\newtheorem{theorem}{Theorem}[chapter]
\newtheorem{conjecture}{Conjecture}[chapter]
\numberwithin{equation}{chapter}
\newcommand{\R}{\mathbb{R}} 
\newcommand{\C}{\mathbb{C}} 
\newcommand{\Z}{\mathbb{Z}} 
\newcommand{\N}{\mathbb{N}} 
\newcommand{\F}{\mathbb{F}} 
\newcommand{\Mat}{\text{Mat}}
\renewcommand{\vec}{\text{vec}}
\renewcommand{\P}{\mathbb{P}}
\newcommand{\E}{\mathbb{E}}
\newcommand{\Cov}{\text{Cov}}
\title{Cyclicity Analysis of the Ornstein-Uhlenbeck Process}
\author{Vivek Kaushik}
\begin{document}

%

\maketitle

\parindent 1em%

\frontmatter

%
\begin{abstract} 
In this thesis, we consider an $N$-dimensional Ornstein-Uhlenbeck (OU) process $\left \lbrace \mathbf x(t) \right \rbrace_{t \ge 0}$ satisfying the linear stochastic differential equation $d \mathbf x(t) = - \mathbf B \ \mathbf x(t) \ dt + \boldsymbol \Sigma \  d \mathbf w(t).$ Here, $\mathbf B$ is a fixed $N \times N$ circulant friction matrix whose eigenvalues have positive real parts, $\boldsymbol \Sigma$ is a fixed $N \times M$ matrix for some $M \in \N,$ and $\left \lbrace \mathbf w(t) \right \rbrace_{t \ge 0}$ is the standard $M$-dimensional Wiener process. 

We consider a signal propagation model governed by this OU process. In this model, an underlying signal propagates throughout a network consisting of $N$ linked sensors located in space. For each $t \ge 0,$ we interpret $x_n(t),$ the $n$-th component of the OU process at time $t,$ as the measurement of the propagating effect made by the $n$-th sensor. The matrix $\mathbf B$ represents the sensor network structure: if $\mathbf B$ has first row $(b_1 \ , \ \dots \ , \ b_N),$ where $b_1>0$ and $b_2 \ , \ \dots \ ,\ b_N \le 0,$ then the magnitude of $b_p$ quantifies how receptive the $n$-th sensor is to activity within the $(n+p-1)$-th sensor, where $n+p-1$ is indexed mod $N.$ Finally, the $(m,n)$-th entry of the matrix $\mathbf D = \frac{\boldsymbol \Sigma  \boldsymbol \Sigma^\text T}{2}$ is the covariance of the component noises injected into the $m$-th and $n$-th sensors.

For different choices of $\mathbf B$ and $\boldsymbol \Sigma,$ we investigate whether Cyclicity Analysis   enables us to recover the structure of network. Roughly speaking, Cyclicity Analysis studies the lead-lag dynamics pertaining to the components of a multivariate signal. We specifically consider an $N \times N$ skew-symmetric matrix $\mathbf Q,$ known as the lead matrix, in which the sign of its $(m,n)$-th entry captures the lead-lag relationship between the $m$-th and $n$-th component OU processes. We investigate whether the structure of the leading eigenvector of $\mathbf Q,$ the eigenvector corresponding to the largest eigenvalue of $\mathbf Q$ in modulus, reflects the network structure induced by $\mathbf B.$

\end{abstract}

%

%
\begin{acknowledgments}
Many people have supported me throughout my Ph.D. journey. I would like to acknowledge them here.

Firstly, I would like to thank my advisor Professor Baryshnikov for his guidance. Professor Baryshnikov introduced me to the topic of Cyclicity Analysis and strongly encouraged me to explore its applications on real-world data. Later on, we formulated the topic for this thesis. Throughout my Ph.D. journey, Professor Baryshnikov instilled the meaning of a Ph.D. scholar: a scholar who is able to conduct independent research. Via his meticulous comments and feedback on my dissertation drafts, he taught me that being a good researcher does not just involve doing research but also involves articulating my findings in a clear and concise manner. 

Next, I would like to thank all the other thesis committee members.  During my undergraduate career, Professor Sowers supervised my first data science project, which initiated my curiosity for the field. He also taught me Stochastic Calculus, a course that was essential to my thesis research. Professor DeVille strongly encouraged me to pursue a Ph.D. at the University of Illinois and taught me Dynamical Systems, another course that was essential to my thesis research.  Although I did not take courses or do research with him, Professor Zharnitsky and I would communicate frequently especially in Russian. I thank him for kindly lending me a book on the Russian language.

Furthermore, I would like to thank Professor Reznick and Professor D'Angelo from the University of Illinois and Professor Ritelli from the University of Bologna. All professors played a significant role during my early research career. I enrolled as an undergraduate freshman in Professor Reznick's senior level research seminar, during which I learned how to conduct and share research. The work done in that seminar would later turn into joint work with Professor Ritelli during my undergraduate career. Professor Ritelli guided me in publishing my first undergraduate paper in the AMS Quarterly of Applied Mathematics. Lastly, Professor D'Angelo has always been supportive of my research and regularly encouraged me to share my research findings with him. He also shared with me his insights on the math department and on issues related to the broader mathematical community.

Moreover, I thank all of my colleagues at eBay. During my Ph.D. journey, I had the opportunity to work at eBay as a data scientist and collaborate with many experts in the field. eBay gave me a unique platform to showcase my skills in solving business related problems.  My colleagues were very supportive of me and helped me grow as a data scientist. The work I did at eBay helped shaped my decision to pursue a Ph.D. 

Finally, I would like to thank my loving family. Admittedly, I faced various challenges pertaining to the completion of the thesis. But my family has always been extremely supportive of me nonetheless. 

\end{acknowledgments}

%
\tableofcontents

%

%

%


%

\mainmatter

%

\chapter{Introduction}\label{chap: Introduction}

Throughout this chapter, we fix $N \in \N$ and let $T$ be an interval in $\R,$ which we regard as an interval of time. We let $\mathbf x: T \rightarrow \R^N$ be a continuous function. Note that we can write $\mathbf x = (x_1 \ , \ \dots \ , \ x_N),$ in which $x_n: T \rightarrow \R$ is a real-valued continuous function. We refer to $x_n$ as the $n$-th \textit{component} of $\mathbf x.$

In many practical applications, $\mathbf x$ is a \textit{signal (wave/pulse)} representing some observed phenomenon, in which we interpret $\mathbf x(t)$ as a measurement of the observed phenomenon recorded at the fixed time $t \in T.$  For example, if we are observing the behavior of the entire stock market, then we could consider an $N$-dimensional signal $\mathbf x: T \rightarrow \R^N,$ where $N$ is the total number of companies in the market; here, the component $x_n(t)$ could be the stock price for the $n$-th company at the time $t \in T.$

\section{Leader-Follower Dynamics Questions}\label{sec: Leader-Follower Dynamics Questions}
In this section, we pose two very general questions pertaining to the leader-follower dynamics amongst the $N$ components of $\mathbf x$ relative to their evolution throughout time. These questions are central to the thesis as a whole.

First, fix two specific indices $1 \le m,n \le N$ and assume that the corresponding component signals $x_m$ and $x_n$ evolve very similarly up to a time shift. Does $x_n$ \textit{lead (precede)} or \textit{follow (lag)} $x_m$ throughout time ? To illustrate the meaning of this question, we consider the signals $f,g: \R \rightarrow \R$ defined by $f(t)=\exp(-\pi t^2)$ and $g(t)=f(t-1).$ We plot these signals in \textbf{Figure \ref{fig: SampleLeaderFollowerTimeSeriesExample}}.
Note that $g$ is a horizontal translation of $f$ to the right by $1$ unit of time, and the number $1$ is the value of the time shift. As a result, throughout time, we can describe the evolution of $f$ and $g$ in relation to one another:  when $f$ increases, $g$ later increases, and when $f$ decreases, $g$ later decreases. Thus, we say that $f$ leads $g,$ or equivalently, $g$ follows $f.$

\begin{figure}[h!]
	\centering
	\includegraphics[scale=0.8]{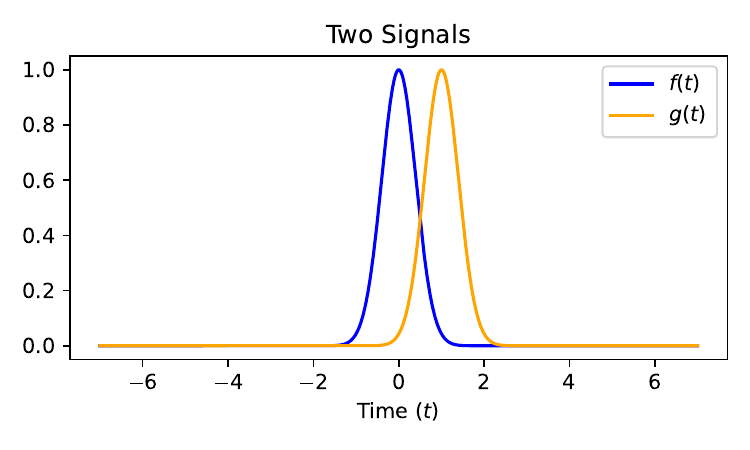}
	\caption{An illustration of two signals exhibiting a leader follower relationship. Here, we plot $f(t)=e^{- \pi t^2}$ and $g(t)=f(t-1).$ We interpret $f$ to lead $g.$  }\label{fig: SampleLeaderFollowerTimeSeriesExample}
\end{figure}

Next, assume all of the $N$ component signals of $\mathbf x$ trace some underlying real-valued signal up to scaling constants and time shifts. What is the order in which the component signals evolve throughout time ? More specifically, what is the order of the time shifts ? To illustrate the meaning of this question, let $T$ be the time interval $[0,1],$ and consider the sinusoidal signal $\mathbf x: [0,1] \rightarrow \R^N,$ whose $n$-th component signal is defined by
	\begin{align}\label{eq: Sinusoidal Signal}
		x_n(t) &= \sin \left(2 \pi \left(t - \frac{n-1}{N} \right) \right)
	\end{align}
for each $1 \le n \le N.$  In \textbf{Figure \ref{fig: SamplePeriodicCOOMExample}}, we fix $N=20$ and plot some of these component sinusoidal signals. Each $x_n$ is a time-shifted copy of the signal $\phi(t)=\sin(2 \pi t)$ with $\frac{n-1}{N}$ being the value of the time shift. Since $\frac{n-1}{N} < \frac{n}N$ for each $1 \le n \le N,$ we interpret the $n$-th component signal as leading the $(n+1)$-th component signal. Thus, the
order in which the $N$ component signals evolve throughout time is $x_1 \ , \ \dots \ , \ x_{N}.$ 

\begin{figure}[h]
	\centering
	\includegraphics[scale=0.6]{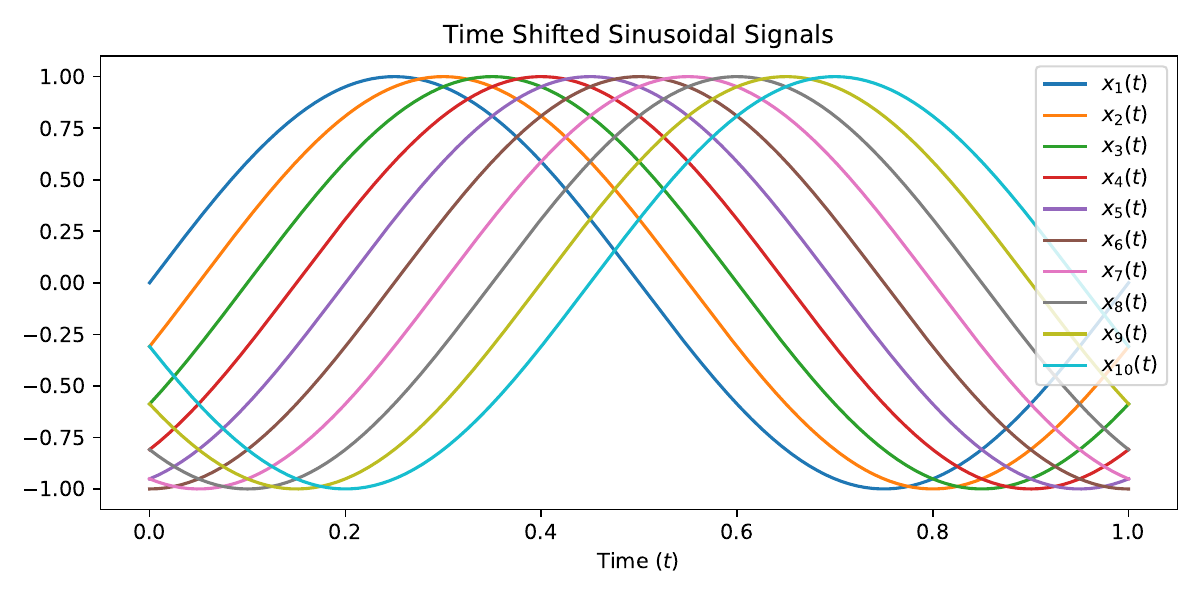}
	\caption{An illustration of multiple signals tracing the common signal $\phi(t)=\sin(2 \pi t)$. Here,  $x_n(t)=\sin \left( 2 \pi \left(t- \frac{n-1}{20} \right) \right)$ for each $1 \le n \le 10.$ We plot the $10$ signals in different colors.  } \label{fig: SamplePeriodicCOOMExample}
\end{figure}

\section{Analysis of Periodic Signals}\label{sec: Analysis of Periodic Signals}

In this section, we focus on analyzing periodic signals. Recall that a signal $\mathbf x: \R \rightarrow \R^N$ is \textit{periodic} if there exists a minimal constant $P>0$ such that $\mathbf x(t+P)=\mathbf x(t)$ for all $t \in \R.$ The constant $P$ is known as the \textit{period} of $\mathbf x.$ We specifically say $\mathbf x$ is $P$-periodic in order to emphasize $\mathbf x$ has period $P$. Equivalently, we can view a $P$-periodic signal as a well-defined map between $\R/(P \Z)$ and $\R^N,$ where $\R/(P \Z)$ is the additive group $[0,P)$ equipped with addition modulo $P.$ 

To analyze periodic signals in general, one uses tools from harmonic analysis, which include \textit{Fourier series} \cite[Chapter 2]{Stein2011} and the \textit{Fourier transform} \cite[Chapter 5]{Stein2011}. Another common tool that is suitable for analyzing periodic signals is the cross-correlation function \cite[Chapter 12]{Papoulis1967}.
Recall if $f,g: \R \rightarrow \R$ are two $P$-periodic signals, then the \textit{(periodic) cross-correlation} of $f$ and $g$ is the $P$-periodic signal $f \star g: \R \rightarrow \R$ defined by
\begin{align}\label{eq: Periodic Cross-Correlation Function}
	(f \star g)(\tau) &= \int_0^P f(t-\tau) \  g(t) \ dt.
\end{align}
The cross-correlation function measures the similarity between the input signals $f$ and $g$ as a function of the displacement variable $\tau.$ The displacement at which the cross-correlation attains its absolute maximum reveals information about $f$ and $g$. Up to an integer multiple of the period, the maximum displacement coincides with the time shift value such that shifting $f$ by this value would result in the signals $f$ and $g$ being most aligned. 

For example, let $f,g : \R \rightarrow \R$ be the periodic signals $f(t)=\sin(2 \pi t)$ and $g(t)= \cos(2 \pi t).$ Explicitly computing the cross-correlation of $f$ and $g$ using \eqref{eq: Periodic Cross-Correlation Function}, we obtain 
\begin{align}
	(f \star g)(\tau) &= \int_0^1 \sin(2 \pi (t- \tau)) \cos(2 \pi t) \ dt \nonumber  \\
	&=  \int_0^1 \left(\sin(2 \pi t) \cos(2 \pi \tau) - \sin(2 \pi \tau) \cos(2 \pi t)\right) \cos(2 \pi t) \ dt  \nonumber \\
	&=  \int_0^1 \sin(2 \pi t) \cos(2 \pi t) \cos(2 \pi \tau) - \sin(2 \pi \tau) \cos^2(2 \pi t) \ dt  \nonumber \\
	&=  \int_0^1 \frac{\sin(4 \pi t)}{2} \cos(2 \pi \tau) - \sin(2 \pi \tau) \left(\frac{1-\cos(4 \pi t)}{2} \right) \ dt \nonumber  \\
	&= - \frac{\sin(2 \pi \tau)}{2} \nonumber.
\end{align}
We plot this cross-correlation function in \textbf{Figure \ref{fig: SamplePeriodicCrossCorrelationExample}}.  On one hand, note that $g$ is a time-shifted copy of $f$ to the right by $\frac{3}{4}$ units of time. On the other hand, we observe the cross-correlation function $f \star g$ attains its maximum at $\tau = \frac{3}{4}+k$ for any $k \in \Z.$ This means the displacement at which the cross-correlation function attains its maximum coincides with the time shift between $f$ and $g$ up to an integer.

\begin{figure}[h]
	\centering
	\includegraphics[scale=0.6]{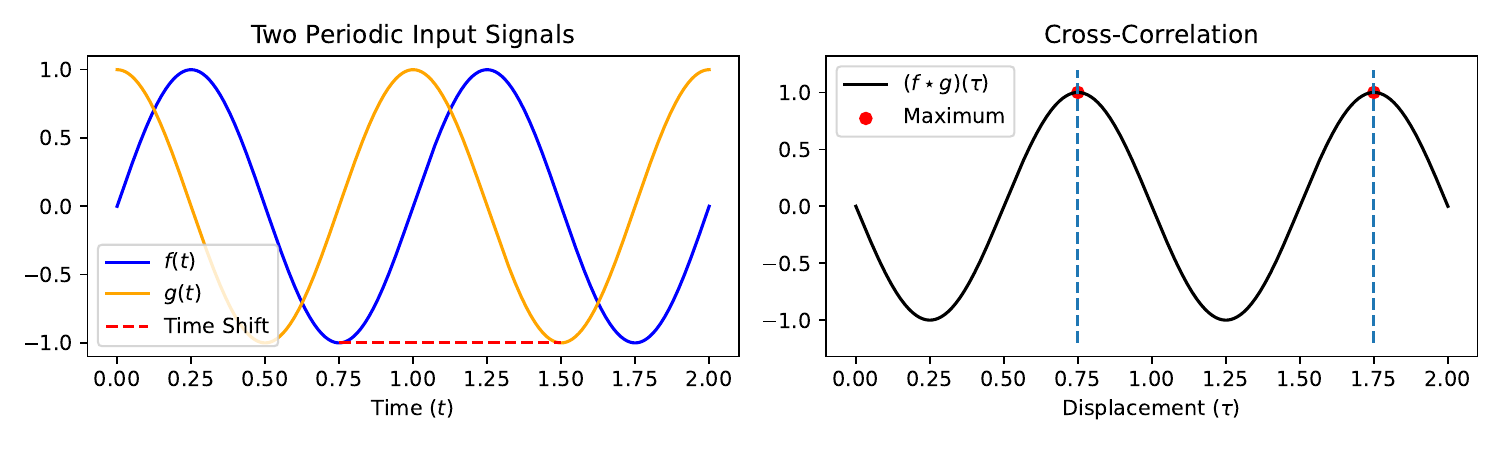}
	\caption{An example of cross-correlation. Let $f,g : \R \rightarrow \R$ be the signals $f(t)=\sin(2 \pi t)$ and $g(t)=\cos(2 \pi t).$ We plot $f$ and $g$ on the left, and we use a dashed red line to indicate the time shift needed to align $f$ with $g,$ namely $\frac{3}{4}$ units of time. We plot the cross-correlation $f \star g$ on the right and indicate its absolute maxima with red dots. Observe the time shift needed to align $f$ with $g$ coincides with the displacement at which $f \star g$ takes on its maximum up to an integer. } \label{fig: SamplePeriodicCrossCorrelationExample}
\end{figure}

\subsection{The Chain of Offsets Model}\label{subsec: Chain Of Offsets Model}
We answer the leader follower dynamics questions that we posed in \textbf{Section \ref{sec: Leader-Follower Dynamics Questions}} for periodic signals. We assume that $\mathbf x:\R \rightarrow \R^N$ obeys a \textit{Chain of Offsets model (COOM)}, which was first introduced in \cite{BaryshnikovSchlafly2016}. This model stipulates the existence of a $P$-periodic signal $\phi: \R \rightarrow \R,$ scaling constants $c_1 \ , \ \dots \ , \ c_N >0,$ and \textit{offsets} $\alpha_1 \ , \ \dots \ , \ \alpha_N \in \R/(P \Z)$ such that 
	\begin{align}\label{eq: Periodic COOM}
		x_n(t) &= c_n \phi(t-\alpha_n)
	\end{align}
for each $1 \le n \le N.$ In other words, under COOM, all component signals of $\mathbf x$ trace an underlying real-valued periodic signal up to scaling constants and offsets. 

Answering our posed leader follower dynamics questions amounts to determining the order of the offsets $\alpha_1 \ , \ \dots \ , \ \alpha_N$ under COOM. However, we emphasize this order of offsets is a \textit{cyclic order}; it corresponds to the order in which the points $e^{2 \pi i \alpha_1} \ , \ \dots \ , \ e^{2 \pi i \alpha_N}$ are traversed on the unit circle in the counterclockwise direction. We represent the cyclic order as a permutation $\sigma \in S_N$ of the indices $\left \lbrace 1\ , \ \dots \ , \ N \right \rbrace$ satisfying the condition that $e^{2 \pi i \alpha_{\sigma(n)}}$ comes before $e^{2 \pi i \alpha_{\sigma(n+1)}}$ in the counterclockwise direction. 
In terms of the signals themselves, we interpret $x_{\sigma(n)}$ as (cyclically) preceding $x_{\sigma(n+1)}$ for each $1 \le n <N.$ Furthermore, $\sigma$ is unique up to cyclic shifts of indices. This means if $\widetilde{\sigma} \in S_N$ is another permutation representing the cyclic order of the offsets, then there exists an integer $d$ such that 	
 	\begin{align} \nonumber
 		\widetilde{\sigma}(n)=\sigma(d+n-1)
 	\end{align}
for all $1 \le n \le N,$ in which $d+n-1$ is indexed mod $N.$ 
 
We give an example of how a permutation representing the cyclic order is unique up to cyclic shifts. Reconsider the sinsoidal signal $\mathbf x$ with $n$-th component signal defined in \eqref{eq: Sinusoidal Signal} for each $1 \le n \le N.$ Note that $\mathbf x$ satisfies COOM in \eqref{eq: Periodic COOM} if we set $\phi(t)=\sin(2 \pi t)$ and $c_n=1$ and $\alpha_n=\frac{n-1}{N}$ for each $1 \le n \le N.$ On one hand, note that $0 = \alpha_1 < \ \dots \ < \alpha_N <1,$ which means the points $e^{2 \pi i \alpha_1} \ , \ \dots \ , \ e^{2 \pi i \alpha_N}$ are traversed in that order on the unit circle in the counterclockwise direction beginning from the positive real axis. Hence, the identity permutation $\text{Id} \in S_N$ is one permutation that represents the cyclic order of the offsets. In fact, any permutation $\widetilde{\sigma} \in S_N$ defined by $\widetilde{\sigma}(n)=n+d,$ where $d \in \N$ is a fixed integer and $n+d$ is indexed mod $N,$ is a valid permutation representing the cyclic order of the offsets. Nevertheless, observe that 
    \begin{align}
        \text{Id}(n) &= \widetilde{\sigma}(n+N-d),
    \end{align}
which means the identity permutation is equal to $\widetilde{\sigma}$ up to $N-d$ cyclic shifts.

\subsection{Recovery of the Cyclic Order under COOM}\label{subsec: Recovery of Cyclic Order under COOM}

Given the $P$-periodic signal $\mathbf x,$ we outline a procedure to recover the cyclic order of the offsets $\alpha_1 \ , \ \dots \ , \ \alpha_N$ under COOM. This procedure involves the utilization of the cross-correlation function, which we defined previously in \eqref{eq: Cross-Correlation Function}.

First, we write the underlying $P$-periodic signal $\phi$ in COOM as a Fourier series:	
	\begin{align}\label{eq: Periodic COOM Primary Function Fourier Series}
		\phi(t) &= \sum_{k \in \Z} \hat{\phi}_k \  e^{\frac{2 \pi i k t}{P}},
	\end{align}
where $\hat{\phi}_k$ is the $k$-th Fourier coefficient of $\phi$ for each $k \in \Z.$ Then, for each $1 \le m,n \le N,$ the cross-correlation of the component signals $x_m$ and $x_n$ is explicitly    
	\begin{align}
		(x_m \star x_n)(\tau) &= \int_0^P x_m(t -\tau) \ x_n(t) \ dt \nonumber  \\
		&= \int_0^{P}  \sum_{k,\ell \in \Z} c_m c_n  \  \hat{\phi}_{k} \  \hat{\phi_\ell} \  e^{\frac{2 \pi i k(t-\tau -\alpha_m)+ 2 \pi i \ell (t-\alpha_n)}{P}} \  dt \nonumber \\
		&= c_m c_n \sum_{k,\ell \in \Z}  \hat{\phi}_{k}  \ \hat{\phi_\ell}  \ e^{\frac{-2 \pi i k (\tau + \alpha_m) - 2 \pi i \ell \alpha_n }{P}} \int_0^{P}  e^{\frac{2 \pi i (k+\ell) t}{P}} dt \nonumber  \\
		&= c_m c_n \sum_{k,\ell \in \Z}  \hat{\phi}_{k}  \ \hat{\phi_\ell}  \ e^{\frac{-2 \pi i k (\tau + \alpha_m) - 2 \pi i \ell \alpha_n }{P}} \ P \  \delta_{k,-\ell}  \nonumber \\
		&= P  c_m c_n \sum_{k \in \Z}  \left |\hat{\phi}_{k} \right|^2   \ e^{\frac{2 \pi i k ( \alpha_n-\alpha_m-\tau) }{P}} \nonumber \\
		&=  P  c_m c_n \left|\hat{\phi}_{0} \right|^2  + 2P  c_m c_n \sum_{k \in \N }  \left |\hat{\phi}_{k} \right|^2 \cos \left(\frac{2 \pi k (\alpha_n-\alpha_m - \tau)}{P} \right) \nonumber,
	\end{align}
where $\delta_{\cdot ,\cdot}$ is the Kronecker delta function. Differentiating the cross-correlation function with respect to $\tau$, we obtain
	\begin{align}
		(x_m \star x_n)'(\tau) &= -4 \pi   c_m c_n \sum_{k \in \N}  k \left |\hat{\phi}_{k} \right|^2 \sin \left(\frac{2 \pi k (\alpha_n-\alpha_m - \tau)}{P} \right) \nonumber.
	\end{align}
Equating the derivative to $0,$ we see that any critical point of $x_m \star x_n$ is of the form
	\begin{align}
		\tau &= \alpha_n-\alpha_m + k P, \nonumber
	\end{align}
where $k \in \Z.$ This means that the critical points of the $x_m \star x_n$ are equal to the difference between the offsets $\alpha_n$ and $\alpha_m$ up to integer multiples of the period $P.$

Now, we are ready to construct a permutation of the indices $\left \lbrace 1 \ , \ \dots \ , \ N \right \rbrace$ representing the cyclic order of the offsets under COOM. For each $1 \le m,n \le N,$ let 
	\begin{align}
		\tau_{m,n} &= \alpha_n-\alpha_m \nonumber,
	\end{align} 
in which $\alpha_n-\alpha_m$ is specifically reduced modulo $P.$ More explicitly, we choose an integer $k \in \Z$ such that $\alpha_n-\alpha_m + kP \in [0,P)$ and set $\tau_{m,n}$ equal to $\alpha_n-\alpha_m + kP$ for this choice of $k.$ Define the permutation $\sigma \in S_N$ be a permutation as follows: let $\sigma(1)$ be arbitrary, and for each $1 < n \le N,$ let $\sigma(n)$ be the smallest index not in $\left \lbrace \sigma(1) \ , \ \dots  \  , \ \sigma(n-1) \right \rbrace$ such that $\tau_{\sigma(n-1) \ , \ \sigma(n)}$ is minimal i.e. we have $\tau_{\sigma(n-1) \ , \ \sigma(n)} \le \tau_{\sigma(n-1) \ , \ m}$ for all $m \notin \left \lbrace \sigma(1) \ , \ \dots  \  , \ \sigma(n-1) \right \rbrace.$ Observe $\sigma$ ensures that $e^{2 \pi i \alpha_{\sigma(n)}}$ is the closest point on the unit circle to the point $e^{2 \pi i \alpha_{\sigma(n-1)}}$ in the counterclockwise direction. Therefore, $\sigma$ is a candidate permutation representing the cyclic order.

It remains to show that our constructed permutation $\sigma$ from the previous paragraph is unique up to cyclic shifts. To this end, let $\widetilde{\sigma} \in S_N$ be another permutation constructed according to the procedure in the previous paragraph. Note that there exists a unique index $1 \le d \le N$ such that $\widetilde{\sigma}(1) =\sigma(d),$ which follows from the fact that the composition $\widetilde{\sigma} \circ \sigma^{-1}$ is itself a permutation in $S_N.$ We claim that for this chosen index $d,$ we have 
	\begin{align}\label{eq: Periodic COOM Cyclic Order Uniqueness}
		\widetilde{\sigma}(n) &= \sigma(d+n-1)
	\end{align}
for all $1 \le n \le N,$ in which $d+n-1$ is indexed mod $N.$ We prove this by induction. The statement \eqref{eq: Periodic COOM Cyclic Order Uniqueness} already holds for $n=1$ by the property of $d.$ Now, suppose the statement \eqref{eq: Periodic COOM Cyclic Order Uniqueness} holds for $n=k$ for some $1<k<N. $ We show it must also hold for $n=k+1.$ By the construction of $\widetilde{\sigma},$ we have that $\widetilde{\sigma}(k+1)$ is the smallest index not in $\left \lbrace \widetilde{\sigma}(1) \ , \ \dots \ , \ \widetilde{\sigma}(k) \right \rbrace$ such that $\tau_{\widetilde{\sigma}(k) \ , \ \widetilde{\sigma}(k+1)}$ is minimal. On the other hand, by the induction hypothesis, $\widetilde{\sigma}(k+1)$ is the smallest index not in $\left \lbrace \sigma(1) \ , \ \dots \ , \ \sigma(d+k-1) \right \rbrace$ such that $\tau_{\sigma(d+k-1) \ , \  \widetilde{\sigma}(k+1)}$ is minimal. By the construction of $\sigma,$ we conclude $\widetilde{\sigma}(k+1)=\sigma(d+k).$ Therefore, the statement \eqref{eq: Periodic COOM Cyclic Order Uniqueness} holds for $n=k+1.$ Thus, we have shown our candidate permutation $\sigma$ is unique up to cyclic shifts.

\section{Analysis of Non-Periodic Signals}\label{sec: Analysis of Non-Periodic Signals}

In the previous section, we utilized the cross-correlation function to answer our posed leader follower dynamics questions in \textbf{Section \ref{sec: Leader-Follower Dynamics Questions}} for periodic signals. In this section, we investigate whether cross-correlation can be used to answer our posed leader follower dynamics questions for signals that are not necessarily periodic. The \textit{cross-correlation} of two generic signals $f,g: \R \rightarrow \R$ is defined as the map $f \star g: \R \rightarrow \R,$ where 
	\begin{align}\label{eq: Cross-Correlation Function}
		(f \star g)(\tau) &= \int_{\R} f(t-\tau) g(t) \ dt,
	\end{align}
provided the integral on the right hand side exists.

\subsection{Time Shifted Model}\label{subsec: Time Shifted Model}
Suppose $f: \R \rightarrow \R$ is a  signal that is compactly supported in a closed interval i.e. the closure of the set of all points $t \in \R$ with $f(t) \ne 0$ is a closed interval. Let $g: \R \rightarrow \R$ be the map defined by  
	\begin{align} \label{eq: Time Shifted Model}
		g(t) &= f(t-\tau_0).
	\end{align}
for some time shift $\tau_0 \in \R.$ We refer to the model \eqref{eq: Time Shifted Model} as a \textit{time shifted model}.  Determining the leader follower relationship between $f$ and $g$ under the time shifted model amounts to determining the sign of the time shift $\tau_0.$  If $\tau_0>0,$ then we interpret $f$ to lead $g.$ If $\tau_0<0,$ then we interpret $f$ to follow $g.$ 

Given the signals $f$ and $g,$ we determine the value of the time shift $\tau_0$ under the time shifted model \eqref{eq: Time Shifted Model}. In order to do this, we utilize the cross-correlation of $f$ and $g$ defined in \eqref{eq: Cross-Correlation Function}. For each $\tau \in \R,$ we have  
	\begin{align}
		(f \star g)(\tau) &= \int_{\R} f(t - \tau ) \ g(t) \ dt \nonumber  \\
		&= \int_{\R} \frac{(f(t-\tau))^2 + (g(t))^2 - (f(t-\tau) - g(t))^2}{2} \ dt \nonumber  \\
		&= \int_{\R} \frac{(f(t-\tau))^2 + (f(t-\tau_0))^2}{2} \ dt - \int_{\R} \frac{\left(f(t-\tau) - f(t-\tau_0) \right)^2}{2} \ dt \nonumber \\
		&= \int_{\R} \frac{(f(t-\tau))^2}{2} \ dt + \int_{\R} \frac{(f(t-\tau_0))^2}{2} \ dt - \int_{\R} \frac{\left(f(t-\tau) - f(t-\tau_0) \right)^2}{2} \ dt
		  \label{eq: Time Shifted Model Cross Correlation Function 1} \\
		&=(f \star f)(0) - \int_{\R} \frac{\left(f(t-\tau) - f(t-\tau_0) \right)^2}{2} \ dt \label{eq: Time Shifted Model Cross Correlation Function 2},
	\end{align}
in which the first term of \eqref{eq: Time Shifted Model Cross Correlation Function 2} follows from observing the first two integral terms in \eqref{eq: Time Shifted Model Cross Correlation Function 1} are both equal to $\frac{(f \star f)(0)}{2}$ via the respective changes of variables $t \mapsto t-\tau$ and $t \mapsto t-\tau_0.$

Note that the integral after the minus sign in \eqref{eq: Time Shifted Model Cross Correlation Function 1} is a non-negative quantity because it is the integral of a non-negative function. This implies the cross-correlation function has an upper bound
	\begin{align} \label{eq: Time Shifted Model Cross Correlation Upper Bound}
		(f \star g)(\tau) \le (f \star f)(0).
	\end{align}
for all $\tau \in \R.$ Equality in \eqref{eq: Time Shifted Model Cross Correlation Upper Bound} holds if and only if the integral term in \eqref{eq: Time Shifted Model Cross Correlation Function 2} is $0$ for some $\tau \in \R.$ Because $f$ is continuous and compactly supported in a closed interval, equality in \eqref{eq: Time Shifted Model Cross Correlation Upper Bound} holds if and only if the integrand of the integral term in \eqref{eq: Time Shifted Model Cross Correlation Function 2} is $0$ i.e.
	\begin{align}
		f(t-\tau) &= f(t-\tau_0) \nonumber.
	\end{align} 
for all $t \in \R.$ But since $f$ is not periodic, we have $(f \star g)(\tau)=(f \star f)(0)$ if and only if  $\tau=\tau_0.$ 
Therefore, the unique displacement at which the cross-correlation $f \star g$ attains its maximum is precisely the time shift $\tau_0$ under the model \eqref{eq: Time Shifted Model}. Having recovered the time shift $\tau_0,$ we can determine the leader follower relationship between the signals $f$ and $g$ upon inspecting the sign of $\tau_0.$

\subsection{Time Reparameterization Invariance}\label{subsec: Time Reparameterization Invariance}

Based on our analysis so far, it seems that the cross-correlation function is a useful signal processing tool that enables us to determine the leader follower relationship between any two input signals, provided these signals have nice enough properties. We wonder whether there are drawbacks to using the cross-correlation. 

We focus on one drawback central to the thesis. The cross-correlation function suffers under time reparameterization.  A \textit{time reparameterization} of the time interval $T$ is a strictly increasing homeomorphism between $T$ and itself. This means that if we use the cross-correlation as a tool to analyze signals, then the results given by cross-correlation may not be the same across all time reparameterizations. In other words, the results we obtain via cross-correlation may vary according to how we observe the signals throughout time. 

To illustrate with an example, let $T=[-3,2],$ and consider the signals $f: [-3,2] \rightarrow \R,$ where
	\begin{align}
		f(t) &= \begin{cases} \sin^2(2 \pi t) & t \in [-1,1] \\ 0 & \text{else} \end{cases}
	\end{align}
and $g(t)= f \left(t-\frac{1}{4} \right).$ Consider the map $r: [-3,2] \rightarrow [-3,2]$ defined by 
	\begin{align}
	r(t)=\frac{t^3+6}{7}.
	\end{align} 
Note that $r$ is a valid time reparameterization of $[-3,2].$ Indeed, we have $r(-3)=-3$ and $r(2)=2,$ and since $t \mapsto t^3$ is strictly increasing on $\R$, the map $t \mapsto r(t)$ is also strictly increasing on $\R$ and thus strictly increasing in $[-3,2].$ Therefore, $r([-3,2])=[-3,2].$ Finally, the inverse map $r^{-1}(y)=(7y-6)^{\frac{1}{3}}$ is continuous on all of $\R$ and is thus continuous in $[-3,2].$ Therefore, $r$ is a strictly increasing homeomorphism, which means it is a valid time reparameterization. However, in \textbf{Figure \ref{fig: SampleReparameterizationVariance}}, we plot the correlation functions $f \star g$ and $(f \circ r) \star (g \circ r).$ We see that the displacement maximum for the original cross-correlation $f \star g$ and the displacement maximum for the reparameterized cross-correlation $(f \circ r) \star (g \circ r)$ differ in sign. Hence, the cross-correlation function does not allow us to conclude whether $f$ leads $g$ or vice versa.

\begin{figure}
	\centering
	\includegraphics[scale=0.65]{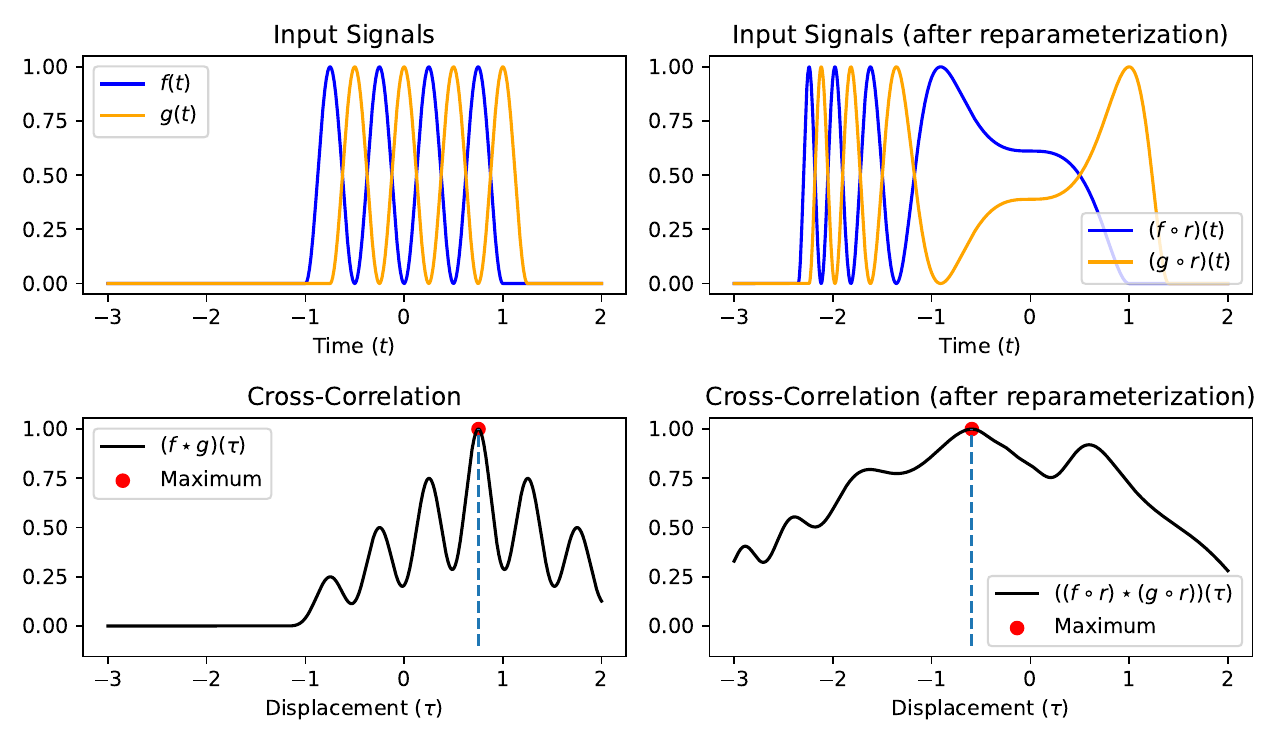}
	\caption{We illustrate how time reparameterization can impact the results of cross-correlation. Consider the signal $f: \R \rightarrow \R$ defined by $f(t)= \sin^2(2 \pi t)$ for $t \in [-1,1]$ and $0$ otherwise. Let $g(t)=f(t-0.25).$ In the first row, we plot the original signals $f$ and $g$ on the left, and we plot the reparameterized signals $f \circ r$ and $g \circ r$ on the right. In the second row, we plot the cross-correlation $f \star g$ on the left and the cross-correlation $(f \circ r) \star (g \circ r)$ on the right. Observe the displacement at which $(f \star g)$ takes on its maximum differs in sign from the displacement at which $(f \circ r) \star (g \circ r)$ takes on its maximum.} \label{fig: SampleReparameterizationVariance}
\end{figure}

Thus, in order to perform any meaningful analysis of signals, we need to consider using tools that are time reparameterization invariant.

\section{Cyclicity Analysis}\label{sec: Cyclicity Analysis}
In this section, we consider cyclic signals. A signal $\mathbf x : \R \rightarrow \R^N$ is \textbf{cyclic} if $\mathbf x \circ r$ is periodic for some time reparameterization $r: \R \rightarrow \R.$ Cyclic signals generalize periodic signals. Roughly speaking, cyclic signals have repeating temporal patterns, but such patterns do not repeat in a predictable fashion, which makes them different from periodic signals. Many real world phenomena are cyclic in nature. For example, the economy undergoes expansion and recession, but not predictably due to various external factors that control the economy. Other examples include cardiac cycles and fMRI signals \cite{AbrahamShahsavaraniZimmermanHusainBaryshnikov2021}.

We illustrate the difference between a periodic and a cyclic signal with a theoretical example. Consider the signals $f(t)= \sin(2 \pi t)$ and $g(t)= f(t^3)$ on $\R.$ In \textbf{Figure \ref{fig: SampleCyclicSignals}}, we plot $f$ and $g$ and the resulting signals we observe after shifting the time axis by $1$ unit. Since $f$ is $1$-periodic, shifting the time axis by $1$ unit results in us observing the exact same signal that we started with. On the other hand, $g$ is not $1$-periodic; shifting the time axis by $1$ unit results in us observing a completely different signal than the one we started with. In fact, $g$ is not $P$-periodic for any constant $P>0.$ However, $g$ is cyclic: if $r: \R \rightarrow \R$ is defined by $r(t)=t^{\frac{1}3},$ then note that $r$ is a valid time reparameterization satisfying $f= g \circ r.$

\begin{figure}
	\centering
	\includegraphics[scale=0.6]{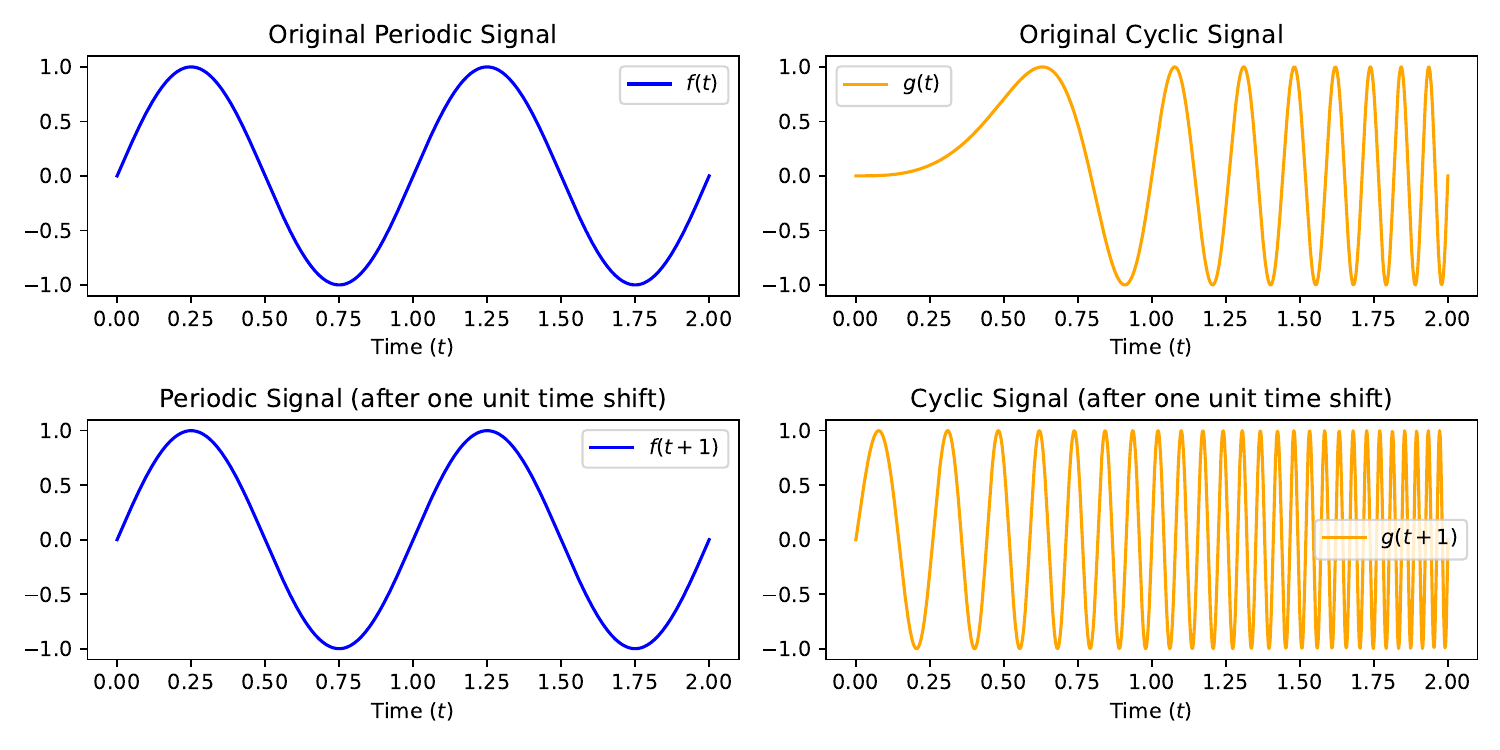}
	\caption{We illustrate the difference between a cyclic and a periodic signal. Let $f(t)=\sin (2 \pi t)$ and $g(t)=f(t^3).$ On the first row, we plot $f(t)$ on the left and $g(t)$ on the right. On the second row, we plot $f(t+1)$ on the left and $g(t+1)$ on the right. Note $f$ is unaltered by the time shift of $1$ unit, but $g$ is altered.} \label{fig: SampleCyclicSignals}
\end{figure}

In this section, we describe the procedure of \textit{Cyclicity Analysis}, first introduced in \cite{BaryshnikovSchlafly2016}, which is the study of the leader-follower dynamics pertaining to cyclic signals.  Cyclicity Analysis directly answers both of our posed leader follower questions in \textbf{Section \ref{sec: Leader-Follower Dynamics Questions}} in a time reparameterization invariant fashion.

\subsection{Iterated Path Integrals}
First, we assume $\mathbf x: T \rightarrow \R^N$ is a generic signal in which $T=[a,b]$ is a closed interval. We recall the first question we posed in \textbf{Section \ref{sec: Leader-Follower Dynamics Questions}}, which is to determine the pairwise leader follower relationship between the $m$-th component and $n$-th component signals of $\mathbf x$ for fixed indices $1 \le m,n \le N,$ assuming such component signals evolve similarly throughout time. 

In order to answer this question, 
Cyclicity Analysis invokes the theory of iterated path integrals \cite{Chen1958,Chen1971,Chen1973,Chen1977}. For each $K \in \N$, a $K$-th order \textit{iterated path integral} of $\mathbf x$ is a functional $\text{IP}_{i_1 \ , \ \dots \ , \ i_K}$ of the form
	\begin{align}\label{eq: Iterated Path Integral}
		\text{IP}_{i_1 \ , \ \dots \ , \ i_K}(\mathbf x) &= \frac{1}{K!} \int_{a \le t_1 \le \ \dots \ \le t_K \le b} x'_{i_1}(t_1) \ \dots \ x'_{i_K}(t_K) \ dt_K \ \dots \ dt_1,
	\end{align}
where $1 \le i_1 \le \ \dots \ \le i_K \le N.$ For example, a first order iterated path integral of $\mathbf x$ is explicitly of the form
	\begin{align}
		\text{IP}_n(\mathbf x) &= \int_a^b x'_{n}(t) \ dt \nonumber   \\
		&= x_{n}(b)- x_{n}(a) \nonumber ,
	\end{align}
which is the \textit{increment} of the trajectory parameterized by $x_n:T \rightarrow \R.$ 

Iterated path integrals date back to the times of Riemann and Picard. They were initially used as tools to construct and analyze solutions to ordinary differential equations. Later on, K.T. Chen \cite{Chen1958,Chen1971,Chen1973,Chen1977} discovered the relevance of iterated path integrals to algebraic topology, while Terry Lyons \cite{Lyons1998,Lyons2007} discovered the relevance to the analysis of \textit{rough paths}, which are Holder trajectories.

Iterated path integrals of the signal $\mathbf x$ obey several invariant properties \cite{BaryshnikovSchlafly2016}, which make them a suitable tool for data analysis. Firstly, they are indeed time reparameterization invariant:  for any time reparameterization $r: T \rightarrow T,$ we have
	\begin{align}
			\text{IP}_{i_1 \ , \ \dots \ , \ i_K}(\mathbf x)  &= 	\text{IP}_{i_1 \ , \ \dots \ , \ i_K}(\mathbf x \circ r) \nonumber,
	\end{align}
which follows directly from performing a change of variables $t \mapsto r(t)$ in \eqref{eq: Iterated Path Integral}. Next, the iterated path integrals of $\mathbf x$ are invariant under parallel translation: if $\mathbf a \in \R^N$ is fixed and if $\mathbf y : T \rightarrow \R^N$ is the signal defined by $\mathbf y(t)=\mathbf x(t) + \mathbf a,$ then 
	\begin{align}
		\text{IP}_{i_1 \ , \ \dots \ , \ i_K}(\mathbf x) &= \text{IP}_{i_1 \ , \ \dots \ , \ i_K}(\mathbf y) \nonumber.
	\end{align}
Moreover, the iterated path integrals are invariant under \textit{detours}. If $C_1 \ , \  \dots \ , \ C_L$ are oriented curves in $\R^N$ such that the ending point of $C_\ell$ is the starting point of $C_{\ell+1},$ their  concatenation possesses a \textit{detour}  if for some $1 \le \ell < L,$ the curves $C_{\ell+1}$ and $C_{\ell}$ are the same but with opposite orientations. In other words, if we traverse $C_1 \ , \ \dots \ , \ C_L$ in that order, then our traversal will involve backtracking. If $\mathbf x$ and $\mathbf y$ parameterize the same curve, in which $\mathbf x$ traverses it with detours but $\mathbf y$ traverses it without detours, then 
	\begin{align}
		\text{IP}_{i_1 \ , \ \dots \ , \ i_K}(\mathbf x) &= \text{IP}_{i_1 \ , \ \dots \ , \ i_K}(\mathbf y) \nonumber.
	\end{align}
Finally, K.T. Chen \cite{Chen1958,Chen1971,Chen1973,Chen1977} proved a stronger statement: two signals $\mathbf x, \mathbf y: T \rightarrow \R^N$ that parameterize their respective curves without detours are the same up to time reparameterization and parallel translation \textit{if and only if} all of their corresponding iterated path integrals agree.

\subsection{Oriented Areas}
Now, assume $\mathbf x: T \rightarrow \R^N$ parameterizes a closed curve i.e. $\mathbf x(a)=\mathbf x(b).$ We consider the second order iterated path integrals of $\mathbf x.$ For each $1 \le m,n \le N,$ we substitute $K=2$ and $i_1=m$ and $i_2=n$ into \eqref{eq: Iterated Path Integral} to obtain the explicit form of the second order iterated path integral:
	\begin{align}
		\text{IP}_{m,n}(\mathbf x) &= \frac{1}2 \int_{a \le s \le t \le b} x_{m}'(s) \ x_n'(t) \ ds \ dt \nonumber \\
		&=  \frac{1}2\int_a^b \int_a^t x_{m}'(s) \ x_n'(t) \ ds \ dt \nonumber \\
		&=  \frac{1}2 \int_a^b (x_m(t) - x_m(a)) \ x_n'(t) \ dt \nonumber \\
		&= \frac{1}2 \int_a^b x_m(t) \ x_n'(t)  \ dt -  \frac{\left(x_n(b) - x_n(a) \right) \ x_m(a)}2 \label{eq: Second Order Iterated Path Integral 1} \\
		&=  \frac{1}2 \int_a^b x_m(t) \ x_n'(t)  \ dt \nonumber,
	\end{align}
in which the second term of \eqref{eq: Second Order Iterated Path Integral 1} vanishes due to the assumption $\mathbf x$ parameterizes a closed curve.

For all $1 \le m,n \le N,$ we define
	\begin{align}
		 A_{m,n}(\mathbf x) &= \text{IP}_{m,n}(\mathbf x) - \text{IP}_{n,m}(\mathbf x)  \nonumber \\
		&= \frac{1}2 \int_a^b x_m(t) \ x_n'(t) - x_n(t) \ x_m'(t) \ dt.   \label{eq: Oriented Area} 
	\end{align}
By Green's Theorem, the integral \eqref{eq: Oriented Area} is a line integral that is equal to the area of the region enclosed by the curve parameterized by the map $(x_m, x_n): T \rightarrow \R^2.$  We note, however, this area is actually an \textit{oriented (signed) area}.  In particular, $A_{m,n}(\mathbf x)$ is positive if $(x_m, x_n): T \rightarrow \R^2$ traverses the aforementioned curve in the counterclockwise direction and is negative otherwise. We heuristically interpret the oriented area as an indicator of the lead-lag relationship between the signals $x_m$ and $x_n$. We say that $x_m$ \textit{leads (precedes)} $x_n$ if $A_{m,n}(\mathbf x)>0.$ We say that $x_m$ \textit{follows (lags)} $x_n$ if $A_{m,n}(\mathbf x)<0.$ Otherwise, we say $x_m$ and $x_n$ are \textit{in sync}.  

We illustrate the oriented area with an example. Reconsider the signals $f,g : [-\infty, \infty] \rightarrow \R$ with $f(t)= e^{-\pi t^2}$ and $g(t)=f(t-1)$ with $f(\pm \infty)=g(\pm \infty)=0.$ The curve $(f,g): [-\infty, \infty] \rightarrow \R^2$ is a closed curve oriented in the counterclockwise direction, which we plot in \textbf{Figure \ref{fig: SampleOrientedArea}}. The oriented area of the region enclosed by this curve is explicitly
	\begin{align}
		\frac{1}{2} \int_\R f(t) \ g'(t) - f'(t)  \ g(t) \ dt &= \frac{1}{2} \int_{\R} - 2 \pi (t-1) e^{- \pi t^2} e^{- \pi (t-1)^2} + 2 \pi t e^{- \pi t^2} e^{- \pi (t-1)^2} \ dt \nonumber  \\
		&= \int_\R \pi e^{-\pi t^2 - \pi (t-1)^2} \ dt = \frac{\pi e^{-\frac{\pi}{2}}}{\sqrt 2} \nonumber,
	\end{align} 
which is positive. Therefore, we interpret $f$ to lead $g$ based on the oriented area.

\begin{figure}[h]
	\centering
	\includegraphics[scale=0.65]{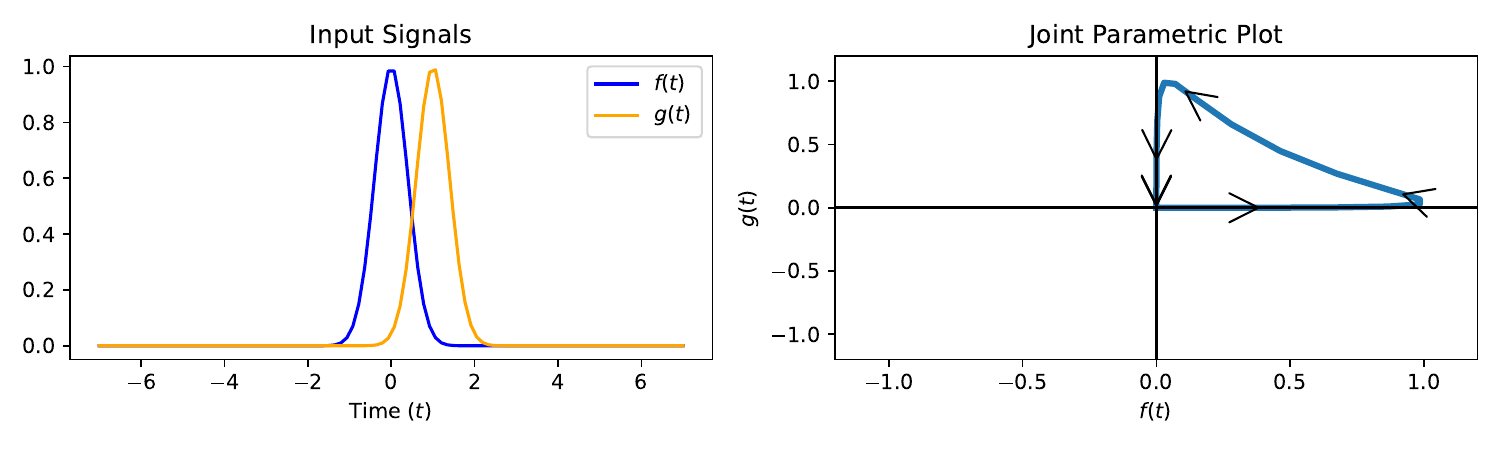}
	\caption{An illustration of the oriented area. Let $f,g: \R \rightarrow \R$ be the signals $f(t)=e^{- \pi t^2}$ and $g(t)=f(t-1).$ We plot $f$ and $g$ on the left and plot the curve parameterized by $(f,g): [-\infty,\infty] \rightarrow \R$ on the right. We indicate the curve's orientation via black arrows. Because the curve is oriented counterclockwise, we interpret $f$ to lead $g.$ } \label{fig: SampleOrientedArea}
\end{figure}

Recall our assumption $\mathbf x$ parameterizes a closed curve. We can modify the definition of the oriented area in the case $\mathbf x$ does not parameterize a closed curve. In this situation, we may consider any map $\widetilde{\mathbf x}: T \rightarrow \R^N$ that parameterizes a closed curve formed by concatenating the curve parameterized by the original signal $\mathbf x$ and any oriented curve in $\R^N$ with starting point $\mathbf x(b)$ and ending point $\mathbf x(a).$ Different choices of  $\widetilde{\mathbf x}$ yield different values of $A_{m,n}(\widetilde{\mathbf x}).$ For our purposes, however, we will choose a signal $\widetilde{\mathbf x}: T \rightarrow \R^N$ such that
	\begin{align} \label{eq: Oriented Area Preservation of Line Integral}
		A_{m,n}(\widetilde{\mathbf x}) &= \frac{1}2 \int_a^b x_m(t) \ x_n'(t) - x_n(t) \ x_m'(t) \ dt.
	\end{align}
i.e. the $(m,n)$-th oriented area of $\widetilde{\mathbf x}$ is the same as the $(m,n)$-th line integral of $\frac{x_m x_n' - x_n x_m'}{2}$ over the curve parameterized by $(x_m, x_n):T \rightarrow \R^2.$

Consider the signal $\widetilde{\mathbf x}:T \rightarrow \R^N$ defined by 
	\begin{align}\label{eq: Linear Interpolation Signal}
		\widetilde{\mathbf x}(t) &= \begin{cases}
			\mathbf x( 2t - a ) & t \in \left [a , \frac{a+b}{2} \right ] \\
			\mathbf x(b) + \left(\mathbf x(a) - \mathbf x(b)\right) \left( \frac{a+b-2t}{a-b} \right) & t \in \left [ \frac{a+b}{2} , b\right ].
		\end{cases}
	\end{align}
This signal parameterizes the curve formed by concatenating the curve parameterized by the original signal $\mathbf x$ and the oriented line segment with starting point $\mathbf x(b)$ and ending point $\mathbf x(a).$ We show this signal satisfies our requirement \eqref{eq: Oriented Area Preservation of Line Integral}. Explicitly computing $A_{m,n}(\widetilde{\mathbf x}),$ we have 
	\begin{align}
		A_{m,n}(\widetilde{\mathbf x}) &= \frac{1}{2} \int_a^b \widetilde{x}_m(t) \  \widetilde{x}_n'(t) - \widetilde{x}_n(t) \  \widetilde{x}_m'(t) \  dt \nonumber \\
		&= \frac{1}{2} \int_a^{\frac{a+b}{2}} \widetilde{x}_m(t) \  \widetilde{x}_n'(t) - \widetilde{x}_n(t) \  \widetilde{x}_m'(t)  \ dt  \nonumber \\
  &\qquad \qquad +  \frac{1}{2} \int_{\frac{a+b}{2}}^b \widetilde{x}_m(t) \  \widetilde{x}_n'(t) - \widetilde{x}_n(t) \  \widetilde{x}_m'(t) \ dt \nonumber \\
		&= \int_a^{\frac{a+b}{2}} x_m(2t-a) \  x_n'(2t-a) - x_n(2t-a) \  x_m'(2t-a) \ dt \nonumber \\
		&\qquad - \frac{x_n(a) - x_n(b)}{b-a} \int_{\frac{a+b}{2}}^b x_m(b) + \left(x_m(a) - x_m(b) \right) \left(\frac{a+b-2t}{a-b} \right) \ dt \nonumber \\
		&\qquad \qquad + \frac{x_m(a) - x_m(b)}{b-a} \int_{\frac{a+b}{2}}^b x_n(b) + \left(x_n(a) - x_n(b) \right) \left(\frac{a+b-2t}{a-b} \right) \ dt \nonumber \\
		&= \int_a^{\frac{a+b}{2}} x_m(2t-a) \  x_n'(2t-a) - x_n(2t-a) \  x_m'(2t-a) \ dt  \nonumber  \\
		&\qquad - \frac{x_m(b) (x_n(a)-x_n(b))}{2} - \frac{(x_m(a) - x_m(b)) (x_n(a)-x_n(b))}{4}  \nonumber \\
		&\qquad \quad +  \frac{x_n(b) (x_m(a)-x_m(b))}{2} + \frac{(x_n(a) - x_n(b)) (x_m(a)-x_m(b))}{4} \nonumber \\
		&= \int_a^{\frac{a+b}{2}} x_m(2t-a) \  x_n'(2t-a) - x_n(2t-a) \  x_m'(2t-a) \  dt \label{eq: Linear Interpolation Oriented Area 1} \\
		&= \frac{1}{2} \int_a^{b} x_m(t) \  x_n'(t) - x_n(t) \  x_m'(t) \ dt \label{eq: Linear Interpolation Oriented Area 2},
	\end{align}
in which \eqref{eq: Linear Interpolation Oriented Area 2} follows from performing the change of variables $t \mapsto 2t-a$ in \eqref{eq: Linear Interpolation Oriented Area 1}. Thus, our choice of $\widetilde{\mathbf x}$ satisfies our specific requirement in \eqref{eq: Oriented Area Preservation of Line Integral}.

So from henceforth, if the signal $\mathbf x$ does not parameterize a closed curve, then we define $A_{m,n}(\mathbf x)=A_{m,n}(\widetilde{\mathbf x}),$ where $\widetilde{\mathbf x}$ is the signal defined in \eqref{eq: Linear Interpolation Signal}.  

\subsection{The Lead Matrix}

If $\mathbf x: T \rightarrow \R^N$ is a signal with $T=[a,b],$ then we will write $A_{m,n}$ as the oriented area in place of the original notation $A_{m,n}(\mathbf x).$  In isolation, the oriented area is simply a number that heuristically indicates the pairwise leader follower relationship between any two specific components of $\mathbf x.$ For future purposes, however, we would like to consider all of these $N^2$ total oriented areas together. So we will arrange all the oriented areas into an $N \times N$ matrix. Let $\mathbf A$ be the $N \times N$ matrix whose $(m,n)$-th entry is the oriented area $A_{m,n}$ for each $1 \le m,n \le N.$ We refer to $\mathbf A$ as the \textit{lead(-lag) matrix} corresponding to $\mathbf x$.  More succinctly, we can rewrite $\mathbf A$ as an integral of a matrix-valued function:
	\begin{align}\label{eq: Lead Matrix}
		\mathbf A &= \frac{1}{2} \int_a^b \mathbf x(t) \  \dot{\mathbf x}^{\text T}(t) - \dot{\mathbf x}(t) \  \mathbf x^{\text T}(t) \ dt,
	\end{align}
where $\dot{\mathbf x}(t)=\left(x_1'(t) \ , \ \dots \ , \ x_N'(t) \right )$ is the derivative of $\mathbf x$ with respect to $t.$

We state several properties pertaining to the lead matrix. Firstly, the matrix $\mathbf A$ is \textit{skew-symmetric} i.e. $\mathbf A^\text T=-\mathbf A.$ Equivalently, $A_{m,n}=-A_{n,m}$ for all $1 \le m,n \le N,$ which follows immediately upon swapping the indices $m$ and $n$ in the equation \eqref{eq: Oriented Area}. Next, each eigenvalue of $\mathbf A$ is either $0$ or purely imaginary  \cite[Exercise 7B.4]{Axler2024}. These purely imaginary eigenvalues of $\mathbf A$ come in complex conjugate pairs: if $i \lambda$ is an eigenvalue of $\mathbf A$ for some nonzero $\lambda \in \R,$ then $-i \lambda$ is also an eigenvalue of $\mathbf A.$ Lastly, $\mathbf A$ is diagonalizable \cite[Exercise 7B.4]{Axler2024}. In particular, there is an orthonormal basis of $\C^N$ consisting of the eigenvectors of $\mathbf A.$

For each $1 \le n \le N,$ we let $\lambda_n(\mathbf A)$ be the $n$-th largest eigenvalue of $\mathbf A$ in modulus and consider the corresponding orthonormal basis of eigenvectors $\left \lbrace \mathbf v_n(\mathbf A) \right \rbrace_{n=1}^N,$ in which $\mathbf v_n(\mathbf A)$ is the associated unit eigenvector with $\lambda_n(\mathbf A)$ for each $1 \le n \le N.$  We refer to the eigenvector $\mathbf v_1(\mathbf A)$ as the \textit{leading (dominant) eigenvector} of $\mathbf A.$ If $\lambda_1(\mathbf A)$  is nonzero, then our notation for the largest eigenvalue and leading eigenvector is actually ambiguous because $\lambda_2(\mathbf A)$ would be the complex conjugate of $\lambda_1(\mathbf A)$ and thus satisfy $|\lambda_1(\mathbf A)|= |\lambda_2(\mathbf A)|.$ For our purposes, however, this ambiguity does not matter. By \cite[Exercise 7B.4]{Axler2024}, we have $\mathbf v_2(\mathbf A)=\overline{\mathbf v_1(\mathbf A)},$ in which $\overline{\mathbf v_1(\mathbf A)}$ is the vector whose $n$-th component is equal to the complex conjugate of the $n$-th component of $\mathbf v_1(\mathbf A).$ Therefore, regardless of whether $\lambda_1(\mathbf A)$ is assigned to have a positive or negative imaginary part, the leading eigenvector $\mathbf v_1(\mathbf A)$ is unique up to the complex conjugation of its components.

Finally, we mention how to construct a lead matrix so that it is suitable for practical applications. In the real world, one does not directly observe a continuous signal. Rather, one observes a signal at finitely many different times. Given a signal $\mathbf x: T \rightarrow \R^N$ with $T=[a,b],$ a \textit{time series} is a finite collection of the form $\left \lbrace \mathbf x_{t_k} \right \rbrace_{k=1}^K,$ where $K \in \N$ is fixed, $\left \lbrace t_k \right \rbrace_{k=1}^K$ is an increasing sequence in $T$ with $t_1=a$ and $t_K=b,$ and $\mathbf x_{t_k}=\mathbf x(t_k).$   Typically, $K$ is a very large integer. We define the lead matrix for the time series $\left \lbrace \mathbf x_{t_k} \right \rbrace_{k=1}^K$ to be the lead matrix $\mathbf A$ for the signal $\widetilde{\mathbf x}: T \rightarrow \R^N$ parameterizing the oriented curve formed by concatenating the oriented line segments $L_1 \ , \ \dots \ , \ L_K,$ in which $L_k$ has starting point $\mathbf x_{t_k}$ and ending point $\mathbf x_{t_{k+1}}$ for each $1 \le k <K$ and $L_K$ has starting point $\mathbf x_{t_K}$ and ending point $\mathbf x_{t_1}.$  Explicitly, 
	\begin{align} \label{eq: Discrete Lead Matrix}
		\mathbf A &= \frac{1}2 \sum_{k=1}^{K-1} \mathbf x_{t_{k}} \mathbf x_{t_{k+1}}^\text T - \mathbf x_{t_{k+1}} \mathbf x_{t_{k}}^ \text T.
	\end{align}
If we write out $\mathbf x_{t_k}=(x_{t_k, 1} \ , \ \dots \ ,\ x_{t_k, N}),$ then the $(m,n)$-th entry of $\mathbf A$ is the oriented area of the convex polygon with vertices $\left(x_{t_1,m} \ , \ x_{t_1,n} \right) \ , \   \dots \ ,\ \left(x_{t_K,m} \ , \ x_{t_K,n} \right)$ computed via the \textit{Shoelace Formula} \cite{AllgowerSchmidt1986}.

\subsection{Chain of Offsets Model}

Now, we assume that $\mathbf x: \R \rightarrow \R^N$ is a cyclic signal. We recall our second leader-follower dynamics question that we posed in \textbf{Section \ref{sec: Leader-Follower Dynamics Questions}}, which is to determine the order in which the $N$ component signals of $\mathbf x$ evolve throughout time assuming all these signals trace some underlying signal up to scaling constants and time shifts. Formally, we assume that the cyclic signal $\mathbf x$ satisfies the following  \textit{Chain of Offsets Model (COOM)} \cite{BaryshnikovSchlafly2016}, which stipulates the existence of an underlying real-valued $P$-periodic signal $\phi,$ scaling constants $c_1 \ , \ \dots \ , \ c_N >0,$ offsets $\alpha_1 \ , \ \dots \ ,\ \alpha_N \in \R/(P \Z),$ and a time reparameterization $r: \R\rightarrow \R$ such that 
	\begin{align}\label{eq: Cyclic COOM}
		x_n(t) &= c_n \phi(r(t) - \alpha_n).
	\end{align}
for each $1 \le n \le N$ and each $t \in \R.$

Given the cyclic signal $\mathbf x: \R \rightarrow \R^N,$ we determine the cyclic order of the offsets under COOM. But this time, we will utilize the lead matrix to do so. Without loss of generality, we may assume that $r$ is the identity map, which reduces the COOM in \eqref{eq: Cyclic COOM} to the COOM defined earlier in \eqref{eq: Periodic COOM}. If $r$ is not the identity map to begin with, then we can the consider the COOM in  \eqref{eq: Periodic COOM} for the $N$-dimensional signal $\widetilde{\mathbf x} = \mathbf x \circ r^{-1}.$ 

As before, we begin by writing the underlying signal $\phi$ as the Fourier series in \eqref{eq: Periodic COOM Primary Function Fourier Series}. Let $\mathbf A$ be the lead matrix for $\mathbf x$ over one period under COOM, that is, the lead matrix for the restricted signal $\mathbf x|_{[0,P]}.$ Then, for each $1 \le m,n \le N,$ the $(m,n)$-th entry of $\mathbf A$ is explicitly 
	\begin{align}
		A_{m,n} &= \frac{1}{2} \int_0^P x_m(t) \ x_n'(t) - x_n(t) \ x_m'(t)  \ dt \nonumber  \\
		&= \frac{1}{2} \int_0^P \sum_{k,\ell \in \Z} c_n c_m \left(\frac{2 \pi i \ell}{P} \right) \hat{\phi}_k  \ \hat{\phi}_\ell \  e^{\frac{2 \pi i k (t-\alpha_m)}{P}} \ e^{\frac{2 \pi i \ell (t-\alpha_n)}{P}}  \ dt \nonumber  \\
		&\qquad - \frac{1}2 \int_0^P \sum_{k,\ell \in \Z} c_n c_m \left(\frac{2 \pi i k}{P} \right) \hat{\phi}_k  \ \hat{\phi}_\ell \  e^{\frac{2 \pi i k (t-\alpha_m)}{P}} \ e^{\frac{2 \pi i \ell (t-\alpha_n)}{P}} \ dt \nonumber  \\
		&= \pi i \ c_n c_m  \  \sum_{k,\ell \in \Z} \left(\ell - k \right) \ \delta_{k,-\ell} \ \hat{\phi}_k  \ \hat{\phi}_\ell \  e^{-\frac{2 \pi i k \alpha_m + 2 \pi i \ell \alpha_n}{ P}} \nonumber \\
		&= -2 \pi i \ c_n c_m  \  \sum_{k \in \Z} k  \left| \hat{\phi}_k \right|^2 e^{\frac{2 \pi i k \left(\alpha_n - \alpha_m \right)}{ P}} \nonumber  \\
		&= 2 \pi c_m c_n \sum_{k \in \N} k  \left| \hat{\phi}_k \right|^2 \sin \left( \frac{2 \pi k(\alpha_m - \alpha_n)}P\right) \label{eq: Cyclic COOM Lead Matrix}.
	\end{align}

\subsection{One Harmonic}

Assume that the underlying $P$-periodic signal $\phi$ in \eqref{eq: Cyclic COOM} has only one harmonic. This means the Fourier Coefficient $\hat{\phi}_k$ is nonzero for only one index $k \in \N.$  We show that we can recover the cyclic order of the offsets via the leading eigenvector of the lead matrix $\mathbf A$ defined in \eqref{eq: Cyclic COOM Lead Matrix}. Note for each $1 \le m,n \le N,$ the $(m,n)$-th entry of the lead matrix $\mathbf A$ is explicitly
	\begin{align}
		A_{m,n} &=2 \pi c_m c_n  k  \left| \hat{\phi}_k \right|^2 \sin \left( \frac{2 \pi k(\alpha_m - \alpha_n)}P\right) \nonumber \\
		&= 2 \pi c_m c_n  k  \left| \hat{\phi}_k \right|^2 \left( \sin \left( \frac{2 \pi k \alpha_m}{P} \right) \ \cos \left( \frac{2 \pi k \alpha_n}{P} \right) - \cos \left( \frac{2 \pi k \alpha_m}{P} \right) \ \sin \left( \frac{2 \pi k \alpha_n}{P} \right) \right)  \label{eq: Rank 2 COOM Lead Matrix Entry}.
	\end{align}

We show that if the lead matrix $\mathbf A$ is nonzero, then it must have rank two. Recall if $\mathbf a, \mathbf b \in \C^N,$ then the \textit{outer product} of $\mathbf a$ and $\mathbf b$ is the complex $N \times N$ matrix $\mathbf a \otimes \mathbf b = \mathbf a \mathbf b^*,$ where $\mathbf b^*=\overline{\mathbf b^\text T}$ is the conjugate transpose of $\mathbf b.$ A statement made in \cite{BaryshnikovSchlafly2016} is that a complex skew-symmetric matrix has rank two if and only if it is of the form $\mathbf a \otimes \mathbf b - \mathbf b \otimes \mathbf a$ for some linearly independent $\mathbf a, \mathbf b \in \C^N.$ So we verify that our lead matrix $\mathbf A$ indeed satisfies this condition stated in \cite{BaryshnikovSchlafly2016}. Define the vectors
	\begin{align}
		\mathbf a &=  \sqrt{2 \pi k } |\hat{\phi}_k| \left(c_1 \cos \left(\frac{2 \pi k \alpha_1}{P} \right)  \ , \ \dots \ , \ c_N \cos \left(\frac{2 \pi k \alpha_N}{P} \right) \right) \nonumber \\
		\mathbf b &=  \sqrt{2 \pi k } |\hat{\phi}_k| \left(c_1 \sin \left(\frac{2 \pi k \alpha_1}{P} \right)  \ , \ \dots \ , \ c_N \sin \left(\frac{2 \pi k \alpha_N}{P} \right) \right) \nonumber.
	\end{align}
Then, by \eqref{eq: Rank 2 COOM Lead Matrix Entry} immediately implies
	\begin{align}
		\mathbf A &= \mathbf a \otimes \mathbf b - \mathbf b \otimes \mathbf a.
	\end{align}
Now, we need to show that $\mathbf a$ and $\mathbf b$ are linearly independent in $\C^N.$ 
Consider the $N \times 2$ matrix $\mathbf X$ whose first row is $\mathbf a$ and second row is $\mathbf b.$ The condition that $\mathbf a$ and $\mathbf b$ are linearly independent is equivalent to the condition $\mathbf X$ has rank $2.$ The latter condition is equivalent to the condition there exists a $2 \times 2$ submatrix of $\mathbf X$ with a nonzero determinant. Recall our assumption $\mathbf A$ is nonzero. This means $A_{m,n}$ has a nonzero entry for some two indices $1 \le m < n \le N.$ On the other hand,  consider the submatrix $\widetilde{\mathbf X}$ formed by extracting the $m$-th and $n$-th rows of $\mathbf X$  for the same choices of $m$ and $n,$ namely the $2 \times 2$ matrix
	\begin{align}
		\widetilde{\mathbf X}_{m,n} &= \begin{bmatrix} \sqrt{2 \pi k} | \widehat{\phi}_k| c_m \cos \left( \frac{2 \pi k \alpha_m}{P} \right) & \sqrt{2 \pi k} | \widehat{\phi}_k| c_m \sin \left( \frac{2 \pi k \alpha_m}{P} \right) \\
			 \sqrt{2 \pi k} | \widehat{\phi}_k| c_n \cos \left( \frac{2 \pi k \alpha_n}{P} \right) & \sqrt{2 \pi k} | \widehat{\phi}_k| c_n \sin \left( \frac{2 \pi k \alpha_n}{P} \right)
		\end{bmatrix}. \nonumber
	\end{align}
By \eqref{eq: Rank 2 COOM Lead Matrix Entry}, the determinant of $\widetilde{\mathbf X}_{m,n}$ is precisely $A_{m,n},$ which shows $\mathbf A$ has rank two.

We now determine the leading eigenvector of $\mathbf A.$ Since $\mathbf A$ has rank two, it has exactly two nonzero eigenvalues coming in a purely imaginary pair, namely $\lambda_1(\mathbf A)$ and $\lambda_2(\mathbf A),$ in which we recall $\lambda_n(\mathbf A)$ is the $n$-th largest eigenvalue of $\mathbf A$ in modulus. By \cite{BaryshnikovSchlafly2016}, 
	\begin{align}
		\lambda_1(\mathbf A) &=i \sin(\theta) \| \mathbf a\| \|\mathbf b\| \nonumber
	\end{align}
is the largest eigenvalue of $\mathbf A$ with associated eigenvector
	\begin{align}
		\mathbf v_1(\mathbf A) &= -e^{-i \theta} \frac{\mathbf a}{\| \mathbf a\|} + \frac{\mathbf b}{\| \mathbf b\|} \nonumber,
	\end{align}
where $\theta$ is the angle between the vectors $\mathbf a$ and $\mathbf b,$ and $\|\mathbf a\|$ is the Euclidean $2$-norm of $\mathbf a.$  In particular, if $v_{1,n}(\mathbf A)$ is the $n$-th component of $\mathbf v_1(\mathbf A),$ then upon extracting real and imaginary parts of $v_{1,n}(\mathbf A),$ we obtain
	\begin{align}
		\Re(v_{1,n}(\mathbf A)) &=- \frac{a_n \cos(\theta)}{\| \mathbf a\|} + \frac{b_n}{\|\mathbf b\|} \label{eq: Rank 2 COOM Leading Eigenvector Real Part} \\ 
		\Im(v_{1,n}(\mathbf A)) &= \frac{a_n \sin(\theta)}{\| \mathbf a\|} \label{eq: Rank 2 COOM Leading Eigenvector Imaginary Part},
	\end{align}
where 
	\begin{align}
		a_n &= \sqrt{2 \pi k} |\hat \phi_k| c_n \cos \left(\frac{2 \pi k \alpha_n}{P} \right) \nonumber \\
		b_n &=\sqrt{2 \pi k} |\hat \phi_k| c_n \sin \left(\frac{2 \pi k \alpha_n}{P} \right) \nonumber
	\end{align}
are the $n$-th components of $\mathbf a$ and $\mathbf b,$ respectively. 

Through the equations \eqref{eq: Rank 2 COOM Leading Eigenvector Real Part} and \eqref{eq: Rank 2 COOM Leading Eigenvector Imaginary Part}, we see $\left( \Re(v_{1,n}(\mathbf A))  \ , \   \Im(v_{1,n}(\mathbf A)) \right)$ is the result of a certain linear transformation evaluated at the point $\left(a_n, b_n\right).$ Since the points $(a_1, b_1) \ , \ \dots \ , \ (a_N,b_N)$ lie on a circle, the components of the leading eigenvector $\mathbf v_1(\mathbf A)$ lie on an ellipse. The cyclic order in which the points $\left \lbrace (a_n, b_n) \right \rbrace_{n=1}^N$ are traversed on the circle is the cyclic order in which the points $\left \lbrace \left( \Re(v_{1,n}(\mathbf A))  \ , \   \Im(v_{1,n}(\mathbf A)) \right) \right \rbrace_{n=1}^N$ are traversed on the ellipse.  Thus, the cyclic order of $\left \lbrace \text{Arg} \left(v_{1,n}(\mathbf A) \right) \right \rbrace_{n=1}^N,$ where $\text{Arg}(z)$ is the principal argument of the complex number $z,$ matches the cyclic order of the offsets in COOM. We refer to the principal arguments of the components of the leading eigenvector as \textit{phases}.

We show an example of how the cyclic order of the leading eigenvector component phases and the cyclic order of the offsets match when the underlying signal $\phi$ has only one harmonic. Reconsider the sinusoidal signal \eqref{eq: Sinusoidal Signal} for each $1 \le n \le N.$ On one hand, we already established the identity permutation in $S_N$ represents the cyclic order of the offsets. On the other hand, in \textbf{Figure \ref{fig: SampleLeadMatrixCOOM}}, for $N=10,$ we plot the heat map of the lead matrix $\mathbf A$ over one period, as well as the complex components, component moduli, and phases of the leading eigenvector $\mathbf v_1(\mathbf A).$ Since the components of $\mathbf v_1(\mathbf A)$ have equal moduli, they lie on a  circle. Observe the permutation $\sigma \in S_{10}$ that sorts the leading eigenvector component phases in ascending order is given by $\sigma(n)=n+1,$ in which $n+1$ is indexed mod $10.$ This is the identity permutation up to a cyclic shift.

\begin{figure}[h!]
	\centering
	\includegraphics[scale=0.6]{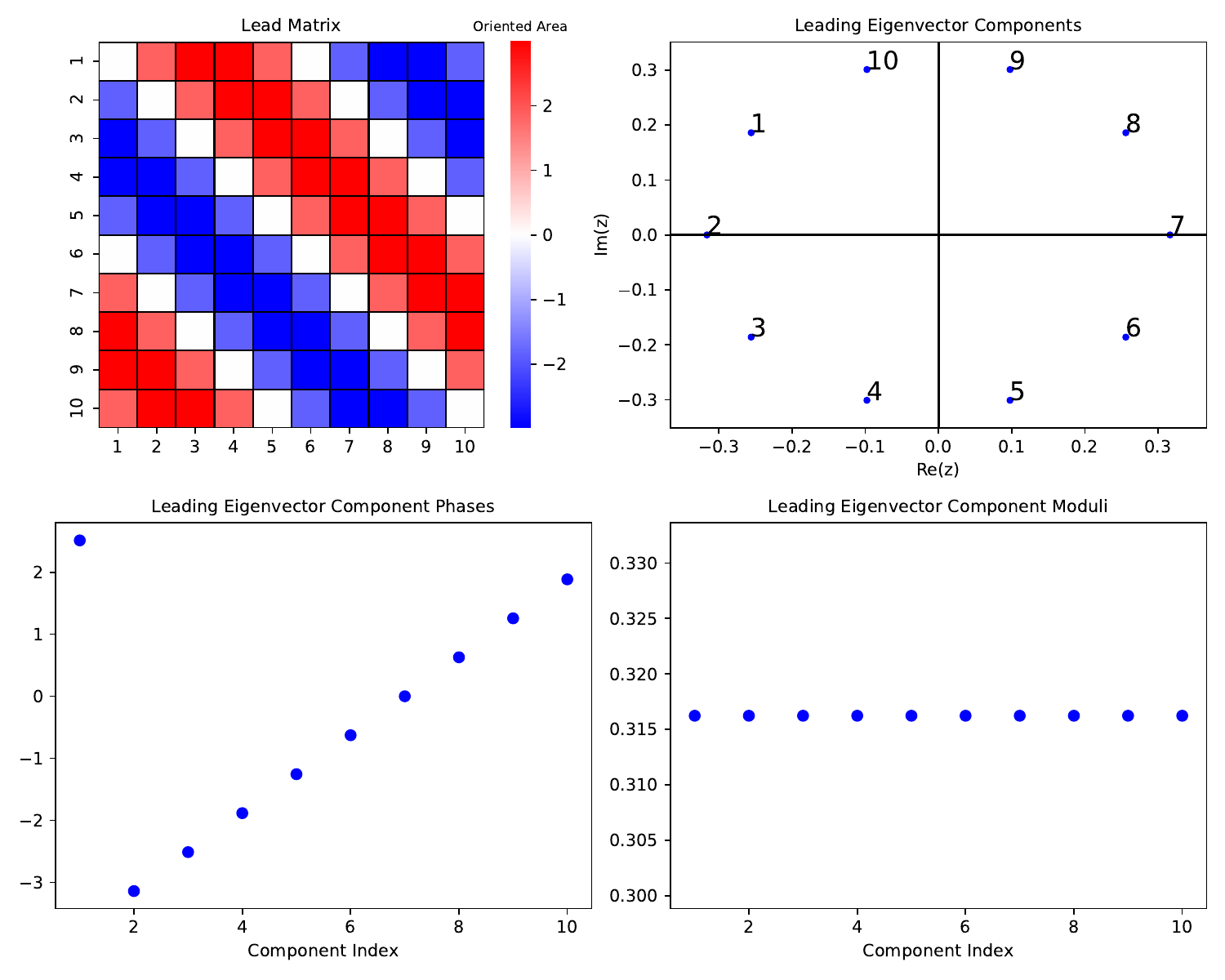}
	\caption{We illustrate the structure of the leading eigenvector of the lead matrix under the rank $2$ COOM. Reconsider the signal $x_n(t)= \sin(2 \pi (t- \frac{n-1}{10}))$ for each $1 \le n \le 10.$  In the first row, we plot the heatmap of the $10 \times 10$ lead matrix corresponding to these signals on the left and plot the $10$ complex components of the leading eigenvector as coordinates in $\R^2$ on the right. We annotate the $n$-th component with the index $n.$ In the second row, we plot the $10$ leading eigenvector component phases on the left, and we plot the $10$ leading eigenvector component moduli on the right. } \label{fig: SampleLeadMatrixCOOM}
\end{figure}

\subsection{Multiple Harmonics}

Suppose the underlying $P$-periodic signal $\phi$ in \eqref{eq: Cyclic COOM} has multiple harmonics, meaning $\phi$ has multiple nonzero Fourier coefficients. Assume, however, one of these Fourier coefficients \textit{dominates}: there exists exactly one $k \in \N$ such that 
	\begin{align}
		|\hat{\phi}_k|^2 \gg \sum_{\ell \ne k} 	|\hat{\phi}_\ell|^2 \nonumber.
	\end{align}
With these assumptions, it is still possible to approximately recover the cyclic order of the offsets $\alpha_1 \ , \ \dots \ , \ \alpha_N$ under COOM via the leading eigenvector of the lead matrix over one period.

Let $\widetilde{\mathbf A}_k$ be the rank two skew-symmetric $N \times N$ matrix, whose $(m,n)$-th entry is equal to \eqref{eq: Rank 2 COOM Lead Matrix Entry} for all $1 \le m,n \le N$.  Then, the $(m,n)$-th entry of $\widetilde{\mathbf A}_k$ will dominate the series expansion in \eqref{eq: Cyclic COOM Lead Matrix}. This means we well approximate the original lead matrix $\mathbf A$ with the lower rank matrix $\widetilde{\mathbf A}_k.$

On the other hand, a rank two approximation of $\mathbf A$ is the matrix $\mathbf P \mathbf A \mathbf P,$ in which $\mathbf P$ is the $N \times N$ matrix corresponding to the projection operator onto the subspace of $\C^N$ spanned by the vectors $\mathbf v_1(\mathbf A)$ and $\mathbf v_2(\mathbf A).$ The smaller the Frobenius norm $\|\mathbf P \mathbf A \mathbf P - \mathbf A\|_F$ is, the better this low rank approximation is of $\mathbf A$.  Moreover, since $\mathbf P$ sends $\mathbf v_1(\mathbf A)$ to itself, we have 
	\begin{align}
		(\mathbf P \mathbf A \mathbf P) \  \mathbf v_1(\mathbf A) &= (\mathbf P \mathbf A) \  \mathbf v_1(\mathbf A) \nonumber \\
		&= \lambda_1(\mathbf A)  \ \mathbf P \  \mathbf v_1(\mathbf A) \nonumber \\
		&= \lambda_1(\mathbf A)  \ \mathbf v_1(\mathbf A) \nonumber,
	\end{align}
which means the leading eigenvector of $\mathbf P \mathbf A \mathbf P$ is $\mathbf v_1(\mathbf A).$ Therefore, assuming the low rank matrix $\mathbf P \mathbf A \mathbf P$ approximates $\mathbf A$ well enough, we can still approximately recover the cyclic order of the offsets $\alpha_1 \ , \ \dots \ , \ \alpha_N$ in COOM via the cyclic order of the component phases of $\mathbf v_1(\mathbf A).$

For practical purposes, one heuristic indicator \cite{BaryshnikovSchlafly2016} that measures the dominance of the low rank $2$ approximation of the lead matrix $\mathbf A$ is the magnitude of $\left |\frac{\lambda_1(\mathbf A)}{\lambda_3(\mathbf A)} \right |,$ the ratio of the first to third largest eigenvalue of $\mathbf A.$ This is because in the rank $2$ situation, this ratio would be infinite.

\section{Ornstein-Uhlenbeck Process} \label{sec: Ornstein-Uhlenbeck Process}
In this section, we fix a probability space $(\Omega, \mathcal A, \P)$ and $N \in \N.$  We consider an \textit{($N$-dimensional) Ornstein Uhlenbeck (OU) process} on this probability space, which is an $N$-dimensional stochastic process $\left \lbrace \mathbf x(t) \right \rbrace_{t \ge 0}$ satisfying the linear stochastic differential equation \cite{OrnsteinUhlenbeck1930}:
	\begin{align}
		d \mathbf x(t) &= - \mathbf B \ \mathbf x(t)  \ dt + \boldsymbol \Sigma \ d \mathbf w(t).
	\end{align}
Here, $\mathbf B : \R^N \rightarrow \R^N$ is a fixed linear operator such that $-\mathbf B$ is \textit{Hurwitz (stable)} i.e. all eigenvalues of $-\mathbf B$ have negative real parts, $\boldsymbol \Sigma: \R^M \rightarrow \R^N$ is a fixed linear operator for some $M \in \N,$ and $\left \lbrace \mathbf w(t) \right \rbrace_{t \ge 0}$ is the standard $M$-dimensional Wiener process independent of the initial random vector $\mathbf x(0)$. We refer to  $\mathbf B$ as the \textit{friction (drift) operator} and $\boldsymbol \Sigma$ as the \textit{volatility operator}, and we collectively refer to $\mathbf B$ and $\boldsymbol \Sigma$ as the \textit{OU model parameters}. We refer to the matrix         
        \begin{align}
		\mathbf D &= \frac{\boldsymbol \Sigma \boldsymbol \Sigma^\text T}{2}
	\end{align} 
as the \textit{diffusion operator}.

The OU process is an ergodic Gaussian diffusion process \cite{GobetShe2016} that prominently appears in Financial Mathematics and Biology \cite{Fasen2013, Meucci2009,BartoszekFuentes-GonzalesMitovPienaarPiwczynskiPuchalkaSalikVoje2022}.  In engineering disciplines, the OU Process is heuristically characterized as the solution to the \textit{Langevin equation} \cite{GodrecheLuck2018}:
\begin{align} \label{eq: OU Langevin Equation}
	\dot{\mathbf x}(t) &= - \mathbf B \ \mathbf x(t) + \boldsymbol \eta(t),
\end{align}
in which $\left \lbrace \boldsymbol \eta(t) \right \rbrace_{t \ge 0}$ is the $N$-dimensional Gaussian ``white noise" process whose autocovariance function satisfies
\begin{align*}
	\boldsymbol \Gamma_{\boldsymbol \eta} (s,t) &= 2 \mathbf D \ \delta(s-t),
\end{align*}
for all $s,t \ge 0.$ Here, $\delta(\cdot)$ is the Dirac measure. The OU process has a unique stationary distribution \cite{GobetShe2016,CourgaeuVeraart2022}, namely the Gaussian distribution with mean $\mathbf 0$ and covariance matrix $\mathbf S,$ in which $\mathbf S$ satisfies the Lyapunov equation \cite{Poznyak2008}:
    \begin{align}
        \mathbf B \mathbf S + \mathbf S \mathbf B^\text T &= 2 \mathbf D.
    \end{align}
For this reason, we refer to $\mathbf S$ as the \textit{stationary covariance operator}. Any realization of the OU process will fluctuate around the origin $\mathbf 0$ in the long term \cite{GodrecheLuck2018}. These long term fluctuations vary according to the stationary Gaussian distribution. Finally, if $\mathbf x(0)$ is the Gaussian distribution with mean $\mathbf 0$ and covariance matrix $\mathbf S,$ then the OU process is stationary.

\section{Problem Statement} \label{sec: Problem Statement}
In this thesis, we will investigate Cyclicity Analysis for the \textit{stationary OU process} with model parameters $\mathbf B$ and $\boldsymbol \Sigma,$ in which $\mathbf x(0)$ is the Gaussian distribution with mean $\mathbf 0$ and covariance matrix $\mathbf S.$ We define the auxiliary \textit{lead process}, which is the $N \times N$ matrix-valued stochastic process $\left \lbrace \mathbf A(t) \right \rbrace_{t \ge 0},$ where 
	\begin{align} 
		\mathbf A(t) &= \frac{1}{2} \int_0^t \mathbf x(s) \ d \mathbf x^\text T(s) - d\mathbf x(s) \ \mathbf x^\text T(s)
	\end{align}
for each $t \ge 0.$ The right hand side is a random matrix whose entries are one-dimensional Ito stochastic integrals \cite{Oksendal2003} with respect to the component OU processes $\left \lbrace x_n(t) \right \rbrace_{t \ge 0}$. In particular, the $(m,n)$-th entry of $\mathbf A(t)$ is a one-dimensional Ito integral of the form
	\begin{align} \nonumber
		A_{m,n}(t) &= \frac{1}{2} \int_0^t x_m(s) \ dx_n(s) - x_n(s) \ dx_m(s).
	\end{align}
By definition, $\mathbf A(t)$ represents the lead matrix corresponding to a \textit{realization} of the OU process over the interval $[0,t].$  

We will prove the lead process obeys the strong law of large numbers identity:
	\begin{align}\label{eq: Lead Matrix Strong Law of Large Numbers Identity}
		\lim_{t \rightarrow \infty} \frac{\mathbf A(t)}{t}&= \frac{\mathbf B \mathbf S - \mathbf S \mathbf B^\text T}{2},
	\end{align}
almost surely. The identity \eqref{eq: Lead Matrix Strong Law of Large Numbers Identity} states we can infer the long term average pairwise leader follower dynamics amongst the $N$ component OU process through a time-averaged lead matrix corresponding to one single realization of the OU process. For this reason, we refer to the matrix
	\begin{align}\label{eq: OU Process Lead Matrix}
		 \mathbf Q &= \frac{\mathbf B \mathbf S - \mathbf S \mathbf B^\text T}{2}
	\end{align}
as the \textit{lead matrix} of the OU process.

Next, we consider a \textit{cyclic OU process}, in which the friction matrix $\mathbf B$ is specifically a \textit{circulant matrix}: if $(b_1 \ , \ \dots \ , \ b_N) \in \R^N$ is the first row of $\mathbf B,$ then the $(m,n)$-th entry of $\mathbf B$ is $b_{n-m+1},$ where $n-m+1$ is indexed mod $N.$  We assume $b_2 \ , \ \dots \ , \ b_N \le 0$ and $b_1>0$ is large enough so that $\mathbf B$ is a valid friction matrix. We consider a signal propagation model governed by this cyclic OU process. In our signal propagation model, we assume that there is a network of $N$ sensors located in space and an underlying one-dimensional signal propagating throughout the network. Similar signal propagation models are considered in \cite{GilsonKouvarisDecoZamora2018,AbrahamShahsavaraniZimmermanHusainBaryshnikov2021,Abraham2022}, for example. Letting $\left \lbrace \mathbf x(t) \right \rbrace_{t \ge 0}$ be the OU process, we interpret the $n$-th component of $\mathbf x(t)$ as a measurement made by the $n$-th sensor of the propagating effect at time $t \ge 0.$ The friction matrix $\mathbf B$ represents the cyclic sensor network structure. We refer to the constant $b_p$ as a \textit{propagation coefficient} for each $p>1,$ and we interpret its magnitude to represent how receptive the $n$-th sensor is to the activity within the neighboring $(n+p-1)$-th sensor for each $1 \le n \le N,$ in which $n+p-1$ is indexed mod $N.$ We refer to the constant $b_1$ as a \textit{suppression coefficient}, and we interpret its magnitude to represent how quickly the signal dissipates as it is propagating throughout the network. Meanwhile, the diffusion matrix $\mathbf D= \frac{\boldsymbol \Sigma \boldsymbol \Sigma^\text T}{2}$ is the covariance matrix whose $(m,n)$-th entry is the covariance of the $m$-th and $n$-th component noises injected into the $m$-th and $n$-th sensors, respectively.

For large dimension $N$ and for different choices of cyclic OU model parameters $\mathbf B$ and $\boldsymbol \Sigma,$ we derive the lead matrix $\mathbf Q$ of the cyclic OU process. Then, we investigate whether Cyclicity Analysis enables us to recover the network structure given by $\mathbf B.$ More specifically, we investigate whether the structure of the leading eigenvector $\mathbf v_1(\mathbf Q)$ recovers the structure of the network. 

For example, let $\mathbf B$ be the $N \times N$ circulant friction matrix such that the suppression coefficient $b_1>1,$ the propagation coefficient $b_2=-1,$ and all other propagation coefficients are $0.$ The cyclic network structure of $\mathbf B$ is such that the $n$-th sensor is only linked to the $(n+1)$-th sensor, the neighboring sensor to its right. Does the structure of the leading eigenvector $\mathbf v_1(\mathbf Q)$ reflect this ground truth network structure in any way ? For example, does the (cyclic) order of the leading eigenvector component phases match the expected (cyclic) order of the sensors receiving the signal as it propagates throughout the network ? In addition, do the leading eigenvector component moduli and the lead matrix eigenvalues indicate anything about the network structure ? 

We will investigate our problem statement under two specific regimes: the first regime is when the suppression coefficient $b_1$ is large i.e. the propagating signal quickly dissipates, and the second regime is when $b_1$ is small i.e. the propagating signal does not quickly dissipate. In the former regime, the circulant friction matrix $\mathbf B$ is perturbed significantly from an unstable matrix, while in the latter regime, $\mathbf B$ is perturbed slightly away from an unstable matrix. We investigate these regimes because they yield different types of asymptotic structures pertaining to the lead matrix $\mathbf Q$ and its leading eigenvector $\mathbf v_1(\mathbf Q).$

\section{Literature Review}
We conclude the introduction by conducting a literature review of the main topics of the thesis.

\subsection{Lead Lag Dynamics}
There are various tools to identify lead lag dynamics between two signals or between two time series. Such tools have been utilized to study financial markets \cite{LiWangSunLiu2022} and fMRI signals \cite{MitraSnyderHackerRaichle2014}. We already discussed one tool, namely the cross-correlation function. Another algorithmic tool is Dynamic Time Warping (DTW) \cite{SakoeChiba1978}. We briefly describe it. Consider two one-dimensional time series of the form $\left \lbrace x_m \right \rbrace_{m=1}^M$ and $\left \lbrace y_m \right \rbrace_{n=1}^N.$ The goal of DTW is to find the optimal alignment of such time-series. To this end, one constructs an $M \times N$ matrix $\mathbf C$ whose $(j,k)$-th entry corresponds to the optimal cost between the restricted time series $\left \lbrace x_m \right \rbrace_{m=1}^j$ and $\left \lbrace y_m \right \rbrace_{n=1}^k.$ The optimal cost is defined via the formula 
    \begin{align} \label{eq: DTW Optimal Cost Update}
        C_{j,k} &= \begin{cases} d(x_j, y_k) & j=k=1 \\
        d(x_j,y_k) + \min \left(C_{j-1,k} \ , \  C_{j-1,k-1} \ , \  C_{j,k-1} \right) & \text{else},
        \end{cases}
    \end{align}
where $d(\cdot,\cdot)$ is a specified distance metric. Then, a \textit{warping path} is any collection of indices $\left \lbrace (j_1 , k_1) \ , \ \dots \ , \ (j_P, k_P) \right \rbrace$ satisfying $(j_1,k_1)=(1,1)$ and $(j_P,k_P)=(M,N)$ and $0 \le j_{p+1}-j_{p} \ , \ k_{p+1}-k_{p}  \le 1$ for all $1 \le p<P,$ where $P \ge 2$ is a fixed length. Finally, the \textit{optimal alignment} between the original time-series $\left \lbrace x_m \right \rbrace_{m=1}^M$ and $\left \lbrace y_m \right \rbrace_{n=1}^N$ is the warping path $\left \lbrace (j_1 , k_1) \ , \ \dots \ , \ (j_P, k_P) \right \rbrace$ minimizing the quantity
    \begin{align}
        \sum_{p=1}^P C_{j_p,k_p}^2 \nonumber.
    \end{align}
DTW, however, has drawbacks. Seeking the warping path to guarantee optimal alignment is a computationally expensive problem. Moreover, DTW is prone to noise.

\subsection{Cyclicity Analysis}
We review the literature on Cyclicity Analysis. As mentioned earlier, Cyclicity Analysis was officially introduced in \cite{BaryshnikovSchlafly2016} as a means to study cyclic signals. Later on, a dissertation \cite{Abraham2022} was published demonstrating how Cyclicity Analysis of fMRI signals yields new insights into understanding the brain processes of patients with a neurological condition known as tinnitus. Moreover, the authors in \cite{AbrahamShahsavaraniZimmermanHusainBaryshnikov2021} published an experimental article comparing the results given by Cyclicity Analysis on fMRI signals to results given by other standard tools.

We emphasize that in this thesis, we are applying Cyclicity Analysis in a completely different setting. We are using it to investigate signals generated via a stochastic process.

\subsection{OU Process}
As mentioned earlier, the OU process is one of the most common stochastic processes with applications in Financial Mathematics and Biology. 

Firstly, many authors have studied the estimation of OU process model parameters. In \cite{SinghGhoshAdhikari2018}, for example, the authors showed using MAP estimation how to recover OU model parameters $\mathbf B$ and $\boldsymbol \Sigma,$ in which $\boldsymbol \Sigma$ is an $N \times N$ invertible matrix, given a time series that approximates a realization of an OU process. In \cite{GobetShe2016}, the authors showed how to approximate the stationary distribution of a generalized OU process with a modified drift coefficient. We emphasize that our problem statement is completely different from model parameter estimation. We are not trying to recover the friction matrix $\mathbf B$ itself from realizations of the OU process; rather, we interpret $\mathbf B$ as a weighted adjacency matrix associated with a network of sensors, and we are investigating whether we can recover the network structure induced by $\mathbf B$ from any realization of the OU process.

Moreover, the lead matrix $\mathbf Q$ of the OU process defined in \eqref{eq: OU Process Lead Matrix} has been studied in the literature as a means to characterize the time-irreversibility of the OU process \cite{GodrecheLuck2018,GilsonTagliazucchiConfre2023,Qian2001}. A stationary stochastic process $\left \lbrace \mathbf y(t) \right \rbrace_{t \ge 0}$ is said to be \textit{time-reversible} \cite{Qian2001} if for all $K \in \N$ for all times $0 \le t_1 \ , \ \dots \ , \ t_K \le \tau,$ the probability distributions of $\left(\mathbf y(t_1) \ , \ \dots \ , \ \mathbf y(t_K)\right)$ and of $\left(\mathbf y(\tau -t_1) \ , \ \dots \ , \ \mathbf y(\tau -t_K)\right)$ are equal. The main result is the stationary OU process is time reversible if and only if $\mathbf B \mathbf S$ is symmetric, where $\mathbf S$ is the stationary covariance matrix of the OU process. Equivalently, the OU process is time reversible if and only if the matrix $\mathbf Q$ vanishes.

To study the degree to which the OU process $\left \lbrace \mathbf x(t) \right \rbrace_{t \ge 0}$ is time irreversible, we briefly outline the approach described in \cite{GilsonTagliazucchiConfre2023}. Let  $P: \R^N \times [0, \infty) \rightarrow \R$ be the probability density function of $\mathbf x(t)$ at time $t \ge 0.$ Since we are considering the OU stationary process, for each $t \ge 0,$ the random vector $\mathbf x(t)$ has a Gaussian distribution with mean $\mathbf 0$ and covariance matrix $\mathbf S.$  Explicitly, we have
	\begin{align}
		P(\mathbf x,t) &= \frac{1}{\sqrt{(2 \pi)^N |\det(\mathbf S)| }} \  e^{-\frac{\mathbf x^\text T \mathbf S^{-1} \mathbf x}{2}}
	\end{align}
for each $t \ge 0.$ The density function $P$ satisfies the \textit{Fokker-Planck (Kolmogorov forward) equation} \cite{GilsonTagliazucchiConfre2023}, which is the parabolic partial differential equation
	\begin{align}\label{eq: Fokker Planck Equation}
		\frac{\partial P}{\partial t}  &= \nabla \cdot \left( \mathbf B \ \mathbf x \  P + \mathbf D \  \nabla P  \right),
	\end{align}
where $\mathbf D$ is the diffusion matrix, $\nabla P$ is the gradient of $P$ with respect to variable $\mathbf x= (x_1 \ , \ \dots \ , \ x_N),$ and $\nabla \cdot \mathbf F$ is the divergence of the vector field $\mathbf F: \R^N \rightarrow \R^N.$ Define the map $\mathbf J: \R^N \times [0, \infty) \rightarrow \R^N$ by the expression within the divergence operator on the right hand side of \eqref{eq: Fokker Planck Equation}, namely
	\begin{align}
		\mathbf J(\mathbf x, t) &= - \mathbf B \  \mathbf x \  P(\mathbf x,t) - \mathbf D \  \nabla P(\mathbf x,t).
	\end{align}
The map $\mathbf J$ is known as the \textit{probability flux (current)} \cite{GodrecheLuck2018, GilsonTagliazucchiConfre2023}. In general, the probability flux describes how probabilities associated with different states in the state space of a stochastic process change over time. Intuitively, when the probability flux is nonzero, the realizations follow paths that lead towards regions with higher probability density i.e. those states with higher probability of occurrence.

For each $t \ge 0,$ upon recognizing that $\nabla P(\mathbf x,t) = -  \mathbf S^{-1} \  \mathbf x \ P(\mathbf x,t),$ we can rewrite the probability flux in the following way:
	\begin{align}
		\mathbf J(\mathbf x, t) &= - (\mathbf B \mathbf x  \ P(\mathbf x,t) + \mathbf D \mathbf S^{-1} \mathbf x  \ P(\mathbf x,t)) \nonumber \\
		&= \left(\mathbf D \mathbf S^{-1} - \mathbf B \right) \mathbf x \  P(\mathbf x,t) \nonumber \\
		&= \left(\frac{\mathbf B \mathbf S + \mathbf S \mathbf B^\text T}{2} \  \mathbf S^{-1} - \mathbf B \right) \mathbf x  \ P(\mathbf x,t) \nonumber  \\
		&=\left( \frac{\mathbf S \mathbf B^\text T \mathbf S^{-1} - \mathbf B}{2}  \right) \mathbf x \  P(\mathbf x,t)  \nonumber \\
		&=\left(\frac{\left(\mathbf B \mathbf S - \mathbf 2 \mathbf Q \right) \mathbf S^{-1} - \mathbf B}{2}  \right) \mathbf x \  P(\mathbf x,t)  \nonumber \\
		&= - \mathbf Q \mathbf S^{-1}  \mathbf x \  P(\mathbf x,t) \nonumber.
	\end{align}
Thus, the stationary OU process is time-reversible if and only if the probability flux $\mathbf J$ vanishes. If the flux does not vanish, then observe $\mathbf Q$ is involved in the measurement of the flux.

\chapter{Preliminaries}\label{chap: Preliminaries}

In this chapter, we will briefly review the needed notations, definitions, and standard results from Linear Algebra, Probability Theory, and Stochastic Processes that will be used throughout this thesis.

\section{Linear Algebra}\label{sec: Linear Algebra}
Throughout this section, we let $\F \in \left \lbrace \R , \C \right \rbrace$ and we fix $M,N \in \N.$  We denote $\Mat_{M,N}(\F)$ as the set of all $M \times N$ matrices with entries in $\F,$ and we tacitly identify the $N$-dimensional vector space $\F^N$ with $\Mat_{N,1}(\F).$ Recall if $\mathbf A \in \Mat_{M,N}(\F),$ then there is a unique linear operator between $\F^N$ and $\F^M$ whose transformation matrix is $\mathbf A.$ Therefore, we will interchangeably refer to $\mathbf A$ as either a matrix or as a linear operator between the appropriate vector spaces.

Let $\mathbf A \in \Mat_{M,N}(\F)$ and $\mathbf x \in \F^N.$ We recall some standard linear algebra notation. For each $1 \le m \le M$ and each $1 \le n \le N,$ we denote $A_{m,n}$ as the $(m,n)$-th entry of $\mathbf A,$ and we denote $x_n$ as the $n$-th component of $\mathbf x.$ We denote $\mathbf A^ \text T$ as the transpose of $\mathbf A$ and denote $\mathbf A^*=\overline{\mathbf A^ \text T}$ as the conjugate-transpose of $\mathbf A.$ We denote $\mathbf I$ as the $N \times N$ identity matrix, in which the value of $N$ will be clear in context. We denote $\mathbf 0$ as either the $M \times N$ zero matrix or the zero vector in $\F^N,$ in which the situation, along with the values of $M$ and $N$, will be clear in context. Finally, we denote $\mathbf e_n \in \R^N$ as the $n$-th canonical basis vector having $n$-th component equal to $1$ and all other components equal to $0.$

Now, let $\mathbf A \in \Mat_{N,N}(\F).$ We denote $\det(\mathbf A), \text{tr}(\mathbf A),$ and $\mathbf A^{-1}$ as the determinant, trace, and inverse of $\mathbf A,$ respectively. Recall the matrix exponential of $\mathbf A$ is the power series
    \begin{align}\label{eq: Matrix Exponential}
	\exp(\mathbf A) &=\sum_{k \ge 0} \frac{\mathbf A^k}{k!}.
    \end{align}

We state the basic properties of the matrix exponential that we will be using later in the thesis. Proofs of these properties can be found in \cite{HornJohnson2012}, for example.

\begin{theorem}[Matrix Exponential Properties]\label{thm: Matrix Exponential Properties}
If $\mathbf A \in \Mat_{N,N}(\R),$ then the following properties hold.
	\begin{enumerate}[label=\roman*)]
		\item $\exp(\mathbf A)$ commutes with $\mathbf A$ under matrix multiplication.
		\item $\exp(\mathbf A)$ is invertible with $(\exp(\mathbf A))^{-1}=\exp(\mathbf -\mathbf A).$
		\item We have 
			\begin{align}
				\left(\exp{\mathbf A} \right)^ \text T=\exp \left(\mathbf A^ \text T \right).
			\end{align}

		\item The matrix-valued map $t \mapsto \exp(t \mathbf A)$ is differentiable on $\R$. In particular, 
			\begin{align}
				(\exp{t \mathbf A})' &= \mathbf A \ \exp(t \mathbf A) = \exp(t \mathbf A) \ \mathbf A.
			\end{align}
	\end{enumerate} 
\end{theorem}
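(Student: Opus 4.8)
The plan is to establish all four properties directly from the power series definition in \eqref{eq: Matrix Exponential}, the sole analytic ingredient being absolute convergence of that series. Fixing any submultiplicative norm $\|\cdot\|$ on $\Mat_{N,N}(\R)$, so that $\|\mathbf A \mathbf B\| \le \|\mathbf A\| \ \|\mathbf B\|$, the bound $\|\mathbf A^k\| \le \|\mathbf A\|^k$ gives $\sum_{k \ge 0} \frac{\|\mathbf A^k\|}{k!} \le e^{\|\mathbf A\|} < \infty$, so $\exp(\mathbf A)$ converges absolutely. This single fact licenses every term-by-term manipulation below.

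Properties (i) and (iii) fall out by pushing an operation inside the sum term by term. For (i), I would pull $\mathbf A$ inside on each side: $\mathbf A \exp(\mathbf A) = \sum_{k \ge 0} \frac{\mathbf A^{k+1}}{k!} = \exp(\mathbf A) \mathbf A$, the reindexed series being identical summand by summand. For (iii), since the transpose map is linear and continuous on the finite-dimensional space $\Mat_{N,N}(\R)$, it commutes with the convergent series, and using $(\mathbf A^k)^{\text T} = (\mathbf A^{\text T})^k$ yields $(\exp(\mathbf A))^{\text T} = \sum_{k \ge 0} \frac{(\mathbf A^{\text T})^k}{k!} = \exp(\mathbf A^{\text T})$.

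Property (ii) I would reduce to the restricted additivity law $\exp(\mathbf A + \mathbf B) = \exp(\mathbf A) \exp(\mathbf B)$ valid whenever $\mathbf A \mathbf B = \mathbf B \mathbf A$: expanding the product as a double series and using absolute convergence to perform a Cauchy-product rearrangement, the binomial theorem (legitimate precisely because $\mathbf A$ and $\mathbf B$ commute) collapses the $k$-th diagonal to $\frac{(\mathbf A + \mathbf B)^k}{k!}$. Applying this with $\mathbf B = -\mathbf A$, which commutes with $\mathbf A$, gives $\exp(\mathbf A) \exp(-\mathbf A) = \exp(\mathbf 0) = \mathbf I$, so $\exp(\mathbf A)$ is invertible with the claimed inverse. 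For (iv), I would differentiate $\exp(t \mathbf A) = \sum_{k \ge 0} \frac{t^k \mathbf A^k}{k!}$ term by term in $t$; the differentiated series $\sum_{k \ge 1} \frac{t^{k-1} \mathbf A^k}{(k-1)!}$ converges uniformly on every compact interval by the same exponential majorant, which justifies interchanging $\frac{d}{dt}$ with the summation. Reindexing produces $\mathbf A \exp(t \mathbf A)$, and part (i) supplies the equality with $\exp(t \mathbf A) \mathbf A$.

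The main obstacle is the Cauchy-product rearrangement underlying (ii): it is the only step requiring the reordering of an absolutely convergent \emph{double} series rather than a single term-by-term operation, and it is the sole place where the commutativity hypothesis $\mathbf A \mathbf B = \mathbf B \mathbf A$ is genuinely used, via the matrix binomial expansion. Everything else is routine once absolute and locally uniform convergence have been recorded.
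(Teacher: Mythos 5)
Your proposal cannot be compared against an internal argument, because the paper does not actually prove this theorem: it records the statement and defers to \cite{HornJohnson2012} for the proofs. Taken on its own terms, your proof is correct and self-contained, following the standard power-series route. The single analytic ingredient you isolate, absolute convergence of $\sum_{k \ge 0} \mathbf A^k/k!$ under a submultiplicative norm, is stated correctly and genuinely does license each manipulation: left/right multiplication by $\mathbf A$ inside the sum for (i), passing the continuous linear transpose map through the limit for (iii), and the Weierstrass-majorant justification for differentiating under the sum in (iv). For (ii), the reduction to the commuting-case law $\exp(\mathbf A + \mathbf B) = \exp(\mathbf A)\exp(\mathbf B)$ via the Cauchy-product rearrangement is the classical argument, and you correctly flag that the binomial collapse of the $k$-th diagonal is exactly where commutativity is used. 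Two minor points to make explicit in a full write-up, neither of which affects correctness: first, to conclude that $\exp(-\mathbf A)$ is a two-sided inverse you should either apply the additivity law once more with the roles of $\mathbf A$ and $-\mathbf A$ exchanged, or note that a one-sided inverse of a square matrix is automatically two-sided; second, the term-by-term differentiation theorem in (iv) needs convergence of the undifferentiated series at some point (trivially true at $t=0$, or indeed everywhere) in addition to locally uniform convergence of the differentiated series, so that hypothesis deserves a sentence.
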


Let $\mathbf A \in \Mat_{N,N}(\F).$ The matrix $\mathbf A$ is \textit{Hurwitz} if all of its eigenvalues have negative real parts. Note if $\mathbf A$ is Hurwitz, then $\lim_{t \rightarrow \infty} \exp(t \mathbf A)= \mathbf 0.$ The matrix $\mathbf A$ is  \textit{Hermitian} if $\mathbf A^* =\mathbf A$ and \textit{skew-Hermitian (anti-Hermitian)} if $\mathbf A^*=-\mathbf A.$ Note that Hermitian real matrices are the same as symmetric real matrices, while skew-Hermitian real matrices are the same as skew-symmetric real matrices. We state the important properties of Hermitian and skew-Hermitian matrices. One may consult \cite{HornJohnson2012} for proofs.

\begin{theorem}[Hermitian and skew-Hermitian Matrix Properties]\label{thm: Hermitian and skew-Hermitian Matrix Properties}
If $\mathbf A \in \Mat_{N,N}(\F)$ is either Hermitian or skew-Hermitian, then the following properties hold
	\begin{enumerate}[label=\roman*)]
		\item $\mathbf A$ is skew-Hermitian if and only if $i \mathbf A$ is Hermitian.
		\item If $\mathbf A$ is Hermitian or skew-Hermitian, then $\mathbf A$ is diagonalizable; there is an orthonormal basis of $\C^N$ consisting of the eigenvectors of $\mathbf A.$ 
		\item If $\mathbf A$ is Hermitian, then all its eigenvalues are real. If $\mathbf A$ is skew-Hermitian, then all eigenvalues of $\mathbf A$ are either $0$ or purely imaginary. 
	\end{enumerate}
\end{theorem}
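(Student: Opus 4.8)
The plan is to establish the three parts in the order i), iii), ii), because the skew-Hermitian case of iii) reduces to the Hermitian case via i), and part ii) is the deepest of the three. Part i) is a one-line computation: recalling $(\mathbf A)^* = \overline{\mathbf A^\text T}$ and that $*$ is conjugate-linear, I would compute $(i \mathbf A)^* = \overline{i} \, \mathbf A^* = -i \, \mathbf A^*$. If $\mathbf A$ is skew-Hermitian, so $\mathbf A^* = -\mathbf A$, this gives $(i\mathbf A)^* = i \mathbf A$, i.e. $i\mathbf A$ is Hermitian; reversing the steps proves the converse.

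For part iii), I would use the Rayleigh-quotient argument. Fix an eigenpair $(\lambda, \mathbf v)$ with $\mathbf v \ne \mathbf 0$ and consider the scalar $\mathbf v^* \mathbf A \mathbf v$. When $\mathbf A$ is Hermitian, $(\mathbf v^* \mathbf A \mathbf v)^* = \mathbf v^* \mathbf A^* \mathbf v = \mathbf v^* \mathbf A \mathbf v$, so this scalar equals its own conjugate and is therefore real; since $\mathbf v^* \mathbf A \mathbf v = \lambda \, \mathbf v^* \mathbf v = \lambda \|\mathbf v\|^2$ with $\|\mathbf v\|^2 > 0$ real, $\lambda$ must be real. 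For the skew-Hermitian case I would invoke part i): $i \mathbf A$ is Hermitian, and its eigenvalues are exactly $i \lambda$ as $\lambda$ ranges over the eigenvalues of $\mathbf A$. Since each $i\lambda$ is real, each $\lambda$ is either $0$ or purely imaginary.

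For part ii), the approach is Schur triangularization. I would first invoke the fact (provable by induction on $N$, using that the characteristic polynomial has a root by the fundamental theorem of algebra, or simply cited from \cite{HornJohnson2012}) that any $\mathbf A \in \Mat_{N,N}(\F)$ admits a decomposition $\mathbf U^* \mathbf A \mathbf U = \mathbf T$ with $\mathbf U$ unitary and $\mathbf T$ upper triangular. Writing $\mathbf A^* = \epsilon \mathbf A$ with $\epsilon = +1$ in the Hermitian case and $\epsilon = -1$ in the skew-Hermitian case, I would take conjugate transposes to get $\mathbf U^* \mathbf A^* \mathbf U = \mathbf T^*$, hence $\mathbf T^* = \epsilon \mathbf T$. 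Since $\mathbf T$ is upper triangular while $\mathbf T^*$ is lower triangular, this equation forces every off-diagonal entry of $\mathbf T$ to vanish, so $\mathbf T$ is diagonal. The columns of $\mathbf U$ then form an orthonormal basis of $\C^N$ consisting of eigenvectors of $\mathbf A$, which is exactly the claimed diagonalizability.

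The main obstacle is the Schur triangularization step underlying part ii): everything else is a short algebraic manipulation, whereas the existence of a unitary triangularizing change of basis is the genuine analytic-algebraic input (resting on the existence of at least one eigenvector for a complex matrix). I would either carry out the standard induction on $N$ — peel off one unit eigenvector, complete it to an orthonormal basis, and recurse on the $(N-1)\times(N-1)$ compression — or simply cite \cite{HornJohnson2012} and focus the written proof on deducing diagonality of $\mathbf T$ from $\mathbf T^* = \epsilon \mathbf T$.
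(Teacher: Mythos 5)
Your proof is correct in all three parts. Note, however, that the paper itself does not prove this theorem at all: it states the properties and defers to \cite{HornJohnson2012} for the proofs, so there is no in-paper argument to compare against. What you have written is essentially the standard proof one would find in that reference: part i) by conjugate-linearity of $*$, part iii) by the Rayleigh-quotient computation $\mathbf v^* \mathbf A \mathbf v = \lambda \|\mathbf v\|^2$ (with the skew-Hermitian case reduced to the Hermitian one via i)), and part ii) by Schur's unitary triangularization together with the observation that $\mathbf T^* = \epsilon \mathbf T$ forces the upper triangular factor $\mathbf T$ to be diagonal. The unified treatment with $\epsilon = \pm 1$ is a clean way to handle both cases simultaneously, and your ordering i), iii), ii) is sensible, though worth noting that once ii) is established, iii) also follows immediately by reading the eigenvalues off the diagonal of $\mathbf T$ and applying $\mathbf T^* = \epsilon \mathbf T$ entrywise; your independent Rayleigh-quotient argument is equally valid and arguably more self-contained. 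The one genuine input you correctly identify is the existence of the Schur decomposition (equivalently, the existence of one eigenvector over $\C$, supplied by the fundamental theorem of algebra); citing it or proving it by the standard induction are both acceptable ways to close that step.
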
 

Finally, let $\mathbf A \in \Mat_{N,N}(\F).$ The matrix $\mathbf A$ is \textit{circulant} if there exists a sequence of numbers $\left \lbrace a_n \right \rbrace_{1 \le n <N}$ such that $A_{m,n} = a_{m-n+1},$ where $m-n+1$ is indexed mod $N.$ Circulant matrices obey several properties \cite{Gray2006}. Circulant matrices form a subspace of $\Mat_{N,N}(\F).$ They commute under matrix multiplication. The product of circulant matrices is circulant, and the transpose of a circulant matrix is circulant. 

We state and prove the properties pertaining to the eigenvalues and eigenvectors of circulant matrices that are essential for this thesis.
\begin{theorem}[Circulant Matrix Properties]\label{thm: Circulant Matrix Properties}
If $\mathbf A, \mathbf B \in \Mat_{N,N}(\F)$ are circulant matrices, then the following properties hold.
    \begin{enumerate}[label=\roman*)]       
        \item $\mathbf A$ is diagonalizable. In particular, if the first row of $\mathbf A$ is of the form $(a_1 \ , \ \dots \ , \ a_{N}),$ then for each $1 \le n \le N,$ the vector 
            \begin{align}
                \mathbf w_n &= \sqrt {\frac{1}{N}}  \left (1 \ , \  \omega_n \ , \ \dots \ , \ \omega_n^{N-1} \right) \label{eq: Circulant Matrix Eigenvector},
            \end{align}
        is an eigenvector of $\mathbf A,$  associated with the eigenvalue 
            \begin{align}
                \mu_n&=   \sum_{p=1}^{N} a_{p} \  \omega_n^{p-1} \label{eq: Circulant Matrix Eigenvalue},
            \end{align}
        where $\omega_n= e^{\frac{2 \pi i n}{N}}$ is an $N$-th root of unity. 
        Furthermore, the collection of eigenvectors $\left \lbrace \mathbf w_n \right \rbrace_{n=1}^N$ forms an orthonormal basis of $\C^N,$ and we have $\mathbf w_n^*=\mathbf w_{N-n}^T$ for each $1 \le n \le N.$ 

        \item For each $k \ge 0,$ we have 
        	\begin{align}\label{eq: Circulant Matrix Spectral Sum}
        		\mathbf A^k &= \sum_{n=1}^N \mu_n^k \   \mathbf w_n \mathbf w_{N-n}^\text T.
        	\end{align}
        
        \item For each $t \in \mathbb R,$ the matrix exponential of $t \mathbf A$ is explicitly 
            \begin{align}\label{eq: Circulant Matrix Exponential}
                \exp(t \mathbf A) &= \sum_{n=1}^N e^{\mu_n  t} \  \mathbf w_n \mathbf w_{N-n}^ \text T .
            \end{align}
        Moreover, $\exp(t \mathbf A)$ is a circulant matrix.
    \end{enumerate}
\end{theorem}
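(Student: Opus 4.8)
The plan is to prove the three parts in sequence, with part (i) carrying essentially all of the computational content and parts (ii)--(iii) following formally from the resulting spectral decomposition.

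For part (i), I would first verify the eigenvector equation \eqref{eq: Circulant Matrix Eigenvector}--\eqref{eq: Circulant Matrix Eigenvalue} directly. Writing out the $m$-th component of $\mathbf A \mathbf w_n$, one obtains $\frac{1}{\sqrt N}\sum_{j=1}^N A_{m,j}\, \omega_n^{j-1}$; since $\mathbf A$ is circulant its $(m,j)$ entry depends only on the index difference (equal to $a_{j-m+1}$, indices mod $N$, consistent with the first row being $(a_1,\dots,a_N)$), so substituting $p = j-m+1$ reduced mod $N$ and using $\omega_n^N = 1$ lets me factor out $\omega_n^{m-1}$ and collapse the remaining sum into exactly $\mu_n = \sum_{p=1}^N a_p \omega_n^{p-1}$. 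This shows $\mathbf A \mathbf w_n = \mu_n \mathbf w_n$. Next I would establish orthonormality of $\left \lbrace \mathbf w_n \right \rbrace_{n=1}^N$: the inner product $\mathbf w_m^* \mathbf w_n = \frac1N \sum_{k=0}^{N-1}\left(\omega_n \overline{\omega_m}\right)^k = \frac1N \sum_{k=0}^{N-1}\omega_{n-m}^k$ is a geometric sum over $N$-th roots of unity, equal to $1$ when $m=n$ and $0$ otherwise. Having $N$ orthonormal eigenvectors in $\C^N$ immediately gives diagonalizability together with an orthonormal eigenbasis. Finally, the identity $\mathbf w_n^* = \mathbf w_{N-n}^\text T$ follows from $\overline{\omega_n} = \omega_n^{-1} = \omega_{N-n}$ applied componentwise.

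For part (ii), I would assemble the spectral decomposition. Letting $\mathbf W$ be the unitary matrix whose $n$-th column is $\mathbf w_n$, orthonormality gives $\mathbf W^* \mathbf W = \mathbf I$, and part (i) gives $\mathbf A = \mathbf W \boldsymbol \Lambda \mathbf W^*$ with $\boldsymbol \Lambda = \text{diag}(\mu_1,\dots,\mu_N)$. Raising to the $k$-th power telescopes the unitary factors, yielding $\mathbf A^k = \mathbf W \boldsymbol \Lambda^k \mathbf W^* = \sum_{n=1}^N \mu_n^k\, \mathbf w_n \mathbf w_n^*$; substituting $\mathbf w_n^* = \mathbf w_{N-n}^\text T$ from part (i) produces \eqref{eq: Circulant Matrix Spectral Sum}. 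The case $k=0$ is just the completeness relation $\mathbf I = \sum_{n=1}^N \mathbf w_n \mathbf w_n^*$, which is the same identity read for $\boldsymbol \Lambda^0$.

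For part (iii), I would substitute \eqref{eq: Circulant Matrix Spectral Sum} into the defining power series \eqref{eq: Matrix Exponential}, interchange the finite sum over $n$ with the series over $k$ (justified since each of the $N^2$ entries is an absolutely convergent scalar series), and recognize $\sum_{k\ge 0}(t\mu_n)^k/k! = e^{t\mu_n}$, giving \eqref{eq: Circulant Matrix Exponential}. To see that $\exp(t\mathbf A)$ is circulant, I would note that each partial sum $\sum_{k=0}^{K}\frac{(t\mathbf A)^k}{k!}$ is a polynomial in $\mathbf A$ and hence circulant, since the circulant matrices form a subspace closed under products as recalled above, and that this finite-dimensional subspace is closed; the entrywise limit $\exp(t\mathbf A)$ therefore remains circulant. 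The only genuinely delicate step in the whole argument is the reindexing in part (i): one must track the modular reduction $p = j-m+1 \bmod N$ carefully to confirm that $p$ sweeps a complete residue system and that the factored exponent is exactly $m-1$, so that the collapsed sum matches the stated $\mu_n$; everything afterward is formal manipulation of the spectral decomposition.
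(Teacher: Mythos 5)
Your proposal is correct and follows essentially the same route as the paper: direct componentwise verification of the eigenvector equation and orthonormality via geometric sums in part (i), a spectral-sum formula for powers in part (ii), and term-by-term summation of the exponential series plus closedness of the circulant subspace under limits in part (iii). The only cosmetic difference is that in part (ii) you telescope the unitary factorization $\mathbf A = \mathbf W \boldsymbol \Lambda \mathbf W^*$, whereas the paper checks that $\mathbf A^k$ and the claimed sum agree on the orthonormal eigenbasis — the same argument in different clothing.
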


\begin{proof}
We prove the first statement. For each $1 \le j, n \le N,$ the $j$-th component of $\mathbf A \mathbf w_n$ is of the form 
	\begin{align}
		\sqrt{\frac{1}{N}} \left(\sum_{p=1}^{N}  a_{p-j+1} \ \omega_n^{p-1} \right) &= \sqrt{\frac{1}{N}} \left( \sum_{p=1}^{N-j+1}  a_{p} \ \omega_n^{p+j-2} + \sum_{p=N-j+2}^{N}  a_{p} \ \omega_n^{ p - (N-j+2) } \right) \nonumber \\
		&= \sqrt{\frac{1}{N}} \left(\sum_{p=1}^{N-j+1}  a_{p} \ \omega_n^{p+j-2} + \sum_{p=N-j+2}^{N}  a_{p} \ \omega_n^{p+j-2} \right)  \nonumber  \\
		&=  \sqrt{\frac{1}{N}}  \sum_{p=1}^{N}  a_{p} \ \omega_n^{p+j-2} \nonumber \\
		&=   \left(\sum_{p=1}^{N}  a_{p} \ \omega_n^{p-1} \right ) \  \frac{ \omega_n^{j-1}}{\sqrt N} \nonumber \\
		&=  \mu_n  \  \frac{\omega_n^{j-1}}{\sqrt N} \nonumber,
	\end{align}
which is the $j$-th component of $\mu_n \mathbf w_n.$ This shows $\mu_n$ is an eigenvalue of $\mathbf A$ with associated eigenvector $\mathbf w_n.$ Next, note that 
	\begin{align}
		 \omega_{N-n} &= e^{\frac{2 \pi i (N-n)}{N}} \nonumber \\
		 &= e^{-\frac{2 \pi i n}{N}} = \overline{\omega_n},
	\end{align}
for each $1 \le n \le N,$ which implies $\mathbf w_n^*= \mathbf w^\text T_{N-n}.$ Finally, for all $1 \le m,n \le N$ with $m \ne n,$ we use the finite geometric series identity 
	\begin{align}
		\sum_{j=1}^N r^j &= \frac{r- r^{N+1}}{1-r} \nonumber 
	\end{align}
for all $r \ne 1,$ to see that
	\begin{align}
		 \mathbf w_m^* \mathbf w_n &= \frac{1}{N} \sum_{p=1}^N \omega_{N-m}^{p-1} \omega_n^{p-1} \nonumber  \\
		  &= \frac{1}{N} \sum_{p=1}^N \omega_{N-m+n}^{p-1}  \nonumber \\
		  &=\frac{1}{N} \ \left(\frac{1- \omega_{N-m+n}^N }{1- \omega_{N-m+n}}\right) \nonumber  \\
		  &= 0 \nonumber.
	\end{align}
If $n=m,$ then 
	\begin{align}
		\mathbf w_n^* \mathbf w_n  &= \frac{1}{N} \sum_{p=1}^N \omega_{N-n}^{p-1} \omega_n^{p-1} \nonumber \\
		&= \frac{1}{N} \sum_{p=1}^N \omega_{N}^{p-1} \nonumber \\
		&=\frac{N}N=1 \nonumber.
	\end{align}
Since $\left \lbrace \mathbf w_n \right \rbrace_{n=1}^N$ is a set of $N$ orthonormal vectors in $\C^N,$ we deduce it is a basis for $\C^N.$ This proves the first statement.

We prove the second statement. Let $k \ge 0$ be fixed. Because $\mu_n$ is an eigenvalue of $\mathbf A,$ note that $\mu_n^k$ is an eigenvalue of $\mathbf A^k$ with associated eigenvector $\mathbf w_n.$ For each $1 \le m \le N,$ the orthonormality of the collection of eigenvectors $\left \lbrace \mathbf w_n \right \rbrace_{n=1}^N$ yields
	\begin{align}
	\sum_{n=1}^N  \mu_n^k \ \mathbf w_n \mathbf w_{N-n}^\text T \mathbf w_m &= \sum_{n=1}^N  \mu_n^k \  \mathbf w_n  \ \delta_{m,n} \nonumber \\
	&= \mu_m^k \mathbf w_m = \mathbf A^k \mathbf w_m \nonumber.
	\end{align}
This shows that the linear operator $\mathbf A^k$ and the linear operator $\sum_{n=1}^N  \lambda_n^k \ \mathbf w_n \mathbf w_{N-n}^\text T$ agree on the basis of eigenvectors for each $k \ge 0$. Because every vector in $\C^N$ is a unique linear combination of vectors from this basis, the operators agree on all of $\C^N$ and thus are identical. This proves the second statement. 

Now, we prove the last statement. By the definition of the matrix exponential in \eqref{eq: Matrix Exponential}, we have 
	\begin{align}
		\sum_{k \ge 0} \frac{(t \mathbf A)^k}{k!} &= \sum_{k \ge 0} \frac{t^k \mathbf A^k}{k!} \nonumber \\
		&=  \sum_{k \ge 0} \frac{t^k}{k!} \sum_{n=1}^N \mu_n^k \ \mathbf w_n \mathbf w_{N-n}^\text T \nonumber \\
		&= \sum_{n=1}^N \sum_{k \ge 0} \frac{(\mu_n t)^k}{k!}   \mathbf w_n \mathbf w_{N-n}^\text T \label{eq: Circulant Matrix Exponential 1} \\
		&= \sum_{n=1}^N e^{\mu_n t} \  \mathbf w_n \mathbf w_{N-n}^\text T,  \label{eq: Circulant Matrix Exponential 2}
	\end{align}
in which we obtained
\eqref{eq: Circulant Matrix Exponential 2} by simplifying the inner sum in \eqref{eq: Circulant Matrix Exponential 1} using the Taylor series of the ordinary exponential function:
	\begin{align}
		e^z &= \sum_{k \ge 0} \frac{z^k}{k!}. \nonumber
	\end{align}
The product of circulant matrices is circulant and the scalar multiple of a circulant matrix is also circulant. Therefore, for each $t \in \R,$ the matrix $(t \mathbf A)^k$ is a circulant matrix. Since the sum of circulant matrices is circulant, the partial sum
	\begin{align}
		\sum_{k=0}^K \frac{(t \mathbf A)^k}{k!} \nonumber 
	\end{align}
is circulant for each $K \in \N.$ Thus, upon taking the limit as $K \rightarrow \infty,$ we deduce $\exp(t \mathbf A)$ is a circulant matrix.
\end{proof}

We will adopt the following notation in this thesis. If $c_1 \ , \ \dots \ , \ c_N \in \F,$ then we denote $\text{Circ}(c_1 \ , \ \dots \ ,\  c_N)$ as the $N \times N$ circulant matrix whose first row is $(c_1 \ , \ \dots \ ,\  c_N).$

\section{Probability Theory}\label{sec: Probability Theory}
Throughout this section, we let $\left(\Omega,\mathcal A, \P \right)$ be a probability space, and we also fix $M,N \in \N.$ Recall a random vector on $\Omega$ is a measurable function between $\Omega$ and $\R^N,$ in which $\R^N$ is equipped with its Borel sigma algebra.

Let $\mathbf x$ and $\mathbf y$ be two $M$-dimensional and $N$-dimensional random vectors on $\Omega,$ respectively.  We denote $\E[\mathbf x]$ as the \textit{expectation (mean)} of $\mathbf x$ and recall
        \begin{align}
	\Cov(\mathbf x, \mathbf y) &= \E \left[\mathbf x \mathbf y^\text T \right]  - \E \left[\mathbf x  \right ] \ \left(\E \left[ \mathbf y \right ] \right)^\text T \nonumber
	\end{align}
is the $M \times N$ \textit{cross-covariance matrix} between $\mathbf x$ and $\mathbf y.$ The \textit{covariance matrix} of $\mathbf x$ is the positive semidefinite matrix $\Cov(\mathbf x, \mathbf x).$ 

Let $\mathbf x$ be an $M$-dimensional  random vector on $\Omega.$ Recall  $\mathbf x$ is  \textit{($M$-dimensional) Gaussian} if its probability density function satisfies
	\begin{align} \label{eq: Multivariate Gaussian Probability Density Function}
		f_{\mathbf x}(\mathbf z) &= \frac{1}{\left(2 \pi \right)^{\frac{M}{2}} \ \sqrt{|\det(\mathbf S)|}} \ e^{\frac{-(\mathbf z - \boldsymbol \mu)^\text T \mathbf S^{-1} (\mathbf z - \boldsymbol \mu) }{2}}.
	\end{align} 
for some  $\boldsymbol \mu \in \R^M$ and positive definite matrix $\mathbf S \in \Mat_{M,M}(\R).$ Note that $\mathbf x$ has mean $\boldsymbol \mu$ and covariance matrix $\mathbf S,$ and we write $\mathbf x \sim \mathcal N(\boldsymbol \mu, \mathbf S).$  Recall that any linear combination of independent Gaussian random vectors is also a Gaussian random vector. Moreover,  any linear transformation of a Gaussian random vector is also a Gaussian random vector.

\section{Stochastic Processes}\label{sec: Stochastic Processes}

Throughout this section, we let $\left(\Omega,\mathcal A , \P \right)$ be probability space, $\left \lbrace \mathcal F_t  \right \rbrace_{t \ge 0}$ be a filtration on $\Omega,$ and we fix $N \in \N.$ Recall an event $A \in \mathcal A$ occurs ($\P$)-\textit{almost surely (with probability one)} if $\P(A^c)=0.$ 

Recall an \textit{($N$-dimensional) stochastic process} on $\Omega$ is a map $\mathbf x: [0, \infty) \times \Omega \rightarrow \R^N$ such that for each fixed $t \ge 0,$ the map $\mathbf x(t, \cdot): \Omega \rightarrow \R^N$ is an $N$-dimensional random vector on $\Omega.$ A \textit{realization (sample path/trajectory)} of $\mathbf x$ is a map of the form $\mathbf x(\cdot, \omega): [0,\infty) \rightarrow \R^N,$ where $\omega \in \Omega$ is fixed. From henceforth, we adopt the notation $\left \lbrace \mathbf x(t) \right \rbrace_{t \ge 0},$ in which we write $\mathbf x(t)=\mathbf x(t, \cdot)$ as the random vector at the time $t .$ The process $\left \lbrace \mathbf x(t) \right \rbrace_{t \ge 0}$ is \textit{adapted} to the filtration $\left \lbrace \mathcal F_t \right \rbrace_{t \ge 0}$ if $\mathbf x(t)$ is a random vector on $\Omega$ specifically equipped with the sigma algebra $\mathcal F_t$ for each $t \ge 0.$ In particular, the \textit{standard (natural) filtration} of the process $\left \lbrace \mathbf x(t) \right \rbrace_{t \ge 0}$ is the filtration $\left \lbrace \mathcal S_t \right \rbrace_{t \ge 0},$ such that $\mathcal S_t$ is the sigma algebra generated by the collection of random vectors $\left \lbrace \mathbf x(s) \right \rbrace_{s=0}^t$ for each $t \ge 0.$ Note that $\left \lbrace \mathbf x(t) \right \rbrace_{t \ge 0}$ is always adapted to its own standard filtration.

Let $\left \lbrace \mathbf x(t) \right \rbrace_{t \ge 0}$ be a stochastic process. We recall the following time dependent statistical functions associated with the process. The \textit{mean function}, \textit{autocovariance function}, and the \textit{covariance function} of $\left \lbrace \mathbf x(t) \right \rbrace_{t \ge 0}$ are defined respectively as the maps $\boldsymbol \mu_{\mathbf x} : [0,\infty) \rightarrow \R^N \ , \  \boldsymbol \Gamma_{\mathbf x} : [0,\infty)^2 \rightarrow \Mat_{N,N}(\R),$ and $\mathbf V_\mathbf x: [0,\infty) \rightarrow \Mat_{N,N}(\R),$ where 
    \begin{align}
        \boldsymbol \mu_{\mathbf x}(t)  &= \E[\mathbf x(t)] \label{eq: Stochastic Process Mean Function} \\
        \boldsymbol \Gamma_{\mathbf x}(s,t)  &= \Cov(\mathbf x(s) , \mathbf x(t)) \label{eq: Stochastic Process Autocovariance Function} \\
        \mathbf V_{\mathbf x}(t) &= \boldsymbol \Gamma_{\mathbf x}(t,t) \label{eq: Stochastic Process Covariance Function}.
    \end{align}

The process $\left \lbrace \mathbf x(t) \right \rbrace_{t \ge 0}$ is \textit{(strictly) stationary} if the cumulative distribution functions of $\mathbf x(t)$ and $\mathbf x(t+h)$ are identical for all $t,h \ge 0.$ The process $\left \lbrace \mathbf x(t) \right \rbrace_{t \ge 0}$ is \textit{weakly (wide sense) stationary} if its mean function $\boldsymbol \mu_\mathbf x$ is identically constant and the map $(t,h) \mapsto \boldsymbol \Gamma_\mathbf x(t,t+h)$ between $[0,\infty)^2$ and $\Mat_{N,N}(\R)$ depends only on the variable $h.$ Note that all stationary processes are weakly stationary.

Let $\left \lbrace \mathbf x(t) \right \rbrace_{t \ge 0}$ be a stochastic process adapted to the filtration $\left \lbrace \mathcal F_t \right \rbrace_{t \ge 0}.$ The process $\left \lbrace \mathbf x(t) \right \rbrace_{t \ge 0}$  is \textit{Markov} with respect to the filtration $\left \lbrace \mathcal F_t \right \rbrace_{t \ge 0}$ if for all  $s,t \ge 0$ and any measurable, bounded function $f: \R^N \rightarrow \R,$ we have
	\begin{align}
		\E \left[f(\mathbf x(s+t)) \ |  \ \mathcal F_s \right] &= \E \left[f(\mathbf x(s+t)) \ |  \ \mathbf x(s)  \right] \nonumber
	\end{align}
almost surely. We simply say $\left \lbrace \mathbf x(t) \right \rbrace_{t \ge 0}$  is Markov if it is Markov with respect to its standard filtration.  Moreover, the process $\left \lbrace \mathbf x(t) \right \rbrace_{t \ge 0}$ is a \textit{diffusion process} if it is a Markov process having continuous sample paths with probability one. The process $\left \lbrace \mathbf x(t) \right \rbrace_{t \ge 0}$ is a \textit{martingale} with respect to the filtration $\left \lbrace \mathcal F_t \right \rbrace_{t \ge 0}$ if for all  $0 \le s \le t,$ the expectation $\E \left [ \left |\mathbf x(t) \right | \right ]$ exists and $\E \left[ \mathbf x(t) \ | \ \mathcal F_s \right] = \mathbf x(s).$ We simply say $\left \lbrace \mathbf x(t) \right \rbrace_{t \ge 0}$  is a martingale if it is a martingale with respect to its standard filtration.

Let $\left \lbrace \mathbf x(t) \right \rbrace_{t \ge 0}$ be a stochastic process. The process is said to be \textit{($N$-dimensional) Gaussian} if for all $K \in \N$ and all times $t_1 \ , \ \dots \ ,\  t_K \ge 0$ and all $\alpha_1 \ , \ \dots \ , \ \alpha_K \in \R,$ the linear combination $\sum_{k=1}^K \alpha_k \  \mathbf x(t_k)$ is a Gaussian random vector. For Gaussian processes, the notions of stationarity and weak stationarity are equivalent. The process $\left \lbrace \mathbf x(t) \right \rbrace_{t \ge 0}$ is the \textit{($N$-dimensional) standard Wiener process (Brownian Motion)} if it is a diffusion process such that $\mathbf x(0)= \mathbf 0$ almost surely with independent, Gaussian increments: for all $0 \le s_1 < t_1 \le s_2 < t_2,$ both the increments $\mathbf x(t_2)-\mathbf x(s_2)$ and $\mathbf x(t_1)-\mathbf x(s_1)$ are independent and $\mathbf x(t_1)-\mathbf x(s_1) \sim \mathcal N(\mathbf 0, (t_1-s_1) \  \mathbf I),$ where $\mathbf I$ is the $N \times N$ identity matrix. Note that the Wiener process is a Gaussian process and a martingale. Furthermore, its component processes $\left \lbrace w_m(t) \right \rbrace_{t \ge 0}$ are independent for each $1 \le m \le M.$

Let $\left \lbrace \mathbf x(t) \right \rbrace_{t \ge 0}$ be a Markov process adapted to the filtration $\left \lbrace \mathcal F_t \right \rbrace_{t \ge 0},$ and let the map $(t, \omega) \mapsto \mathbf x(t)(\omega)$ defined between $[0, \infty) \times \Omega$ and  $\R^N$ be $\mathcal B([0,\infty)) \times \mathcal A$ measurable, where $\mathcal B([0,\infty))$ is the Borel sigma algebra on $[0, \infty).$ Let $\left \lbrace \theta_t \right \rbrace_{t \ge 0}$ be a collection of shift operators on $\Omega$ such that $  \mathbf x(t)(\theta_s)=\mathbf x(s+t)$ for all $s,t \ge 0.$ For each $t \ge 0$ and $B \subset \R^N$ and $\mathbf y \in \R^N,$ let $$p_t(\mathbf y,B)=\P(\mathbf x(t) \in B \ | \  \mathbf x(0) = \mathbf y).$$ A probability measure $\pi$ on $\R^N$ equipped with the Borel sigma algebra is \textit{invariant} with respect to $\left \lbrace \mathbf x(t) \right \rbrace_{t \ge 0}$ if 
	\begin{align}
		\int_{\R^N} p_t(\mathbf y, B) \ d \pi (\mathbf y) &= \pi(B) \nonumber
	\end{align}
for each $t \ge 0$ and each $B \subset \R^N.$ A set $A \in \mathcal A$ is \textit{shift invariant} if $\theta_t^{-1}(A)=A$ for all $t \ge 0.$ The Markov process $\left \lbrace \mathbf x(t) \right \rbrace_{t \ge 0}$ is \textit{ergodic} if it admits an invariant probability measure $\pi$ satisfying $\P_\pi(A) \in \lbrace 0,1 \rbrace$ for every shift invariant set $A,$ in which $\P_\pi$ is the probability measure defined by
	\begin{align}
		\P_\pi(B) &= \int_{\R^N}  p_t(\mathbf y, B) \ d \pi(\mathbf y).
	\end{align}
	
The most important theorem pertaining to Markov ergodic processes is Birkhoff's Ergodic Theorem \cite{Sandric2017}.

\begin{theorem}[Birkhoff's Ergodic Theorem]\label{thm: Birkhoff's Ergodic Theorem}
If $\left \lbrace \mathbf x(t) \right \rbrace_{t \ge 0}$ is a Markov ergodic process, then for any $p \in \N$ and for any measurable $f :\R^N \rightarrow \R$ satisfying the condition that $\int_{\R^N} (f(\mathbf y))^p \ d \pi(\mathbf y)$ exists, we have 
	\begin{align}
		\lim_{t \rightarrow \infty} \frac{1}t \int_0^t f(\mathbf x(s)) \ ds &= \E_\pi[f(\mathbf x(0))].
	\end{align}
$\P_\pi$-almost surely, where 
	\begin{align}
		\E_\pi[f(\mathbf y)] &= \int_{\R^N} f(\mathbf y) \ d \pi(\mathbf y).
	\end{align}
\end{theorem}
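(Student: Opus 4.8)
The plan is to deduce the continuous-time statement from the classical discrete-time Birkhoff theorem applied to the time-one shift operator $T=\theta_1,$ and then to upgrade convergence along integer times to convergence along all real $t.$ Since $p \ge 1$ and $\pi$ is a probability measure, the integrability hypothesis places $f$ in $L^p(\pi) \subseteq L^1(\pi),$ so I may assume $f$ is $\pi$-integrable throughout. First I would record that $T$ preserves $\P_\pi$: this is precisely the invariance of $\pi$ under the transition kernels $p_t,$ which forces the push-forward of $\P_\pi$ under $\theta_1$ to coincide with $\P_\pi,$ making $(\Omega, \mathcal A, \P_\pi, T)$ a measure-preserving system.

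Next I would introduce the single-step functional
\begin{align}
	g(\omega) &= \int_0^1 f(\mathbf x(s)(\omega)) \ ds, \nonumber
\end{align}
which lies in $L^1(\P_\pi)$ because Tonelli's theorem together with stationarity under $\pi$ gives $\E_\pi[|g|] \le \int_0^1 \E_\pi[|f(\mathbf x(s))|] \ ds = \E_\pi[|f(\mathbf x(0))|] < \infty.$ The shift relation $\mathbf x(s)(\theta_k)=\mathbf x(s+k)$ telescopes the Birkhoff sums of $g$ into a single integral:
\begin{align}
	\sum_{k=0}^{n-1} g(T^k \omega) &= \sum_{k=0}^{n-1} \int_0^1 f(\mathbf x(s+k)) \ ds = \int_0^n f(\mathbf x(s)) \ ds. \nonumber
\end{align}
Invoking the discrete-time Birkhoff theorem for the measure-preserving map $T$ then yields almost-sure convergence of $\frac 1n \int_0^n f(\mathbf x(s)) \ ds$ to a $T$-invariant limit $\bar f \in L^1(\P_\pi).$

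To pass from integers to the continuum I would interpolate: for $n \le t < n+1,$ write $\frac 1t \int_0^t f(\mathbf x(s)) \ ds = \frac nt \cdot \frac 1n \int_0^n f(\mathbf x(s)) \ ds + \frac 1t \int_n^t f(\mathbf x(s)) \ ds,$ and bound the remainder by $\frac 1t \int_n^{n+1} |f(\mathbf x(s))| \ ds = \frac 1t \, g^+(T^n \omega),$ where $g^+ = \int_0^1 |f(\mathbf x(s))| \ ds \in L^1(\P_\pi).$ Since the Birkhoff averages of $g^+$ converge almost surely, the individual terms obey $\frac 1n g^+(T^n \omega) \to 0,$ so the remainder vanishes and $\frac 1t \int_0^t f(\mathbf x(s)) \ ds \to \bar f$ for all real $t.$ Finally I would identify $\bar f$ as the asserted constant. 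A direct estimate shows the limit is invariant under the entire flow: for fixed $r \ge 0,$ one has $\bar f(\theta_r \omega) = \lim_t \frac 1t \int_r^{t+r} f(\mathbf x(s)) \ ds = \bar f(\omega),$ the boundary corrections being of order $\frac 1t.$ Hence every sublevel set $\{ \bar f \le c \}$ is shift invariant, so by ergodicity it has $\P_\pi$-measure $0$ or $1,$ which forces $\bar f$ to equal a constant almost surely. That constant is its own expectation, and since $T$ preserves $\P_\pi$ we get $\E_\pi[\bar f] = \E_\pi[g] = \int_0^1 \E_\pi[f(\mathbf x(s))] \ ds = \E_\pi[f(\mathbf x(0))]$ by stationarity under $\pi,$ which is exactly the claimed value.

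The main obstacle is the almost-sure pointwise convergence delivered by the discrete-time theorem; its proof rests on the maximal ergodic inequality (equivalently, Garsia's argument), which is the one genuinely nontrivial analytic input. Everything else—the telescoping identity, the $L^1$ bookkeeping via Tonelli and stationarity, the interpolation across non-integer times, and the ergodicity-driven collapse of the flow-invariant limit to a constant—is routine once that convergence is in hand.
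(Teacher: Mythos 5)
The first thing to say is that the paper contains no proof of this statement to compare against: Birkhoff's Ergodic Theorem is quoted there as a standard result for Markov ergodic processes, with a citation to the literature (Sandri\'c), and is then used as a black box in the proof of the lead-process law of large numbers. Your argument is therefore necessarily a different route, and it is the standard one: reduce to the discrete-time Birkhoff theorem for the time-one shift $T=\theta_1$, telescope $g(\omega)=\int_0^1 f(\mathbf x(s)(\omega))\,ds$ into $\sum_{k=0}^{n-1} g(T^k\omega)=\int_0^n f(\mathbf x(s))\,ds$, interpolate to non-integer times via the bound $\frac1t\int_n^{n+1}|f(\mathbf x(s))|\,ds=\frac1t\,g^+(T^n\omega)$ together with the fact that Ces\`aro-convergent sequences satisfy $a_n/n\to 0$, and finally collapse the limit to a constant. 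The reduction is sound; the $L^1$ bookkeeping is justified because the paper explicitly assumes joint measurability of $(t,\omega)\mapsto\mathbf x(t)(\omega)$, which is what Tonelli needs. You also caught the one point that genuinely requires care: the paper's ergodicity is formulated for sets invariant under the \emph{whole flow}, which is a weaker hypothesis than ergodicity of the single map $\theta_1$, so the discrete theorem only delivers a $T$-invariant limit $\bar f$ and one must, as you do, separately verify that $\bar f$ is invariant under every $\theta_r$ before invoking the flow-ergodicity assumption. This buys a self-contained proof where the paper simply imports the result.

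Two small caveats, neither fatal. First, your argument yields $\bar f\circ\theta_r=\bar f$ only $\P_\pi$-almost surely for each fixed $r$, while the paper's definition of a shift-invariant set demands exact equality $\theta_t^{-1}(A)=A$ for all $t\ge 0$; to apply that definition literally you should pass to an exactly invariant representative, e.g.\ $\limsup_{t\to\infty}\frac1t\int_0^t f(\mathbf x(s))\,ds$, whose sublevel sets are genuinely invariant off a fixed null set, or note the standard modification-on-a-null-set repair. Second, your reading of the hypothesis as $f\in L^p(\pi)\subseteq L^1(\pi)$ is the only sensible one, but as literally stated the theorem assumes $\int (f(\mathbf y))^p\,d\pi(\mathbf y)$ exists, and for odd $p$ this does not by itself force $\int |f|\,d\pi<\infty$; interpreting ``exists'' as absolute convergence (as you implicitly do) is needed for the whole argument to start.
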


\section{Stochastic Differential Equations}
As before, let $(\Omega, \mathcal A, \mathbb P)$ be a probability space and fix $M,N \in \N.$ Let $\left \lbrace \mathbf w(t) \right \rbrace_{t \ge 0}$ be the standard $M$-dimensional Wiener process. In this section, we briefly review the theory of stochastic differential equations. 

Let $\mathbf F : [0,\infty) \times \Omega \rightarrow \Mat_{N,M}(\R)$ be a map such that the $MN$-dimensional stochastic process $\left \lbrace \text{vec}(\mathbf F(t , \cdot)) \right \rbrace_{t \ge 0}$ is adapted to the standard filtration of the Wiener process, where $\vec$ is the vectorization operator that maps $N \times M$ matrix to the $NM$-dimensional vector formed by stacking the columns of the matrix one after another. The \textit{time integral} of $\mathbf F$ and \textit{stochastic integral} of $\mathbf F$ with respect to the Wiener process are respectively defined as
    \begin{align} \label{eq: Time Integral}
        \int_0^t \mathbf F(s, \omega) \ ds &= \lim_{K \rightarrow \infty} \sum_{k=1}^{K-1} \mathbf F(s_k , \omega) \ \left( s_{k+1} -s_k \right) \\
        \int_0^t \mathbf F(s, \omega)  \ d \mathbf w(s) &= \lim_{K \rightarrow \infty} \sum_{k=1}^{K-1} \mathbf F(s_k , \omega) \ \left(\mathbf w(s_{k+1}) -\mathbf w(s_{k}) \right)  \label{eq: Ito Integral},
    \end{align}
where $\left \lbrace s_k \right \rbrace_{k=1}^K$ is an increasing sequence in $[0,t]$ satisfying $s_1=0$ and $s_K=t.$ Note the limits in \eqref{eq: Time Integral} and  \eqref{eq: Ito Integral} are taken in the sense of $L^2$ i.e. the mean-squared limit.

An \textit{Ito process} is a process $\left \lbrace \mathbf x(t) \right \rbrace_{t \ge 0}$ adapted to the standard filtration of the Wiener process satisfying
	\begin{align}\label{eq: Ito Process}
		\mathbf x(t) &= \mathbf x(0) + \int_0^t \mathbf b(s, \mathbf x(s)) \ ds+ \int_0^t \boldsymbol \sigma(s,\mathbf x(s)) \ d \mathbf w(s),
	\end{align} 
where $\mathbf b: [0,\infty) \times \R^N \rightarrow \R^N$ and $\boldsymbol \sigma : [0,\infty) \times \R^N \rightarrow \Mat_{N,M}(\R)$ are measurable functions. Informally, we write \eqref{eq: Ito Process} as an equation of the form
    \begin{align} \label{eq: SDE}
        d \mathbf x(t) &= \mathbf b(t, \mathbf x(t)) \ dt + \boldsymbol \sigma(t, \mathbf x(t)) \ d \mathbf w(t),
    \end{align}
which we refer to as a \textit{stochastic differential equation (SDE)}. We interpret any Ito process in \eqref{eq: Ito Process} as a solution to the SDE \eqref{eq: SDE}.

We state the existence and uniqueness criteria \cite[Theorem 5.2.1]{Oksendal2003} pertaining to the solution of the SDE \eqref{eq: SDE}.
\begin{theorem}[SDE Solution Existence and Uniqueness Conditions]\label{thm: SDE Solution Existence and Uniqueness}
Consider the SDE \eqref{eq: SDE}. Suppose there exist constants $C,D$ satisfying 
    \begin{align} 
    	\|\mathbf b(t,\mathbf y) \| + \| \boldsymbol \sigma(t,\mathbf y) \|_F &\le C (1+ \| \mathbf y\|) \label{eq: SDE Existence Uniquenness Condition 1} \\
    	\|\mathbf b(t,\mathbf z) - \mathbf b(t,\mathbf y)\|   &\le D (\| \mathbf z - \mathbf y \|) \label{eq: SDE Existence Uniquenness Condition 2},
    \end{align}
for all $\mathbf y, \mathbf z \in \R^N$ and all $t \ge 0,$  where $\| \cdot \|_F$ is the Frobenius norm and $\|\cdot \|$ is the Euclidean $2$-norm. Suppose further that the initial random vector $\mathbf x(0)$ satisfies $\E[\|\mathbf x(0)\|^2]$ being finite and $\mathbf x(0)$ is independent of the Wiener process. Then, the SDE \eqref{eq: SDE}
has a unique solution for $\mathbf x(t),$ which is a diffusion process adapted to the standard filtration of the Wiener process. Moreover, the solution is square integrable i.e. $\E \left [\int_0^t \| \mathbf x(s)\|^2 \ ds \right]$ is finite for each $t \ge 0.$
\end{theorem}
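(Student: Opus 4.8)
The plan is to adapt the classical Picard--Lindel\"of fixed-point strategy to the stochastic setting, treating existence and uniqueness separately and then reading off the remaining structural properties. Throughout I would fix a finite horizon $T>0$ and work on $[0,T]$, since the conclusions are local in time and can be patched together across successive intervals. The key analytic inputs are the It\^o isometry $\E\left[\left\|\int_0^t \mathbf F(s)\, d\mathbf w(s)\right\|^2\right] = \E\left[\int_0^t \|\mathbf F(s)\|_F^2\, ds\right]$ for square-integrable adapted integrands, the Cauchy--Schwarz inequality for the drift term, and Gr\"onwall's inequality.

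For uniqueness, suppose $\left\lbrace \mathbf x(t) \right\rbrace$ and $\left\lbrace \hat{\mathbf x}(t) \right\rbrace$ both solve \eqref{eq: SDE} with the same initial vector. Writing both in the integral form \eqref{eq: Ito Process} and subtracting, I would bound $\E[\|\mathbf x(t)-\hat{\mathbf x}(t)\|^2]$ by splitting the difference into a drift integral and a stochastic integral and applying $\|u+v\|^2 \le 2\|u\|^2 + 2\|v\|^2$. Cauchy--Schwarz controls the drift term and the It\^o isometry controls the stochastic term, and in both the Lipschitz hypothesis \eqref{eq: SDE Existence Uniquenness Condition 2} (together with the analogous Lipschitz bound on $\boldsymbol\sigma$, which the standard version of this theorem also requires) converts the integrand into $\|\mathbf x(s)-\hat{\mathbf x}(s)\|^2$. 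This yields $g(t) \le L\int_0^t g(s)\, ds$ for $g(t)=\E[\|\mathbf x(t)-\hat{\mathbf x}(t)\|^2]$ and a constant $L$ depending on $T$ and $D$; Gr\"onwall then forces $g\equiv 0$, so the two solutions agree for each $t$, and path-continuity upgrades this to indistinguishability.

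For existence, I would set up the Picard iterates $\mathbf x^{(0)}(t)\equiv \mathbf x(0)$ and $\mathbf x^{(k+1)}(t) = \mathbf x(0) + \int_0^t \mathbf b(s,\mathbf x^{(k)}(s))\, ds + \int_0^t \boldsymbol\sigma(s,\mathbf x^{(k)}(s))\, d\mathbf w(s)$, noting each iterate is adapted because integrals of adapted integrands against $ds$ and $d\mathbf w$ are adapted. The linear-growth bound \eqref{eq: SDE Existence Uniquenness Condition 1} together with $\E[\|\mathbf x(0)\|^2]<\infty$ keeps every iterate square-integrable. The heart of the argument is the same drift/stochastic split as above, now applied to consecutive differences, producing $\E[\sup_{t\le T}\|\mathbf x^{(k+1)}(t)-\mathbf x^{(k)}(t)\|^2] \le C_0\,\frac{(At)^k}{k!}$ for suitable constants, where Doob's maximal inequality is needed to pass from the stochastic integral to its supremum over $[0,T]$. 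Since these bounds are summable, the iterates converge uniformly in $L^2$ (and, along a subsequence, almost surely uniformly) to a continuous adapted limit $\mathbf x$; passing to the limit inside the integrals shows $\mathbf x$ solves \eqref{eq: SDE}, and the same estimates give the asserted square-integrability, while the Markov, hence diffusion, property follows from the time-homogeneous structure and the uniqueness just established.

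I expect the main obstacle to be the contraction estimate for the Picard iterates, specifically obtaining the factorial decay $\frac{(At)^k}{k!}$ uniformly on $[0,T]$. This requires carefully combining the It\^o isometry and Doob's inequality on the stochastic part with Cauchy--Schwarz on the drift part, and then iterating the resulting recursive bound; a naive application of Gr\"onwall at each stage only yields exponential rather than summable control, so the induction must be organized to expose the factorial explicitly.
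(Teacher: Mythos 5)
The paper offers no proof of this statement: it is quoted as a standard result, with a citation to \cite[Theorem 5.2.1]{Oksendal2003}. Your outline reconstructs essentially the proof given in that reference --- uniqueness via the drift/stochastic-integral split, It\^o isometry, and Gr\"onwall; existence via Picard iteration, with Doob's maximal inequality and the factorial estimate on consecutive differences --- so the argument is the right one and correctly organized. One point deserves emphasis: your parenthetical remark about $\boldsymbol\sigma$ is not optional. As stated in the paper, condition \eqref{eq: SDE Existence Uniquenness Condition 2} imposes Lipschitz continuity only on $\mathbf b$, but both your uniqueness step and the contraction estimate for the iterates genuinely require $\|\boldsymbol\sigma(t,\mathbf z)-\boldsymbol\sigma(t,\mathbf y)\|_F \le D\|\mathbf z-\mathbf y\|$ as well, since the It\^o isometry converts the stochastic-integral difference into exactly that quantity; without it the Gr\"onwall inequality cannot close. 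This hypothesis is present in the cited source but missing from the paper's statement, although in the paper's only application --- the OU process, where $\boldsymbol\sigma$ is a constant matrix --- the omission is harmless. A last minor caution: the Markov (hence diffusion) property does not need time-homogeneity, which your sketch invokes but which the SDE \eqref{eq: SDE} does not assume; it follows from pathwise uniqueness and the flow property of solutions.
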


\chapter{The Ornstein-Uhlenbeck Process} \label{chap: ouprocess}

Let $(\Omega, \mathcal A, \mathbb P)$ be a probability space. Recall in the introduction, we defined an $N$-dimensional OU Process on this probability space, namely the $N$-dimensional stochastic process $\left \lbrace \mathbf x(t) \right \rbrace_{t \ge 0}$ satisfying the SDE
    \begin{align} \label{eq: OU Process SDE} 
        d \mathbf x(t)= - \mathbf B  \  \mathbf x(t) \ dt + \boldsymbol \Sigma \ d \mathbf w(t).
    \end{align}
The linear operator $\mathbf B \in \Mat_{N,N}(\R)$ is called the friction operator which satisfies the condition $-\mathbf B$ is Hurwitz. The linear operator $\boldsymbol \Sigma \in \Mat_{M,N}(\R)$ is called the volatility operator. The process $\left \lbrace \mathbf w(t) \right \rbrace_{t \ge 0}$ is the $M$-dimensional standard Wiener Process. Moreover, we assume that $\mathbf x(0) \sim \mathcal N(\mathbf 0 ,\mathbf S)$ is independent of the Wiener process, in which we recall $\mathbf S \in \Mat_{N,N}(\R)$ is the stationary covariance operator satisfying the Lyapunov equation
    \begin{align} \label{eq: OU Process Stationary Covariance Matrix Lyapunov Equation}
	\mathbf B \mathbf S + \mathbf S \mathbf B^\text T &= 2 \mathbf D,
    \end{align}
and
    \begin{align}\label{eq: OU Process Diffusion Matrix}
        \mathbf D &= \frac{\boldsymbol \Sigma \boldsymbol \Sigma^\text T}{2}
    \end{align}
is the diffusion operator. We view the friction operator $\mathbf B$ and volatility operator $\boldsymbol \Sigma$ as OU model parameters.

In this thesis, we will utilize the matrix-valued function $\mathbf G: \R \rightarrow \Mat_{N,N}(\R)$ defined by
    \begin{align} \label{eq: OU Process Green Function}
        \mathbf G(t) &= \exp(-t \mathbf B),
    \end{align}
which is commonly known as the \textit{Green's function (propagator)} in the literature \cite{GodrecheLuck2018}. We note that $\mathbf G$ is the fundamental solution operator to the linear ordinary differential equation $\dot{\mathbf y} = - \mathbf B \mathbf y.$

The stationary covariance matrix $\mathbf S$ has an explicit formula as a matrix-valued integral involving $\mathbf G(t).$ We will be using this explicit formula throughout this thesis.

\begin{theorem}[OU Process Stationary Covariance Matrix Explicit Formula]\label{thm: OU Process Stationary Covariance Matrix Explicit Formula}
The stationary covariance matrix $\mathbf S$ is explicitly
    \begin{align}\label{eq: OU Process Stationary Covariance Matrix}
        \mathbf S &= 2 \int_{0}^\infty \mathbf G(t) \  \mathbf D \ \mathbf G^\text T(t) \ dt.
    \end{align}
Moreover, $\mathbf S$ is positive semidefinite.
\end{theorem}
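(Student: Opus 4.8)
The plan is to treat the right-hand side as a candidate matrix
\begin{align}
\mathbf M &= 2 \int_0^\infty \mathbf G(t) \ \mathbf D \ \mathbf G^\text T(t) \ dt, \nonumber
\end{align}
show that $\mathbf M$ solves the very Lyapunov equation \eqref{eq: OU Process Stationary Covariance Matrix Lyapunov Equation} that characterizes $\mathbf S,$ and then invoke uniqueness of that equation's solution to conclude $\mathbf S = \mathbf M.$ The first step is to confirm the integral converges. Since $-\mathbf B$ is Hurwitz, there exist constants $C, \alpha > 0$ with $\|\mathbf G(t)\| \le C e^{-\alpha t},$ so the integrand is bounded in norm by $C^2 \|\mathbf D\| e^{-2 \alpha t}$ and the improper integral converges absolutely; thus $\mathbf M$ is a well-defined element of $\Mat_{N,N}(\R).$

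The heart of the argument is recognizing that the integrand of $\mathbf B \mathbf M + \mathbf M \mathbf B^\text T$ telescopes. By \textbf{Theorem \ref{thm: Matrix Exponential Properties}} (item iv) applied to $-\mathbf B,$ we have $\mathbf G'(t) = -\mathbf B \ \mathbf G(t),$ and taking transposes gives $(\mathbf G^\text T)'(t) = -\mathbf G^\text T(t) \ \mathbf B^\text T.$ The product rule then yields
\begin{align}
\frac{d}{dt} \left( \mathbf G(t) \ \mathbf D \ \mathbf G^\text T(t) \right) &= -\mathbf B \ \mathbf G(t) \ \mathbf D \ \mathbf G^\text T(t) - \mathbf G(t) \ \mathbf D \ \mathbf G^\text T(t) \ \mathbf B^\text T. \nonumber
\end{align}
Pulling the constant matrices $\mathbf B$ and $\mathbf B^\text T$ inside the integral (justified by the uniform exponential bound) and applying the fundamental theorem of calculus on $[0, \infty),$ I obtain
\begin{align}
\mathbf B \mathbf M + \mathbf M \mathbf B^\text T &= -2 \left[ \mathbf G(t) \ \mathbf D \ \mathbf G^\text T(t) \right]_{t=0}^{t=\infty} = 2 \mathbf D, \nonumber
\end{align}
where the boundary evaluation uses $\mathbf G(0) = \mathbf I$ together with $\mathbf G(t) \to \mathbf 0$ as $t \to \infty,$ again a consequence of $-\mathbf B$ being Hurwitz.

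It remains to argue uniqueness. The linear operator $\mathbf X \mapsto \mathbf B \mathbf X + \mathbf X \mathbf B^\text T$ on $\Mat_{N,N}(\R)$ fails to be invertible exactly when $\mathbf B$ has two eigenvalues $\lambda_i, \lambda_j$ with $\lambda_i + \lambda_j = 0.$ Because $-\mathbf B$ is Hurwitz, every eigenvalue of $\mathbf B$ has strictly positive real part, so $\Re(\lambda_i + \lambda_j) > 0$ for all pairs and the operator is injective, hence bijective. Consequently \eqref{eq: OU Process Stationary Covariance Matrix Lyapunov Equation} has a unique solution, and since both $\mathbf S$ and $\mathbf M$ satisfy it, we conclude $\mathbf S = \mathbf M,$ which is \eqref{eq: OU Process Stationary Covariance Matrix}.

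Finally, positive semidefiniteness follows directly from the integral representation. For any $\mathbf v \in \R^N,$
\begin{align}
\mathbf v^\text T \mathbf S \mathbf v &= 2 \int_0^\infty \left( \mathbf G^\text T(t) \mathbf v \right)^\text T \mathbf D \left( \mathbf G^\text T(t) \mathbf v \right) \ dt \ge 0, \nonumber
\end{align}
since $\mathbf D = \tfrac{1}{2} \boldsymbol \Sigma \boldsymbol \Sigma^\text T$ is positive semidefinite, making the integrand nonnegative pointwise. I expect the main obstacle to be the uniqueness step: the telescoping identity and the convergence estimate are routine once the exponential decay of $\mathbf G$ is in hand, but cleanly establishing that the Lyapunov operator is invertible (via its spectral characterization) is the one genuinely non-mechanical ingredient. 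An alternative route that sidesteps uniqueness would be to compute $\mathbf S$ directly from the explicit mild solution $\mathbf x(t) = \mathbf G(t) \mathbf x(0) + \int_0^t \mathbf G(t-s) \boldsymbol \Sigma \ d \mathbf w(s)$ via the Ito isometry, but this requires more stochastic bookkeeping than the Lyapunov-equation verification above.
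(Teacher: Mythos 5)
Your proposal is correct, and its skeleton matches the paper's proof: both verify that the candidate integral solves the Lyapunov equation \eqref{eq: OU Process Stationary Covariance Matrix Lyapunov Equation} via the telescoping derivative of $\mathbf G(t) \, \mathbf D \, \mathbf G^\text T(t),$ and then appeal to uniqueness. Where you genuinely diverge is the uniqueness step. The paper argues it dynamically and self-containedly: if $\mathbf S_1, \mathbf S_2$ both solve the equation, then $\frac{d}{dt}\bigl(\mathbf G(t) \, (\mathbf S_1-\mathbf S_2) \, \mathbf G^\text T(t)\bigr)=\mathbf 0,$ so this matrix is constant in $t$; since it tends to $\mathbf 0$ as $t \rightarrow \infty$ by the Hurwitz property, it vanishes identically, and invertibility of $\mathbf G(t)$ forces $\mathbf S_1=\mathbf S_2.$ You instead invoke the spectral characterization of the Lyapunov operator $\mathbf X \mapsto \mathbf B \mathbf X + \mathbf X \mathbf B^\text T$: it is singular precisely when two eigenvalues of $\mathbf B$ sum to zero, which is impossible when every eigenvalue of $\mathbf B$ has positive real part. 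That argument is correct and shorter, and it buys you slightly more (invertibility of the operator, hence solvability and uniqueness for an arbitrary right-hand side), but it rests on a nontrivial outside fact --- the Kronecker-sum description of the Sylvester operator's spectrum --- which the thesis never establishes, whereas the paper's route uses only the matrix-exponential properties already proved in \textbf{Theorem \ref{thm: Matrix Exponential Properties}}. Two further small differences: your explicit convergence check for the improper integral is a point the paper omits and is a worthwhile addition; conversely, for positive semidefiniteness you should state explicitly that $\mathbf S$ is symmetric --- immediate from your representation, since each integrand $\mathbf G(t) \, \mathbf D \, \mathbf G^\text T(t)$ is symmetric --- whereas the paper derives symmetry from uniqueness (showing $\mathbf S^\text T$ also solves the equation) before factoring $\mathbf D= \boldsymbol \Sigma \boldsymbol \Sigma^\text T/2$ to exhibit the quadratic form as an integral of squared norms.
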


\begin{proof}
First, we verify that \eqref{eq: OU Process Stationary Covariance Matrix} is indeed a solution to the Lyapunov equation \eqref{eq: OU Process Stationary Covariance Matrix Lyapunov Equation}. Substituting the right hand side of \eqref{eq: OU Process Stationary Covariance Matrix} for $\mathbf S$ into the left hand side of the equation \eqref{eq: OU Process Stationary Covariance Matrix Lyapunov Equation} and utilizing the property $\mathbf G'=-\mathbf B  \mathbf G$ from \textbf{Theorem \ref{thm: Matrix Exponential Properties}}, we have
    \begin{align}
         \mathbf B \mathbf S + \mathbf S \mathbf B^\text T   &= 2 \mathbf B \left(\int_{0}^\infty \mathbf G(t) \  \mathbf D \ \mathbf G^\text T (t) \ dt\right) +  2 \left(\int_{0}^\infty \mathbf G(t) \  \mathbf D \  \mathbf G^\text T (t) \ dt\right) \mathbf B^ \text T \nonumber \\
         &= 2 \int_{0}^\infty \mathbf B \ \mathbf G(t) \  \mathbf D \ \mathbf G^ \text T(t) \ dt + 2 \int_{0}^\infty  \mathbf G(t) \  \mathbf D \  \left(\mathbf B \ \mathbf G(t) \right)^ \text T dt \nonumber  \\
         &= -2 \int_{0}^\infty \mathbf G'(t) \  \mathbf D \  \mathbf G^ \text T (t) \  dt - 2 \int_{0}^\infty \mathbf G(t) \  \mathbf D \  \left(\mathbf G'(t) \right)^ \text T dt \nonumber  \\
         &= -2 \int_0^\infty \left(\mathbf G(t) \ \mathbf D \ \left(\mathbf G^\text T(t) \right) \right)'  \ dt \nonumber \\
         &= \lim_{t \rightarrow \infty} -2 \  \mathbf G(t) \ \mathbf D \ \mathbf G^\text T(t) + 2 \ \mathbf G(0) \ \mathbf D \ \mathbf G^\text T(0) \nonumber \\
         &=\mathbf 0 + 2 \mathbf D = 2 \mathbf D \nonumber .
    \end{align}
Thus, the right hand side of \eqref{eq: OU Process Stationary Covariance Matrix} is a solution of \eqref{eq: OU Process Stationary Covariance Matrix Lyapunov Equation}.

Next, we verify our solution to the Lyapunov equation \eqref{eq: OU Process Stationary Covariance Matrix Lyapunov Equation} is unique. To this end, we let $\mathbf S_1$ and $\mathbf S_2$ be two solutions to \eqref{eq: OU Process Stationary Covariance Matrix Lyapunov Equation}. Via the product rule, we obtain
    \begin{align}
        \left(\mathbf G(t) \ (\mathbf S_1 - \mathbf S_2) \ \mathbf G^\text T(t) \right)' &=  - \mathbf G(t) \ \mathbf B \  (\mathbf S_1 - \mathbf S_2) \ \mathbf G^\text T(t) \nonumber \\
        &\qquad \qquad - \mathbf G(t) \ (\mathbf S_1 - \mathbf S_2) \  \mathbf B^\text T \  \mathbf G^\text T(t) \nonumber \\
        &= - \mathbf G(t) \ \left(\mathbf B \mathbf S_1 + \mathbf S_1 \mathbf B^\text T - \mathbf B \mathbf S_2 - \mathbf S_2 \mathbf B^\text T\right) \ \mathbf G^\text T(t) \nonumber \\
        &= - \mathbf G(t) \ \left(2 \mathbf D - 2 \mathbf D \right) \ \mathbf G^\text T(t) = \mathbf 0 \nonumber,
    \end{align}
which implies $\mathbf G(t) \ (\mathbf S_1 - \mathbf S_2) \ \mathbf G^\text T(t)$ must be a constant function of $t.$ Since $-\mathbf B$ is Hurwitz, we have
    \begin{align}
        \lim_{t \rightarrow \infty} \mathbf G(t) \ (\mathbf S_1 - \mathbf S_2) \ \mathbf G^\text T(t) &= \mathbf 0 \nonumber, 
    \end{align}
which forces 
    \begin{align}
        \mathbf G(t) \ (\mathbf S_1 - \mathbf S_2) \ \mathbf G^\text T(t) &= \mathbf 0
    \end{align}
for all $t \ge 0.$ Since $\mathbf G(t)$ and $\mathbf G^\text T(t)$ are invertible, we left multiply both sides of the equation by $\mathbf G(-t)$ and right multiply both sides of the equation by $\mathbf G^\text T(-t)$ to obtain the equality $\mathbf S_1-\mathbf S_2= \mathbf 0.$ This shows $\mathbf S_1=\mathbf S_2,$ which means the Lyapunov equation \eqref{eq: OU Process Stationary Covariance Matrix Lyapunov Equation} has only one solution.

Finally, we show $\mathbf S$ is positive semidefinite. Note that 
    \begin{align}
        \mathbf B \mathbf S^\text T + \mathbf S^\text T \mathbf B^\text T &= \left( \mathbf B \mathbf S + \mathbf S \mathbf B^\text T \right)^\text T \nonumber  \\
        &= 2 \mathbf D^\text T = 2 \mathbf D,
    \end{align}
which implies $\mathbf S^\text T$ solves the Lyapunov equation \eqref{eq: OU Process Stationary Covariance Matrix Lyapunov Equation}. But by uniqueness, we have $\mathbf S= \mathbf S^\text T,$ which means $\mathbf S$ is symmetric. Now, for all nonzero $\mathbf y \in \R^N,$ we have 
    \begin{align}
        \mathbf y^\text T   \mathbf S 
  \mathbf y &= 2 \int_0^\infty \mathbf y^\text T \  \mathbf G(t) \ \mathbf D \ \mathbf G^\text T(t) \ \mathbf y \ dt \nonumber  \\
 &= \int_0^\infty \mathbf y^\text T \  \mathbf G(t) \ \boldsymbol \Sigma \boldsymbol \Sigma^\text T \ \mathbf G^\text T(t) \ \mathbf y \ dt \nonumber  \\
 &= \int_0^\infty \left \| \boldsymbol \Sigma^\text T \ \mathbf G^\text T (t)  \ \mathbf y \right \|_2^2 \  dt \nonumber,
    \end{align}
which is non-negative because the integrand $\left \| \boldsymbol \Sigma^\text T \ \mathbf G^\text T (t)  \ \mathbf y \right \|_2^2$ is non-negative for all $t \ge 0.$ Hence, $\mathbf S$ is positive semidefinite.
\end{proof}

\section{Statistical Properties}
In this section, we will recall the important statistical properties of the OU Process. We first derive the explicit representation of the OU process i.e. the solution to the governing stochastic differential equation \eqref{eq: OU Process SDE}. The solution also reveals the essential statistics of the OU process. 

\begin{theorem}[OU Process Explicit Representation]\label{thm: OU Process Explicit Representation Theorem}
The unique solution of the governing OU process stochastic differential equation \eqref{eq: OU Process SDE} is explicitly
    \begin{align} \label{eq: OU Process Explicit Representation}
        \mathbf x(t) &= \mathbf G(t) \ \mathbf x(0) + \int_0^t \mathbf G(t-s) \ \boldsymbol \Sigma \ d \mathbf w(s).
    \end{align}
Moreover, the OU process is an Ito diffusion process that is square integrable.
\end{theorem}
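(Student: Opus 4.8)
The plan is to solve the linear SDE \eqref{eq: OU Process SDE} by the integrating-factor (variation of parameters) method, while obtaining existence, uniqueness, and the diffusion and square-integrability properties by invoking \textbf{Theorem \ref{thm: SDE Solution Existence and Uniqueness}}. The ``Moreover'' claim will be handled first and essentially for free, after which the remaining work is to identify the unique solution with the right-hand side of \eqref{eq: OU Process Explicit Representation}.

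First I would check the hypotheses of \textbf{Theorem \ref{thm: SDE Solution Existence and Uniqueness}} for the coefficients $\mathbf b(t, \mathbf y) = -\mathbf B \mathbf y$ and $\boldsymbol \sigma(t, \mathbf y) = \boldsymbol \Sigma.$ The linear growth bound $\|\mathbf B \mathbf y\| + \|\boldsymbol \Sigma\|_F \le C(1 + \|\mathbf y\|)$ holds with $C = \max(\|\mathbf B\|, \|\boldsymbol \Sigma\|_F),$ where $\|\mathbf B\|$ is the operator norm, and the Lipschitz bound $\|\mathbf B \mathbf z - \mathbf B \mathbf y\| \le \|\mathbf B\| \, \|\mathbf z - \mathbf y\|$ holds with $D = \|\mathbf B\|.$ Since $\mathbf x(0) \sim \mathcal N(\mathbf 0, \mathbf S)$ satisfies $\E[\|\mathbf x(0)\|^2] = \text{tr}(\mathbf S) < \infty$ and is independent of the Wiener process, the theorem guarantees a unique solution that is a square-integrable diffusion adapted to the standard filtration of the Wiener process. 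This settles the ``Moreover'' assertion.

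To derive the explicit formula, I would apply It\^o's product rule to the process $\mathbf y(t) = \mathbf G(-t) \mathbf x(t) = \exp(t \mathbf B) \mathbf x(t),$ where $\mathbf x(t)$ denotes the unique solution just obtained. Because $\mathbf G(-t)$ is deterministic and continuously differentiable, the product rule carries no quadratic-covariation correction term, and using $\frac{d}{dt}\exp(t\mathbf B) = \mathbf B \exp(t \mathbf B)$ together with the commutativity of $\mathbf B$ and $\exp(t \mathbf B)$ from \textbf{Theorem \ref{thm: Matrix Exponential Properties}}, the drift contributions $\mathbf B \mathbf G(-t)\mathbf x(t)$ and $-\mathbf G(-t)\mathbf B \mathbf x(t)$ cancel exactly. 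This leaves $d\mathbf y(t) = \mathbf G(-t)\boldsymbol \Sigma \, d\mathbf w(t).$ Integrating from $0$ to $t$ and using $\mathbf G(0) = \mathbf I,$ I obtain $\mathbf G(-t)\mathbf x(t) = \mathbf x(0) + \int_0^t \mathbf G(-s)\boldsymbol \Sigma \, d\mathbf w(s).$ Left-multiplying by the deterministic matrix $\mathbf G(t),$ passing it inside the stochastic integral, and invoking the semigroup identity $\mathbf G(t)\mathbf G(-s) = \exp(-(t-s)\mathbf B) = \mathbf G(t-s)$ then yields precisely \eqref{eq: OU Process Explicit Representation}.

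The main obstacle is the careful justification of the stochastic-calculus manipulations rather than any deep idea: one must confirm that the It\^o product rule applied to a deterministic finite-variation factor times an It\^o process reduces to the ordinary Leibniz rule with no bracket term, and that a deterministic matrix may be passed through the It\^o integral sign, which follows from the definition \eqref{eq: Ito Integral} of the stochastic integral as an $L^2$-limit of approximating sums through which a constant matrix factor passes directly. A secondary point is the bookkeeping with noncommuting matrices: the cancellation of the drift terms relies essentially on $\mathbf B$ commuting with its own exponential, so I would flag this explicitly when invoking \textbf{Theorem \ref{thm: Matrix Exponential Properties}}. Alternatively, one could bypass the derivation by \emph{defining} $\mathbf x(t)$ through \eqref{eq: OU Process Explicit Representation}, verifying by It\^o's formula that it solves \eqref{eq: OU Process SDE}, and appealing to uniqueness; I prefer the integrating-factor route since it produces the formula constructively.
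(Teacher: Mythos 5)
Your proposal is correct and follows essentially the same route as the paper's own proof: both verify the hypotheses of \textbf{Theorem \ref{thm: SDE Solution Existence and Uniqueness}} for $\mathbf b(t,\mathbf y)=-\mathbf B\mathbf y$, $\boldsymbol\sigma(t,\mathbf y)=\boldsymbol\Sigma$ to get uniqueness, the diffusion property, and square integrability, and both derive the formula by applying It\^o's product rule to $\mathbf y(t)=\mathbf G(-t)\mathbf x(t)$, cancelling the drift via commutativity of $\mathbf B$ with its exponential, integrating, and left-multiplying by $\mathbf G(t)$. If anything, your treatment of the Lipschitz/linear-growth constants ($D=\|\mathbf B\|$ in operator norm) is slightly cleaner than the paper's appeal to ``boundedness'' of $\mathbf b$, which as stated is a minor misstatement since $\mathbf y\mapsto-\mathbf B\mathbf y$ is Lipschitz but not bounded.
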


\begin{proof}
First, we show that the right hand side of \eqref{eq: OU Process Explicit Representation} solves the stochastic differential equation  \eqref{eq: OU Process SDE}. Let 
	\begin{align}
		\mathbf y(t) &= \mathbf G(-t)  \ \mathbf x(t) \nonumber.
	\end{align}
Using Ito's product rule and the Ito formalisms \cite[Theorem 4.2.1]{Oksendal2003}, namely
\begin{align*}
	(dt)^2 =0 \ , \ d \mathbf w(t) \  dt =\mathbf 0,
\end{align*}
we obtain
    \begin{align}
        d \mathbf y(t) &= d \left(\mathbf G(-t) \  \mathbf x(t) \right) \nonumber \\
        &=  d\mathbf G(-t) \ \mathbf x(t) +  \mathbf G(-t) \ d \mathbf x(t) + d \mathbf G(-t) \ d \mathbf x(t) \nonumber  \\
        &=\mathbf B  \mathbf G(-t) \  \mathbf x(t) \ dt + \mathbf G(-t)  \left(- \mathbf B \ \mathbf x(t) \ dt + \boldsymbol \Sigma \ d \mathbf w(t) \right) \nonumber
        \\ & \qquad + \left(\mathbf B  \mathbf G(-t) \ dt \right) (- \mathbf B \ \mathbf x(t) \ dt + \boldsymbol \Sigma \ d \mathbf w(t)) \nonumber \\
        &=  \mathbf G(-t) \   \boldsymbol \Sigma \ d \mathbf w(t)  + \left(\mathbf B  \mathbf G(-t) \ dt \right) (- \mathbf B \ \mathbf x(t) \ dt + \boldsymbol \Sigma \ d \mathbf w(t)) \nonumber  \\
        &= \mathbf G(-t) \   \boldsymbol \Sigma \ d \mathbf w(t)  \label{eq: OU Process Transformed SDE},
    \end{align}
Formally, \eqref{eq: OU Process Transformed SDE} is the integral equation
    \begin{align}
        \mathbf y(t) - \mathbf y(0) &= \int_0^t \mathbf G(-s)  \  \boldsymbol \Sigma \ d \mathbf w(s).
    \end{align}
Upon recognizing that $\mathbf y(0)=\mathbf x(0)$ and recalling that $\mathbf G(-t)$ is invertible with inverse $\mathbf G(t)$ by \textbf{Theorem \ref{thm: Matrix Exponential Properties}},
we have 
    \begin{align}
        \mathbf x(t) &= \mathbf G(t) \ \mathbf y(t) \nonumber \\
        &= \mathbf G(t) \left(\mathbf y(0) + \int_0^t \mathbf G(-s) \    \boldsymbol \Sigma \ d \mathbf w(s)  \right) \nonumber \\
        &= \mathbf G(t) \left(\mathbf x(0) + \int_0^t \mathbf G(-s) \    \boldsymbol \Sigma \ d \mathbf w(s)  \right) \nonumber \\
        &= \mathbf G(t) \  \mathbf x(0) + \int_0^t \mathbf G(t-s) \   \boldsymbol \Sigma \ d \mathbf w(s).  \nonumber 
    \end{align}
Thus, the right side of \eqref{eq: OU Process Explicit Representation} is a solution to the governing stochastic differential equation \eqref{eq: OU Process SDE}.

Next, we show that \eqref{eq: OU Process SDE} has a unique solution that is an Ito diffusion process that is square integrable. We need to verify the existence and uniqueness criteria listed in \textbf{Theorem \ref{thm: SDE Solution Existence and Uniqueness}} for the OU process. Define  $\mathbf b:  [0,\infty) \times \R^N \rightarrow \R^N$ and $\boldsymbol \sigma: [0,\infty) \times \R^N \rightarrow \Mat_{N,M}(\R)$ by
	\begin{align}
		\mathbf b(t,\mathbf y) &= - \mathbf B \mathbf y \\
		\boldsymbol \sigma(t,\mathbf y) &= \boldsymbol \Sigma.
	\end{align}
Since $\mathbf b(t, \cdot)$ is a linear operator that is independent of the variable $t,$ it is continuous and is hence measurable. Since $\boldsymbol \sigma$ is constant function in both $\mathbf y$ and $t,$ it is automatically measurable. Furthermore, $\mathbf b$ is bounded; by the definition of boundedness, there exists a constant $D$ directly satisfying \eqref{eq: SDE Existence Uniquenness Condition 2}. Now, let $C=\max(D,\|\boldsymbol \Sigma\|_F).$ Then
\begin{align}
	\|\mathbf b(\mathbf y,t) \| + \| \boldsymbol \sigma(\mathbf y,t) \|_F &= \|\mathbf b(\mathbf y,t) \| + \|\boldsymbol \Sigma\|_F \nonumber \\
	&\le D \| \mathbf y \| + \|\boldsymbol \Sigma\|_F \nonumber \\
	&\le C\| \mathbf y \| + C = C(1 + \| \mathbf y\|). \nonumber 
\end{align}
Hence, our choice of $C$ satisfies \eqref{eq: SDE Existence Uniquenness Condition 1}.  
Moreover, recall by assumption that $\mathbf x(0) \sim \mathcal N(\mathbf 0, \mathbf S)$ is independent of the Wiener process, and $\E[\| \mathbf x(0) \|^2]$ exists by virtue of $\mathbf x(0)$ having a finite covariance matrix. Hence, the SDE  \eqref{eq: OU Process SDE} satisfies the assumptions listed in \textbf{Theorem \ref{thm: SDE Solution Existence and Uniqueness}}. This completes the proof.
\end{proof}

Using the solution of the OU process, we derive its important statistical properties.

\begin{theorem}[OU Process is Gaussian]\label{thm: OU Process is Gaussian}
The OU process is a Gaussian process.
\end{theorem}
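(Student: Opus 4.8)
The plan is to combine the explicit representation of the OU process from \textbf{Theorem \ref{thm: OU Process Explicit Representation Theorem}} with the definition of a Gaussian process. Fix $K \in \N$, times $t_1 \ , \ \dots \ , \ t_K \ge 0$, and scalars $\alpha_1 \ , \ \dots \ , \ \alpha_K \in \R$; I must show the linear combination $\mathbf z = \sum_{k=1}^K \alpha_k \mathbf x(t_k)$ is a Gaussian random vector. Substituting the explicit representation $\mathbf x(t) = \mathbf G(t) \mathbf x(0) + \int_0^t \mathbf G(t-s) \boldsymbol \Sigma \ d \mathbf w(s)$, I would split $\mathbf z$ into a deterministic-matrix multiple of $\mathbf x(0)$, namely $\left(\sum_{k=1}^K \alpha_k \mathbf G(t_k)\right) \mathbf x(0)$, plus a sum of Ito integrals $\sum_{k=1}^K \alpha_k \int_0^{t_k} \mathbf G(t_k - s) \boldsymbol \Sigma \ d \mathbf w(s)$.

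The first summand is a linear transformation of the Gaussian vector $\mathbf x(0) \sim \mathcal N(\mathbf 0, \mathbf S)$ and is therefore Gaussian. The heart of the argument is showing that the stochastic-integral summand is Gaussian. I would return to the definition of the Ito integral in \eqref{eq: Ito Integral} as a mean-square limit of approximating sums of the form $\sum_j \mathbf G(t_k - s_j) \boldsymbol \Sigma \left(\mathbf w(s_{j+1}) - \mathbf w(s_j)\right)$. Because the integrand is deterministic, each such sum is a fixed linear combination of the increments of the Wiener process, which are jointly Gaussian; hence each approximating sum is itself Gaussian.

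To combine the two pieces, I would use the hypothesis that $\mathbf x(0)$ is independent of the Wiener process, so that the concatenation of $\mathbf x(0)$ with any finite collection of Wiener increments is jointly Gaussian. Consequently, every approximant of $\mathbf z$ obtained by replacing each Ito integral with its approximating sum is a linear combination of jointly Gaussian quantities, and is thus Gaussian. Since $\mathbf z$ is the mean-square limit of these Gaussian random vectors, it remains only to invoke the fact that Gaussianity is preserved under $L^2$ limits.

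The main obstacle will be this final closure step: justifying that a mean-square limit of Gaussian vectors is Gaussian. I would handle it by observing that $L^2$ convergence forces convergence of means and covariances, and hence convergence in distribution; passing to the limit in the (Gaussian) characteristic functions $\exp\left(i \mathbf u^\text T \boldsymbol \mu_n - \tfrac{1}{2} \mathbf u^\text T \mathbf C_n \mathbf u\right)$, where $\boldsymbol \mu_n$ and $\mathbf C_n$ are the mean and covariance of the $n$-th approximant, yields a characteristic function of the same Gaussian form and thereby identifies the limit as Gaussian. Everything else is routine bookkeeping with linearity and the stated independence.
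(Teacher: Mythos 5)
Your proposal is correct, and it follows the same basic skeleton as the paper's proof: decompose each $\mathbf x(t_k)$ via the explicit representation into the deterministic part $\mathbf G(t_k)\,\mathbf x(0)$ and the Wiener-integral part, handle the first by linear transformation of the Gaussian $\mathbf x(0)$, and glue the two pieces together using the independence of $\mathbf x(0)$ and the Wiener process. Where you genuinely diverge is at the heart of the argument: the paper disposes of the stochastic-integral part by citing the standard result (Theorem 3.2.1 of Oksendal) that Ito integrals of deterministic integrands form a Gaussian process, whereas you re-derive that fact from first principles --- each approximating sum $\sum_j \mathbf G(t_k - s_j)\,\boldsymbol \Sigma\,\bigl(\mathbf w(s_{j+1}) - \mathbf w(s_j)\bigr)$ is a linear image of jointly Gaussian Wiener increments, and the mean-square limit of Gaussian vectors is Gaussian by the characteristic-function argument you sketch (noting that $L^2$ convergence forces convergence of the means $\boldsymbol \mu_n$ and covariances $\mathbf C_n$, hence pointwise convergence of $\exp\bigl(i\,\mathbf u^\text T \boldsymbol \mu_n - \tfrac{1}{2}\,\mathbf u^\text T \mathbf C_n\,\mathbf u\bigr)$ to a function of the same form). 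Your route is more self-contained and makes transparent exactly which properties of the Wiener process are used (independent Gaussian increments and nothing more); the paper's route is shorter and delegates the $L^2$-closure machinery to the cited reference. One point worth making explicit if you write this up: when you combine the approximants across the indices $k = 1, \dots, K$, take a common refinement of the partitions so that all $K$ approximating sums are linear combinations of the \emph{same} finite family of Wiener increments; joint Gaussianity of the whole approximant vector, together with $\mathbf x(0)$, is then immediate from independence, which is exactly the step your phrase ``routine bookkeeping'' is carrying.
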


\begin{proof}
Consider the stochastic processes $\left \lbrace \mathbf y(t) \right \rbrace_{t \ge 0}$ and $\left \lbrace \mathbf z(t) \right \rbrace_{t \ge 0},$ where
    \begin{align}
        \mathbf y(t) &= \mathbf G(t) \ \mathbf x(0) \nonumber \\
        \mathbf z(t) &= \mathbf G(t) \  \int_0^t \mathbf G(-s) \ \boldsymbol \Sigma \  d \mathbf w(s) \nonumber 
    \end{align}
for each $t \ge 0.$ Note that every finite linear combination of random vectors from the OU process is the sum of a finite linear combination of random vectors from $\left \lbrace \mathbf y(t) \right \rbrace_{t \ge 0}$ and a finite linear combination of random vectors from $\left \lbrace \mathbf z(t) \right \rbrace_{t \ge 0}.$ Furthermore, any linear combination of random vectors from $\left \lbrace \mathbf y(t) \right \rbrace_{t \ge 0}$ is independent of any linear combination of random vectors from $\left \lbrace \mathbf z(t) \right \rbrace_{t \ge 0}$ since $\mathbf x(0)$ is independent of the Wiener process.

We show that the process $\left \lbrace \mathbf y(t) \right \rbrace_{t \ge 0}$ is Gaussian. Every finite linear combination of random vectors from $\left \lbrace \mathbf y(t) \right \rbrace_{t \ge 0}$ is of the form
    \begin{align} \label{eq: OU Process Deterministic Component Linear Combination}
        \sum_{k=1}^K \alpha_k \   \mathbf y(t_k) &= \left(\sum_{k=1}^K \alpha_k \ \mathbf G(t_k) \right) \ \mathbf x(0)
    \end{align}
for some $K \in \N$ and times $t_1 \ , \ \dots \ ,\ t_K \ge 0$ and constants $\alpha_1 \ , \ \dots \ , \ \alpha_K \in \R.$ Observe that the right hand side of \eqref{eq: OU Process Deterministic Component Linear Combination} is a deterministic linear transformation of the Gaussian random vector $\mathbf x(0),$ which means it is also Gaussian. Hence, $\left \lbrace \mathbf y(t) \right \rbrace_{t \ge 0}$ is a Gaussian process.

Next, we show $\left \lbrace \mathbf z(t) \right \rbrace_{t \ge 0}$ is a Gaussian process. Consider the map $(s,\omega) \mapsto \mathbf G(t-s)  \ \boldsymbol \Sigma$ defined between $[0,\infty) \times \Omega$ and $\Mat_{N,M}(\R).$ Since this map does not depend on the variable $\omega,$ it is a deterministic matrix-valued function whose $NM$ entry functions are deterministic real-valued functions. These entry functions are automatically adapted to the standard filtration of the Wiener process and are continuous in $s,$ which means they are square integrable over any closed, finite interval. Hence, by \cite[Theorem 3.2.1]{Oksendal2003} , the process $\left \lbrace \mathbf z(t) \right \rbrace_{t \ge 0}$ is a Gaussian process.


Finally, every finite linear combination of random vectors from the OU process is the sum of two independent Gaussian random vectors, one of which is a linear combination of random vectors from the process $\left \lbrace \mathbf y(t) \right \rbrace_{t \ge 0},$ while the other is a linear combination of random vectors from the process $\left \lbrace \mathbf z(t) \right \rbrace_{t \ge 0}.$ Therefore, every linear combination of random vectors from the OU process is  a Gaussian random vector. This proves the claim.

\end{proof}

\begin{theorem}[OU Process Statistics]\label{thm: OU Process Statistics}
The OU process is a Gaussian process whose mean, auto-covariance functions are
    \begin{align} \label{eq: OU Process Autocovariance Function}
    	\boldsymbol \mu_{\mathbf x}(t) &= \mathbf 0\\
        \boldsymbol \Gamma_{\mathbf x}(s,t) &= \mathbf G(s) \  \mathbf S \ \mathbf G^\text T(t) + 2 \int_0^{\min(s,t)} \mathbf G(s-u) \ \mathbf D \ \mathbf G^\text T(t-u)\ du,
    \end{align}
respectively.
\end{theorem}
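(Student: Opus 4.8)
The plan is to read off both statistics directly from the explicit representation \eqref{eq: OU Process Explicit Representation} established in \textbf{Theorem \ref{thm: OU Process Explicit Representation Theorem}}, exploiting the independence of $\mathbf x(0)$ from the Wiener process and the fact that an Ito integral of a deterministic integrand is a mean-zero random vector. Gaussianity has already been proven in \textbf{Theorem \ref{thm: OU Process is Gaussian}}, so only the two formulas remain to be verified.

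First I would compute the mean. Applying $\E[\cdot]$ to \eqref{eq: OU Process Explicit Representation} and using linearity, the term $\mathbf G(t) \ \mathbf x(0)$ contributes $\mathbf G(t) \ \E[\mathbf x(0)] = \mathbf 0$ since $\mathbf x(0) \sim \mathcal N(\mathbf 0, \mathbf S),$ while the stochastic integral $\int_0^t \mathbf G(t-s) \ \boldsymbol \Sigma \ d \mathbf w(s)$ has mean $\mathbf 0$ because it is a martingale vanishing at $t=0.$ Hence $\boldsymbol \mu_{\mathbf x}(t) = \mathbf 0.$

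Next, since the mean vanishes, $\boldsymbol \Gamma_{\mathbf x}(s,t) = \Cov(\mathbf x(s), \mathbf x(t)) = \E[\mathbf x(s) \ \mathbf x^\text T(t)].$ I would substitute \eqref{eq: OU Process Explicit Representation} for both $\mathbf x(s)$ and $\mathbf x(t)$ and expand the product into four terms: a purely deterministic term $\mathbf G(s) \ \mathbf x(0) \ \mathbf x^\text T(0) \ \mathbf G^\text T(t),$ two mixed terms each pairing $\mathbf x(0)$ against a stochastic integral, and a purely stochastic term. Taking expectations, the deterministic term yields $\mathbf G(s) \ \E[\mathbf x(0) \ \mathbf x^\text T(0)] \ \mathbf G^\text T(t) = \mathbf G(s) \ \mathbf S \ \mathbf G^\text T(t),$ using that the covariance matrix of the mean-zero vector $\mathbf x(0)$ is $\mathbf S.$ The two mixed terms vanish because $\mathbf x(0)$ is independent of the Wiener process and each Ito integral has mean $\mathbf 0,$ so each expectation factorizes into $\E[\mathbf x(0)]$ times a mean-zero factor.

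The crux is the purely stochastic term, which I would evaluate with the vector-valued Ito isometry in covariance form: for deterministic matrix integrands $\mathbf F, \mathbf H,$ one has $\E \left[ \left( \int_0^s \mathbf F(u) \ d \mathbf w(u) \right) \left( \int_0^t \mathbf H(u) \ d \mathbf w(u) \right)^\text T \right] = \int_0^{\min(s,t)} \mathbf F(u) \ \mathbf H^\text T(u) \ du,$ which relies on the standard Wiener increments having covariance $(t-s) \ \mathbf I.$ Taking $\mathbf F(u) = \mathbf G(s-u) \ \boldsymbol \Sigma$ and $\mathbf H(u) = \mathbf G(t-u) \ \boldsymbol \Sigma,$ the integrand becomes $\mathbf G(s-u) \ \boldsymbol \Sigma \boldsymbol \Sigma^\text T \ \mathbf G^\text T(t-u) = 2 \ \mathbf G(s-u) \ \mathbf D \ \mathbf G^\text T(t-u)$ by \eqref{eq: OU Process Diffusion Matrix}, producing exactly the claimed integral term. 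The main obstacle is justifying this covariance-form isometry in the multidimensional setting; componentwise it reduces to the one-dimensional Ito isometry together with the independence of distinct Wiener components, for which I would invoke \cite[Theorem 3.2.1]{Oksendal2003}. Summing the two surviving terms gives $\boldsymbol \Gamma_{\mathbf x}(s,t),$ completing the proof.
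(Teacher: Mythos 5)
Your proposal is correct and follows essentially the same route as the paper: decompose $\mathbf x(t)$ into the deterministic part $\mathbf G(t)\,\mathbf x(0)$ and the stochastic integral, kill the mixed terms via independence of $\mathbf x(0)$ and the Wiener process, recover $\mathbf G(s)\,\mathbf S\,\mathbf G^\text{T}(t)$ from the initial condition, and evaluate the purely stochastic term by Ito's isometry. The only cosmetic difference is that you invoke the cross-covariance (polarization) form of the isometry with the $\min(s,t)$ built in, whereas the paper derives it on the spot by splitting $\int_0^t$ as $\int_0^s + \int_s^t$ and using independent increments — the same argument, packaged as a lemma.
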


\begin{proof}
First, we determine the mean function of the OU process. Let $\left \lbrace \mathbf y(t) \right \rbrace_{t \ge 0}$ and $\left \lbrace \mathbf z(t) \right \rbrace_{t \ge 0}$ be the processes as defined in the proof of the previous theorem. Since $\mathbf x(t)= \mathbf y(t) + \mathbf z(t)$ for each $t \ge 0,$ we use the linearity of the expectation operator to obtain
	\begin{align}
		\boldsymbol \mu_{\mathbf x}(t) &= \boldsymbol \mu_{\mathbf y}(t) + \boldsymbol \mu_{\mathbf z}(t) \nonumber \\
		&=\mathbf G(t) \ \E[\mathbf x(0)] + \E \left[ \int_0^t \mathbf G(t-s) \ \boldsymbol \Sigma \ d \mathbf w(s) \right] \label{eq: OU Process Mean Function 1} \\
		&= \mathbf 0 + \mathbf 0 =\mathbf 0 \nonumber,
	\end{align} 
in which we recognized the expectation of the stochastic integral term in \eqref{eq: OU Process Mean Function 1} vanishes due to the process $\left \lbrace \mathbf z(t) \right \rbrace_{t \ge 0}$ being a martingale.

Next, we calculate the autocovariance function of the OU process.  Using the definition \eqref{eq: Stochastic Process Autocovariance Function} and the bilinearity of the covariance operator, we have
	\begin{align}
		\boldsymbol \Gamma_\mathbf x(s,t) &= \Cov \left(\mathbf x(s) , \mathbf x(t) \right) \nonumber  \\
		&= \Cov \left(\mathbf y(s)  + \mathbf z(s) ,\mathbf y(t)  + \mathbf z(t) \right ) \label{eq: OU Process Autocovariance Function 1} \\
		&= \Cov \left(\mathbf y(s)  , \mathbf y(t)  \right) + \Cov \left( \mathbf z(s), \mathbf z(t) \right ) \label{} \label{eq: OU Process Autocovariance Function 2} \\
		&= \boldsymbol \Gamma_{\mathbf y}(s,t) +  \boldsymbol \Gamma_{\mathbf z}(s,t) \nonumber \\
		&= \mathbf G(s) \ \Cov(\mathbf x(0) , \mathbf x(0)) \ \mathbf G^\text T(t) + \boldsymbol \Gamma_{\mathbf z}(s,t) \nonumber \\
		&= \mathbf G(s) \ \mathbf S \ \mathbf G^\text T(t) + \boldsymbol \Gamma_{\mathbf z}(s,t) \nonumber,
	\end{align}
in which \eqref{eq: OU Process Autocovariance Function 2} is the result of expanding out the covariance expression in  \eqref{eq: OU Process Autocovariance Function 1} using bilinearity and the fact $\mathbf y(s)$ and $\mathbf z(t)$ are independent and hence uncorrelated. We now determine $\boldsymbol \Gamma_{\mathbf z}(s,t).$ For all $s \le t,$ we use the additivity of the stochastic integral and the bilinearity of the covariance operator to obtain   
    \begin{align}
	   \boldsymbol \Gamma_{\mathbf z}(s,t) &=  \Cov \left( \int_0^s \mathbf G(s-u) \   \boldsymbol \Sigma \ d \mathbf w(u) \ , \  \int_0^t \mathbf G(t-v) \   \boldsymbol \Sigma \ d \mathbf w(v) \right) \nonumber  \\
	        &= \Cov \left( \int_0^s \mathbf G(s-u) \   \boldsymbol \Sigma \ d \mathbf w(u) \ , \  \int_0^s \mathbf G(t-v) \   \boldsymbol \Sigma \ d \mathbf w(v)  \right)  \label{eq: OU Process Stochastic Part Autocovariance Function 1} \\
	        &\qquad + \Cov \left ( \int_0^s \mathbf G(s-u) \   \boldsymbol \Sigma \ d \mathbf w(u),  \int_s^t \mathbf G(t-v) \   \boldsymbol \Sigma \ d \mathbf w(v) \right) \nonumber  \\
	       &=  \Cov \left( \int_0^s \mathbf G(s-u) \   \boldsymbol \Sigma \ d \mathbf w(u) \ , \  \int_0^s \mathbf G(t-v) \   \boldsymbol \Sigma \ d \mathbf w(v)  \right) \label{eq: OU Process Stochastic Part Autocovariance Function 2} \\
	       &= \int_0^s \mathbf G(s-u) \ \boldsymbol \Sigma \ \left(\mathbf G(t-u) \ \boldsymbol \Sigma \right)^\text T \ du \label{eq: OU Process Stochastic Part Autocovariance Function 3}\\
	       &=2  \int_0^s \mathbf G(s-u) \ \mathbf D \ \mathbf G^\text T(t-u) \ du \nonumber,
	    \end{align}
in which the second covariance term in \eqref{eq: OU Process Stochastic Part Autocovariance Function 1} vanishes due to the Wiener process having independent increments, and \eqref{eq: OU Process Stochastic Part Autocovariance Function 3} follows from evaluating the covariance term in \eqref{eq: OU Process Stochastic Part Autocovariance Function 2} with Ito's Isometry \cite[Corollary 3.1.7]{Oksendal2003}. Similar reasoning gives 
	\begin{align}
		\boldsymbol \Gamma_\mathbf z(s,t) &= 2  \int_0^t \mathbf G(s-u) \ \mathbf D \ \mathbf G^\text T(t-u) \ du \nonumber
	\end{align}
for $s \ge t.$ Upon combining the results corresponding to the cases $s \le t$ and $s \ge t,$  we have
	\begin{align}
		\boldsymbol \Gamma_\mathbf z(s,t) &= 2  \int_0^{\min(s,t)}\mathbf G(s-u) \ \mathbf D \ \mathbf G^\text T(t-u) \ du \nonumber
	\end{align}
for all $s,t \ge 0.$  Hence, we obtain the OU process autocovariance function.
\end{proof}

Based on the time-dependent statistics, we can deduce the OU process is stationary.
\begin{theorem}[OU Process Stationary]\label{thm: OU Process Stationary}
The OU process is stationary, and its covariance function is 
	\begin{align} \label{eq: OU Process Covariance Function}
		\mathbf V_\mathbf x(t) &= \mathbf S
	\end{align}
for all $t \ge 0.$
\end{theorem}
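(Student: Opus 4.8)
The plan is to exploit the fact, recorded in \textbf{Theorem \ref{thm: OU Process is Gaussian}}, that the OU process is Gaussian, so that strict stationarity is equivalent to weak stationarity. It therefore suffices to check the two defining conditions of weak stationarity: that the mean function is identically constant and that $(t,h) \mapsto \boldsymbol \Gamma_\mathbf x(t,t+h)$ depends only on $h$. The first condition is immediate from \textbf{Theorem \ref{thm: OU Process Statistics}}, which gives $\boldsymbol \mu_\mathbf x(t) = \mathbf 0$ for all $t \ge 0$. The heart of the argument is then the covariance computation, namely showing $\mathbf V_\mathbf x(t) = \boldsymbol \Gamma_\mathbf x(t,t) = \mathbf S$; the translation-invariance of the autocovariance will follow from the same mechanism.

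To compute $\mathbf V_\mathbf x(t)$, I would start from the autocovariance formula in \textbf{Theorem \ref{thm: OU Process Statistics}} evaluated at $s=t$, and apply the change of variables $v = t-u$ in the integral term to write
\begin{align}
\mathbf V_\mathbf x(t) &= \mathbf G(t) \ \mathbf S \ \mathbf G^\text T(t) + 2 \int_0^t \mathbf G(v) \ \mathbf D \ \mathbf G^\text T(v) \ dv \nonumber.
\end{align}
The key structural input is the semigroup property $\mathbf G(t+w) = \mathbf G(t)\,\mathbf G(w)$, which holds because $\mathbf G(t)=\exp(-t\mathbf B)$ and the matrices $-t\mathbf B,-w\mathbf B$ commute, together with $\mathbf G^\text T(t+w) = \mathbf G^\text T(w)\,\mathbf G^\text T(t)$ from \textbf{Theorem \ref{thm: Matrix Exponential Properties}}. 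Using the explicit formula $\mathbf S = 2\int_0^\infty \mathbf G(v)\,\mathbf D\,\mathbf G^\text T(v)\,dv$ from \textbf{Theorem \ref{thm: OU Process Stationary Covariance Matrix Explicit Formula}}, I would split this integral at $v=t$ and, in the tail $\int_t^\infty$, substitute $v=t+w$ and pull the constant factors $\mathbf G(t)$ and $\mathbf G^\text T(t)$ outside the integral to obtain
\begin{align}
2\int_t^\infty \mathbf G(v) \ \mathbf D \ \mathbf G^\text T(v) \ dv &= \mathbf G(t) \left( 2\int_0^\infty \mathbf G(w) \ \mathbf D \ \mathbf G^\text T(w) \ dw \right) \mathbf G^\text T(t) = \mathbf G(t) \ \mathbf S \ \mathbf G^\text T(t) \nonumber.
\end{align}
Substituting this back shows $\mathbf S = 2\int_0^t \mathbf G(v)\,\mathbf D\,\mathbf G^\text T(v)\,dv + \mathbf G(t)\,\mathbf S\,\mathbf G^\text T(t) = \mathbf V_\mathbf x(t)$, which is the claimed covariance identity.

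Finally, for the translation-invariance condition, I would repeat the same manipulation on $\boldsymbol \Gamma_\mathbf x(t,t+h)$ for $h\ge 0$: with $\min(t,t+h)=t$, the change of variables $v=t-u$ and the factorization $\mathbf G^\text T(v+h)=\mathbf G^\text T(v)\,\mathbf G^\text T(h)$ let me pull $\mathbf G^\text T(h)$ out on the right, yielding
\begin{align}
\boldsymbol \Gamma_\mathbf x(t,t+h) &= \left[ \mathbf G(t) \ \mathbf S \ \mathbf G^\text T(t) + 2\int_0^t \mathbf G(v) \ \mathbf D \ \mathbf G^\text T(v) \ dv \right] \mathbf G^\text T(h) = \mathbf V_\mathbf x(t) \ \mathbf G^\text T(h) = \mathbf S \ \mathbf G^\text T(h) \nonumber,
\end{align}
which depends only on $h$. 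Both conditions for weak stationarity then hold, and Gaussianity upgrades this to strict stationarity. I expect the main obstacle to be purely bookkeeping: carefully justifying the change of variables and the extraction of the constant matrices $\mathbf G(t),\mathbf G^\text T(t)$ from the matrix-valued integral, and correctly invoking the commuting-exponential semigroup identity, since everything hinges on factoring $\mathbf G$ at shifted times. No genuinely new idea beyond the semigroup property and the explicit form of $\mathbf S$ should be required.
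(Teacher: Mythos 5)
Your proposal is correct, but it takes a genuinely different route from the paper's proof. Both arguments reduce strict stationarity to weak stationarity via Gaussianity and both start from the autocovariance formula of \textbf{Theorem \ref{thm: OU Process Statistics}}, but the mechanisms diverge from there. The paper differentiates $\boldsymbol \Gamma_{\mathbf x}(t+h,t)$ with respect to $t$: after the change of variables $u \mapsto t-u$ in the integral term, the product rule, the commutation of $\mathbf B$ with $\mathbf G(t)$, and the Lyapunov equation $\mathbf B \mathbf S + \mathbf S \mathbf B^\text T = 2\mathbf D$ make the partial derivative vanish identically, and constancy in $t$ then follows from connectedness of $[0,\infty)^2$; the identity $\mathbf V_{\mathbf x}(t) = \mathbf S$ is read off at $t=h=0$. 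You instead work integrally: you invoke the explicit representation $\mathbf S = 2\int_0^\infty \mathbf G(v)\,\mathbf D\,\mathbf G^\text T(v)\,dv$ from \textbf{Theorem \ref{thm: OU Process Stationary Covariance Matrix Explicit Formula}}, split it at $v=t$, and use the semigroup factorization $\mathbf G(t+w) = \mathbf G(t)\mathbf G(w)$ (valid since $-t\mathbf B$ and $-w\mathbf B$ commute) to identify the tail with $\mathbf G(t)\,\mathbf S\,\mathbf G^\text T(t)$, so that $\mathbf V_{\mathbf x}(t)=\mathbf S$ drops out as a purely algebraic identity with no differentiation and no connectedness argument. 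Your route has two advantages: it sidesteps the (implicit) justification of differentiating under the integral sign, and it produces the explicit stationary autocovariance $\boldsymbol \Gamma_{\mathbf x}(t,t+h) = \mathbf S\,\mathbf G^\text T(h)$ as a byproduct, which the paper never writes down. The paper's route has the complementary advantage of relying only on the Lyapunov equation, the defining property of $\mathbf S$, rather than on its integral representation; in settings where such a closed-form representation is unavailable, the differential argument still runs. Your bookkeeping is sound: the direction of the factorization $\mathbf G^\text T(v+h) = \mathbf G^\text T(v)\,\mathbf G^\text T(h)$ is legitimate precisely because $\mathbf G(v)$ and $\mathbf G(h)$ commute, and your restriction to $h \ge 0$ matches the paper's definition of weak stationarity on $[0,\infty)^2$.
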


\begin{proof}
Since the OU Process is a Gaussian process, it suffices to show the process is weakly stationary. In particular, we show the map $(t,h) \mapsto \boldsymbol \Gamma_{\mathbf x}(t+h,t)$ defined between $[0,\infty)^2$ and $\Mat_{N,N}(\R)$ depends only on $h.$ Differentiating the map $\boldsymbol \Gamma_{\mathbf x}(t+h,t)$ with respect to $t,$ we obtain
	\begin{align}
		\frac{\partial  \ \boldsymbol \Gamma_\mathbf x(t+h,t)}{\partial t} &= \frac{\partial}{\partial t} \left(\mathbf G(t+h) \ \mathbf S \ \mathbf G^\text T(t) + 2 \int_0^t \mathbf G(t+h-u) \ \mathbf D \ \mathbf G^\text T(t-u) \ du \right) \label{eq: OU Covariance Function Partial Derivative 1}\\
		&= - \mathbf B \  \mathbf G(t+h)  \ \mathbf S \  \mathbf G^\text T(t) - \mathbf G(t+h) \ \mathbf S \ \left(\mathbf B \ \mathbf G(t) \right)^\text T \nonumber \\
		&\qquad \qquad + 2\  \frac{\partial}{\partial t} \left( \int_0^t \mathbf G(u+h) \ \mathbf D \ \mathbf G^\text T(u) \ du \right)  \label{eq: OU Covariance Function Partial Derivative 2} \\
		&= \mathbf G(t+h)  \ (-\mathbf B \mathbf S - \mathbf S \mathbf B^\text T)  \ \mathbf G^\text T(t)  + \mathbf G(t+h) \  (2 \mathbf D) \ \mathbf G^\text T(t)  \label{eq: OU Covariance Function Partial Derivative 3} \\
		 &= \mathbf G(t+h) \ (-\mathbf B \mathbf S + \mathbf S \mathbf B^\text T + 2 \mathbf D ) \ \mathbf G^\text T(t) = \mathbf 0 \label{eq: OU Covariance Function Partial Derivative 4},
	\end{align}
where the second term of \eqref{eq: OU Covariance Function Partial Derivative 2} follows from applying the change of variables $u \mapsto t-u$ on the integral term within the partial derivative expression of \eqref{eq: OU Covariance Function Partial Derivative 1}, and the first term of \eqref{eq: OU Covariance Function Partial Derivative 3} follows from rewriting the first term of  \eqref{eq: OU Covariance Function Partial Derivative 2}  using the the fact that $\mathbf B$ and $\mathbf G(t)$ commute according to \textbf{Theorem \ref{thm: Matrix Exponential Properties}}. Finally, \eqref{eq: OU Covariance Function Partial Derivative 4}  follows from the Lyapunov equation  \eqref{eq: OU Process Stationary Covariance Matrix Lyapunov Equation}.
Since the partial derivative of $\boldsymbol \Gamma_{\mathbf x}(t+h,t)$ vanishes on the connected set $[0,\infty)^2,$ we conclude that $\boldsymbol \Gamma_{\mathbf x}(t+h,t)$ depends only on $h.$ 

Next, for each $t \ge 0,$ we have
	\begin{align}
		\mathbf V_\mathbf x(t) &= \boldsymbol \Gamma_\mathbf x(t,t) \nonumber \\ &=  \boldsymbol \Gamma_\mathbf x(0,0) \nonumber \\
		&= \mathbf V_\mathbf x(0) = \mathbf S \nonumber.
	\end{align}
\end{proof}
The previous theorem implies $\mathbf x(t) \sim \mathcal N(\mathbf 0, \mathbf S)$ for each $t \ge 0,$ which trivially implies the stationary distribution of the OU process is the Gaussian distribution with mean $\mathbf 0$ and covariance matrix $\mathbf S.$

We provide a sufficient condition for the OU process to be ergodic. 
\begin{theorem}[OU Process Ergodicity]\label{thm: OU Process Ergodicity}
If the diffusion matrix $\mathbf D$ is invertible, then the OU process is ergodic.
\end{theorem}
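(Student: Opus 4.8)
The plan is to exhibit $\pi = \mathcal{N}(\mathbf{0}, \mathbf{S})$ as an invariant probability measure and then argue it is the \emph{unique} invariant measure, from which ergodicity in the sense of the stated definition (every shift-invariant set has $\P_\pi$-measure $0$ or $1$) will follow. We already know from \textbf{Theorem \ref{thm: OU Process Stationary}} that $\pi$ is the stationary distribution, hence invariant, and the role of the hypothesis that $\mathbf{D}$ is invertible is to upgrade $\mathbf{S}$ from positive semidefinite to positive definite, so that $\pi$ is a nondegenerate Gaussian supported on all of $\R^N$. The entire argument hinges on understanding the transition kernels $p_t(\mathbf{y}, \cdot)$ and showing they forget their initial condition as $t \to \infty$.

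First I would compute the transition law explicitly. Conditioning the explicit representation of \textbf{Theorem \ref{thm: OU Process Explicit Representation Theorem}} on $\mathbf{x}(0) = \mathbf{y}$ gives $\mathbf{x}(t) = \mathbf{G}(t)\,\mathbf{y} + \int_0^t \mathbf{G}(t-s)\,\boldsymbol{\Sigma}\,d\mathbf{w}(s)$, which is Gaussian with mean $\mathbf{G}(t)\,\mathbf{y}$ and, by Ito's isometry, covariance $\mathbf{S}_t = 2\int_0^t \mathbf{G}(u)\,\mathbf{D}\,\mathbf{G}^\text{T}(u)\,du$. Because $-\mathbf{B}$ is Hurwitz, $\mathbf{G}(t)\,\mathbf{y} \to \mathbf{0}$, while $\mathbf{S}_t \to \mathbf{S}$ by \textbf{Theorem \ref{thm: OU Process Stationary Covariance Matrix Explicit Formula}}; hence $p_t(\mathbf{y}, \cdot)$ converges weakly to $\pi$ for every fixed $\mathbf{y}$. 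Invertibility of $\mathbf{D}$ enters a second time here: since $\mathbf{y}^\text{T}\mathbf{S}_t\,\mathbf{y} = 2\int_0^t (\mathbf{G}^\text{T}(u)\mathbf{y})^\text{T}\,\mathbf{D}\,(\mathbf{G}^\text{T}(u)\mathbf{y})\,du > 0$ for $\mathbf{y}\ne\mathbf{0}$ (using that $\mathbf{D}$ is positive definite and $\mathbf{G}^\text{T}(u)$ is invertible), each $\mathbf{S}_t$ with $t>0$ is positive definite, so every transition density is strictly positive on $\R^N$, which yields irreducibility.

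From weak convergence of $p_t(\mathbf{y},\cdot)$ to a single limit $\pi$ independent of $\mathbf{y}$, I would deduce uniqueness of the invariant measure: any invariant $\nu$ satisfies $\nu = \int_{\R^N} p_t(\mathbf{y}, \cdot)\,d\nu(\mathbf{y})$, and letting $t\to\infty$ forces $\nu = \pi$. The final step is the general principle that a stationary Markov process with a unique invariant measure is ergodic, applied to the definition at hand: a shift-invariant set $A$ gives a bounded invariant function $\mathbf{y}\mapsto \P(A \mid \mathbf{x}(0)=\mathbf{y})$ that the forgetting property pins to the constant $\P_\pi(A)$ for $\pi$-almost every $\mathbf{y}$, which in turn forces $\P_\pi(A)\in\{0,1\}$. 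I expect this last implication --- rigorously converting convergence/uniqueness of the invariant measure into the triviality of shift-invariant sets --- to be the main obstacle, since it is the one genuinely ergodic-theoretic step rather than a direct Gaussian computation. The alternative route of verifying the Gaussian mixing criterion, namely that the stationary autocovariance $\mathbf{R}(h) = \mathbf{G}(h)\,\mathbf{S} \to \mathbf{0}$ as $h\to\infty$ and invoking that mixing implies ergodicity, is equally clean on the computational side but confronts the same conceptual step.
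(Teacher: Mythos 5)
Your proposal is correct, but it reaches uniqueness of the invariant measure by a genuinely different route than the paper. Both arguments make the same initial reduction: ergodicity follows once the stationary distribution $\pi = \mathcal N(\mathbf 0, \mathbf S)$ is shown to be the \emph{unique} invariant measure (the paper makes this reduction too, citing \cite{Sandric2017}, and like you it does not re-prove the ergodic-theoretic conversion of uniqueness into triviality of shift-invariant sets). Where you diverge is the uniqueness step. The paper works through the general theory of diffusions: it invokes the sufficient condition of \cite[Theorem 5.3.2]{CapassoBakstein2021} --- nondegeneracy of the diffusion coefficient on a bounded domain $U$ together with recurrence relative to $U$ --- and verifies recurrence via the Foster--Lyapunov drift condition of \cite[Theorem 3.9]{Khasminskii2012}, using the quadratic Lyapunov function $V(\mathbf z) = \mathbf z^\text{T} \mathbf P \mathbf z$ with $\mathbf P$ solving $\mathbf B^\text{T} \mathbf P + \mathbf P \mathbf B = \mathbf I$, plus a trace inequality to get $\mathcal L V < 0$ outside a ball. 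You instead exploit the linear-Gaussian structure directly: the transition kernel is the explicit Gaussian $\mathcal N\left(\mathbf G(t)\,\mathbf y, \ \mathbf S_t\right)$ with $\mathbf S_t = 2\int_0^t \mathbf G(u)\,\mathbf D\,\mathbf G^\text{T}(u)\,du$, the Hurwitz property forces $p_t(\mathbf y, \cdot) \rightarrow \pi$ independently of $\mathbf y$, and invariance plus this forgetting property pins any invariant measure to $\pi$. Your route is more elementary and entirely self-contained for this process --- no Lyapunov function, generator computation, or recurrence theory is needed --- while the paper's machinery is the one that would survive a nonlinear drift where no explicit solution exists. One refinement you should make when writing this up: weak convergence of $p_t(\mathbf y,\cdot)$ suffices to identify invariant measures (test against $\pi$-continuity sets and use dominated convergence), but the final step you flag as the main obstacle --- showing the bounded harmonic function $h(\mathbf y) = \P\left(A \mid \mathbf x(0) = \mathbf y\right)$ attached to a shift-invariant set $A$ is constant --- requires convergence against bounded \emph{measurable} test functions, i.e.\ total variation convergence. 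This is free in your setting: invertibility of $\mathbf D$ makes $\mathbf S_t$ positive definite for every $t>0$, the Gaussian densities of $p_t(\mathbf y,\cdot)$ then converge pointwise to the density of $\pi$, and Scheff\'e's lemma upgrades weak convergence to total variation; the same remark closes your alternative mixing argument via $\mathbf G(h)\,\mathbf S \rightarrow \mathbf 0$.
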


\begin{proof}
In order to prove ergodicity, we need to show that the stationary distribution of the OU process, namely the Gaussian distribution with mean $\mathbf 0$ and covariance matrix $\mathbf S,$ is unique \cite{Sandric2017}.

Consider a generic (time homogeneous) Ito diffusion process $\left \lbrace \mathbf y(t) \right \rbrace_{t \ge 0}$ satisfying the stochastic differential equation
\begin{align}\label{eq: General Diffusion Process}
	d \mathbf y(t) &= \mathbf b(\mathbf y(t)) \ dt + \boldsymbol \sigma(\mathbf y(t) ) \ d \mathbf w(t)
\end{align}
with $\mathbf b: \R^N  \rightarrow \R^N$ and $\boldsymbol \sigma: \R^N \rightarrow \Mat_{N,M}(\R)$ being Borel measurable functions. A sufficient condition \cite[Theorem 5.3.2]{CapassoBakstein2021} for this diffusion process to have a unique stationary distribution is that there is an open, bounded domain $U \subset \R^N$ with smooth boundary $\partial U$ such that 
\begin{enumerate}[label=\roman*)]
	\item For each $\mathbf x \in U,$ the smallest eigenvalue of the diffusion coefficient $\frac{\boldsymbol \sigma(\mathbf x) \ \boldsymbol \sigma^ \text T(\mathbf x)}{2}$ is bounded away from $0.$ 
	\item The process is recurrent relative to $U$: for each $\mathbf z \in \R^N \setminus U,$ the mean time $\tau$ at which a path issuing from $\mathbf z$ reaches the set $U$ is finite and $\sup_{\mathbf z \in K} \E[\tau | \mathbf z]$ exists for each compact set $K$ in $\R^N.$
\end{enumerate}
Moreover, in \cite[Theorem 3.9]{Khasminskii2012}, it was shown that the condition $\text{ii)}$ is equivalent to the \textit{Foster Lyapunov drift condition}, which states there is a non-negative, twice continuously differentiable function $V : \R^N \rightarrow \R$ for which $\mathcal{L}V$ is negative on $\R^N \setminus U.$ Here, $\mathcal L$ is the infinitesimal generator defined by
\begin{align}
	(\mathcal L f)(\mathbf z) &=   \left( \nabla f \right)^\text T \mathbf b(\mathbf z) + \text{tr} \left(\nabla^2 f \ \frac{\boldsymbol \sigma(\mathbf z) \boldsymbol \sigma^\text T(\mathbf z)}{2}\right).
\end{align}
for any twice continuously-differentiable $f: \R^N \rightarrow \R,$ where $\nabla f$ is the gradient of $f$ and $\nabla^2 f$ is the Hessian matrix of $f.$

First, we verify statement $\text{i)}$ holds for the OU process. Let
\begin{align} \nonumber
	U = \left \lbrace \mathbf z \in \R^N \ : \ \| \mathbf z\| < \sqrt{ 3\text{tr}(\mathbf P) \ \text{tr}(\mathbf D)} \right \rbrace,
\end{align}
where $\mathbf P \in \Mat_{N,N}(\R)$ is the unique positive definite solution to the Lyapunov equation
\begin{align}
	\mathbf B^\text T \mathbf P + \mathbf P \mathbf B &= \mathbf I \label{eq: Ergodic Lyapunov Equation},
\end{align}
where $\mathbf I$ is the $N \times N$ identity matrix. Since $\mathbf P$ and $\mathbf D$ are positive definite, their traces are positive. Hence, $U$ is nonempty. It is an open ball in $\R^N$ centered around $\mathbf 0.$ Note that the diffusion coefficient of the OU process is the constant matrix $\mathbf D.$ Since $\mathbf D$ is positive definite, its eigenvalues are all positive. So automatically, its smallest eigenvalue is bounded away from $0$ on $U.$ This means the OU process satisfies statement $\text i).$

Next, we verify the second statement $\text{ii)}$ by verifying the Foster Lyapunov drift condition holds. For the OU process, the infinitesimal generator is explicitly
    \begin{align}
    	(\mathcal L f)(\mathbf y) &= - (\nabla f)^\text T \ \mathbf B \mathbf y + \text{tr}(\nabla^2 f \  \mathbf D),
    \end{align}
for each twice-continuously differentiable $f: \R^N \rightarrow \R.$ Consider the Lyapunov function $V: \R^N \rightarrow \R$ defined by
    \begin{align}
    	V(\mathbf z)=\mathbf z^\text T \mathbf P \mathbf z. \nonumber 
    \end{align}
Then, we compute
    \begin{align}
    	\nabla V(\mathbf z) &= (\mathbf P + \mathbf P^ \text T) \ \mathbf z= 2 \mathbf P \mathbf z \nonumber \\
    	\nabla^2 V(\mathbf z) &= 2 \mathbf P \nonumber.
    \end{align}  
For all $\mathbf z \in \R^N \setminus U,$ by the Lyapunov equation \eqref{eq: Ergodic Lyapunov Equation}, we have
    \begin{align}
    	(\mathcal LV)(\mathbf z) &= -\left(2 \mathbf P  \mathbf z \right)^\text T\mathbf B \mathbf z + \text{tr}(2  \mathbf P \mathbf D ) \nonumber \\
    	&= -2\mathbf z^\text T  (\mathbf P \mathbf B) \    \mathbf z + 2 \text{tr}(  \mathbf P \mathbf D ) \nonumber  \\
    	&= -\mathbf z^\text T  (\mathbf P \mathbf B + \left(\mathbf P \mathbf B \right)^\text T) \    \mathbf z + 2 \text{tr}(  \mathbf P \mathbf D ) \nonumber  \\
    	&= -\mathbf z^\text T  (\mathbf P \mathbf B +  \mathbf B^\text T \mathbf P) \    \mathbf z + 2 \text{tr}(  \mathbf P \mathbf D )  \nonumber \\
    	&= -\| \mathbf z\|^2 + 2 \text{tr}(  \mathbf P \mathbf D )  \nonumber  \\
    	&\le  -3 \text{tr}(\mathbf P) \  \text{tr}(\mathbf D)+ 2 \text{tr}(\mathbf P) \  \text{tr}(\mathbf D) \label{eq: Trace Triangle Inequality}\\
    	&=-\text{tr}(\mathbf P) \  \text{tr}(\mathbf D)  \nonumber <0,
    \end{align}
in which the second term of \eqref{eq: Trace Triangle Inequality} follows from the Cauchy Schwarz inequality applied to the trace of the product of two positive semidefinite matrices. Hence, the OU process satisfies statement $\text{ii)},$ and we conclude the process is ergodic.
\end{proof}

\section{The Lead Process}
In this section, we consider the OU process generated by model parameters $\mathbf B$ and $\boldsymbol \Sigma,$ and we assume the process is ergodic. We recall the auxiliary \textit{lead process} that we defined in the introduction, namely the $N \times N$ matrix-valued stochastic process $\left \lbrace \mathbf A(t) \right \rbrace_{t \ge 0},$ in which 
    \begin{align} \label{eq: OU Lead Lag Process}
        \mathbf A(t) &= \frac{1}{2} \int_0^t \mathbf x(s) \ d \mathbf x^\text T(s) - d \mathbf x(s)   \ \mathbf x^\text T(s).
    \end{align}
for each $t \ge 0.$ More explicitly, for each $1 \le m,n \le N,$ the $(m,n)$-th entry of $\mathbf A(t)$ is an Ito stochastic integral of the form
	\begin{align}\label{eq: OU Lead Lag Component Process}
		A_{m,n}(t) &= \frac{1}{2} \int_0^t x_m(s) \ dx_n(s) - x_n(s) \ dx_m(s),
	\end{align}
where $\left \lbrace x_n(t) \right \rbrace_{t \ge 0}$ is the $n$-th component OU process. By definition, each $\mathbf A(t)$ is the lead matrix corresponding to a realization of the OU process over the finite time interval $[0,t].$ 

We prove a strong law of large numbers identity pertaining to the lead process.
\begin{theorem}[Lead Process Strong Law of Large Numbers Identity] \label{thm: Lead Process Law of Large Numbers Identity}
The lead process satisfies the following strong law of large numbers identity: 
    \begin{align} \label{eq: Lead Process Limit}
        \lim_{t \rightarrow \infty} \frac{\mathbf A(t)}{t} &= \frac{\mathbf B \mathbf S- \mathbf S \mathbf B^\text T}{2} 
    \end{align}
almost surely.
\end{theorem}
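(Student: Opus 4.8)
The plan is to use the defining SDE to split $\mathbf A(t)$ into a finite-variation (Lebesgue) integral and a martingale (Ito) integral, to treat the first by the ergodic theorem and the second by a martingale law of large numbers. Substituting $d\mathbf x(s) = -\mathbf B\,\mathbf x(s)\,ds + \boldsymbol\Sigma\,d\mathbf w(s)$ into the integrand of \eqref{eq: OU Lead Lag Process} and separating the $ds$-terms from the $d\mathbf w$-terms gives
\begin{align}
\mathbf x(s)\,d\mathbf x^{\text T}(s) - d\mathbf x(s)\,\mathbf x^{\text T}(s) &= \left(\mathbf B\,\mathbf x(s)\mathbf x^{\text T}(s) - \mathbf x(s)\mathbf x^{\text T}(s)\,\mathbf B^{\text T}\right)ds \nonumber \\
&\qquad + \left(\mathbf x(s)\,d\mathbf w^{\text T}(s)\,\boldsymbol\Sigma^{\text T} - \boldsymbol\Sigma\,d\mathbf w(s)\,\mathbf x^{\text T}(s)\right). \nonumber
\end{align}
Writing $\mathbf M(t) = \frac1t\int_0^t \mathbf x(s)\mathbf x^{\text T}(s)\,ds$ and $\mathbf N(t) = \int_0^t \mathbf x(s)\,d\mathbf w^{\text T}(s)$, this yields
\begin{align}
\frac{\mathbf A(t)}{t} &= \frac12\left(\mathbf B\,\mathbf M(t) - \mathbf M(t)\,\mathbf B^{\text T}\right) + \frac{1}{2t}\left(\mathbf N(t)\,\boldsymbol\Sigma^{\text T} - \boldsymbol\Sigma\,\mathbf N^{\text T}(t)\right). \nonumber
\end{align}

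For the finite-variation part I would invoke \textbf{Theorem \ref{thm: Birkhoff's Ergodic Theorem}} entrywise. The process is ergodic by hypothesis (cf. \textbf{Theorem \ref{thm: OU Process Ergodicity}}), its invariant law is $\mathcal N(\mathbf 0,\mathbf S)$, and applying Birkhoff with $f(\mathbf y)=y_m y_n$—which is $\pi$-integrable since the Gaussian stationary law has all moments—gives $\mathbf M(t)\to \E_\pi[\mathbf x(0)\mathbf x^{\text T}(0)]=\mathbf S$ almost surely, as the stationary mean is $\mathbf 0$. Hence the first term converges almost surely to $\tfrac12(\mathbf B\mathbf S-\mathbf S\mathbf B^{\text T})$, which is exactly the claimed limit.

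It then remains to show the normalized martingale term vanishes, i.e. $\mathbf N(t)/t \to \mathbf 0$ almost surely; this is the step I expect to be the main obstacle, since a martingale strong law is not among the results quoted above. I would argue entrywise: $N_{m,k}(t)=\int_0^t x_m(s)\,dw_k(s)$ is a continuous $L^2$-martingale with quadratic variation $\langle N_{m,k}\rangle(t)=\int_0^t x_m^2(s)\,ds$. Birkhoff again gives $\tfrac1t\langle N_{m,k}\rangle(t)\to S_{mm}$ almost surely, and $S_{mm}>0$ because $\mathbf S$ is positive definite when $\mathbf D$ is invertible (cf. the integral formula in \textbf{Theorem \ref{thm: OU Process Stationary Covariance Matrix Explicit Formula}}), so $\langle N_{m,k}\rangle(t)\to\infty$. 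By the Dambis--Dubins--Schwarz time change $N_{m,k}(t)=\beta(\langle N_{m,k}\rangle(t))$ for a standard one-dimensional Brownian motion $\beta$, combined with the almost-sure asymptotics $\beta(u)/u\to 0$ as $u\to\infty$, one concludes
\begin{align}
\frac{N_{m,k}(t)}{t} &= \frac{N_{m,k}(t)}{\langle N_{m,k}\rangle(t)}\cdot\frac{\langle N_{m,k}\rangle(t)}{t} \longrightarrow 0\cdot S_{mm}=0 \nonumber
\end{align}
almost surely. Since $\mathbf N(t)/t$ has finitely many entries, each vanishing almost surely, we get $\mathbf N(t)/t\to\mathbf 0$ almost surely, the second term drops out, and combining with the ergodic limit yields \eqref{eq: Lead Process Limit}.
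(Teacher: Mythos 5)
Your proposal is correct, and its skeleton coincides with the paper's: substitute the SDE into the definition of $\mathbf A(t)$, send the Lebesgue-integral part to $\tfrac12(\mathbf B\mathbf S-\mathbf S\mathbf B^{\text T})$ via Birkhoff's ergodic theorem applied entrywise to $f(\mathbf y)=y_my_n$, and show the normalized stochastic-integral part vanishes almost surely. Where you genuinely diverge is in the treatment of that martingale term, which you rightly flag as the crux. The paper proceeds by discretization: it restricts $R_{n,m}(t)=\int_0^t x_n\,dw_m$ to integer times, controls the martingale-difference sequence via $\sum_k \E[Y_{n,m}^2(k)]/k^2<\infty$ (using Ito's isometry, $\E[R_{n,m}^2(t)]=tS_{n,n}$), invokes a discrete martingale convergence result plus Kronecker's lemma to get the law of large numbers along integers, and then patches the continuum gaps with Doob's submartingale inequality and Borel--Cantelli. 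You instead use the Dambis--Dubins--Schwarz time change: $N_{m,k}(t)=\beta(\langle N_{m,k}\rangle(t))$ for a Brownian motion $\beta$, with $\langle N_{m,k}\rangle(t)/t\to S_{mm}>0$ again by Birkhoff, so the Brownian strong law $\beta(u)/u\to0$ gives the conclusion in one stroke, directly in continuous time. Your route is shorter and avoids the paper's two-stage interpolation argument entirely (no Doob, no Borel--Cantelli, no $K^{2/3}$ threshold); its cost is reliance on the heavier time-change theorem, whereas the paper's argument uses only discrete martingale theory and maximal inequalities. One small point worth making explicit in your write-up: positivity of $S_{mm}$ is needed only to guarantee $\langle N_{m,k}\rangle(t)\to\infty$ so that DDS applies on all of $[0,\infty)$; in the degenerate case $S_{mm}=0$ the component $x_m$ vanishes identically under the stationary law and the corresponding entry $N_{m,k}\equiv0$, so the conclusion holds trivially and your argument covers all cases consistent with the paper's standing ergodicity assumption.
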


\begin{proof}
Using the governing stochastic differential equation \eqref{eq: OU Process SDE}, we write
	\begin{align}
		\frac{1}{2t} \int_0^t \mathbf x(s) \ d \mathbf x^\text T(s)  &= \frac{1}{2t} \int_0^t \mathbf x(s) \ \left(- \mathbf B \  \mathbf x(s) \ ds + \boldsymbol \Sigma \ d \mathbf w(s) \right)^\text T \nonumber \\
  &= - \frac{1}{2t} \int_0^t \mathbf x(s) \ \mathbf x^\text T(s) \ ds \ \mathbf B^\text T + \frac{1}{2t} \int_0^t \mathbf x(s) \ d \mathbf w^\text T(s) \ \boldsymbol \Sigma^\text T \label{eq: Lead Process Limit 1}.
	\end{align}
We need to determine the almost sure limits of both integral terms in \eqref{eq: Lead Process Limit 1} as $t \rightarrow \infty.$

First, we show that the almost sure limit of the first term of \eqref{eq: Lead Process Limit 1} is $-\frac{\mathbf S \mathbf B^\text T}{2}$ as $t \rightarrow \infty.$ To this end, consider the map $f: \R^N \rightarrow \Mat_{N,N}(\R)$ defined by 
	\begin{align}
		f(\mathbf y) &= \mathbf y  \mathbf y^\text T.
	\end{align}
This map has $N^2$ real-valued entry functions of the form $f_{m,n}: \R^N \rightarrow \R$ defined by $f_{m,n}(\mathbf y)=y_my_n$ for $1 \le m,n \le N.$ Each $f_{m,n}$ is continuous and hence measurable between the spaces $\R^N$ and $\R$ equipped with their respective Borel sigma-algebras. Hence, $f$ itself is measurable and by \textbf{Theorem \ref{thm: Birkhoff's Ergodic Theorem}}, we obtain
	\begin{align}
		\lim_{t \rightarrow \infty} \frac{1}{2t} \int_0^t \mathbf x(s) \ \mathbf x^\text T(s) \ ds \ \mathbf B^\text T &= \lim_{t \rightarrow \infty} \frac{1}{2t} \int_0^t f(\mathbf x(s)) \ ds \ \mathbf B^\text T \nonumber \\
		&=\frac{\E[f(\mathbf x(0))] \ \mathbf B^\text T}2 \nonumber  \\
		&= \frac{\mathbf S \mathbf B^\text T}2 \nonumber 
	\end{align}
almost surely. 

Next, we show that the almost sure limit of the second term of \eqref{eq: Lead Process Limit 2} is $\mathbf 0.$ To this end, consider the $N \times M$ matrix-valued stochastic process $\left \lbrace \mathbf R(t) \right \rbrace_{t \ge 0}$ defined by 
    \begin{align}
        \mathbf R(t) &= \int_0^t \mathbf x(s) \ d \mathbf w^\text T(s) \nonumber.
    \end{align}
for each $t \ge 0.$ We need to show that 
    \begin{align}
        \lim_{t \rightarrow \infty} \frac{\mathbf R(t)}{t}  &= \mathbf 0 \nonumber
    \end{align}
almost surely. Equivalently, we need to show each entry process of $\left \lbrace \mathbf R(t) \right \rbrace_{t \ge 0},$ namely the process $\left \lbrace R_{n,m}(t) \right \rbrace_{t \ge 0},$ in which 
	\begin{align}
		R_{n,m}(t) &= \int_0^t x_n(s) \ d w_m(s) \nonumber
	\end{align}
for each $t \ge 0,$ satisfies
	\begin{align}\label{eq: Lead Process Limit 2}
		\lim_{t \rightarrow \infty} \frac{R_{n,m}(t)}{t} &=0 
	\end{align}
almost surely. Note that $\left \lbrace R_{n,m}(t) \right \rbrace_{t \ge 0}$ is a continuous time square integrable martingale with respect to the standard filtration of the Wiener process because the component OU process $\left \lbrace x_n(t) \right \rbrace_{t \ge 0}$ is square integrable and is adapted to the component Wiener process $\left \lbrace w_m(t) \right \rbrace_{t \ge 0}.$  By Ito's Isometry, for each $t \ge 0,$ we have
    \begin{align}
    	\E \left [ R^2_{n,m}(t)  \right] &= \E \left [ \int_0^t x^2_n(s) \ ds \right] \nonumber \\
    	&= \int_0^t  \E \left [ x^2_n(s) \right] \ ds \nonumber \\
    	&= t S_{n,n}. \nonumber 
    \end{align}
Moreover, the discrete process $\left \lbrace R_{n,m}(K) \right \rbrace_{K \in \N \cup \left \lbrace 0 \right \rbrace}$ is a discrete time square integrable martingale.

We show that the discrete time martingale  $\left \lbrace R_{n,m}(K) \right \rbrace_{K \in \N \cup \left \lbrace 0 \right \rbrace}$ satisfies the strong law of large numbers identity
	\begin{align}\label{eq: Lead Process Limit 3}
		\lim_{K \rightarrow \infty} \frac{R_{n,m}(K)}{K} &=0
	\end{align}
almost surely, in which $K$ runs over the integers. We consider the martingale difference sequence $\left \lbrace Y_{n,m}(K) \right \rbrace_{K \in \N},$ where
	\begin{align}
		Y_{n,m}(K) &=  R_{n,m}(K) - R_{n,m}(K-1) \nonumber
	\end{align}
for each $K \in \N.$ Because $\left \lbrace R_{n,m}(K) \right \rbrace_{K \in \N \cup \lbrace 0 \rbrace}$ is a martingale, we have
	\begin{align}
		\E \left [R_{n,m}(K) \ R_{n,m}(K-1) \right ] &= \E \left [ R_{n,m}(K-1) \ \E \left[ R_{n,m}(K) \ | \ \mathcal S_{K-1} \right] \ \right] \nonumber \\
  &= \E \left [ R^2_{n,m}(K) \right] \nonumber,
	\end{align}
where $\left \lbrace \mathcal S_t \right \rbrace_{t \ge 0}$ is the standard filtration of the Wiener process. Therefore, 
	\begin{align}
		\sum_{k=1}^K \frac{\E \left [ (Y_{n,m}(k)) ^2 \right ]}{k^2} &= \sum_{k=1}^K \frac{\E \left [\left (R_{n,m}(k) - R_{n,m}(k-1) \right )^2 \right]}{k^2} \nonumber  \\
		&=  \sum_{k=1}^K \frac{\E\left[ (R_{n,m}(k))^2 \right] - 2 \E\left[ R_{n,m}(k) R_{n,m}(k-1) \right]+ \E\left[ (R_{n,m}(k-1))^2 \right]}{k^2} \nonumber \\
		&= \sum_{k=1}^K \frac{\E\left[ (R_{n,m}(k))^2 \right] - 2 \E\left[ (R_{n,m}(k-1))^2 \right]+ \E\left[ (R_{n,m}(k-1))^2 \right]}{k^2} \nonumber  \\
		&= \sum_{k=1}^K \frac{\E\left[ (R_{n,m}(k))^2 \right] - \E\left[ (R_{n,m}(k-1))^2 \right]}{k^2} \nonumber   \\
		&=  \sum_{k=1}^K \frac{k S_{n,n} - (k-1)S_{n,n}}{k^2} \nonumber \\
		&=  \sum_{k=1}^K \frac{S_{n,n}}{k^2}, \nonumber
	\end{align}
which converges as $K \rightarrow \infty$. By \cite[Exercise 4.4.10]{Durrett2019}, we deduce $\frac{R_{n,m}(K)}{K}$ converges almost surely to some limit as $K \rightarrow \infty.$ Because the sequence $\left \lbrace k \right \rbrace_{k \in \N}$ is an increasing sequence diverging to infinity, we deduce by Kronecker's Lemma \cite[Theorem 2.5.9]{Durrett2019} that 
	\begin{align}
		\lim_{K \rightarrow \infty} \frac{R_{n,m}(K)}K  &=0 \nonumber
	\end{align}
almost surely, which proves the identity \eqref{eq: Lead Process Limit 3}. Now, by Doob's submartingale inequality \cite[Theorem 4.4.2]{Durrett2019}, we have
	\begin{align}
		\sum_{K \ge 0} \P \left(\sup_{t \in [K, K+1]}  \left | R_{n,m}(t) - R_{n,m}(K)  \right | > K^{\frac{2}{3}}   \right) &\le \sum_{K \ge 0} \frac{\E[ \left | R_{n,m}(K+1) - R_{n,m}(K)  \right |^2  ]}{K^{\frac{4}{3}} } \nonumber  \\
		&\le \sum_{K \ge 0} \frac{S_{n,n}}{K^{\frac{4}{3}}} \nonumber,
	\end{align}
which we note converges as $K \rightarrow \infty$. Therefore, by the Borel Cantelli Lemma, we have 
	\begin{align}
		\P \left( \left \lbrace \sup_{t \in [K, K+1]}  \left | R_{n,m}(t) - R_{n,m}(K)  \right | > K^{\frac{2}{3}}  \right \rbrace  \text { infinitely often}\right) &=0 \nonumber.
	\end{align}
This means for large $t \ge 0,$ there is a unique integer $K(t)$ with $K(t) -1 \le t \le K(t)$ such that
    \begin{align}
		\left | \frac{R_{n,m}(t)}{t}  \right | & \le 	\left | \frac{R_{n,m}(t)-R_{n,m}(K(t))}{t}  \right | + \left | \frac{R_{n,m}(K(t))}{t} \right | \nonumber \\ 
		&= \left | \frac{R_{n,m}(t)-R_{n,m}(K(t))}{t}  \right | + \left | \frac{R_{n,m}(K(t))}{K(t)} \right | \nonumber \\ 
		& \le  \frac{ K(t)^{\frac{2}{3}}}{K(t)}  + \left | \frac{R_{n,m}(K(t))}{K(t)} \right | \nonumber \\
		&= \frac{1}{K(t)^{\frac{1}{3}}}+ \left | \frac{R_{n,m}(K(t))}{K(t)} \right | \label{eq: Lead Process Limit 4},
    \end{align}
almost surely. Therefore, as $t \rightarrow \infty,$ we have $K(t) \rightarrow \infty,$  which means \eqref{eq: Lead Process Limit 4} converges almost surely to $0.$ Hence, we deduce the identity \eqref{eq: Lead Process Limit 2} and conclude the second integral in \eqref{eq: Lead Process Limit 1} almost surely converges to $\mathbf 0 \boldsymbol \Sigma^\text T=\mathbf 0.$

Thus, we conclude
	\begin{align}
		\lim_{t \rightarrow \infty} \frac{\mathbf A(t)}{t} &=- \frac{\mathbf S \mathbf B^\text T}{2} - \left(-\frac{\mathbf S \mathbf B^\text T}{2} \right)^\text T \nonumber \\
		&= \frac{\mathbf B \mathbf S- \mathbf S \mathbf B^\text T}{2} \nonumber.
	\end{align}
almost surely.
\end{proof}

Because of this strong law of large numbers identity, we use the matrix
	\begin{align}\label{eq: OU Process Lead Matrix}
		\mathbf Q &= \frac{\mathbf B \mathbf S - \mathbf S \mathbf B^\text T}2
	\end{align}
as our main tool for the Cyclicity Analysis of the OU process. We refer to $\mathbf Q$ as the \textit{lead matrix} of the OU process. In the next chapter, we will investigate the structure of this matrix in depth.

\chapter{Cyclicity Analysis of the Cyclic Ornstein-Uhlenbeck Process}\label{chap: cyclicouprocess}

In this chapter, we consider the $N$-dimensional cyclic OU process $\left \lbrace \mathbf x(t) \right \rbrace_{t \ge 0}$ with model parameters $\mathbf B$ and $\boldsymbol \Sigma,$ in which $\mathbf B$ is a circulant friction matrix. We write $\mathbf B = \text{Circ}(b_1 \ , \ \dots \ , \ b_N),$ in which $b_p \le 0$ for each $p>1$ and $b_1>0$ is large enough so that $\mathbf B$ is a friction matrix. We let $\mathbf D$ be the $N \times N$ diffusion matrix corresponding to this OU process, as defined in \eqref{eq: OU Process Diffusion Matrix}. Recall by \textbf{Theorem \ref{thm: Circulant Matrix Properties}}, for each $1 \le n \le N,$
	\begin{align}
		\mu_n &= \sum_{p=1}^N b_p \ \omega_n^{p-1} \label{eq: Cyclic OU Process Friction Matrix Eigenvalue} 
	\end{align}
is an eigenvalue of $\mathbf B$ with associated eigenvector
	\begin{align}
		\mathbf w_n &= \frac{1}{\sqrt N} \left(1 \ , \ \omega_n \ , \ \dots \ , \ \omega_n^{N-1} \right) \label{eq: Cyclic OU Process Friction Matrix Eigenvector} ,
	\end{align}
in which $\omega_n=e^{\frac{2 \pi i n}{N}}.$

Using the eigenvalues of $\mathbf B$, we can determine the explicit lower bound of $b_1$ to ensure that $\mathbf B$ is a valid friction matrix.

\begin{theorem}[Cyclic OU Process Friction Matrix Stability Condition]\label{thm: Cyclic OU Process Friction Matrix Stability Condition}
$\mathbf B$ is a valid friction matrix if and only if $b_1 > -\sum_{p=2}^N b_p.$
\end{theorem}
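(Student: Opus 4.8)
The plan is to reduce the stated stability condition to a statement about the real parts of the eigenvalues of $\mathbf B$ and then exploit the sign constraints on the propagation coefficients $b_2 \ , \ \dots \ , \ b_N$. By definition, $\mathbf B$ is a valid friction matrix precisely when $-\mathbf B$ is Hurwitz, which is equivalent to requiring that every eigenvalue $\mu_n$ of $\mathbf B$ satisfy $\Re(\mu_n) > 0$. Using the explicit eigenvalue formula from \textbf{Theorem \ref{thm: Circulant Matrix Properties}}, namely $\mu_n = \sum_{p=1}^N b_p \ \omega_n^{p-1}$ with $\omega_n = e^{\frac{2 \pi i n}{N}}$, I would first extract the real part:
\begin{align}
	\Re(\mu_n) &= b_1 + \sum_{p=2}^N b_p \ \cos \left( \frac{2 \pi n (p-1)}{N} \right) \nonumber.
\end{align}
The entire problem then reduces to bounding this trigonometric sum uniformly in $n$.

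For the sufficiency direction, I would invoke the hypothesis $b_p \le 0$ for each $p > 1$. Since each such $b_p$ is nonpositive and $\cos(\theta) \le 1$, we have the pointwise inequality $b_p \cos(\theta) \ge b_p$; summing over $p$ yields the uniform lower bound $\Re(\mu_n) \ge b_1 + \sum_{p=2}^N b_p$ for every $1 \le n \le N$. Consequently, if $b_1 > -\sum_{p=2}^N b_p$, then $\Re(\mu_n) > 0$ for all $n$, so $-\mathbf B$ is Hurwitz and $\mathbf B$ is a valid friction matrix.

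For the necessity direction, the key observation is to evaluate at the index for which $\omega_n = 1$, namely $n = N$. Then $\mu_N = \sum_{p=1}^N b_p$ is a real eigenvalue equal to $b_1 + \sum_{p=2}^N b_p$. If $\mathbf B$ is a valid friction matrix, then in particular $\Re(\mu_N) = \mu_N > 0$, which rearranges immediately to $b_1 > -\sum_{p=2}^N b_p$. This closes the equivalence.

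There is no genuinely hard analytic step here; the proof is essentially an interplay of the sign hypothesis with a sharp trigonometric bound. The one point that requires care is the direction of the inequality $b_p \cos(\theta) \ge b_p$, which holds only because $b_p \le 0$, and the recognition that the all-ones root of unity $\omega_N = 1$ saturates the lower bound on $\Re(\mu_n)$. It is precisely this saturation that makes the sufficient condition also necessary, so the two directions are really two sides of the same sharp estimate.
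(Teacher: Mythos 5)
Your proof is correct and follows essentially the same route as the paper: both extract $\Re(\mu_n) = b_1 + \sum_{p=2}^N b_p \cos\left(\frac{2\pi n(p-1)}{N}\right)$, use the sign hypothesis $b_p \le 0$ to get the uniform bound $b_p\cos(\theta) \ge b_p$, and observe that this bound is saturated at $n=N$ where $\omega_N = 1$. The only cosmetic difference is that the paper packages both directions at once by identifying $\Re(\mu_N)$ as $\min_{1\le n\le N}\Re(\mu_n)$, whereas you split the equivalence into explicit sufficiency and necessity steps.
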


\begin{proof}
In order for $\mathbf B$ to be a valid friction matrix, $-\mathbf B$ must be Hurwitz i.e. $\Re(\mu_n)>0$ for all $1 \le n \le N,$ where $\Re(z)$ is the real part of the complex number $z.$ This is equivalent to the condition that  $\min_{1 \le n \le N} \Re(\mu_n)>0.$

We determine $\min_{1 \le n \le N} \Re(\mu_n)$ explicitly.
For each $1 \le n \le N,$  we extract the real part of $\mu_n$ to obtain
	\begin{align}
			\Re(\mu_n) &= \sum_{p=1}^N b_p \ \cos \left(\frac{2 \pi n (p-1)}{N} \right) \nonumber \\
			&= b_1 +\sum_{p=2}^N b_p \ \cos \left(\frac{2 \pi n (p-1)}{N} \right) \nonumber \\
			&\ge  b_1 +\sum_{p=2}^N b_p  \label{eq: Circulant Friction Matrix Eigenvalue Real Part 1} \\
			&= \Re(\mu_N), \nonumber 
	\end{align}
in which the inequality \eqref{eq: Circulant Friction Matrix Eigenvalue Real Part 1} follows from the fact that $b_p$ is non-positive and $b_p \cos \left(\frac{2 \pi n (p-1)}{N} \right) \ge b_p$ for each $1 <p \le N.$ 
Hence, we see
	\begin{align}
		\min_{1 \le n \le N} \Re(\mu_n) &= \Re(\mu_N), \nonumber
	\end{align}
which means $\Re(\lambda_N) >0$ if and only if 
	\begin{align}
		b_1 +\sum_{p=2}^N b_p >0. \nonumber
	\end{align}
Rearranging the above inequality yields the claim.
\end{proof}

In this chapter, we consider the signal propagation model that we introduced \textbf{Chapter \ref{chap: Introduction}} governed by the aforementioned cyclic OU process. In this model, there is a one-dimensional signal propagating throughout a network consisting of $N$ sensors. Writing out $\mathbf x(t) = \left(x_1(t) \ , \ \dots \ , \ x_N(t) \right)$ for each $t \ge 0,$ we interpret $x_n(t)$ as the measurement made by the $n$-th sensor of the propagating effect at time $t$. The circulant friction matrix $\mathbf B$ represents the sensor network structure. More specifically, for each $p>1,$ the constant $b_p$ is a \textit{propagation coefficient}, whose magnitude quantifies how receptive the $n$-th sensor is to the activity within the $(n+p-1)$-th sensor for each $1 \le n \le N,$ in which $n+p-1$ is indexed mod $N.$ The constant $b_1$ as a \textit{suppression coefficient}, whose magnitude quantifies the rate at which the propagating signal in our model dissipates. Finally, the diffusion matrix $\mathbf D$ is the covariance matrix of the noise injected into the sensors. In particular, $D_{m,n}$ is the covariance between $m$-th component noise injected into the $m$-th sensor and the $n$-th component noise injected into the $n$-th sensor for each $1 \le m,n \le N.$ 

For large dimension $N \in \N$ and different choices of cyclic OU model parameters $\mathbf B$ and $\boldsymbol \Sigma,$ we compute the lead matrix $\mathbf Q$ defined in \eqref{eq: OU Process Lead Matrix}. We investigate whether Cyclicity Analysis enables us to recover the structure of the cyclic network structure induced by $\mathbf B.$ More specifically, we investigate whether the structure of $\mathbf v_1(\mathbf Q),$ the leading eigenvector of $\mathbf Q,$ reflects the cyclic network structure. We investigate this problem under two regimes: the first regime is when $b_1$ is far from $\sum_{p=2}^N b_p$ and the second regime is when $b_1$ is close to $\sum_{p=2}^N b_p.$

\section{Lead Matrix Explicit Representation}

In this section, we derive the explicit formula of the lead matrix $\mathbf Q$ corresponding to the cyclic OU Process generated by the model parameters $\mathbf B$ and $\boldsymbol \Sigma$ defined at the beginning of the chapter.    

\begin{theorem}[Cyclic OU Process Lead Matrix Explicit Formula]\label{thm: Cyclic OU Process Lead Matrix Explicit Formula}
The lead matrix $\mathbf Q$ of the cyclic OU process is explicitly 
    \begin{align}\label{eq: Cyclic OU Process Lead Matrix}
        \mathbf Q &=  \sum_{m,n=1}^N \frac{\sum_{p=2}^N b_p (\omega_m^{p-1} - \omega_n^{p-1})}{2b_1 + \sum_{q=2}^N b_q (\omega_m^{q-1} + \omega_n^{q-1})} \  \mathbf w_m \ \mathbf w_{N-m}^\text T \  \mathbf D \ \mathbf w_{N-n}\ \mathbf w_n^\text T.
    \end{align}
\end{theorem}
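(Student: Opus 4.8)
The plan is to diagonalize everything in the orthonormal eigenbasis $\{\mathbf{w}_n\}_{n=1}^N$ of the circulant friction matrix $\mathbf{B}$ supplied by \textbf{Theorem \ref{thm: Circulant Matrix Properties}}, and then to solve the Lyapunov equation $\mathbf{B}\mathbf{S} + \mathbf{S}\mathbf{B}^\text{T} = 2\mathbf{D}$ entrywise in that basis. First I would record the eigen-identities that drive the whole computation. Since $\mathbf{B}$ is real with $\mathbf{B}\mathbf{w}_n = \mu_n \mathbf{w}_n$, conjugating and using $\overline{\omega_n} = \omega_{N-n}$ gives $\mathbf{B}\mathbf{w}_{N-n} = \overline{\mu_n}\,\mathbf{w}_{N-n}$, hence the reflection relation $\overline{\mu_{N-n}} = \mu_n$. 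The same facts yield $\mathbf{w}_m^* \mathbf{B} = \mu_m\,\mathbf{w}_m^*$ and $\mathbf{B}^\text{T}\mathbf{w}_n = \overline{\mu_n}\,\mathbf{w}_n$, together with the conjugation identity $\mathbf{w}_n^* = \mathbf{w}_{N-n}^\text{T}$ already in \textbf{Theorem \ref{thm: Circulant Matrix Properties}}.

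Writing $\tilde{S}_{mn} = \mathbf{w}_m^* \mathbf{S}\mathbf{w}_n$ and $\tilde{D}_{mn} = \mathbf{w}_m^* \mathbf{D}\mathbf{w}_n$ for the components of $\mathbf{S}$ and $\mathbf{D}$ in the eigenbasis, I sandwich the Lyapunov equation between $\mathbf{w}_m^*$ and $\mathbf{w}_n$. By the eigen-identities the left side collapses to $(\mu_m + \overline{\mu_n})\tilde{S}_{mn}$, so
\[
\tilde{S}_{mn} = \frac{2\tilde{D}_{mn}}{\mu_m + \overline{\mu_n}}.
\]
The denominator never vanishes: because $-\mathbf{B}$ is Hurwitz we have $\Re(\mu_m), \Re(\mu_n) > 0$, whence $\Re(\mu_m + \overline{\mu_n}) > 0$. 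Sandwiching $\mathbf{Q} = \tfrac{1}{2}(\mathbf{B}\mathbf{S} - \mathbf{S}\mathbf{B}^\text{T})$ in the same way gives $\mathbf{w}_m^* \mathbf{Q}\mathbf{w}_b = \tfrac{1}{2}(\mu_m - \overline{\mu_b})\tilde{S}_{mb} = \frac{\mu_m - \overline{\mu_b}}{\mu_m + \overline{\mu_b}}\,\tilde{D}_{mb}$.

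Finally I reconstruct $\mathbf{Q}$ from its components using the resolution of the identity $\mathbf{I} = \sum_n \mathbf{w}_n \mathbf{w}_n^*$, so that $\mathbf{Q} = \sum_{m,b}(\mathbf{w}_m^* \mathbf{Q}\mathbf{w}_b)\,\mathbf{w}_m \mathbf{w}_b^*$, and substitute $b = N-n$, which is a bijection modulo $N$. The reflection relation $\overline{\mu_{N-n}} = \mu_n$ then converts $\mu_m - \overline{\mu_{N-n}}$ into $\mu_m - \mu_n$ — the $b_1$ terms cancel, producing the numerator $\sum_{p\ge 2} b_p(\omega_m^{p-1}-\omega_n^{p-1})$ — and converts $\mu_m + \overline{\mu_{N-n}}$ into $\mu_m + \mu_n = 2b_1 + \sum_q b_q(\omega_m^{q-1}+\omega_n^{q-1})$, the displayed denominator. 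Using $\mathbf{w}_m^* = \mathbf{w}_{N-m}^\text{T}$ and $\mathbf{w}_{N-n}^* = \mathbf{w}_n^\text{T}$ rewrites $(\mathbf{w}_m^* \mathbf{D}\mathbf{w}_{N-n})\,\mathbf{w}_m \mathbf{w}_{N-n}^*$ as $\mathbf{w}_m \mathbf{w}_{N-m}^\text{T}\mathbf{D}\mathbf{w}_{N-n}\mathbf{w}_n^\text{T}$, matching the claim exactly. I expect the main obstacle to be purely bookkeeping: correctly tracking the interplay between transpose, conjugate-transpose, and the index reflection $n \mapsto N-n$, so that the $\mathbf{D}$-factor lands on $\mathbf{w}_{N-n}$ rather than $\mathbf{w}_n$. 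Getting that reflection right is precisely what turns the natural $\mu_m + \overline{\mu_n}$ denominators into the $\mu_m + \mu_n$ form appearing in the statement.
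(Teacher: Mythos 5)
Your proposal is correct, and it reaches the formula by a genuinely different route from the paper. The paper first invokes its integral representation of the stationary covariance, $\mathbf S = 2\int_0^\infty \mathbf G(t)\,\mathbf D\,\mathbf G^{\text T}(t)\,dt$ (\textbf{Theorem \ref{thm: OU Process Stationary Covariance Matrix Explicit Formula}}), expands the Green's function as $\mathbf G(t)=\sum_n e^{-\mu_n t}\,\mathbf w_n\mathbf w_{N-n}^{\text T}$ via \textbf{Theorem \ref{thm: Circulant Matrix Properties}}, and integrates termwise, the factors $\int_0^\infty e^{-(\mu_m+\mu_n)t}\,dt = \frac{1}{\mu_m+\mu_n}$ producing the denominators directly; it then substitutes $\mathbf S$ into $\mathbf Q=\frac{\mathbf B\mathbf S-\mathbf S\mathbf B^{\text T}}{2}$ exactly as you do. You instead bypass the integral representation entirely: sandwiching the Lyapunov equation between $\mathbf w_m^*$ and $\mathbf w_n$ collapses it to the scalar equations $(\mu_m+\overline{\mu_n})\tilde S_{mn}=2\tilde D_{mn}$, which you solve and then reassemble through the resolution of the identity. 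What your route buys is self-containedness and economy of hypotheses --- no convergence of a matrix-valued integral is needed, the Hurwitz condition enters transparently as the nonvanishing of the denominators $\mu_m+\overline{\mu_n}$, and uniqueness of the Lyapunov solution falls out for free rather than requiring the separate argument the paper gives in proving \textbf{Theorem \ref{thm: OU Process Stationary Covariance Matrix Explicit Formula}}. What the paper's route buys is that, with that theorem already established, the projector convention $\mathbf w_n\mathbf w_{N-n}^{\text T}$ yields the denominators $\mu_m+\mu_n$ immediately, whereas you must carry the conjugates and discharge them at the end via the reflection identity $\overline{\mu_{N-n}}=\mu_n$ and the reindexing $b=N-n$ --- bookkeeping you correctly identified as the main hazard and handled properly.
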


\begin{proof}
First, we explicitly compute the stationary covariance matrix $\mathbf S$ of the OU process in \eqref{eq: OU Process Stationary Covariance Matrix}. By \textbf{Theorem \ref{thm: Circulant Matrix Properties}}, the Green's Function $\mathbf G(t),$ defined in \eqref{eq: OU Process Green Function}, is explicitly
	\begin{align}
		\mathbf G(t) &= \sum_{n=1}^N e^{-\mu_n t} \  \mathbf w_n \ \mathbf w_{N-n}^\text T \nonumber.
	\end{align}
Substituting this expression for $\mathbf G(t)$ into the integrand of \eqref{eq: OU Process Stationary Covariance Matrix}, we have 
    \begin{align}
        \mathbf S &= 2 \int_0^\infty \sum_{m,n=1}^N e^{- \mu_m t} \ \mathbf w_m \  \mathbf w_{N-m}^ \text T \  \mathbf D  \ \left(e^{- \mu_n t} \ \mathbf w_n \ \mathbf w_{N-n}^\text T \right)^ \text T \ dt \nonumber \\
        &= 2 \sum_{m,n=1}^N \int_0^\infty e^{-t (\mu_m+\mu_n)} \ \mathbf w_m \ \mathbf w_{N-m}^\text T \  \mathbf D \ \mathbf w_{N-n} \ \mathbf w_n^\text T \ dt \nonumber \\
        &= 2 \sum_{m,n=1}^N \frac{\mathbf w_m \ \mathbf w_{N-m}^\text T \  \mathbf D \ \mathbf w_{N-n}\ \mathbf w_n^\text T}{\mu_m + \mu_n}.  \nonumber
    \end{align}

Now, upon substituting this explicit representation of $\mathbf S$ into the lead matrix definition in \eqref{eq: OU Process Lead Matrix}, we obtain
    \begin{align}
        \mathbf Q &=  \mathbf B \left(\sum_{m,n=1}^N \frac{\mathbf w_m \ \mathbf w_{N-m}^\text T \  \mathbf D \ \mathbf w_{N-n} \ \mathbf w_n^\text T}{\mu_m + \mu_n} \right)  - \left(\sum_{m,n=1}^N \frac{\mathbf w_m \ \mathbf w_{N-m}^\text T \  \mathbf D \ \mathbf w_{N-n}\ \mathbf w_n^\text T}{\mu_m + \mu_n} \right) \mathbf B^ \text T \nonumber \\
         &=  \left(\sum_{m,n=1}^N \frac{ \mu_m \mathbf w_m \ \mathbf w_{N-m}^\text T \  \mathbf D \ \mathbf w_{N-n} \ \mathbf w_n^\text T}{\mu_m + \mu_n} \right)  - \left(\sum_{m,n=1}^N \frac{\mathbf w_m \ \mathbf w_{N-m}^\text T \  \mathbf D \ \mathbf w_{N-n}\ (\mathbf B \  \mathbf w_n)^\text T }{\mu_m + \mu_n} \right)  \nonumber \\
        &=  \sum_{m,n=1}^N \frac{\mu_m - \mu_n }{\mu_m + \mu_n} \ \mathbf w_m \ \mathbf w_{N-m}^\text T \  \mathbf D \ \mathbf w_{N-n} \ \mathbf w_n^\text T \nonumber \\
        &= \sum_{m,n=1}^N \frac{\sum_{p=1}^N b_p (\omega_m^{p-1} - \omega_n^{p-1})}{ \sum_{q=1}^N b_q (\omega_m^{q-1} + \omega_n^{q-1})} \  \mathbf w_m \ \mathbf w_{N-m}^\text T \  \mathbf D \ \mathbf w_{N-n} \ \mathbf w_n^\text T. \nonumber \\
        &= \sum_{m,n=1}^N \frac{\sum_{p=2}^N b_p (\omega_m^{p-1} - \omega_n^{p-1})}{2b_1 +  \sum_{q=2}^N b_q (\omega_m^{q-1} + \omega_n^{q-1})} \  \mathbf w_m \ \mathbf w_{N-m}^\text T \  \mathbf D \ \mathbf w_{N-n} \ \mathbf w_n^\text T \nonumber.
    \end{align}
This completes the proof.
\end{proof}

As mentioned in the beginning of the chapter, we consider two regimes for our signal propagation model: the first regime in which the propagating signal exhibits significant dissipation and the second regime in which the signal does not exhibit significant dissipation. In order to better distinguish these regimes and make it suitable for future calculations, we modify our setup. For each $\epsilon>0,$ we consider the circulant friction matrix
    \begin{align}\label{eq: Cyclic OU Process Friction Matrix}
        \mathbf B(\epsilon)= \text{Circ}\left(\epsilon-\sum_{p=2}^N b_p  \ , \ b_2 \ , \ \dots \ , \ b_N \right)
    \end{align}
in place of the original friction matrix $\mathbf B,$ in which the constant $\epsilon>0$ is a \textit{perturbation}. In this setup, the former regime occurs when $\epsilon$ is large i.e. the friction matrix is significantly perturbed from an unstable matrix, while the latter regime occurs when $\epsilon$ is small i.e. the friction matrix is slightly perturbed from an unstable matrix. So in future sections, we will consider the cyclic OU process with model parameters $\mathbf B(\epsilon)$ and $\boldsymbol \Sigma$ with $\mathbf B(\epsilon)$ defined as in \eqref{eq: Cyclic OU Process Friction Matrix}. Consequently, we let $\mathbf Q(\epsilon)$ be the lead matrix of this cyclic OU process and $\mathbf v_1(\mathbf Q(\epsilon))$ be its leading eigenvector.

In future sections, we will also extend the big-Oh notation to matrices in the following way: if $g: [0, \infty) \rightarrow \R$ is a function, then we write $O(g(\epsilon))$ to represent an $M \times N$ matrix-valued function of $\epsilon$ such that there exists a $K \ge 0$ for which all $MN$ entry functions of $\epsilon$ are bounded above by $K g(\epsilon).$ We will specify in context as to whether $\epsilon$ tends to infinity or tends to $0.$

\section{Only One Nonzero Propagation Coefficient and Injection of Noise into Only One Sensor}\label{sec: Only One Nonzero Propagation Coefficient and Injection of Noise into Only One Sensor}

In this section, for each perturbation $\epsilon>0,$ we consider the signal propagation model governed by the Cyclic OU process with model parameters $\mathbf B(\epsilon)$ and $\boldsymbol \Sigma,$ in which $\mathbf B(\epsilon)$ is defined as in \eqref{eq: Cyclic OU Process Friction Matrix}. Here, we assume exactly one propagation coefficient is nonzero i.e. $b_p$ is nonzero for only one index $1 < p \le N.$ The ground truth network structure is such that the $n$-th sensor is only linked to the $(n+p-1)$-th sensor for each $1 \le n \le N,$ where $n+p-1$ is indexed mod $N.$ We further assume $p-1$ and $N$ are coprime. Note that this condition ensures the signal reaches every sensor in the network. To see this, $\gcd(p-1,N)=1$ implies there exist integers $x,y$ for which $Nx+(p-1)y =1$ i.e. $(p-1) y \equiv 1 \mod N$ has an integer solution $y>0.$ Hence, for each $1 \le  k <N,$ we have 
	\begin{align}
		N - k(p-1)y \equiv k \mod N \nonumber.
	\end{align} 
In other words, if the signal is broadcast from the $N$-th sensor, then it reaches the $k$-th sensor after passing through $ky$ intermediate sensors.

Moreover, we assume that $\boldsymbol \Sigma$ is a volatility matrix such that $\mathbf D$ is a diagonal diffusion matrix whose $s$-th diagonal entry $d_s$ is nonzero for only one index $1 \le s \le N.$ This corresponds to the situation where noise is only injected into only one sensor, namely the $s$-th sensor. 

We derive the lead matrix $\mathbf Q(\epsilon)$ corresponding to this particular cyclic OU process with the assumptions imposed on $\mathbf B(\epsilon)$ and $\boldsymbol \Sigma.$

\begin{theorem}[One Nonzero Propagation Coefficient, Noise in One Sensor Lead Matrix Explicit Formula]\label{thm: Cyclic OU Process One Nonzero Propagation Coefficient and Noise in One Sensor Lead Matrix Explicit Formula}
For each $\epsilon>0,$ with the assumptions imposed on the model parameters $\mathbf B(\epsilon)$ and $\boldsymbol \Sigma,$ the lead matrix $\mathbf Q(\epsilon)$ is explicitly 
	\begin{align}\label{eq: Cyclic OU Process One Nonzero Propagation Coefficient and Noise in One Sensor Lead Matrix}
		\mathbf Q(\epsilon) &= \frac{d_s b_p}{N} \sum_{m,n=1}^N \frac{(\omega_m^{p-1} - \omega_n^{p-1}) \  \omega_{m+n}^{1-s}}{2 \epsilon + b_p (\omega_m^{p-1} + \omega_n^{p-1}-2)} \ \mathbf w_m \mathbf w_n^\text T.
	\end{align}
\end{theorem}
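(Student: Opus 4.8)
The plan is to obtain \eqref{eq: Cyclic OU Process One Nonzero Propagation Coefficient and Noise in One Sensor Lead Matrix} by specializing the general formula \eqref{eq: Cyclic OU Process Lead Matrix} from \textbf{Theorem \ref{thm: Cyclic OU Process Lead Matrix Explicit Formula}} to the present hypotheses on $\mathbf B(\epsilon)$ and $\boldsymbol \Sigma$. No new machinery is needed; the work is entirely substitution and bookkeeping with roots of unity. First I would handle the scalar coefficient. Since $b_p$ is the only nonzero propagation coefficient among $b_2 \ , \ \dots \ , \ b_N$, every sum over $q$ in \eqref{eq: Cyclic OU Process Lead Matrix} collapses to its single $q=p$ term: the numerator becomes $b_p(\omega_m^{p-1} - \omega_n^{p-1})$ and the summand inside the denominator becomes $b_p(\omega_m^{p-1} + \omega_n^{p-1})$. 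The suppression coefficient of $\mathbf B(\epsilon)$ in \eqref{eq: Cyclic OU Process Friction Matrix} is $b_1 = \epsilon - \sum_{q=2}^N b_q = \epsilon - b_p$, so substituting $2b_1 = 2\epsilon - 2b_p$ turns the denominator into $2\epsilon + b_p(\omega_m^{p-1} + \omega_n^{p-1} - 2)$, exactly as claimed.

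Next I would evaluate the rank-one ``sandwich'' $\mathbf w_{N-m}^\text T \ \mathbf D \ \mathbf w_{N-n}$ that appears between the outer factors $\mathbf w_m$ and $\mathbf w_n^\text T$. Because $\mathbf D$ is diagonal with a single nonzero entry, I would write $\mathbf D = d_s \ \mathbf e_s \mathbf e_s^\text T$, reducing the sandwich to $d_s \ (\mathbf w_{N-m}^\text T \mathbf e_s)(\mathbf e_s^\text T \mathbf w_{N-n})$, i.e.\ $d_s$ times the product of the $s$-th entries of $\mathbf w_{N-m}$ and $\mathbf w_{N-n}$. By the explicit eigenvector formula \eqref{eq: Circulant Matrix Eigenvector} in \textbf{Theorem \ref{thm: Circulant Matrix Properties}}, the $s$-th entry of $\mathbf w_k$ is $\frac{1}{\sqrt N}\omega_k^{s-1}$, so the sandwich equals $\frac{d_s}{N} \ \omega_{N-m}^{s-1} \ \omega_{N-n}^{s-1}$, a scalar.

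The final step is simplifying this product of roots of unity. Using $\omega_{N-k} = \overline{\omega_k} = e^{-\frac{2\pi i k}{N}}$, I would combine $\omega_{N-m}^{s-1} \ \omega_{N-n}^{s-1} = e^{-\frac{2\pi i (m+n)(s-1)}{N}} = \omega_{m+n}^{1-s}$, so that the entire matrix term $\mathbf w_m \ \mathbf w_{N-m}^\text T \ \mathbf D \ \mathbf w_{N-n} \ \mathbf w_n^\text T$ becomes $\frac{d_s}{N} \ \omega_{m+n}^{1-s} \ \mathbf w_m \mathbf w_n^\text T$. Multiplying by the scalar coefficient from the first step and factoring out the constant $\frac{d_s b_p}{N}$ produces \eqref{eq: Cyclic OU Process One Nonzero Propagation Coefficient and Noise in One Sensor Lead Matrix}. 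I expect the only genuinely delicate point to be this last index manipulation: the eigenvectors enter \emph{transposed} rather than conjugate-transposed, so one must correctly track $\omega_{N-k}^{s-1} = \omega_k^{-(s-1)}$ to collapse the two exponentials into the single factor $\omega_{m+n}^{1-s}$ rather than introducing a spurious conjugation. The coprimality of $p-1$ and $N$ and the precise index $s$ play no role in this algebraic identity itself; they matter only for the subsequent spectral analysis of $\mathbf Q(\epsilon)$.
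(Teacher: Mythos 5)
Your proposal is correct and follows essentially the same route as the paper's proof: substitute $b_1=\epsilon-b_p$ and the single nonzero propagation coefficient into the general formula \eqref{eq: Cyclic OU Process Lead Matrix}, collapse the middle factor $\mathbf w_{N-m}^\text T \, \mathbf D \, \mathbf w_{N-n}$ to $\frac{d_s}{N}\,\omega_{N-m}^{s-1}\omega_{N-n}^{s-1}$ using the single nonzero diagonal entry of $\mathbf D$, and reduce the product of roots of unity to $\omega_{m+n}^{1-s}$. The only cosmetic difference is that you write $\mathbf D = d_s\,\mathbf e_s \mathbf e_s^\text T$ as an outer product, whereas the paper applies $\mathbf D$ directly to $\mathbf w_{N-n}$ first; the computations are identical.
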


\begin{proof}
Substituting $b_1=\epsilon-b_p$ and $b_q=0$ for all $1<q \le N$ with $q \ne p$ into the general cyclic OU process lead matrix formula \eqref{eq: Cyclic OU Process One Nonzero Propagation Coefficient and Noise in One Sensor Lead Matrix}, we simplify the resulting summand using matrix multiplication:
	\begin{align}
		\mathbf Q(\epsilon) &=  \sum_{m,n=1}^N \frac{b_p (\omega_m^{p-1}- \omega_n^{p-1})}{2\epsilon  + b_p(\omega_m^{p-1}+ \omega_n^{p-1}-2)} \  \mathbf w_m \ \mathbf w_{N-m}^\text T \  \mathbf D \ \mathbf w_{N-n} \ \mathbf w_n^\text T \nonumber \\
		&= \sum_{m,n=1}^N \frac{b_p (\omega_m^{p-1}- \omega_n^{p-1} )}{2\epsilon  + b_p(\omega_m^{p-1} + \omega_n^{p-1}-2)} \  \mathbf w_m \ \mathbf w_{N-m}^\text T \ \left(\frac{d_s \  \omega_{N-n}^{s-1}}{\sqrt N} \ \mathbf e_s \right) \ \mathbf w_n^\text T \nonumber \\
		&= \sum_{m,n=1}^N \frac{b_p (\omega_m^{p-1}- \omega_n^{p-1})}{2\epsilon  + b_p(\omega_m^{p-1} + \omega_n^{p-1}-2)} \  \mathbf w_m \ \left(\frac{d_s  \ \omega_{N-m}^{s-1} \  \omega_{N-n}^{s-1}}{N} \right) \ \mathbf w_n^\text T \nonumber  \\
		&= \sum_{m,n=1}^N \frac{b_p (\omega_m^{p-1}- \omega_n^{p-1})}{2\epsilon  + b_p(\omega_m^{p-1} + \omega_n^{p-1}-2)} \  \mathbf w_m \ \left(\frac{d_s \  \omega_{-m-n}^{s-1}}{N}  \right) \ \mathbf w_n^\text T \nonumber \\ 
		&= \frac{d_sb_p}{N} \sum_{m,n=1}^N \frac{ (\omega_m^{p-1}- \omega_n^{p-1}) \ \omega_{m+n}^{1-s}}{2\epsilon  + b_p(\omega_m^{p-1} + \omega_n^{p-1}-2) } \  \mathbf w_m  \mathbf w_n^\text T\nonumber.
	\end{align} 
This proves the claim.
\end{proof}

Now, under the two regimes, we determine whether the structure of the leading eigenvector of $\mathbf Q(\epsilon)$ enables us to recover the expected network structure induced by $\mathbf B(\epsilon)$.

\subsection{First Regime}
We start with the first regime, in which the perturbation $\epsilon$ is large. We derive the asymptotic expansion of the lead matrix under this regime.

\begin{theorem}[One Nonzero Propagation Coefficient, Noise in One Sensor First Regime Lead Matrix Asymptotic Expansion]\label{thm: Cyclic OU Process One Nonzero Propagation Coefficient and Noise in One Sensor First Regime Lead Matrix Asymptotic Expansion}
If $\mathbf Q(\epsilon)$ is the lead matrix in \eqref{eq: Cyclic OU Process One Nonzero Propagation Coefficient and Noise in One Sensor Lead Matrix}, then as $\epsilon \rightarrow \infty,$ we have
	\begin{align} \label{eq: Cyclic OU Process One Nonzero Propagation Coefficient and Noise in One Sensor First Regime Lead Matrix Asymptotic Formula}
		\mathbf Q(\epsilon) &= \mathbf A(\epsilon) + O \left(\frac{1}{\epsilon^2} \right),
	\end{align}
where $\mathbf A(\epsilon)$ is the $N \times N$ skew-symmetric matrix whose $(j,k)$-th entry is
	\begin{align}\label{eq: Cyclic OU Process One Nonzero Propagation Coefficient and Noise in One Sensor First Regime Lead Matrix Asymptotic Entry}
		A_{j,k}(\epsilon) &= \begin{cases} \frac{d_s b_p}{2 \epsilon} & j \equiv s+1-p \mod N \ ,  \ k=s \\
			-\frac{d_s b_p}{2 \epsilon} & j=s \ , \ k \equiv s+1-p \mod N \\
			0 & \text{else}
		\end{cases}. 
	\end{align}
\end{theorem}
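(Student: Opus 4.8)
The plan is to expand the scalar denominator appearing in the summand of \eqref{eq: Cyclic OU Process One Nonzero Propagation Coefficient and Noise in One Sensor Lead Matrix} in powers of $1/\epsilon$ and then read off the leading coefficient entrywise. Write $c_{m,n} = b_p(\omega_m^{p-1} + \omega_n^{p-1} - 2)$, so that the denominator of the $(m,n)$ summand is $2\epsilon + c_{m,n}$. Since $|\omega_m^{p-1}| = 1$, there is the uniform bound $|c_{m,n}| \le 4|b_p|$ over all $1 \le m,n \le N$, and because $\Re(c_{m,n}) = b_p(\cos(\tfrac{2\pi m(p-1)}{N}) + \cos(\tfrac{2\pi n(p-1)}{N}) - 2) \ge 0$ (as $b_p \le 0$), the denominator never vanishes; for $\epsilon$ large one has $|2\epsilon + c_{m,n}| \ge \epsilon$. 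Using the exact identity $\frac{1}{2\epsilon + c_{m,n}} = \frac{1}{2\epsilon} - \frac{c_{m,n}}{2\epsilon(2\epsilon + c_{m,n})}$, I would split $\mathbf Q(\epsilon)$ into a leading matrix, retaining only the $\frac{1}{2\epsilon}$ term, plus a remainder matrix built from the fractions $\frac{c_{m,n}}{2\epsilon(2\epsilon + c_{m,n})}$.

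Next I would compute the $(j,k)$-th entry of the leading matrix $\frac{d_s b_p}{2\epsilon N}\sum_{m,n=1}^N (\omega_m^{p-1} - \omega_n^{p-1})\,\omega_{m+n}^{1-s}\,\mathbf w_m \mathbf w_n^\text T$. Two facts drive the computation: by \eqref{eq: Circulant Matrix Eigenvector} the $(j,k)$-th entry of the outer product $\mathbf w_m \mathbf w_n^\text T$ equals $\frac{1}{N}\omega_m^{j-1}\omega_n^{k-1}$, and $\omega_{m+n}^{1-s} = \omega_m^{1-s}\omega_n^{1-s}$ since $\omega_{m+n} = \omega_m\omega_n$. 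After substituting, the double sum factors into a difference of two products of single sums, each of the form $\sum_{m=1}^N \omega_m^{a}$. I would then invoke the geometric-series identity $\sum_{m=1}^N \omega_m^{a} = N$ when $a \equiv 0 \bmod N$ and $0$ otherwise (the identity already used in the proof of \textbf{Theorem \ref{thm: Circulant Matrix Properties}}). The first product survives only when $j \equiv s+1-p$ and $k \equiv s \pmod N$, and the second only when $j \equiv s$ and $k \equiv s+1-p \pmod N$; combining the surviving factors of $N$ with the prefactor $\frac{d_s b_p}{2\epsilon N}\cdot\frac{1}{N}$ gives $+\frac{d_s b_p}{2\epsilon}$ and $-\frac{d_s b_p}{2\epsilon}$ respectively, with every other entry vanishing. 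This is exactly \eqref{eq: Cyclic OU Process One Nonzero Propagation Coefficient and Noise in One Sensor First Regime Lead Matrix Asymptotic Entry}.

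Finally I would bound the remainder. Each of its $(j,k)$-entries is a fixed finite sum of $N^2$ terms, the $(m,n)$-term having modulus at most a constant times $\frac{|c_{m,n}|}{2\epsilon\cdot\epsilon} \le \frac{2|b_p|}{\epsilon^2}$; hence every entry is $O(1/\epsilon^2)$ with a constant independent of $(j,k)$, which is precisely the matrix big-Oh convention fixed earlier in the chapter. The routine ingredients here — the geometric expansion and the uniform remainder bound — are straightforward. I expect the only delicate point to be the index bookkeeping in the orthogonality step: confirming that, modulo $N$, the exponents $p-s+j-1$, $k-s$, $j-s$, and $p+k-s-1$ collapse to precisely the two congruence conditions defining the nonzero entries of $\mathbf A(\epsilon)$, and that the overall sign pattern is consistent with the skew-symmetry of $\mathbf A(\epsilon)$.
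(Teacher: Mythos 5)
Your proposal is correct and takes essentially the same route as the paper's proof: isolate the $\frac{1}{2\epsilon}$ leading part of each summand, evaluate its $(j,k)$-th entry via the finite geometric-sum identity \eqref{eq: Finite Geometric Series Identity} together with the resulting congruence analysis, and absorb everything else into $O\left(\frac{1}{\epsilon^2}\right)$. The one genuine difference is a technical refinement in your favor: where the paper expands $\frac{1}{2\epsilon + c_{m,n}}$ as an infinite geometric series (valid only once $\epsilon$ is large enough for convergence) and then truncates, your exact splitting $\frac{1}{2\epsilon+c_{m,n}} = \frac{1}{2\epsilon} - \frac{c_{m,n}}{2\epsilon(2\epsilon+c_{m,n})}$ yields the same leading matrix with a remainder that is a single finite sum, bounded uniformly because $\Re(c_{m,n}) \ge 0$ keeps the denominators away from zero; and the one detail you defer, the disjointness of the two congruence cases, is immediate, since both holding simultaneously would force $p \equiv 1 \bmod N$, which is impossible for $2 \le p \le N$.
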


\begin{proof}
Upon dividing the numerator and denominator of the summand in \eqref{eq: Cyclic OU Process One Nonzero Propagation Coefficient and Noise in One Sensor Lead Matrix} by $2 \epsilon,$ we obtain 
	\begin{align} \label{eq: Cyclic OU Process One Nonzero Propagation Coefficient and Noise in One Sensor First Regime Lead Matrix Asymptotic Expansion 1}
		\mathbf Q(\epsilon) &= \frac{d_sb_p}{N} \sum_{m,n=1}^N \frac{\frac{ (\omega_m^{p-1}-\omega_n^{p-1})  \ \omega_{m+n}^{1-s}}{2 \epsilon}}{1+ \frac{b_p (\omega_m^{p-1}+\omega_n^{p-1}-2)}{2 \epsilon}} \ \mathbf w_m \mathbf w_n^\text T.
	\end{align}
Suppose $\epsilon$ satisfies
    \begin{align}
    	\left |  \frac{b_p \ \left(\omega_m^{p-1}+\omega_n^{p-1}-2 \right)}{2 \epsilon}\right | &<1 \nonumber
    \end{align}
for all $1 \le m,n \le N.$ We can then rewrite the summand on the right hand side of  \eqref{eq: Cyclic OU Process One Nonzero Propagation Coefficient and Noise in One Sensor First Regime Lead Matrix Asymptotic Expansion 1} as a geometric series:
	\begin{align}
		\mathbf Q(\epsilon) &= \frac{d_s b_p}{N} \sum_{m,n=1}^N \frac{(\omega_m^{p-1} - \omega_n^{p-1})  \ \omega_{m+n}^{1-s}}{2 \epsilon} \ \sum_{r \ge 0} \left( - \frac{b_p (\omega_m^{p-1}+\omega_n^{p-1}-2)}{2 \epsilon} \right)^r \ \mathbf w_m \mathbf w_n^\text T \nonumber \\
		&= \frac{d_s b_p}{N} \sum_{r \ge 0} \sum_{m,n=1}^N  \frac{  \left(-b_p \left(\omega_m^{p-1}+\omega_n^{p-1}-2 \right) \right)^r (\omega_m^{p-1} - \omega_n^{p-1}) \ \omega_{m+n}^{1-s} }{(2 \epsilon)^{r+1}}\ \mathbf w_m  \mathbf w_n^\text T  \label{eq: Cyclic OU Process One Nonzero Propagation Coefficient and Noise in One Sensor First Regime Lead Matrix Asymptotic Expansion 2} \\
		&= \frac{d_s b_p}{2 \epsilon N } \sum_{m,n=1}^N (\omega_m^{p-1} - \omega_n^{p-1}) \ \omega_{m+n}^{1-s} \ \mathbf w_m  \mathbf w_n^\text T+ O \left(\frac{1}{\epsilon^2} \right) \label{eq: Cyclic OU Process One Nonzero Propagation Coefficient and Noise in One Sensor First Regime Lead Matrix Asymptotic Expansion 3},
	\end{align}
in which \eqref{eq: Cyclic OU Process One Nonzero Propagation Coefficient and Noise in One Sensor First Regime Lead Matrix Asymptotic Expansion 3} is the result of extracting the inner double sum corresponding to the $r=0$ term in \eqref{eq: Cyclic OU Process One Nonzero Propagation Coefficient and Noise in One Sensor First Regime Lead Matrix Asymptotic Expansion 2} and replacing the remaining sum over all $r>0$ with $O \left(\frac{1}{\epsilon^2} \right).$ Extracting the $(j,k)$-th entry of \eqref{eq: Cyclic OU Process One Nonzero Propagation Coefficient and Noise in One Sensor First Regime Lead Matrix Asymptotic Expansion 3}, we get
	\begin{align}
		Q_{j,k}(\epsilon) &= \frac{d_s b_p}{2 \epsilon N^2} \sum_{m,n=1}^N (\omega_m^{p-1} - \omega_n^{p-1}) \ \omega_{m+n}^{1-s} \ \omega_m^{j-1} \ \omega_n^{k-1}+ O \left(\frac{1}{\epsilon^2} \right)  \nonumber \\
		&= \frac{d_s b_p}{2 \epsilon N^2} \sum_{m,n=1}^N \left(\omega_m^{p+j-s-1} \omega_n^{k-s} - \omega_m^{j-s} \omega_n^{p+k-s-1} \right )+ O \left(\frac{1}{\epsilon^2} \right). \label{eq: Cyclic OU Process One Nonzero Propagation Coefficient and Noise in One Sensor First Regime Lead Matrix Asymptotic Expansion 4}
	\end{align}
	
To complete the proof, we evaluate the double sum appearing on the right hand side of \eqref{eq: Cyclic OU Process One Nonzero Propagation Coefficient and Noise in One Sensor First Regime Lead Matrix Asymptotic Expansion 4}. In order to do so, we utilize the following geometric series identity for all $q \in \Z$:
    \begin{align}\label{eq: Finite Geometric Series Identity}
		\frac{1}{N} \sum_{n=1}^N \omega_n^q &= \begin{cases} 1 & q \equiv 0 \mod N \\ 0 & \text{else} \end{cases}.
    \end{align}
Using \eqref{eq: Finite Geometric Series Identity}, we obtain
    \begin{align}
		Q_{j,k}(\epsilon) &=  \begin{cases}
		    \frac{d_s b_p}{2 \epsilon}  & j \equiv s+1-p \mod N \ , \ k \equiv s \mod N \\
      0 & \text{else}
		\end{cases} \nonumber   \\
  & \qquad \qquad - \begin{cases}
		    \frac{d_s b_p}{2 \epsilon}  & k \equiv s+1-p \mod N \ , \ j \equiv s \mod N \\
      0 & \text{else}
		\end{cases} \ + O \left(\frac{1}{\epsilon^2} \right) \label{eq: Cyclic OU Process One Nonzero Propagation Coefficient and Noise in One Sensor First Regime Lead Matrix Asymptotic Expansion 5}
    \end{align}
Now, for $1 \le j,k \le N,$ consider the systems of congruence equalities appearing in \eqref{eq: Cyclic OU Process One Nonzero Propagation Coefficient and Noise in One Sensor First Regime Lead Matrix Asymptotic Expansion 5}, namely
    \begin{align}
        j &\equiv s+1-p \mod N \ , \ k \equiv s \mod N \label{eq: Cyclic OU Process One Nonzero Propagation Coefficient and Noise in One Sensor First Regime Lead Matrix Asymptotic Expansion 6}\\
        k & \equiv s+1-p \mod N \ , \ j \equiv s \mod N \label{eq: Cyclic OU Process One Nonzero Propagation Coefficient and Noise in One Sensor First Regime Lead Matrix Asymptotic Expansion 7}.
    \end{align}
Observe the second congruence equation in \eqref{eq: Cyclic OU Process One Nonzero Propagation Coefficient and Noise in One Sensor First Regime Lead Matrix Asymptotic Expansion 6} holds if and only if $k-s$ divides $N.$ But since $k-s <N,$ the second congruence equation in \eqref{eq: Cyclic OU Process One Nonzero Propagation Coefficient and Noise in One Sensor First Regime Lead Matrix Asymptotic Expansion 6} holds if and only if $k=s.$ Similarly, the second congruence equation in \eqref{eq: Cyclic OU Process One Nonzero Propagation Coefficient and Noise in One Sensor First Regime Lead Matrix Asymptotic Expansion 7} holds if and only if $j=s.$ Moreover, note that \eqref{eq: Cyclic OU Process One Nonzero Propagation Coefficient and Noise in One Sensor First Regime Lead Matrix Asymptotic Expansion 6} and \eqref{eq: Cyclic OU Process One Nonzero Propagation Coefficient and Noise in One Sensor First Regime Lead Matrix Asymptotic Expansion 7} cannot simultaneously hold if $p>2$. Otherwise, we would have both $j \equiv s \mod N$ and $j \equiv s+1-p \mod N,$ which would imply $s \equiv s+1-p \mod N$ i.e. $p \equiv 1 \mod N.$  This would mean $p-1$ divides $N,$ and if $p>2,$ this would contradict the assumption $\gcd(p-1,N)=1.$ Hence, we can simplify \eqref{eq: Cyclic OU Process One Nonzero Propagation Coefficient and Noise in One Sensor First Regime Lead Matrix Asymptotic Expansion 5} in the following way:
    \begin{align} \nonumber
    Q_{j,k}(\epsilon) &= \left( \begin{cases}
    		    \frac{d_s b_p}{2 \epsilon}  & j \equiv s+1-p \mod N \ , \ k=s \\
    		    -\frac{d_s b_p}{2 \epsilon}  & k \equiv s+1-p \mod N \ , \ j=s  \\ 
            0 & \text{else}
          \end{cases} \right) +  \ O \left(\frac{1}{\epsilon^2} \right),
    \end{align}
which proves the claim.
\end{proof}

Knowing the asymptotic structure of the lead matrix under the first regime, we now investigate the asymptotic structure of its leading eigenvector.

\begin{theorem}[One Nonzero Propagation Coefficient, Noise in One Sensor First Regime Leading Eigenvector Asymptotic Expansion]\label{thm: Cyclic OU Process One Nonzero Propagation Coefficient and Noise in One Sensor First Regime Leading Eigenvector Asymptotic Expansion}
The leading eigenvector of $\mathbf Q(\epsilon)$ satisfies
    \begin{align}\label{eq: Cyclic OU Process One Nonzero Propagation Coefficient and Noise in One Sensor First Regime Leading Eigenvector}
        \lim_{\epsilon \rightarrow \infty} \mathbf v_1(\mathbf Q(\epsilon)) &= \frac{ i \ \mathbf e_s + \mathbf e_{s+1-p}}{\sqrt 2},
    \end{align}
where $\mathbf e_s$ is the $s$-th canonical basis vector in $\C^N$ and $s+1-p$ is indexed mod $N$ as before. 
\end{theorem}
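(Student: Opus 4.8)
The plan is to leverage the asymptotic expansion $\mathbf Q(\epsilon) = \mathbf A(\epsilon) + O\!\left(1/\epsilon^2\right)$ established in the preceding theorem and to read off the leading eigenvector from the dominant rank-two matrix $\mathbf A(\epsilon)$, treating the $O\!\left(1/\epsilon^2\right)$ term as a vanishing perturbation. Since eigenvectors are unchanged by positive rescaling, I would first multiply through by $2\epsilon$: writing $j_0 \equiv s+1-p \pmod N$ and $k_0 = s$, the matrix $2\epsilon\,\mathbf A(\epsilon) = d_s b_p\left(\mathbf e_{j_0}\mathbf e_{k_0}^\text T - \mathbf e_{k_0}\mathbf e_{j_0}^\text T\right)$ is a fixed (independent of $\epsilon$) real skew-symmetric matrix, call it $\mathbf A_\infty$, and $2\epsilon\,\mathbf Q(\epsilon) = \mathbf A_\infty + O(1/\epsilon)$, where the error now tends to $\mathbf 0$.

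Next I would compute the eigenstructure of $\mathbf A_\infty$ directly. It annihilates every $\mathbf e_\ell$ with $\ell \notin \{j_0,k_0\}$ and acts on $\operatorname{span}\{\mathbf e_{j_0},\mathbf e_{k_0}\}$ by $\mathbf A_\infty \mathbf e_{j_0} = -d_s b_p\,\mathbf e_{k_0}$ and $\mathbf A_\infty \mathbf e_{k_0} = d_s b_p\,\mathbf e_{j_0}$. Solving the resulting $2\times 2$ eigenproblem yields the two nonzero eigenvalues $\pm i\,d_s b_p$ (purely imaginary, as forced by skew-symmetry), each simple, with the remaining $N-2$ eigenvalues equal to $0$. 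A short check shows that $\mathbf e_{j_0}+i\,\mathbf e_{k_0}$ is an eigenvector for $i\,d_s b_p$, which after normalization is exactly $\frac{i\,\mathbf e_s + \mathbf e_{s+1-p}}{\sqrt 2}$; its complex conjugate is the eigenvector for $-i\,d_s b_p$. Because $d_s b_p \ne 0$, the two dominant eigenvalues have modulus $|d_s b_p| > 0$, strictly separated from the zero eigenvalue by the gap $|d_s b_p|$.

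Finally I would transfer this to $\mathbf Q(\epsilon)$ by perturbation theory. For $\epsilon$ large enough, the $O(1/\epsilon)$ perturbation has norm below half the spectral gap $|d_s b_p|$, so by standard eigenvalue perturbation bounds the two largest-modulus eigenvalues of $2\epsilon\,\mathbf Q(\epsilon)$ remain within $o(1)$ of $\pm i\,d_s b_p$ and stay isolated from the cluster of eigenvalues near $0$; in particular the ``largest in modulus'' label continues to select the branch near $i\,d_s b_p$. The spectral projection onto this one-dimensional eigenspace is a continuous function of the matrix wherever the eigenvalue stays simple, so it converges to the projection onto $\operatorname{span}\{\mathbf e_{j_0}+i\,\mathbf e_{k_0}\}$; hence the unit leading eigenvector $\mathbf v_1(\mathbf Q(\epsilon))$ converges to $\frac{i\,\mathbf e_s + \mathbf e_{s+1-p}}{\sqrt 2}$.

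The hard part will be this last step, since the leading eigenvector is only defined up to complex conjugation and up to a unimodular phase, so the limit must be understood for a consistent choice of representative. The cleanest rigorous route is through the eigenprojection rather than the eigenvector itself, because the projection is genuinely continuous and phase-independent; fixing the phase afterward (for instance by requiring the $j_0$-th component to be real and positive) then pins down the eigenvector and produces the stated limit. I would also verify the gap/simplicity condition quantitatively, which reduces to the perturbation norm being $o(1)$ relative to the fixed gap $|d_s b_p|$ — a fact guaranteed by the $O(1/\epsilon)$ bound — in order to legitimately invoke continuity of the eigenprojection.
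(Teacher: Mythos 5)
Your proposal is correct, and its skeleton is the same as the paper's: start from the expansion $\mathbf Q(\epsilon) = \mathbf A(\epsilon) + O(1/\epsilon^2)$ of the preceding theorem, identify the rank-two skew-symmetric approximant, and read the limit off its leading eigenvector. The execution differs in two ways, both to your advantage. First, where you solve the $2\times 2$ eigenproblem on $\operatorname{span}\{\mathbf e_{j_0},\mathbf e_{k_0}\}$ by hand, the paper instead writes $\mathbf A(\epsilon) = \mathbf a(\epsilon)\mathbf b^{\text T}(\epsilon) - \mathbf b(\epsilon)\mathbf a^{\text T}(\epsilon)$ with $\mathbf a(\epsilon) = \sqrt{d_s b_p/(2\epsilon)}\,\mathbf e_{s+1-p}$, $\mathbf b(\epsilon) = \sqrt{d_s b_p/(2\epsilon)}\,\mathbf e_{s}$ and invokes the closed-form eigenvalue/eigenvector formula for rank-two skew-symmetric matrices from Baryshnikov--Schlafly with angle $\theta = \pi/2$; your direct computation is more elementary, self-contained, and incidentally sidesteps the fact that $d_s b_p < 0$ (since $b_p \le 0$), which makes the paper's square roots formally imaginary. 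Second, and more substantively, the paper's proof ends once $\mathbf v_1(\mathbf A(\epsilon))$ is computed: the passage from the eigenvector of the approximant to $\lim_{\epsilon\to\infty}\mathbf v_1(\mathbf Q(\epsilon))$ is justified only by the remark that $\|\mathbf Q(\epsilon)-\mathbf A(\epsilon)\|_F = O(1/\epsilon^2)$ makes $\mathbf A(\epsilon)$ a ``low rank approximation.'' Your rescaling by $2\epsilon$ to obtain a fixed limit matrix $\mathbf A_\infty$ plus an $O(1/\epsilon)$ error, followed by the spectral-gap argument and continuity of the Riesz projection for a simple eigenvalue, is exactly the missing rigor; your handling of the conjugation/phase ambiguity (work with the eigenprojection, then fix a phase convention such as a positive real $j_0$-th component) is likewise more careful than the paper, which addresses that ambiguity only in its general discussion of lead matrices rather than in this proof.
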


\begin{proof}
Consider $N \times N$ matrix $\mathbf A(\epsilon)$ whose $(j,k)$-th entry is defined in \eqref{eq: Cyclic OU Process One Nonzero Propagation Coefficient and Noise in One Sensor First Regime Lead Matrix Asymptotic Entry} for all $1 \le j,k \le N.$ Note that $\mathbf A(\epsilon)$ is a skew-symmetric matrix that has only two nonzero rows, namely
$\frac{d_s b_p}{2 \epsilon} \  \mathbf e_{s}$ and $-\frac{d_s b_p}{2 \epsilon} \  \mathbf e_{s+1-p}.$ Since these rows are orthogonal, we deduce $\mathbf A(\epsilon)$ has rank $2.$ Moreover, the previous theorem states as $\epsilon \rightarrow \infty,$ we have
    \begin{align} \nonumber
        \| \mathbf Q(\epsilon)- \mathbf A(\epsilon) \|_F &= O \left(\frac{1}{\epsilon^2} \right),
    \end{align}
which implies $\mathbf A(\epsilon)$ a low rank approximation of $\mathbf Q(\epsilon)$ as $\epsilon \rightarrow \infty.$

We now determine the leading eigenvector of $\mathbf A(\epsilon).$ Define
	\begin{align}
		\mathbf a(\epsilon) &= \sqrt{ \frac{d_s b_p}{2 \epsilon}}  \ \mathbf e_{s+1-p} \nonumber \\
		\mathbf b(\epsilon) &=  \sqrt{\frac{d_s b_p}{2 \epsilon} }  \ \mathbf e_{s} \nonumber.
	\end{align}
Note that
	\begin{align}
		\mathbf A(\epsilon) &= \mathbf a(\epsilon)  \ \mathbf b^\text T(\epsilon)- \mathbf b(\epsilon) \  \mathbf a^\text T(\epsilon).
	\end{align} 
Because $\mathbf a(\epsilon)$ and $\mathbf b(\epsilon)$ are orthogonal, they are linearly independent, and the angle between them is $\frac{\pi}{2}.$ Hence, by the formula for the eigenvalue and eigenvector of a rank $2$ skew-symmetric matrix stated in  \cite{BaryshnikovSchlafly2016}, the largest eigenvalue of $\mathbf A(\epsilon)$ is
	\begin{align}
		\lambda_1(\mathbf A(\epsilon)) &= i \sin \left(\frac{\pi}{2} \right) \| \mathbf a(\epsilon)\| \|\mathbf b(\epsilon)\| \\
		&=\frac{d_s b_p}{2 \epsilon} \ i \nonumber.
	\end{align}
Its associated eigenvector is the leading eigenvector of $\mathbf A(\epsilon),$ which is explicitly
	\begin{align}
		\mathbf v_1(\mathbf A(\epsilon)) &=\frac{ -e^{- \frac{\pi}{2} i } \frac{\mathbf a(\epsilon)}{\| \mathbf a(\epsilon)\|} +  \frac{\mathbf b(\epsilon)}{\| \mathbf b(\epsilon)\|}}{\left \|  -e^{- \frac{\pi}{2} i } \frac{\mathbf a(\epsilon)}{\| \mathbf a(\epsilon)\|} +  \frac{\mathbf b(\epsilon)}{\| \mathbf b(\epsilon)\|} \right \|} \nonumber \\
		&=\frac{1}{\left \|\frac{ i \  \mathbf e_s - \mathbf e_{s+1-p}}{\sqrt{\frac{d_s b_2}{2 \epsilon}}} \right \|}  \left( \frac{ i \  \mathbf e_s + \mathbf e_{s+1-p}}{\sqrt{\frac{d_s b_2}{2 \epsilon}}} \right)\nonumber \\
		&=\frac{1}{\sqrt{\frac{2}{\frac{d_s b_p}{2 \epsilon}}}} \left(\frac{ i \  \mathbf e_s + \mathbf e_{s+1-p}}{\sqrt{\frac{d_s b_p}{2 \epsilon}}} \right) \nonumber \\
		&=  \frac{ i \  \mathbf e_s + \mathbf e_{s+1-p}}{\sqrt 2} \nonumber.
	\end{align}
\end{proof}

We interpret the result of this previous theorem. In the first regime, the leading eigenvector of the lead matrix $\mathbf Q(\epsilon)$ converges to a vector with exactly two nonzero components with equal moduli, namely the $s$-th and $(s+1-p)$-th components. The $s$-th component lies on the real axis, while the $(s+1-p)$-th component lies on the imaginary axis. All other components are equal to $0.$ Therefore, the structure of the asymptotic leading eigenvector under the first regime suggests that only the $s$-th and $(s+1-p)$-th sensors receive the dissipating signal that is propagating throughout the network. Thus, Cyclicity Analysis does not enable us to fully recover the expected network structure under the first regime. 

\subsection{Second Regime}
We now investigate the second regime, in which the perturbation $\epsilon$ is small. We derive the asymptotic expansion of the lead matrix under this regime.

\begin{theorem}[One Nonzero Propagation Coefficient and Noise in One Sensor Second Regime Lead Matrix Asymptotic Formula]\label{thm: Cyclic OU Process One Nonzero Propagation Coefficient and Noise in One Sensor Second Regime Lead Matrix Asymptotic Formula}
Consider the lead matrix $\mathbf Q(\epsilon)$ in \eqref{eq: Cyclic OU Process One Nonzero Propagation Coefficient and Noise in One Sensor Lead Matrix}.  Then, as $\epsilon \rightarrow 0,$ the matrix $\mathbf Q(\epsilon)$ has an asymptotic expansion of the form

    \begin{align}\label{eq: Cyclic OU Process One Nonzero Propagation Coefficient and Noise in One Sensor Second Regime Lead Matrix Asymptotic Formula} 
        \mathbf Q(\epsilon)  &= \frac{d_s}{N} \sum_{m,n=1}^N \left(\frac{\left(\omega_m^{p-1}-\omega_n^{p-1} \right)  \  \omega_{m+n}^{1-s}} {\omega_m^{p-1}+\omega_n^{p-1}-2} \right) \ \mathbf w_m  \mathbf w_n^\text T + O(\epsilon),
	\end{align}
in which the summand inside the double sum is defined to be $0$ if $m=n=N.$
\end{theorem}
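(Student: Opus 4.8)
The plan is to expand the summand in \eqref{eq: Cyclic OU Process One Nonzero Propagation Coefficient and Noise in One Sensor Lead Matrix} index-by-index as $\epsilon \rightarrow 0,$ exploiting the fact that the index set is finite so that all error terms can be collected into a single $O(\epsilon)$ matrix. The first step is to pin down exactly where the denominator $2\epsilon + b_p(\omega_m^{p-1} + \omega_n^{p-1} - 2)$ degenerates at $\epsilon = 0.$ Since $|\omega_m^{p-1}| = |\omega_n^{p-1}| = 1,$ the quantity $\omega_m^{p-1} + \omega_n^{p-1} - 2$ vanishes if and only if $\omega_m^{p-1} = \omega_n^{p-1} = 1,$ because two points on the unit circle sum to $2$ only when both equal $1.$ Invoking the standing assumption $\gcd(p-1,N) = 1,$ the equation $\omega_m^{p-1} = 1$ forces $m(p-1) \equiv 0 \pmod N,$ hence $m \equiv 0 \pmod N,$ i.e. $m = N;$ the same applies to $n.$ Therefore the denominator degenerates at $\epsilon = 0$ precisely at the single index $(m,n) = (N,N).$

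Next I would dispose of this exceptional index. At $(m,n) = (N,N)$ the numerator $\omega_m^{p-1} - \omega_n^{p-1}$ also vanishes, so the corresponding summand equals $0$ for every $\epsilon > 0;$ this matches the stated convention that the limiting summand be set to $0$ there, and the $(N,N)$ term contributes nothing to either side of \eqref{eq: Cyclic OU Process One Nonzero Propagation Coefficient and Noise in One Sensor Second Regime Lead Matrix Asymptotic Formula}.

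For every remaining index $(m,n) \ne (N,N),$ the quantity $c_{m,n} := b_p(\omega_m^{p-1} + \omega_n^{p-1} - 2)$ is a fixed nonzero number, so for $\epsilon$ small enough that $|2\epsilon| < |c_{m,n}|$ I may expand via the geometric series
\begin{align}
\frac{d_s b_p}{N} \cdot \frac{(\omega_m^{p-1} - \omega_n^{p-1}) \ \omega_{m+n}^{1-s}}{c_{m,n} + 2\epsilon} &= \frac{d_s b_p}{N} \cdot \frac{(\omega_m^{p-1} - \omega_n^{p-1}) \ \omega_{m+n}^{1-s}}{c_{m,n}} \bigl(1 + O(\epsilon)\bigr). \nonumber
\end{align}
Since $\tfrac{b_p}{c_{m,n}} = \bigl(\omega_m^{p-1} + \omega_n^{p-1} - 2\bigr)^{-1},$ the factor $b_p$ cancels and each such summand equals $\frac{d_s}{N} \cdot \frac{(\omega_m^{p-1} - \omega_n^{p-1}) \ \omega_{m+n}^{1-s}}{\omega_m^{p-1} + \omega_n^{p-1} - 2} + O(\epsilon),$ matching the corresponding term in the claimed limit matrix.

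Finally I would reassemble the sum. There are at most $N^2$ indices, and each coefficient matrix $\mathbf w_m \mathbf w_n^\text T$ has entries bounded by $\tfrac{1}{N}$ in modulus, so adding the finitely many $O(\epsilon)$ remainders against bounded coefficient matrices yields a single $O(\epsilon)$ matrix in the sense defined earlier in the chapter, which establishes \eqref{eq: Cyclic OU Process One Nonzero Propagation Coefficient and Noise in One Sensor Second Regime Lead Matrix Asymptotic Formula}. The only genuinely delicate point is the treatment of the $(N,N)$ index: one must observe that numerator and denominator vanish there simultaneously, and that the coprimality of $p-1$ and $N$ guarantees this is the \emph{only} degenerate index, so that the geometric-series expansion is legitimate at every other index and no hidden singularity survives in the double sum.
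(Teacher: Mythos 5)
Your proof is correct and follows essentially the same route as the paper's: a geometric-series expansion of the denominator in powers of $\epsilon,$ combined with the coprimality argument showing $(m,n)=(N,N)$ is the unique index at which $\omega_m^{p-1}+\omega_n^{p-1}-2$ vanishes. Your handling of that exceptional index is in fact slightly cleaner than the paper's: you observe the summand is identically zero there for every $\epsilon>0$ (the numerator vanishes while the denominator equals $2\epsilon$) and exclude it before expanding, whereas the paper expands uniformly over all indices (a step whose hypothesis $\left|2\epsilon/\bigl(b_p(\omega_m^{p-1}+\omega_n^{p-1}-2)\bigr)\right|<1$ cannot hold at $(N,N)$) and then justifies the convention afterwards via a limit argument.
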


\begin{proof}
We use the same technique that was used to prove \textbf{Theorem \ref{thm: Cyclic OU Process One Nonzero Propagation Coefficient and Noise in One Sensor First Regime Lead Matrix Asymptotic Expansion}}. We divide the numerator and denominator of the summand in \eqref{eq: Cyclic OU Process One Nonzero Propagation Coefficient and Noise in One Sensor Lead Matrix} by the quantity $b_p \left(\omega_m^{p-1} + \omega_n^{p-1} -2 \right)$ to obtain 
	\begin{align}
		\mathbf Q(\epsilon) &= \frac{d_s b_p}{N} \sum_{m,n=1}^N \frac{ \frac{\left(\omega_m^{p-1}-\omega_n^{p-1} \right)  \  \omega_{m+n}^{1-s}} {b_p \left(\omega_m^{p-1} + \omega_n^{p-1} -2 \right)} }{1+ \frac{2 \epsilon}{b_p \left(\omega_m^{p-1} + \omega_n^{p-1} -2 \right)}} \ \mathbf w_m \mathbf w_n^ \text T \nonumber \\
		&= \frac{d_s}{N} \sum_{m,n=1}^N \frac{ \frac{\left(\omega_m^{p-1}-\omega_n^{p-1} \right)  \  \omega_{m+n}^{1-s}} {\omega_m^{p-1}+\omega_n^{p-1}-2} }{1+ \frac{2 \epsilon}{b_p \left(\omega_m^{p-1} + \omega_n^{p-1} -2 \right)}} \ \mathbf w_m \mathbf w_n^ \text T \label{eq: Cyclic OU Process One Nonzero Propagation Coefficient and Noise in One Sensor Lead Matrix 1}.
	\end{align}

Suppose 
	\begin{align}
		\left | \frac{2\epsilon}{b_p  \left(\omega_m^{p-1} + \omega_n^{p-1} -2 \right)} \right| <1 \nonumber,
	\end{align}
for all $1 \le m,n \le N.$ Then, we can rewrite \eqref{eq: Cyclic OU Process One Nonzero Propagation Coefficient and Noise in One Sensor Lead Matrix 1} as a geometric series. We have 
	\begin{align}
		\mathbf Q(\epsilon) &=  \frac{d_s}{N} \sum_{m,n=1}^N \frac{\left(\omega_m^{p-1}-\omega_n^{p-1} \right) \  \omega_{m+n}^{1-s}} {\omega_m^{p-1}+\omega_n^{p-1}-2} \  \sum_{r \ge 0} \left(- \frac{2 \epsilon}{b_p \ (\omega_m^{p-1}+\omega_n^{p-1}-2)} \right)^r \ \mathbf w_m \mathbf w_n^\text T \nonumber  \\
		&= \frac{d_s}{N} \sum_{r \ge 0} \sum_{m,n=1}^N \left(\frac{\left(\omega_m^{p-1}-\omega_n^{p-1} \right)  \  \omega_{m+n}^{1-s}} {\omega_m^{p-1}+\omega_n^{p-1}-2} \right) \  \left(- \frac{2 \epsilon}{b_ p(\omega_m^{p-1}+\omega_n^{p-1}-2)} \right)^r \ \mathbf w_m \mathbf w_n^\text T \label{eq: Cyclic OU Process One Nonzero Propagation Coefficient and Noise in One Sensor Lead Matrix 2} \\
		&= \frac{d_s}{N} \sum_{m,n=1}^N \left(\frac{\left(\omega_m^{p-1}-\omega_n^{p-1} \right)  \  \omega_{m+n}^{1-s}} {\omega_m^{p-1}+\omega_n^{p-1}-2} \right) \ \mathbf w_m  \mathbf w_n^\text T + O(\epsilon), \label{eq: Cyclic OU Process One Nonzero Propagation Coefficient and Noise in One Sensor Lead Matrix 3}
	\end{align}
in which we obtained \eqref{eq: Cyclic OU Process One Nonzero Propagation Coefficient and Noise in One Sensor Lead Matrix 3} by extracting the term $r=0$ in the outer sum of \eqref{eq: Cyclic OU Process One Nonzero Propagation Coefficient and Noise in One Sensor Lead Matrix 2} and replacing the sum over all $r>0$ with $O(\epsilon).$

Now, we need to justify defining the summand in the double sum of \eqref{eq: Cyclic OU Process One Nonzero Propagation Coefficient and Noise in One Sensor Second Regime Lead Matrix Asymptotic Formula} to be $0$ when $m=n=N.$ Suppose $1 \le m,n \le N$ are two indices satisfying
    \begin{align} \label{eq: Cyclic OU Process One Nonzero Propagation Coefficient and Noise in One Sensor Lead Matrix 4}
        \omega_m^{p-1}+ \omega_n^{p-1}=2.
    \end{align}
Upon taking the moduli of both sides and using the triangle inequality, we obtain
    \begin{align}
        2 &= |\omega_m^{p-1}+ \omega_n^{p-1}| \nonumber \\
        &\le |\omega_m^{p-1}| + |\omega_n^{p-1}| \nonumber  \\
        &= 1+1=2 \nonumber.
    \end{align}
Therefore, we have 
    \begin{align} \label{eq: Cyclic OU Process One Nonzero Propagation Coefficient and Noise in One Sensor Lead Matrix 5}
        |\omega_m^{p-1}+ \omega_n^{p-1}| = |\omega_m^{p-1}| + |\omega_n^{p-1}|.
    \end{align}
Recall if $z,w \in \C$ satisfy $|z+w| =|z|+|w|$ i.e. $z,w$ are such that equality is obtained in the triangle inequality for complex numbers, then either one of $z,w$ must be zero or $\text{Arg}(z)=\text{Arg}(w)$ (see \cite{MartinAhlfors1996}). Since $\omega_m^{p-1}, \omega_{n}^{p-1}$ are both nonzero, \eqref{eq: Cyclic OU Process One Nonzero Propagation Coefficient and Noise in One Sensor Lead Matrix 5} implies $\text{Arg}(\omega_m^{p-1})=\text{Arg}(\omega_n^{p-1})$ i.e. $\frac{2 \pi m(p-1)}{N} = \frac{2 \pi n(p-1)}{N},$ and so $m=n.$ Now, substituting $m=n$ into \eqref{eq: Cyclic OU Process One Nonzero Propagation Coefficient and Noise in One Sensor Lead Matrix 4}, we obtain $\omega_m^{p-1}=1,$ which implies $$m(p-1) \equiv 0 \mod N.$$ Because $\gcd(p-1,N)=1$ divides $0,$ we see the congruence equation has a solution for $m.$ In particular, by \cite[Section 2.6]{Hua1982}, we have
    \begin{align}
        m &= N(k+1)
    \end{align}
for $k \in \Z.$ But since $1 \le m \le N,$ this forces $m=N.$ This shows \eqref{eq: Cyclic OU Process One Nonzero Propagation Coefficient and Noise in One Sensor Lead Matrix 1} implies $m=n=N.$ Finally, note that
    \begin{align}
    \frac{\omega_m^{p-1}-\omega_m^{p-1}}{\omega_m^{p-1}+\omega_m^{p-1}-2} &=0 \nonumber 
    \end{align}
if $m<N.$ But taking the limit of the left hand side as $m \rightarrow N,$ we obtain via L'Hospital's rule:
    \begin{align}
        \lim_{m \rightarrow N} \frac{\omega_m^{p-1}-\omega_m^{p-1}}{\omega_m^{p-1}+\omega_m^{p-1}-2} &= \lim_{m \rightarrow N} \frac{\frac{2 \pi i (p-1)}{N} \left(\omega_m^{p-1}-\omega_m^{p-1} \right) }{\frac{2 \pi i (p-1)}{N} \left(\omega_m^{p-1}+\omega_m^{p-1} \right)} =0.
    \end{align}
This completes the proof.
\end{proof}

\section{Second Regime Special Case: First Propagation Coefficient Nonzero and Noise Injected into Last Sensor} 

At the current time of writing, we are not aware of any closed formulas for the eigenvalues and eigenvectors of the matrix defined on the right hand side of \eqref{eq: Cyclic OU Process One Nonzero Propagation Coefficient and Noise in One Sensor Second Regime Lead Matrix Asymptotic Formula}. Nevertheless, in this section, we investigate the second regime in the situation where $p=2$ and $s=N.$ That is, the ground truth network structure is such that $n$-th sensor is linked only to the $(n+1)$-th sensor, and noise is injected only into the $N$-th sensor. To simplify the analysis, we will also assume that $b_2=-1$ and $d_N=1.$

We investigate the asymptotic structure of the lead matrix in the second regime.
\begin{theorem}[First Nonzero Propagation Coefficient, Noise in Last Sensor Second Regime Lead Matrix]\label{thm: Cyclic OU Process First Nonzero Propagation Coefficient and Noise in Last Sensor Second Regime Lead Matrix Asymptotic Expansion}
Consider the lead matrix $\mathbf Q(\epsilon)$ in 
\eqref{eq: Cyclic OU Process One Nonzero Propagation Coefficient and Noise in One Sensor Lead Matrix}. Then, as $\epsilon \rightarrow 0,$ we have an alternate asymptotic expansion 
	\begin{align} \label{eq: Cyclic OU Process First Nonzero Propagation Coefficient and Noise in Last Sensor Second Regime Lead Matrix Asymptotic Expansion}
		\mathbf Q(\epsilon) &= \mathbf A + O(\epsilon),
	\end{align}
where $\mathbf A$ is the $N \times N$ skew-symmetric matrix whose $(j,k)$-th entry is
	\begin{align}\label{eq: Cyclic OU Process First Nonzero Propagation Coefficient and Noise in Last Sensor Second Regime Lead Matrix Asymptotic Binomial Matrix Entry}
		A_{j,k} &= \begin{cases}  \left(\frac{1}{2} \right)^{2N-j-k} \frac{k-j}{2N-j-k} \binom{2N-j-k}{N-k} &j,k \ne N \\ 0 & \text{else} \end{cases},
	\end{align}
and
    \begin{align}
        \binom{m}n &= \begin{cases}
    \frac{m!}{(m-n)! \  n!} & 0 \le n \le m \\
    0 & \text{else} \end{cases} \nonumber 
    \end{align}
is the binomial coefficient.
\end{theorem}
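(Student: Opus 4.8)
The plan is to reduce the whole statement to a single double sum over $N$-th roots of unity and extract its leading behaviour as $\epsilon\to 0$ by a geometric-series expansion followed by the roots-of-unity filter $\frac1N\sum_{n=1}^N\omega_n^q=\left[q\equiv 0\bmod N\right]$ recorded in \eqref{eq: Finite Geometric Series Identity}. First I would substitute $p=2$, $s=N$, $b_2=-1$, $d_N=1$ into the explicit formula \eqref{eq: Cyclic OU Process One Nonzero Propagation Coefficient and Noise in One Sensor Lead Matrix} and simplify the phase factor using $\omega_{m+n}^{1-s}=\omega_{m+n}^{1-N}=\omega_{m+n}$, which holds because $\omega_{m+n}^N=1$. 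Reading off the $(j,k)$ entry via $\left(\mathbf w_m\mathbf w_n^\text T\right)_{j,k}=\frac1N\omega_m^{j-1}\omega_n^{k-1}$ and collapsing $\omega_{m+n}\,\omega_m^{j-1}\omega_n^{k-1}=\omega_m^{j}\omega_n^{k}$, this yields
\[
Q_{j,k}(\epsilon)=\frac{-1}{N^{2}}\sum_{m,n=1}^{N}\frac{(\omega_m-\omega_n)\,\omega_m^{j}\omega_n^{k}}{2(1+\epsilon)-\omega_m-\omega_n}.
\]
Keeping $\epsilon>0$ here is deliberate: since $|\omega_m+\omega_n|\le 2<2(1+\epsilon)$ for every $m,n$, the denominator stays bounded away from $0$ and the expansion below converges absolutely and uniformly, which lets me interchange sums freely.

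Next I would expand
\[
\frac{1}{2(1+\epsilon)-\omega_m-\omega_n}=\sum_{r\ge 0}\frac{(\omega_m+\omega_n)^{r}}{\bigl(2(1+\epsilon)\bigr)^{r+1}},
\]
write $(\omega_m+\omega_n)^{r}=\sum_{a=0}^{r}\binom{r}{a}\omega_m^{a}\omega_n^{r-a}$, and apply \eqref{eq: Finite Geometric Series Identity} to each of the two monomials produced by the factor $\omega_m-\omega_n$. The $m,n$-sums then collapse to Kronecker deltas, so the surviving $(r,a)$ are exactly those satisfying $j+a+1\equiv 0,\ k+r-a\equiv 0$ (contributing $+\binom{r}{a}$) or $j+a\equiv 0,\ k+r-a+1\equiv 0$ (contributing $-\binom{r}{a}$), all congruences mod $N$. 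Both families force $r\equiv-(j+k+1)\bmod N$, and a short check of when the required residue for $a$ actually lands in $\{0,\dots,r\}$ shows that the smallest degree with a nonzero contribution is $r=2N-j-k-1$; at that degree each family supplies a unique index, namely $a=N-1-j$ and $a=N-j$.

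Finally I would combine the two leading binomial coefficients $\binom{2N-j-k-1}{N-1-j}$ and $\binom{2N-j-k-1}{N-j}$ using the reflection symmetry $\binom{S}{c}=\binom{S}{S-c}$ to rewrite both with lower index near $N-k$, and then the absorption identity $\binom{S-1}{c-1}=\frac{c}{S}\binom{S}{c}$; after correct sign tracking this produces exactly the prefactor $\frac{k-j}{2N-j-k}$ multiplying $\binom{2N-j-k}{N-k}$, while the weight $\left(\tfrac12\right)^{2N-j-k}$ comes from $\bigl(2(1+\epsilon)\bigr)^{-(r+1)}$ at $\epsilon=0$. The degenerate cases $j=N$ or $k=N$ give zero because the governing congruences cannot be satisfied (equivalently the relevant $\omega_N=1$ terms are exactly the excluded $m=n=N$ summand), matching the stated ``else $=0$.'' The main obstacle is the control of the tail: I must argue that, in the limit $\epsilon\to 0$, only the minimal-degree term $r=2N-j-k-1$ feeds into $\mathbf A$, and that the higher-degree terms $r=2N-j-k-1+N,2N-j-k-1+2N,\dots$ together with the degenerate $m=n=N$ summand (whose numerator vanishes but whose denominator does not) are absorbed into the $O(\epsilon)$ remainder; this uniform estimate on the geometric remainder, which is what legitimises passing the $\epsilon\to 0$ limit through the expansion, is the genuinely delicate point, whereas the binomial identity collapsing the two leading coefficients is routine once the indices are pinned down.
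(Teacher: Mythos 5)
Your plan follows essentially the same route as the paper's proof: substitute $p=2$, $s=N$, $b_2=-1$, $d_N=1$ into \eqref{eq: Cyclic OU Process One Nonzero Propagation Coefficient and Noise in One Sensor Lead Matrix}, expand the denominator as a geometric series, apply the binomial theorem and the roots-of-unity filter \eqref{eq: Finite Geometric Series Identity}, and collapse the two surviving binomials by reflection and absorption. One point where you are actually \emph{more} careful than the paper: by keeping $\epsilon>0$ you expand in $\frac{\omega_m+\omega_n}{2(1+\epsilon)}$, which has modulus strictly less than $1$ for all $m,n$, whereas the paper first sets $\epsilon=0$ and expands $\bigl(1-\tfrac{\omega_m+\omega_n}{2}\bigr)^{-1}$ in a series that diverges whenever $m=n$.

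However, the step you yourself single out as ``the genuinely delicate point'' is a genuine gap, and it cannot be closed: the non-minimal terms are not $O(\epsilon)$. The admissible exponents are $r=uN-j-k-1$ with $a=vN-j-1$ (first family) or $a=vN-j$ (second family), $v\ge 1$, $u\ge v+1$, and every such pair contributes $\pm\binom{r}{a}\bigl(2(1+\epsilon)\bigr)^{-(r+1)}$, which converges as $\epsilon\to 0$ to the \emph{nonzero constant} $\pm\binom{r}{a}2^{-(r+1)}$; these $\epsilon$-independent contributions cannot be absorbed into the remainder. A concrete check: for $N=3$, $(j,k)=(1,2)$, evaluating the finite sum over roots of unity directly gives $\lim_{\epsilon\to 0}Q_{1,2}(\epsilon)=-\tfrac{1}{9}$, whereas the minimal-degree term alone is $-\tfrac18$ (the tail forms a series with ratio of order $2^{-N}$), and the theorem claims $A_{1,2}=+\tfrac18$. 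This example also exposes the sign problem: carrying the prefactor $-\tfrac{1}{N^2}$ through the filter, the minimal-degree contribution is
\begin{align}
	-\left(\frac{1}{2}\right)^{2N-j-k}\frac{k-j}{2N-j-k}\binom{2N-j-k}{N-k} \nonumber,
\end{align}
the \emph{negative} of the stated $A_{j,k}$, so your assertion that ``correct sign tracking'' recovers the theorem's prefactor is not right. For context, the paper's own proof commits both of these errors (it swaps the two binomial coefficients between \eqref{eq: Cyclic OU Process First Nonzero Propagation Coefficient and Noise in Last Sensor Second Regime Lead Matrix Asymptotic Formula 4} and \eqref{eq: Cyclic OU Process First Nonzero Propagation Coefficient and Noise in Last Sensor Second Regime Lead Matrix Asymptotic Formula 8}, and it replaces the constant sum over all other $u,v$ by $O(\epsilon)$), so your proposal faithfully reproduces the paper's argument, gaps included; the statement itself can only be salvaged by either putting the full $(u,v)$-sum (with corrected sign) into $\mathbf A$, or by weakening $O(\epsilon)$ to an $\epsilon$-independent error that is exponentially small in $N$.
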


\begin{proof}
Substituting $b_2=-1$ and $d_N=1$ into \eqref{eq: Cyclic OU Process One Nonzero Propagation Coefficient and Noise in One Sensor Second Regime Lead Matrix Asymptotic Formula}, we get
\begin{align} \label{eq: Cyclic OU Process First Nonzero Propagation Coefficient and Noise in Last Sensor Second Regime Lead Matrix Asymptotic Formula}
	\mathbf Q(\epsilon) &= \frac{1}{N} \sum_{m,n=1}^N \left(\frac{(\omega_m - \omega_n) \ \omega_{m+n}}{\omega_m+ \omega_n-2} \right) \ \mathbf w_m  \mathbf w_n^\text T + O(\epsilon).
\end{align}

Extracting the $(j,k)$-th entry of \eqref{eq: Cyclic OU Process First Nonzero Propagation Coefficient and Noise in Last Sensor Second Regime Lead Matrix Asymptotic Formula}, we obtain
\begin{align}
	Q_{j,k}(\epsilon) &=  \frac{1}{N^2} \sum_{m,n=1}^N \frac{(\omega_m-\omega_n) \ \omega_{m+n}} {\omega_m+\omega_n-2} \  \omega_m^{j-1} \omega_n^{k-1}  + O(\epsilon) \nonumber \\
	&=-\frac{1}{2N^2} \sum_{m,n=1}^N \frac{\omega_m^{j+1} \omega_n^k - \omega_m^{j} \omega_n^{k+1}} {1- \frac{\omega_m+\omega_n}{2}}   + O(\epsilon) \label{eq: Cyclic OU Process First Nonzero Propagation Coefficient and Noise in Last Sensor Second Regime Lead Matrix Asymptotic Formula 1} \\
	&= -\frac{1}{2N^2} \sum_{r \ge 0}  \sum_{m,n=1}^N \left(\omega_m^{j+1} \omega_n^k - \omega_m^{j} \omega_n^{k+1} \right) \left(\frac{\omega_m+\omega_n}{2} \right)^r + O(\epsilon)  \label{eq: Cyclic OU Process First Nonzero Propagation Coefficient and Noise in Last Sensor Second Regime Lead Matrix Asymptotic Formula 2}.
\end{align}
Upon expanding the expression $\left( \frac{\omega_m + \omega_n}{2} \right)^r$  within the summand of \eqref{eq: Cyclic OU Process First Nonzero Propagation Coefficient and Noise in Last Sensor Second Regime Lead Matrix Asymptotic Formula 2} with the binomial theorem, we get
\begin{align}
	Q_{j,k}(\epsilon) &= -\frac{1}{2N^2}  \sum_{q,r \ge 0} \sum_{m,n=1}^N \left(\omega_m^{j+1} \omega_n^k - \omega_m^{j} \omega_n^{k+1} \right)  \ \left(\frac{1}2 \right)^r \  \binom{r}q  \  \omega_m^{r-q} \  \omega_n^q + O(\epsilon) \nonumber \\
	&= -\frac{1}{2N^2}  \sum_{q,r \ge 0} \sum_{m,n=1}^N  \ \left(\frac{1}2 \right)^r \  \binom{r}q  \left(\omega_m^{r+j-q+1} \  \omega_n^{k+q} -  \omega_m^{r+j-q} \  \omega_n^{k+q+1} \right)  \label{eq: Cyclic OU Process First Nonzero Propagation Coefficient and Noise in Last Sensor Second Regime Lead Matrix Asymptotic Formula 3} \\
	&\qquad \qquad \qquad  + O(\epsilon) \nonumber  \\
	&= -\frac{1}{2} \sum_{q,r \ge 0}  \begin{cases}
	\left(\frac{1}{2} \right)^r \binom{r}q & r \equiv q-j-1 \mod N \ , \ q \equiv -k \mod N  \\
	-\left(\frac{1}{2} \right)^r \binom{r}q & r \equiv q-j \mod N \ , \ q \equiv -k-1 \mod N  \\
	0 & \text{else}
	 \end{cases} \label{eq: Cyclic OU Process First Nonzero Propagation Coefficient and Noise in Last Sensor Second Regime Lead Matrix Asymptotic Formula 4} \\
	 & \qquad \qquad \qquad \qquad + O(\epsilon) \nonumber,
\end{align}
where \eqref{eq: Cyclic OU Process First Nonzero Propagation Coefficient and Noise in Last Sensor Second Regime Lead Matrix Asymptotic Formula 4} follows from evaluating the inner double sum within  \eqref{eq: Cyclic OU Process First Nonzero Propagation Coefficient and Noise in Last Sensor Second Regime Lead Matrix Asymptotic Formula 3} using the geometric series identity \eqref{eq: Finite Geometric Series Identity}.

Now, consider the system of congruence equations 
	\begin{align}
		r &\equiv q-j-1 \mod N \ , \ q \equiv -k \mod N \label{eq: Cyclic OU Process First Nonzero Propagation Coefficient and Noise in Last Sensor Second Regime Lead Matrix Asymptotic Formula 6} \\ 
            r &\equiv q-j \mod N \ , \ q \equiv -k-1 \mod N \label{eq: Cyclic OU Process First Nonzero Propagation Coefficient and Noise in Last Sensor Second Regime Lead Matrix Asymptotic Formula 7},
	\end{align}
where $r,q \ge 0$ and $1 \le j,k \le N.$ The second congruence equation in \eqref{eq: Cyclic OU Process First Nonzero Propagation Coefficient and Noise in Last Sensor Second Regime Lead Matrix Asymptotic Formula 6} implies $q = vN-k$ for some integer $v \in \N,$ while the first implies $r=uN-j-k-1$ for some integer $u \ge 2.$ Similarly, The congruence equations in \eqref{eq: Cyclic OU Process First Nonzero Propagation Coefficient and Noise in Last Sensor Second Regime Lead Matrix Asymptotic Formula 7} imply
$q= vN-k-1$ and $r= uN-j-k-1$ for some integers $v \in \N$ and $u \ge 2.$ Hence, we can rewrite \eqref{eq: Cyclic OU Process First Nonzero Propagation Coefficient and Noise in Last Sensor Second Regime Lead Matrix Asymptotic Formula 4} as a double sum over all $u \ge 2$ and $v \ge 1$ in the following way:
	\begin{align}
		Q_{j,k}(\epsilon) &= - \sum_{u \ge 2, v \ge 1} \left(\frac{1}{2} \right)^{uN-j-k} \left(\binom{uN-j-k-1}{vN-k-1} - \binom{uN-j-k-1}{vN-k} \right) \label{eq: Cyclic OU Process First Nonzero Propagation Coefficient and Noise in Last Sensor Second Regime Lead Matrix Asymptotic Formula 8} \\
		&\qquad \qquad \qquad + O(\epsilon) \nonumber \\
		&= \left(\frac{1}{2} \right)^{2N-j-k} \left(\binom{2N-j-k-1}{N-k} - \binom{2N-j-k-1}{N-k-1} \right)  + O(\epsilon) \label{eq: Cyclic OU Process First Nonzero Propagation Coefficient and Noise in Last Sensor Second Regime Lead Matrix Asymptotic Formula 9}   \\
		&= \left(\frac{1}{2} \right)^{2N-j-k} \frac{2N-j-k-2(N-k)}{2N-j-k} \binom{2N-j-k}{N-k} + O(\epsilon) \label{eq: Cyclic OU Process First Nonzero Propagation Coefficient and Noise in Last Sensor Second Regime Lead Matrix Asymptotic Formula 10} \\
		 &= \left(\frac{1}{2} \right)^{2N-j-k} \frac{k-j}{2N-j-k} \binom{2N-j-k}{N-k}  + O(\epsilon) \nonumber,
	\end{align}
in which \eqref{eq: Cyclic OU Process First Nonzero Propagation Coefficient and Noise in Last Sensor Second Regime Lead Matrix Asymptotic Formula 9} follows from extracting the term corresponding $u=2,v=1$ in \eqref{eq: Cyclic OU Process First Nonzero Propagation Coefficient and Noise in Last Sensor Second Regime Lead Matrix Asymptotic Formula 8} and replacing the remaining sum over all other $u,v$ with $O(\epsilon)$ and  \eqref{eq: Cyclic OU Process First Nonzero Propagation Coefficient and Noise in Last Sensor Second Regime Lead Matrix Asymptotic Formula 10} follows from rewriting \eqref{eq: Cyclic OU Process First Nonzero Propagation Coefficient and Noise in Last Sensor Second Regime Lead Matrix Asymptotic Formula 9} using the binomial coefficient identity \cite{Knuth1989}:
	\begin{align}
		\binom{n-1}{k} - \binom{n-1}{k-1} &= \frac{n-2k}{n} \binom{n}{k} \nonumber.
	\end{align}
This completes the proof.
\end{proof}

\subsection{Structure of Eigenvalues}
We investigate the spectrum of $\mathbf A,$ whose $(j,k)$-th entry is defined in \eqref{eq: Cyclic OU Process First Nonzero Propagation Coefficient and Noise in Last Sensor Second Regime Lead Matrix Asymptotic Binomial Matrix Entry}, for large $N$. To make things suitable for future calculations, we will consider the Hermitian matrix $i \mathbf A$ in place of the original $\mathbf A.$ Note that the eigenvalues of $i \mathbf A$ are the eigenvalues of $\mathbf A$ multiplied by $i.$ However, the eigenvectors of $i \mathbf A$ are the same as those of $\mathbf A.$ Furthermore, we  parameterize $i \mathbf A$ by its size $N.$ In the end, we define  $\mathbf A_N$ to be the $N \times N$ Hermitian matrix whose $(j,k)$-th entry is the quantity \eqref{eq: Cyclic OU Process First Nonzero Propagation Coefficient and Noise in Last Sensor Second Regime Lead Matrix Asymptotic Binomial Matrix Entry} multiplied by $i$ for all $1 \le j,k \le N.$ Throughout, we let $\lambda_n(\mathbf A_N)$ be the $n$-th largest eigenvalue of $\mathbf A_N$ and $\left \lbrace \mathbf v_n(\mathbf A_N) \right \rbrace_{n=1}^N$ be the orthonormal basis of eigenvectors such that $\mathbf v_n(\mathbf A_N)$ is the associated unit eigenvector of $\mathbf A_N$ with $\lambda_n(\mathbf A_N).$ Finally, we denote $A_{N,j,k}$ as the $(j,k)$-th entry of $\mathbf A_N.$ 

Recall the $j$-th Gershgorin disk of $\mathbf A_N$ is the disk in the complex plane with center $0$ and radius equal to 
\begin{align}\label{eq: Cyclic OU Process First Nonzero Propagation Coefficient and Noise in Last Sensor Second Regime Lead Matrix Asymptotic Binomial Matrix Gershgorin Radii}
	R_{N,j} &= \sum_{k=1}^N |A_{N,j,k}|.
\end{align}

We concentrate on the largest eigenvalue of $\mathbf A_N$ for large $N.$  By Gershgorin's Circle Theorem \cite{Varga2011}, all eigenvalues of $\mathbf A_N$ lie in the union of the Gershgorin disks.  But we conjecture an upper bound on the radii of these disks.

\begin{conjecture}[Gershgorin Radii Upper Bound]\label{conj: Cyclic OU Process First Nonzero Propagation Coefficient and Noise in Last Sensor Second Regime Lead Matrix Asymptotic Binomial Matrix Gershgorin Radii Upper Bound}
The Gershgorin radii of $\mathbf A_N$ satisfy
		\begin{align}
			\lim_{N \rightarrow \infty} R_{N,N-j} &= \begin{cases} \frac{\binom{2j}{j}}{2^{2j-1}} & j>0\\
				1 & j=0 \label{eq: Cyclic OU Process First Nonzero Propagation Coefficient and Noise in Last Sensor Second Regime Lead Matrix Asymptotic Binomial Matrix Gershgorin Radii Upper Bound}
			\end{cases} .
		\end{align}
\end{conjecture}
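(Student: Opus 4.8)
The plan is to reduce the Gershgorin radius $R_{N,N-j}$ to a single power series that no longer depends on $N$, and then evaluate that series in closed form. Fixing the row index $N-j$ with $j\ge 1$ and substituting $k=N-b$ in the definition $R_{N,N-j}=\sum_{k=1}^{N}|A_{N,N-j,k}|$, the exponent $2N-(N-j)-k$ becomes $j+b$, the factor $k-(N-j)$ becomes $j-b$, and $N-k$ becomes $b$. Since $|i|=1$ and the $k=N$ column vanishes, this gives
\begin{align}
R_{N,N-j} &= \sum_{b=1}^{N-1} \left(\frac{1}{2}\right)^{j+b}\frac{|j-b|}{j+b}\binom{j+b}{b}, \nonumber
\end{align}
whose summand is independent of $N$. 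First I would note that the terms are nonnegative and that $\binom{j+b}{b}$ grows only polynomially in $b$ for fixed $j$ (so it is dominated by the geometric factor $2^{-b}$); hence the series converges and $R_{N,N-j}$ is an increasing sequence of partial sums, so monotone convergence identifies $\lim_{N\to\infty}R_{N,N-j}$ with the infinite series $\sum_{b\ge 1}(\tfrac12)^{j+b}\tfrac{|j-b|}{j+b}\binom{j+b}{b}$.

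The heart of the argument is evaluating this series. The key algebraic simplifications are $\frac{j-b}{j+b}=\frac{2j}{j+b}-1$ and $\frac{1}{j+b}\binom{j+b}{b}=\frac1j\binom{j+b-1}{b}$, which, combined with the standard expansions $\sum_{b\ge0}\binom{j+b}{b}x^b=(1-x)^{-(j+1)}$ and $\sum_{b\ge0}\binom{j+b-1}{b}x^b=(1-x)^{-j}$, yield the generating-function identity
\begin{align}
\sum_{b\ge 0}\frac{j-b}{j+b}\binom{j+b}{b}x^{b} &= 2(1-x)^{-j}-(1-x)^{-(j+1)}. \nonumber
\end{align}
Evaluating the right-hand side at $x=\tfrac12$ gives $2\cdot 2^{j}-2^{j+1}=0$: the \emph{signed} sum vanishes. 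This is the decisive point, because it forces the positive part ($b<j$) and the negative part ($b>j$) to cancel, so the sum carrying the absolute value equals exactly twice its finite head,
\begin{align}
\sum_{b\ge 0}\left(\frac12\right)^{b}\frac{|j-b|}{j+b}\binom{j+b}{b} &= 2\sum_{b=0}^{j-1}\left(\frac12\right)^{b}\frac{j-b}{j+b}\binom{j+b}{b}. \nonumber
\end{align}
It then remains to prove the finite identity $\sum_{b=0}^{j-1}(\tfrac12)^{b}\tfrac{j-b}{j+b}\binom{j+b}{b}=\binom{2j}{j}2^{-j}$, which I would establish by induction on $j$ (or by recognizing the ballot numbers $\tfrac{j-b}{j+b}\binom{j+b}{b}$ and applying a Vandermonde-type convolution); multiplying through by $(\tfrac12)^{j}$ then produces the central binomial coefficient.

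The main obstacle is precisely the absolute value, which blocks a one-line generating-function evaluation; the plan dispatches it through the exact cancellation at $x=\tfrac12$, collapsing the infinite tail and reducing everything to a finite central-binomial identity. The second delicate point — and where I would spend the most care — is the boundary term $b=0$, that is, the $k=N$ column. Because the last row and column of $\mathbf A_N$ are identically zero, the $b=0$ term (of value $(\tfrac12)^{j}$) is \emph{absent} from $R_{N,N-j}$, so the telescoping above produces the full weighted sum $\binom{2j}{j}2^{-(2j-1)}$ minus that boundary value; reconciling this additive constant against the stated closed form, and in particular treating the degenerate case $j=0$ where the entire row $N$ vanishes, is the step that fixes the constant and must be argued explicitly. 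Controlling $\max_j R_{N,N-j}$ through these limits is then what localizes the dominant eigenvalue $\lambda_1(\mathbf A_N)$ inside the union of Gershgorin disks, which is the intended use of the estimate.
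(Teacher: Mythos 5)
Your generating-function evaluation is sound, and it goes well beyond what the paper does: the paper never proves this statement (it is left as a conjecture there) and only verifies the case $j=1$ by a geometric-series computation, whereas your route --- the signed series $\sum_{b\ge 0}\frac{j-b}{j+b}\binom{j+b}{b}x^{b}=2(1-x)^{-j}-(1-x)^{-(j+1)}$ vanishing at $x=\tfrac12$, hence the absolute-value series collapsing to twice its head $\sum_{b=0}^{j-1}$ --- would settle every $j$ at once. The finite identity you defer to induction is also readily closed: writing $\frac{j-b}{j+b}\binom{j+b}{b}=2\binom{j+b-1}{b}-\binom{j+b}{b}$ and invoking the classical identity $\sum_{b=0}^{n}\binom{n+b}{b}2^{-b}=2^{n}$ gives $\sum_{b=0}^{j-1}2^{-b}\frac{j-b}{j+b}\binom{j+b}{b}=2\cdot 2^{j-1}-\bigl(2^{j}-\binom{2j}{j}2^{-j}\bigr)=\binom{2j}{j}2^{-j}$, exactly the value you need.

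The genuine problem is the boundary term, and it cannot be left as a constant ``to be reconciled.'' Under your reading (last row and column of $\mathbf A_N$ identically zero, so the $b=0$ term is absent), your own computation yields $\lim_{N\to\infty}R_{N,N-j}=\binom{2j}{j}2^{-(2j-1)}-2^{-j}$ for $j\ge 1$ --- e.g. $\tfrac12$ rather than $1$ at $j=1$ --- and $R_{N,N}=0$ for $j=0$; that is, your proposal as written \emph{disproves} the conjecture rather than proves it, and no further argument can ``fix the constant.'' The resolution is that the paper is internally inconsistent, and its own $j=1$ verification tells you which convention it intends: that computation sums $\left(\tfrac12\right)^{N-k+1}|N-k-1|$ over all $k$ up to and including $k=N$, i.e. it keeps the term $\tfrac12$, which is precisely the value the general binomial formula assigns to the entry $A_{N-1,N}$, despite the displayed ``else $0$'' clause. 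Once you commit to that convention --- include the $b=0$ term $2^{-j}$, and for $j=0$ read row $N$ by the formula so that $R_{N,N}=\sum_{b=1}^{N-1}2^{-b}\to 1$ --- your series evaluation delivers exactly $\binom{2j}{j}/2^{2j-1}$ for $j>0$ and $1$ for $j=0$. So the missing step is not an additional estimate but an explicit choice of the convention under which the statement is actually true; with that choice made, your argument is complete modulo the routine convergence remarks you already supplied.
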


To see this conjecture holds for $j=1,$ we evaluate
	\begin{align}
		R_{N,N-1} &= \sum_{k=1}^N |A_{N,N-1,k}| \nonumber \\
		&= \sum_{k=1}^N  \left(\frac{1}{2} \right)^{N-k+1} |N-k-1| \label{eq: Cyclic OU Process First Nonzero Propagation Coefficient and Noise in Last Sensor Second Regime Lead Matrix Asymptotic Binomial Matrix Gershgorin Radii Upper Bound 1}\\
		&= \sum_{k=1}^{N-1}  \left(\frac{1}{2} \right)^{N-k+1} (N-k-1) + \frac{1}{2}  \label{eq: Cyclic OU Process First Nonzero Propagation Coefficient and Noise in Last Sensor Second Regime Lead Matrix Asymptotic Binomial Matrix Gershgorin Radii Upper Bound 2} \\ 
		&= \frac{1}{2} - \frac{N}{2^N} + \frac{1}{2}  \label{eq: Cyclic OU Process First Nonzero Propagation Coefficient and Noise in Last Sensor Second Regime Lead Matrix Asymptotic Binomial Matrix Gershgorin Radii Upper Bound 3} \\
		&= 1- \frac{N}{2^N} \label{eq: Cyclic OU Process First Nonzero Propagation Coefficient and Noise in Last Sensor Second Regime Lead Matrix Asymptotic Binomial Matrix Gershgorin Radii Upper Bound 4}
	\end{align}
in which we  obtained \eqref{eq: Cyclic OU Process First Nonzero Propagation Coefficient and Noise in Last Sensor Second Regime Lead Matrix Asymptotic Binomial Matrix Gershgorin Radii Upper Bound 2} by rewriting \eqref{eq: Cyclic OU Process First Nonzero Propagation Coefficient and Noise in Last Sensor Second Regime Lead Matrix Asymptotic Binomial Matrix Gershgorin Radii Upper Bound 1} after extracting the term $k=N$ and used the identity 
	\begin{align} \nonumber
		\sum_{k=1}^{N-1} k r^k &= \frac{N r^{N+1}-r^{N+1}-N r^N+r}{(r-1)^2}
	\end{align}
to obtain \eqref{eq: Cyclic OU Process First Nonzero Propagation Coefficient and Noise in Last Sensor Second Regime Lead Matrix Asymptotic Binomial Matrix Gershgorin Radii Upper Bound 3} from \eqref{eq: Cyclic OU Process First Nonzero Propagation Coefficient and Noise in Last Sensor Second Regime Lead Matrix Asymptotic Binomial Matrix Gershgorin Radii Upper Bound 2}. Taking the limit of \eqref{eq: Cyclic OU Process First Nonzero Propagation Coefficient and Noise in Last Sensor Second Regime Lead Matrix Asymptotic Binomial Matrix Gershgorin Radii Upper Bound 4} as $N \rightarrow \infty,$ we see the conjecture holds true for the case $j=1.$

Assuming \textbf{Conjecture \ref{conj: Cyclic OU Process First Nonzero Propagation Coefficient and Noise in Last Sensor Second Regime Lead Matrix Asymptotic Binomial Matrix Gershgorin Radii Upper Bound}}  holds, we can show the sequence of largest eigenvalues $\left \lbrace \lambda_1(\mathbf A_N) \right \rbrace_{N \in \N}$ converges.

\begin{theorem}[Asymptotic Binomial Matrix Largest Eigenvalue Convergence]\label{thm: Asymptotic Binomial Matrix Largest Eigenvalue Convergence}
	The sequence of largest eigenvalues $\left \lbrace \lambda_1(\mathbf A_N) \right \rbrace_{N \in \N}$ converges.
\end{theorem}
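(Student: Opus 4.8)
The plan is to recognize $\mathbf A_N$, after discarding its identically zero last row and column and applying an index-reversing permutation, as the leading $(N-1)\times(N-1)$ principal submatrix of a single fixed infinite Hermitian matrix, and then to invoke the classical ``monotone and bounded'' argument for the top eigenvalue of nested Hermitian truncations.

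First I would substitute $j = N-a$ and $k = N-b$ into the entry formula \eqref{eq: Cyclic OU Process First Nonzero Propagation Coefficient and Noise in Last Sensor Second Regime Lead Matrix Asymptotic Binomial Matrix Entry}. A direct computation gives $2N-j-k = a+b$, $\ N-k = b$, and $k-j = a-b$, so the reindexed $(a,b)$-entry equals $i\,(\tfrac12)^{a+b}\tfrac{a-b}{a+b}\binom{a+b}{b}$, which depends only on $a,b$ and not on $N$. Let $\mathbf A_\infty$ denote the infinite Hermitian matrix on $\ell^2(\mathbb N)$ with these entries (its diagonal vanishes, since $a=b$ kills the factor $a-b$). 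The permutation $a \mapsto N-a$ is a unitary conjugation, so the nonzero part of $\mathbf A_N$ is similar to the leading block $\mathbf A_\infty[1:N-1]$. The extra zero row and column contribute only an eigenvalue $0$; since $\mathbf A_\infty[1:N-1]$ has zero trace yet nonzero off-diagonal entries it carries a strictly positive eigenvalue, whence $\lambda_1(\mathbf A_N) = \lambda_1(\mathbf A_\infty[1:N-1])$ for all large $N$.

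Next I would exploit that these blocks are nested: $\mathbf A_\infty[1:N-1]$ is the compression of $\mathbf A_\infty[1:N]$ to the first $N-1$ coordinates. By the Rayleigh-quotient characterization of the largest eigenvalue of a Hermitian matrix, maximizing over a larger coordinate subspace can only increase the maximum, so $\lambda_1(\mathbf A_N)$ is non-decreasing in $N$. For the upper bound I would use Gershgorin's Circle Theorem together with the structure of the row sums: in the reindexed coordinates the $j$-th absolute row sum is $\sum_{b=1}^{N-1} (\tfrac12)^{a+b}\tfrac{|a-b|}{a+b}\binom{a+b}{b}$ with $a = N-j$, i.e. an increasing partial sum of a fixed nonnegative series whose total value is precisely the limit asserted in \textbf{Conjecture \ref{conj: Cyclic OU Process First Nonzero Propagation Coefficient and Noise in Last Sensor Second Regime Lead Matrix Asymptotic Binomial Matrix Gershgorin Radii Upper Bound}}. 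Hence each $R_{N,j}$ is bounded by its limiting value $\binom{2a}{a}/2^{2a-1}$, whose supremum over $a\ge 1$ is $1$; by Gershgorin's theorem every eigenvalue of $\mathbf A_N$ lies in the disk of radius $1$, so $\lambda_1(\mathbf A_N)\le 1$ uniformly in $N$. A non-decreasing sequence bounded above converges, which is the claim.

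The main obstacle is the uniform upper bound, and it is exactly where the (still unproven) Gershgorin-radii \textbf{Conjecture} enters: without it one would instead need a direct estimate establishing that $\mathbf A_\infty$ defines a bounded self-adjoint operator on $\ell^2(\mathbb N)$. Granting the conjecture, the clean observation that makes everything fit is that the truncated row sums are monotone partial sums of the limiting series, so the conjectured limits serve simultaneously as uniform bounds valid for every finite $N$; the remaining ingredients (monotonicity via the variational principle, and the monotone-convergence theorem) are then routine.
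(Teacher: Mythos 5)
Your proposal is correct (conditional on \textbf{Conjecture \ref{conj: Cyclic OU Process First Nonzero Propagation Coefficient and Noise in Last Sensor Second Regime Lead Matrix Asymptotic Binomial Matrix Gershgorin Radii Upper Bound}}, exactly as the paper's own proof is) and follows essentially the same route: the paper likewise derives monotonicity of $\lambda_1(\mathbf A_N)$ from the fact that $\mathbf A_{N-1}$ is a principal submatrix of $\mathbf A_N$ (via Cauchy interlacing, which your Rayleigh-quotient argument reproves), and boundedness via Gershgorin disks combined with the conjectured radii limits. Your one genuine refinement is the observation that each finite Gershgorin radius is a monotone partial sum of the limiting series and hence is bounded by its conjectured limit for every finite $N$, which makes rigorous a step the paper only argues asymptotically.
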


\begin{proof}
To show convergence, we show that the sequence $\left \lbrace \lambda_1( \mathbf A_N) \right \rbrace_{N \in \N}$ is increasing and bounded above in $\R.$

First, we show the sequence is increasing. By induction on $N,$ we can write $\mathbf A_N$ in the block form
	\begin{align}
		\mathbf A_N &= \begin{bmatrix}
			0 & \mathbf a_N^* \\  \mathbf  a_N & \mathbf A_{N-1}
		\end{bmatrix},
	\end{align}
	where $\mathbf a_N \in \C^{N-1}$ is the first column of $\mathbf A_N$ after excluding its first component $0$.  Notice that $\mathbf A_{N-1}$ is the first principal minor of $\mathbf A_N,$ that is, we obtain $\mathbf A_{N-1}$ by deleting the first row and first column of $\mathbf A_N.$  By the Cauchy Interlacing Theorem \cite{TaoDentonZhang2022}, the eigenvalues of $\mathbf A_{N-1}$ interlace those of $\mathbf A_N,$ which specifically means 
		\begin{align}\label{eq: Cauchy Interlacing Inequality}
			\lambda_N(\mathbf A_{N}) \le \lambda_{N-1}(\mathbf A_{N-1}) \le \lambda_{N-1}(\mathbf A_N) \le  \dots \ \le \lambda_{1}(\mathbf A_{N-1}) \le \lambda_1(\mathbf A_N)
		\end{align}
	The above inequality directly implies $\left \lbrace \lambda_1(\mathbf A_N) \right \rbrace_{ N \in \N}$ is an increasing sequence. 
	
Next, we show that $\left \lbrace \lambda_1(\mathbf A_N) \right \rbrace_{N \in \N}$ is a sequence that is bounded above. Consider $R_{N,N-j},$  the $(N-j)$-th Gershgorin disk radius of $\mathbf A_N.$ Assuming \textbf{Conjecture \ref{conj: Cyclic OU Process First Nonzero Propagation Coefficient and Noise in Last Sensor Second Regime Lead Matrix Asymptotic Binomial Matrix Gershgorin Radii Upper Bound}}  holds, for $j>0,$ we have
	\begin{align}
		\lim_{N \rightarrow \infty} R_{N,N-j} &= \frac{\binom{2j}{j}}{2^{2j-1}} \nonumber  \\
		&= \frac{ \frac{2j}{j} \binom{2(j-1)}{j-1}}{2^{2j-1}} \nonumber  \\
		&= \frac{\binom{2(j-1)}{j-1}}{2^{2j-2}} \nonumber \\
		&<\frac{\binom{2(j-1)}{j-1}}{2^{2(j-1)-1}} = \lim_{N \rightarrow \infty} R_{N,N-(j-1)} \nonumber.
	\end{align}
Hence, as $N \rightarrow \infty,$ we see the Gershgorin radii of $\mathbf A_N$ are bounded above by $1.$  This implies the sequence  $\left \lbrace \lambda_1(\mathbf A_N) \right \rbrace_{N \in \N}$ is bounded above by $1.$
\end{proof}

Now, we make a conjecture about the limit of the sequence $\left \lbrace \lambda_n(\mathbf A_N) \right \rbrace_{N \in \N}.$

\begin{conjecture}[Asymptotic Binomial Matrix Largest Eigenvalue Convergence]\label{conj: Cyclic OU Process First Nonzero Propagation Coefficient and Noise in Last Sensor Second Regime Lead Matrix Asymptotic Binomial Matrix Largest Eigenvalue Limit}
For small $n \in \N,$ we have
	\begin{align}
		\lim_{N \rightarrow \infty} \lambda_{2n-1}(\mathbf A_N) &= \frac{2}{(2n-1)\pi} .
	\end{align}
\end{conjecture}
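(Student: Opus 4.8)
The plan is to recognize $\mathbf A_N$ (up to the factor $i$ and the harmless zero row/column indexed by $N$) as a finite section of a single fixed semi-infinite operator, and then to identify that operator's spectrum explicitly. First I would reindex by $a=N-j$, $b=N-k$; using $\binom{2N-j-k}{N-k}=\binom{a+b}{b}$ and $\frac{k-j}{2N-j-k}=\frac{a-b}{a+b}$, the binomial matrix $\mathbf A$ of \eqref{eq: Cyclic OU Process First Nonzero Propagation Coefficient and Noise in Last Sensor Second Regime Lead Matrix Asymptotic Binomial Matrix Entry} becomes, on the block $a,b\ge 1$, the fixed real skew-symmetric array $M_{a,b}=\frac{a-b}{a+b}\binom{a+b}{b}2^{-(a+b)}$, independent of $N$. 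Thus $\mathbf A_N$ is $i$ times the leading $(N-1)\times(N-1)$ section of $M$ (together with a zero eigenvalue coming from index $N$). Since $\mathbf A_N=i\,(\text{real skew-symmetric})$, its eigenvalues occur in pairs $\pm\mu$ of equal modulus; under the paper's modulus ordering this means $\lambda_{2n}(\mathbf A_N)=-\lambda_{2n-1}(\mathbf A_N)$, so $\lambda_{2n-1}(\mathbf A_N)$ is exactly the $n$-th largest \emph{positive} eigenvalue. This already explains the odd indexing, and reduces the conjecture to showing that the $n$-th positive eigenvalue of the Hermitian operator $iM$ equals $\frac{2}{(2n-1)\pi}$.

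Next I would justify passing to the limit. Using the Pascal-type identity $M_{a,b}=2^{-(a+b)}\big[\binom{a+b-1}{b}-\binom{a+b-1}{b-1}\big]$ together with the local central limit estimate $\binom{a+b}{b}2^{-(a+b)}=O((a+b)^{-1/2})$, the antisymmetric factor $\frac{a-b}{a+b}$ supplies enough off-diagonal decay that $\sum_{a,b}|M_{a,b}|^2<\infty$; hence $iM$ is Hilbert--Schmidt, in particular compact and self-adjoint. For compact self-adjoint operators the eigenvalues of the leading finite sections converge to those of the operator (via the min--max principle and the strong convergence of the sections), and this is already reinforced for the top eigenvalue by the Cauchy interlacing monotonicity established in \textbf{Theorem \ref{thm: Asymptotic Binomial Matrix Largest Eigenvalue Convergence}}. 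This reduces everything to computing the spectrum of $iM$.

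The heart of the proof is identifying $iM$ with a concrete integral operator. Starting from the representation $\binom{a+b}{a}2^{-(a+b)}=\frac{1}{2\pi}\int_{-\pi}^{\pi}\cos^{a+b}\theta\,e^{i(a-b)\theta}\,d\theta$ and the Pascal form above, one obtains $M_{a,b}=\frac{-i}{2\pi}\int_{-\pi}^{\pi}\cos^{a+b-1}\theta\,\sin\theta\,e^{i(a-b)\theta}\,d\theta$. Writing $u=\cos\theta\,e^{i\theta}=\tfrac{1+e^{2i\theta}}{2}$ gives $M_{a,b}=\frac{-i}{2\pi}\int u^a\bar u^{\,b}\tan\theta\,d\theta$, which exhibits $M$ as the compression, to the closed span of $\{u^a\}_{a\ge1}$, of multiplication by the weight $\tan\theta$. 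Under the Cayley-type change of variable $w=e^{2i\theta}$ this weight is the boundary symbol of $\mathrm{sgn}$, and I expect $iM$ to be unitarily equivalent to $iK$, where $K$ is the skew-adjoint operator on $L^2(0,1)$ with kernel $\mathrm{sgn}(x-t)$, equivalently $K=V-V^{*}$ for the Volterra operator $Vf(x)=\int_0^x f$. The spectrum of $K$ is elementary: if $Kf=\lambda f$, then differentiating $Kf(x)=\int_0^x f-\int_x^1 f$ gives $\lambda f'=2f$, so $f(x)=f(0)e^{2x/\lambda}$, while adding the endpoint identities $Kf(0)=-\int_0^1 f$ and $Kf(1)=\int_0^1 f$ forces $f(1)=-f(0)$, i.e. $e^{2/\lambda}=-1$. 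Hence $\lambda=\frac{-2i}{(2n-1)\pi}$, the eigenvalues of $iK$ are $\pm\frac{2}{(2n-1)\pi}$, and its $n$-th positive eigenvalue is $\frac{2}{(2n-1)\pi}$, which would complete the proof.

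The main obstacle is the unitary equivalence in the third step: the weight $\tan\theta$ is \emph{signed} and singular at $\theta=\pm\tfrac\pi2$, and $\{u^a\}_{a\ge1}$ is not an orthonormal system, so upgrading the bilinear identity $M_{a,b}=\frac{-i}{2\pi}\int u^a\bar u^{\,b}\tan\theta\,d\theta$ to a genuine unitary onto $L^2(0,1)$ intertwining $iM$ with $iK$ requires care. One natural route is to produce an explicit orthonormal system $\{\psi_a\}\subset L^2(0,1)$ with $\langle \psi_a,K\psi_b\rangle=M_{a,b}$, e.g. built from shifted Legendre or Jacobi polynomials via the substitution $x=\cos^2\theta$. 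A fully self-contained alternative that sidesteps the operator identification would be to compute the moments $\mathrm{tr}(\mathbf A_N^{2p})$ asymptotically and match them to $\sum_n \big(\tfrac{2}{(2n-1)\pi}\big)^{2p}$, but controlling these traces uniformly in $N$ looks at least as delicate as the equivalence itself.
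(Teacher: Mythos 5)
You should first be aware that the paper does not prove this statement at all: it is posed as a conjecture and supported only by the numerical table comparing $\lambda_1(\mathbf A_N)$ with $\tfrac{2}{\pi}$ for $20 \le N \le 24$. So your proposal is not being measured against a proof in the paper; it is an attempt to settle something the paper leaves open. Much of your scaffolding is in fact correct and already goes beyond the paper. The reindexing $a=N-j$, $b=N-k$ does exhibit $\mathbf A_N$ (up to the zero row and column at index $N$) as $i$ times the leading section of the fixed semi-infinite skew-symmetric array $M_{a,b}=\frac{a-b}{a+b}\binom{a+b}{b}2^{-(a+b)}$; the $\pm$ pairing of the spectrum correctly explains the odd indexing $\lambda_{2n-1}$; your Hilbert--Schmidt bound is valid, since with $s=a+b$ one has $\binom{s}{b}2^{-s}\le C s^{-1/2}$ and $\sum_b (s-2b)^2\binom{s}{b}2^{-s}=s$, giving $\sum_b M_{s-b,b}^2 = O(s^{-3/2})$, summable in $s$; and norm convergence of finite sections of a compact self-adjoint operator does yield convergence of all ordered eigenvalues. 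Incidentally, this compactness argument would also replace the unproven Gershgorin bound (\textbf{Conjecture \ref{conj: Cyclic OU Process First Nonzero Propagation Coefficient and Noise in Last Sensor Second Regime Lead Matrix Asymptotic Binomial Matrix Gershgorin Radii Upper Bound}}) on which \textbf{Theorem \ref{thm: Asymptotic Binomial Matrix Largest Eigenvalue Convergence}} currently relies. I have also checked your integral representation $M_{a,b}=\frac{-i}{2\pi}\int_{-\pi}^{\pi}\cos^{a+b-1}\theta\,\sin\theta\,e^{i(a-b)\theta}\,d\theta$ and your computation of the spectrum of $K=V-V^*$; both are correct, and the resulting limit $\frac{2}{(2n-1)\pi}$ is consistent with the paper's numerics.

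The genuine gap is the one you flag yourself, and it is not a removable technicality: nothing in the proposal proves that the nonzero spectrum of $iM$ coincides with that of $iK$. Moreover, as stated the identification is not even of the right form. Since $\langle u^a,u^b\rangle = \binom{a+b}{b}2^{-(a+b)}\ne\delta_{a,b}$, the matrix $M=\bigl(\langle Tu^b,u^a\rangle\bigr)_{a,b}$, with $T$ denoting multiplication by $-i\tan\theta$, is \emph{not} the compression $PTP$ of $T$ to the closed span of $\{u^a\}$: it is $U^*TU$ for the non-isometric synthesis operator $U\mathbf e_a = u^a$, and the nonzero spectrum of $U^*TU$ agrees with that of $TUU^*$, not with that of $PTP$. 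Worse, $\tan\theta$ is unbounded near $\pm\frac{\pi}{2}$, so $T$ is not a bounded operator and $U^*TU$ is only meaningful as a quadratic form, thanks to the vanishing of $u^a$ at $\pm\frac{\pi}{2}$; any rigorous argument must confront this. The two repairs you suggest --- producing an explicit orthonormal system $\{\psi_a\}\subset L^2(0,1)$ with $\langle\psi_a,K\psi_b\rangle=M_{a,b}$, or matching the traces $\mathrm{tr}(\mathbf A_N^{2p})$ to $2\sum_{n\ge1}\bigl(\tfrac{2}{(2n-1)\pi}\bigr)^{2p}$ --- are plausible, but neither is carried out, and this identification is precisely where the content of the conjecture lies. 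In short: you have correctly reduced the conjecture to the spectral analysis of one fixed compact operator, which is real progress relative to the paper's purely numerical evidence, but the decisive step remains unproven, so the proposal is not yet a proof.
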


We provide numerical evidence supporting \textbf{Conjecture \ref{conj: Cyclic OU Process First Nonzero Propagation Coefficient and Noise in Last Sensor Second Regime Lead Matrix Asymptotic Binomial Matrix Largest Eigenvalue Limit}}. In \textbf{Table \ref{tbl: Cyclic OU Process First Nonzero Propagation Coefficient and Noise in Last Sensor Second Regime Lead Matrix Asymptotic Binomial Matrix Largest Eigenvalue Limit}}, we tabulate the distance between $\lambda_1(\mathbf A_N)$ and $\frac{2}{\pi}$ for $20 \le n \le 24.$ We observe such distances are close to zero.

\begin{table}[h!]
	\centering
	\begin{tabular}{||c | c||} 
		\hline
		$N$ & $\left|\lambda_1(\mathbf A_N) - \frac{2}{\pi}  \right|$  \\ [0.5ex] 
		\hline\hline
		$20$ & $\approx 0.0000232513$  \\ 
		\hline
		$21$ & $\approx 0.0000158971$ \\
		\hline
		$22$ & $\approx 0.0000109305$ \\
		\hline
		$23$ & $\approx 7.55596 \times 10^{-6}$ \\ 
		\hline 
		$24$ & $\approx 5.24969 \times 10^{-6}$ \\
		\hline
	\end{tabular}

	\caption{We tabulate the distance between the largest eigenvalue of $\mathbf A_N$ and the conjectured limit as $N \rightarrow \infty$, namely $\frac{2}{\pi} ,$ for $20 \le N \le 24.$}
	\label{tbl: Cyclic OU Process First Nonzero Propagation Coefficient and Noise in Last Sensor Second Regime Lead Matrix Asymptotic Binomial Matrix Largest Eigenvalue Limit}
\end{table}

Now, we pose a conjecture about the largest eigenvalues corresponding to the principal minors of $\mathbf A_N.$ Recall the $j$-th principal minor of $\mathbf A_N$ is the $(N-1) \times (N-1)$ matrix $\widetilde{\mathbf A}_{N,j}$ formed by deleting the $j$-th row and $j$-th column of $\mathbf A_N.$

\begin{conjecture}[Asymptotic Binomial Matrix Principal Minor Largest Eigenvalues]\label{conj: Cyclic OU Process First Nonzero Propagation Coefficient and Noise in Last Sensor Second Regime Lead Matrix Asymptotic Binomial Matrix Principal Minor Largest Eigenvalues}
For each $1 \le j<N,$ we have
	\begin{align}
		\lambda_{1}(\widetilde{\mathbf A}_{N,j+1}) < \lambda_{1}(\widetilde{\mathbf A}_{N,j}) < \lambda_1(\mathbf A_N),
	\end{align}
where $\lambda_{n}(\widetilde{\mathbf A}_{N,j})$ is the $n$-th largest eigenvalue of $\widetilde{\mathbf A}_{N,j}.$
\end{conjecture}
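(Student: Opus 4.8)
The plan is to reduce the statement to a monotonicity property of the top eigenvalue of a single Hermitian matrix under deletion of a coordinate, and then to treat the two inequalities by different tools: the comparison with $\lambda_1(\mathbf A_N)$ by Cauchy interlacing, and the comparison between consecutive minors by a secular-equation argument driven by the Pascal-type recurrences satisfied by the entries. First I would record the structural facts that make this tractable. Since the entry in \eqref{eq: Cyclic OU Process First Nonzero Propagation Coefficient and Noise in Last Sensor Second Regime Lead Matrix Asymptotic Binomial Matrix Entry} vanishes whenever $j=N$ or $k=N$, the matrix splits as $\mathbf A_N=\mathbf C_{N-1}\oplus[0]$, where $\mathbf C_{N-1}$ is the leading $(N-1)\times(N-1)$ block carrying all nonzero entries; for large $N$ one has $\lambda_1(\mathbf A_N)=\lambda_1(\mathbf C_{N-1})>0$. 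For $1\le j\le N-1$ the minor $\widetilde{\mathbf A}_{N,j}$ is, up to an inert extra zero row and column, the matrix obtained from $\mathbf C_{N-1}$ by deleting coordinate $j$, whereas $\widetilde{\mathbf A}_{N,N}$ merely removes the inert zero row and column and hence shares the top eigenvalue of $\mathbf A_N$; thus the strict chain is to be read over deletions of genuinely nonzero coordinates. Writing $M_{a,b}=i\,(1/2)^{a+b}\tfrac{a-b}{a+b}\binom{a+b}{b}$ and $N_{a,b}=(1/2)^{a+b}\binom{a+b}{b}$, the binomial identity already used in the proof of \textbf{Theorem \ref{thm: Cyclic OU Process First Nonzero Propagation Coefficient and Noise in Last Sensor Second Regime Lead Matrix Asymptotic Expansion}} gives
\begin{align}
 N_{a,b} &= \tfrac12\bigl(N_{a-1,b}+N_{a,b-1}\bigr), & M_{a,b} &= \tfrac{i}{2}\bigl(N_{a-1,b}-N_{a,b-1}\bigr),
\end{align}
so that $\mathbf C_{N-1}$ is an antisymmetric discrete gradient of the averaging kernel $N_{a,b}$. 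These recurrences are the engine for the monotonicity step.

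For the inequality $\lambda_1(\widetilde{\mathbf A}_{N,j})<\lambda_1(\mathbf A_N)$ I would invoke the Cauchy Interlacing Theorem exactly as in \textbf{Theorem \ref{thm: Asymptotic Binomial Matrix Largest Eigenvalue Convergence}}: deleting one row and column yields $\lambda_1(\widetilde{\mathbf A}_{N,j})\le\lambda_1(\mathbf A_N)$. Strictness for $1\le j\le N-1$ reduces to showing that the leading eigenvalue of $\mathbf C_{N-1}$ is simple and that its unit eigenvector $\mathbf v_1$ has a nonzero $j$-th component, since equality in top interlacing is equivalent to the existence of a $\lambda_1$-eigenvector vanishing in the deleted coordinate. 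Full support of $\mathbf v_1$ I would obtain from irreducibility: every off-diagonal entry $C_{j,k}$ with $j\ne k$ is a nonzero multiple of a genuine binomial coefficient, so the support graph of $\mathbf C_{N-1}$ is complete, and a simple top eigenvector of such a matrix cannot vanish at any coordinate; indeed, if $v_1^{(j)}=0$, the $j$-th eigenvalue equation $\sum_{k\ne j}C_{j,k}v_1^{(k)}=0$ together with the restricted eigenvalue relation would over-determine $\mathbf v_1$.

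The heart of the matter is the strict monotonicity $\lambda_1(\widetilde{\mathbf A}_{N,j+1})<\lambda_1(\widetilde{\mathbf A}_{N,j})$. Here I would fix the common core $\mathbf B$ obtained from $\mathbf C_{N-1}$ by deleting both coordinates $j$ and $j+1$, with eigenpairs $(\mu_i,\mathbf p_i)$, and write the two minors as bordered Hermitian matrices $\bigl[\begin{smallmatrix}\mathbf B&\mathbf u\\\mathbf u^*&0\end{smallmatrix}\bigr]$ and $\bigl[\begin{smallmatrix}\mathbf B&\mathbf w\\\mathbf w^*&0\end{smallmatrix}\bigr]$, with border vectors $\mathbf u=(C_{s,j})_s$ present in $\widetilde{\mathbf A}_{N,j+1}$ and $\mathbf w=(C_{s,j+1})_s$ present in $\widetilde{\mathbf A}_{N,j}$. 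For $\lambda$ above $\mu_1$ the top eigenvalue of such a bordered matrix is the largest root of the secular equation $\lambda=\sum_i|\langle\,\cdot\,,\mathbf p_i\rangle|^2/(\lambda-\mu_i)$, which is strictly increasing in each spectral weight; the claim is therefore equivalent to the border from the larger index $j+1$ dominating the border from $j$ in this functional for every $\lambda>\mu_1$. I would attempt to establish this domination by propagating the shift $j\mapsto j+1$ through the relation $M_{a,b}=\tfrac{i}{2}(N_{a-1,b}-N_{a,b-1})$, which expresses $\mathbf w$ in terms of $\mathbf u$ and the averaging kernel and should produce a difference of spectral weights of a single sign. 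Conceptually this encodes that the modulus $|v_1^{(j)}|$ of the dominant mode increases with $j$, so that the leading eigenvector concentrates toward the larger indices and deleting a larger coordinate costs more eigenvalue.

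I expect the domination step to be the real obstacle: controlling $\langle\mathbf w,\mathbf p_i\rangle$ against $\langle\mathbf u,\mathbf p_i\rangle$ requires information about the unknown eigenbasis $\{\mathbf p_i\}$ of the core, and a termwise comparison of weights is almost certainly too strong to hold. The cleanest rigorous route is likely to bypass the secular equation and instead prove directly, from a three-term recurrence for the components of $\mathbf v_1$ inherited from the recurrences above, that $\{|v_1^{(j)}|\}_j$ is monotone in the index; a first-order perturbation estimate, combined with the interlacing already in hand, would then convert component monotonicity into the desired eigenvalue monotonicity. Establishing and signing that component recurrence is where the difficulty concentrates.
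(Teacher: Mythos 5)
You should know at the outset that the paper contains no proof of this statement: it is posed as a conjecture, supported only by the numerical table for $N=10$, and the thesis's later results (the eigenvector-moduli theorem, for instance) explicitly assume it. So there is no argument of the paper's to compare yours against; your text is an attack on what is, in this thesis, an open problem, and it must therefore be judged as a proof — which it is not yet. On the positive side, your structural reduction is correct and is itself a contribution: since $A_{j,k}=0$ whenever $j=N$ or $k=N$, indeed $\mathbf A_N=\mathbf C_{N-1}\oplus[0]$, hence $\lambda_1(\widetilde{\mathbf A}_{N,N})=\lambda_1(\mathbf A_N)$ exactly, so the case $j=N-1$ of the conjecture is \emph{false as literally stated} and must be excluded; your restriction to "deletions of genuinely nonzero coordinates" is the only sensible repair. (This also shows the paper's own table, which reports $\lambda_1(\widetilde{\mathbf A}_{10,10})\approx 0.299$ far below $\lambda_1(\mathbf A_{10})$, cannot have been computed from the stated entry formula — either that formula or the table is in error, a point worth flagging to the author.) Your use of Cauchy interlacing for the non-strict bound $\lambda_1(\widetilde{\mathbf A}_{N,j})\le\lambda_1(\mathbf A_N)$ is fine and matches the toolkit the paper uses elsewhere, and your Pascal-type recurrences for the entries check out.

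The two remaining steps both have genuine gaps. First, your strictness mechanism rests on the claim that a Hermitian matrix whose support graph is complete cannot have a simple top eigenvector vanishing at a coordinate. That claim is false: for small $\epsilon\neq 0$ the matrix
\begin{align}
\begin{pmatrix} 0 & \epsilon & -\epsilon \\ \epsilon & 0 & 1 \\ -\epsilon & 1 & 0 \end{pmatrix} \nonumber
\end{align}
has zero diagonal, all off-diagonal entries nonzero, simple largest eigenvalue $1$, and top eigenvector $(0,1,1)/\sqrt{2}$ vanishing in the first coordinate. Nonvanishing-of-eigenvector statements are Perron--Frobenius phenomena and require entrywise nonnegativity up to diagonal conjugation; $i$ times a real skew-symmetric matrix never has this property for $N\ge 3$ (the phases $\theta_j-\theta_k=-\pi/2$ required for all $j<k$ are inconsistent), so this route needs an argument specific to the binomial structure, and none is given — nor is the asserted simplicity of $\lambda_1(\mathbf C_{N-1})$ proved. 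Second, the inequality that actually carries the conjecture, $\lambda_1(\widetilde{\mathbf A}_{N,j+1})<\lambda_1(\widetilde{\mathbf A}_{N,j})$, is left open by your own account: the secular-equation domination requires controlling $|\langle\mathbf w,\mathbf p_i\rangle|$ against $|\langle\mathbf u,\mathbf p_i\rangle|$ over an unknown eigenbasis of the core, you concede a termwise comparison will likely fail, and the fallback (a signed recurrence for the components of $\mathbf v_1$) is a hope rather than a derivation. Note also that this fallback inverts the paper's logic — the thesis derives the moduli monotonicity of $\mathbf v_1$ \emph{from} this conjecture via the eigenvalue--eigenvector identity, whereas you would derive the conjecture from an independently proved moduli monotonicity; that is legitimate, but nothing in the proposal supplies such a proof. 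In sum: correct and useful reductions, but both the strictness step and the monotonicity-in-$j$ step remain unestablished, so the statement is still a conjecture.
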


We provide some numerical evidence supporting the conjecture. In \textbf{Table \ref{tbl: Cyclic OU Process First Nonzero Propagation Coefficient and Noise in Last Sensor Second Regime Lead Matrix Asymptotic Binomial Matrix Principal Minor Largest Eigenvalues}}, we tabulate the largest eigenvalues of the principal minors of $\mathbf A_{N}$ in the situation $N=10$. We observe as $j$ gets larger, the largest eigenvalue of the $j$-th principal minor precipitously decreases. 
\begin{table}[h!]
	\centering
	\begin{tabular}{||c | c||} 
		\hline 
		$j$ & $\lambda_1(\widetilde{\mathbf A}_{10,j})$  \\ [0.75ex] 
		\hline\hline
		$1$ & $\approx 0.634179$  \\ 
		\hline
		$2$ & $\approx 0.633523$ \\
		\hline 
		$3$ & $\approx 0.632329$ \\
		\hline 
		$4$ & $\approx 0.630125$ \\
		\hline
		$5$ & $\approx 0.626035$ \\ 
		\hline 
		$6$ & $\approx 0.618269$ \\
			\hline
		$7$ & $\approx 0.6018$ \\
			\hline
		$8$ & $\approx 0.555167$ \\
			\hline
		$9$ & $\approx 0.347293$ \\
			\hline
		$10$ & $\approx 0.298889$ \\
		\hline
	\end{tabular}
	
	\caption{We tabulate the largest eigenvalue for each principal minor of $\mathbf A_{10}.$}
	\label{tbl: Cyclic OU Process First Nonzero Propagation Coefficient and Noise in Last Sensor Second Regime Lead Matrix Asymptotic Binomial Matrix Principal Minor Largest Eigenvalues}
\end{table}

\subsection{Structure of Leading Eigenvector}

For large $N,$ we now investigate the structure of $\mathbf v_1(\mathbf A_N),$ the leading eigenvector of $\mathbf A_N,$ assuming all the previous conjectures we stated pertaining to the spectrum of $\mathbf A_N$ hold.

First, we describe the behavior of $|v_{1,n}(\mathbf A_N)|,$ the modulus of the $n$-th component $\mathbf v_1(\mathbf A_N).$

\begin{theorem}[Asymptotic Binomial Matrix Leading Eigenvector Component Moduli Structure]\label{conj: Cyclic OU Process First Nonzero Propagation Coefficient and Noise in Last Sensor Second Regime Lead Matrix Asymptotic Binomial Matrix  Leading Eigenvector Component Moduli Structure}
For each $1 \le n <N,$ we have
	\begin{align}
		|v_{1,n}(\mathbf A_N)| < |v_{1,n+1}(\mathbf A_N)|.
	\end{align}
Moreover, we have
	\begin{align}
		\lim_{N \rightarrow \infty} v_{1,1}(\mathbf A_N) &=0.
	\end{align}
\end{theorem}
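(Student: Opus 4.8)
The plan is to prove the two assertions in sequence: first establish the strict monotonicity $|v_{1,n}(\mathbf A_N)| < |v_{1,n+1}(\mathbf A_N)|$, and then recover the vanishing of the first component as a short corollary. For the monotonicity I would invoke the eigenvector–eigenvalue identity of Denton–Parke–Tao–Zhang (the same reference \cite{TaoDentonZhang2022} already used for Cauchy interlacing), which expresses each squared component modulus through the spectra of the principal minors. Since $\mathbf A_N$ is Hermitian with largest eigenvalue $\lambda_1(\mathbf A_N)$, the identity reads
\begin{align}
	|v_{1,n}(\mathbf A_N)|^2 &= \frac{\prod_{k=1}^{N-1}\left(\lambda_1(\mathbf A_N) - \lambda_k(\widetilde{\mathbf A}_{N,n})\right)}{\prod_{k=2}^{N}\left(\lambda_1(\mathbf A_N) - \lambda_k(\mathbf A_N)\right)} \nonumber
\end{align}
for each $1 \le n \le N$, where $\widetilde{\mathbf A}_{N,n}$ is the $n$-th principal minor. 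The denominator is independent of $n$ and positive, and by Cauchy interlacing (inequality \eqref{eq: Cauchy Interlacing Inequality}) every factor in the numerator is non-negative. Thus the entire claim reduces to showing that the numerator product is strictly increasing in $n$.

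To compare consecutive numerators I would exploit the fact that $\widetilde{\mathbf A}_{N,n}$ is again $i$ times a real skew-symmetric matrix, so its nonzero eigenvalues occur in $\pm$ pairs. Pairing the factors rewrites the numerator as $\prod_{\nu>0}\left(\lambda_1(\mathbf A_N)^2 - \nu^2\right)$, possibly times an extra $\lambda_1(\mathbf A_N)$ coming from a zero eigenvalue, where $\nu$ ranges over the positive eigenvalues of $\widetilde{\mathbf A}_{N,n}$. \textbf{Conjecture \ref{conj: Cyclic OU Process First Nonzero Propagation Coefficient and Noise in Last Sensor Second Regime Lead Matrix Asymptotic Binomial Matrix Principal Minor Largest Eigenvalues}} supplies precisely the comparison of the largest such $\nu$, namely $\lambda_1(\widetilde{\mathbf A}_{N,n+1}) < \lambda_1(\widetilde{\mathbf A}_{N,n})$, which makes the leading factor $\lambda_1(\mathbf A_N)^2 - \lambda_1(\widetilde{\mathbf A}_{N,n})^2$ strictly larger at index $n+1$ than at $n$, pushing the product in the desired direction.

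The hard part will be that this controls only the top factor. To conclude that the full product increases, I must show the remaining positive eigenvalues of $\widetilde{\mathbf A}_{N,n}$ do not shift downward in a way that overwhelms the gain from the leading factor — equivalently, that the positive spectrum of the principal minors is (weakly) majorized as $n$ increases. I would attempt to secure this by strengthening the principal-minor conjecture to a monotonicity statement for all positive eigenvalues, or, failing that, by arguing that the gap $\lambda_1(\mathbf A_N) - \lambda_1(\widetilde{\mathbf A}_{N,n})$ is asymptotically smaller than the gaps to the lower eigenvalues, so that the product's $n$-dependence is dominated by the top factor; this spectral-comparison step is where the genuine difficulty lies, and it is the reason the result is stated conditionally on the preceding conjectures.

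Finally, the second assertion follows from the first with no further spectral input. Once monotonicity gives that $|v_{1,1}(\mathbf A_N)|$ is the strict minimum among the $N$ component moduli, unit normalization yields $N\,|v_{1,1}(\mathbf A_N)|^2 < \sum_{k=1}^N |v_{1,k}(\mathbf A_N)|^2 = 1$, so that $|v_{1,1}(\mathbf A_N)| < N^{-1/2}$. Letting $N \rightarrow \infty$ forces $v_{1,1}(\mathbf A_N) \rightarrow 0$ regardless of phase, which is the stated limit.
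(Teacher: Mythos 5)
Your treatment of the first claim takes the same route as the paper: the eigenvector--eigenvalue identity \eqref{eq: Eigenvalue Eigenvector Identity} reduces the monotonicity of the component moduli to a comparison of the numerator products, with \textbf{Conjecture \ref{conj: Cyclic OU Process First Nonzero Propagation Coefficient and Noise in Last Sensor Second Regime Lead Matrix Asymptotic Binomial Matrix Principal Minor Largest Eigenvalues}} supplying the comparison of the top factors. Your diagnosis of where the difficulty sits is, however, sharper than the paper's own treatment. The conjecture as stated bounds only $\lambda_1(\widetilde{\mathbf A}_{N,j}),$ yet the paper's proof asserts $\lambda_1(\mathbf A_N) -\lambda_n(\widetilde{\mathbf A}_{N,j}) < \lambda_1(\mathbf A_N) -\lambda_n(\widetilde{\mathbf A}_{N,j+1})$ for \emph{every} index $n$ in the product, which amounts to silently invoking the all-eigenvalue strengthening $\lambda_n(\widetilde{\mathbf A}_{N,j+1}) < \lambda_n(\widetilde{\mathbf A}_{N,j})$ --- precisely the strengthening you list as one way to close the gap. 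So your argument stops one step short of what the paper writes, but the step the paper takes is not justified by the conjecture it cites; neither your text nor the paper's derives the full-product comparison from the stated hypotheses, and your proposed fix (restate the conjecture for all eigenvalues of the principal minors) is what the paper's proof actually needs. Your $\pm$-pairing of the minor's spectrum is a correct observation but, as you say yourself, it does not by itself control the lower factors.

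For the second claim your argument is genuinely different from the paper's, and it is correct. The paper specializes the identity to $j=1,$ uses $\widetilde{\mathbf A}_{N,1} = \mathbf A_{N-1}$ and Cauchy interlacing \eqref{eq: Cauchy Interlacing Inequality} to cancel all but one factor, and obtains $|v_{1,1}(\mathbf A_N)|^2 \le \frac{\lambda_1(\mathbf A_N) - \lambda_1(\mathbf A_{N-1})}{2 \lambda_1(\mathbf A_N)},$ which tends to $0$ because the sequence of largest eigenvalues converges (\textbf{Theorem \ref{thm: Asymptotic Binomial Matrix Largest Eigenvalue Convergence}}, itself conditional on the Gershgorin conjecture). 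Your route --- strict monotonicity plus the unit normalization $\sum_{k=1}^N |v_{1,k}(\mathbf A_N)|^2 = 1$ gives $N\,|v_{1,1}(\mathbf A_N)|^2 < 1,$ hence $|v_{1,1}(\mathbf A_N)| < N^{-1/2}$ --- needs no interlacing and no convergence of the eigenvalue sequence, is shorter, and even yields an explicit (if crude) decay rate. The trade-off is in what each argument is conditional on: yours inherits the conjectural hypothesis through the first claim, whereas the paper's bound on $|v_{1,1}|$ is independent of the monotonicity statement and is in principle much sharper, since the gap $\lambda_1(\mathbf A_N) - \lambda_1(\mathbf A_{N-1})$ appears numerically to decay far faster than $N^{-1}.$ For the stated limit, either suffices.
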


\begin{proof}
We first prove the first statement. We utilize the \textit{eigenvalue-eigenvector identity} \cite{TaoDentonZhang2022} which states
	\begin{align}\label{eq: Eigenvalue Eigenvector Identity}
		\left|v_{1,j}(\mathbf A_N) \right|^2 &=  \frac{\prod_{n=1}^{N-1} \left(\lambda_1(\mathbf A_N) -\lambda_n(\widetilde{\mathbf A}_{N,j}) \right) }{\prod_{n=2}^N \left(\lambda_1(\mathbf A_N) - \lambda_n(\mathbf A_N) \right)}.
	\end{align}
 
Assuming \textbf{Conjecture \ref{conj: Cyclic OU Process First Nonzero Propagation Coefficient and Noise in Last Sensor Second Regime Lead Matrix Asymptotic Binomial Matrix Principal Minor Largest Eigenvalues}} holds, we have
	\begin{align}
		\lambda_1(\mathbf A_N) -\lambda_n(\widetilde{\mathbf A}_{N,j}) &< \lambda_1(\mathbf A_N) -\lambda_n(\widetilde{\mathbf A}_{N,j+1}) \nonumber.
	\end{align}
This means we can bound the numerator on the right hand side of \eqref{eq: Eigenvalue Eigenvector Identity} in the following way:
	\begin{align}
	\left|v_{1,j}(\mathbf A_N) \right|^2 &=  \frac{\prod_{n=1}^{N-1} \left(\lambda_1(\mathbf A_N) -\lambda_n(\widetilde{\mathbf A}_{N,j}) \right) }{\prod_{n=2}^N \left(\lambda_1(\mathbf A_N) - \lambda_n(\mathbf A_N) \right)} \nonumber \\
	&< \frac{\prod_{n=1}^{N-1} \left(\lambda_1(\mathbf A_N) -\lambda_n(\widetilde{\mathbf A}_{N,j+1}) \right) }{\prod_{n=2}^N \left(\lambda_1(\mathbf A_N) - \lambda_n(\mathbf A_N) \right)} = \left|v_{1,j+1}(\mathbf A_N) \right|^2 \nonumber,
	\end{align}
which proves the first statement.

Now, we prove the second statement, recall that $\lambda_n(\mathbf A_N) \le \lambda_n(\mathbf A_{N-1})$ by the Cauchy Interlacing inequality. We have
	\begin{align}
		 |v_{1,1}(\mathbf A_N)|^2 &=\frac{\prod_{n=1}^{N-1} \left(\lambda_1(\mathbf A_N) -\lambda_n(\widetilde{\mathbf A}_{N,1}) \right) }{\prod_{n=2}^N \left(\lambda_1(\mathbf A_N) - \lambda_n(\mathbf A_N) \right)} \nonumber \\
		 &=\frac{\prod_{n=1}^{N-1} \left(\lambda_1(\mathbf A_N) -\lambda_n(\mathbf A_{N-1}) \right) }{\prod_{n=2}^N \left(\lambda_1(\mathbf A_N) - \lambda_n(\mathbf A_N) \right)} \nonumber \\
		 &=  \frac{(\lambda_1(\mathbf A_N) - \lambda_1(\mathbf A_{N-1})) \nonumber \ \prod_{n=2}^{N-1} \left(\lambda_1(\mathbf A_N) -\lambda_n(\mathbf A_{N-1}) \right) }{\prod_{n=2}^N \left(\lambda_1(\mathbf A_N) - \lambda_n(\mathbf A_N) \right)} \nonumber \\
		 &\le \frac{(\lambda_1(\mathbf A_N) - \lambda_1(\mathbf A_{N-1})) \ \prod_{n=2}^{N-1} \left(\lambda_1(\mathbf A_N) -\lambda_n(\mathbf A_{N}) \right) }{\prod_{n=2}^N \left(\lambda_1(\mathbf A_N) - \lambda_n(\mathbf A_N) \right)}  \nonumber \\
		 &= \frac{\lambda_1(\mathbf A_N) - \lambda_1(\mathbf A_{N-1})}{\lambda_1(\mathbf A_N) - \lambda_N(\mathbf A_N)} \nonumber \\
		 &=  \frac{\lambda_1(\mathbf A_N) - \lambda_1(\mathbf A_{N-1})}{2\lambda_1(\mathbf A_N)} \nonumber,
	\end{align}
which approaches $0$ as $N \rightarrow \infty$ by \textbf{Theorem \ref{thm: Asymptotic Binomial Matrix Largest Eigenvalue Convergence}}. This proves the second statement.
\end{proof}

We interpret the previous theorem. The previous theorem states for large $N$, the component moduli $\mathbf v_1(\mathbf A_N)$ decrease from right to left. We interpret this in the context of our signal propagation model. As the signal propagates throughout the network, the left most sensors are less receptive to the signal than the right most sensors.

Now, we pose a conjecture about the component phases of $\mathbf v_1(\mathbf A_N).$

\begin{conjecture}[Asymptotic Binomial Matrix Leading Eigenvector Component Phase Structure]\label{conj: Cyclic OU Process First Nonzero Propagation Coefficient and Noise in Last Sensor Second Regime Lead Matrix Asymptotic Binomial Matrix  Leading Eigenvector Component Phase Structure}
For large $N \in \N,$ there exists large $K \le N$ for which
 $$0=\text{Arg}(v_{1,N}(\mathbf A_N)) < \ \dots \ <  \text{Arg}(v_{1,K}(\mathbf A_N))<\pi.$$
\end{conjecture}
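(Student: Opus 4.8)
The plan is to exploit the fact, already used to establish convergence of the eigenvalues, that the entries of $\mathbf A$ depend only on the distances from the last index: writing $a=N-j$ and $b=N-k$, the $(j,k)$-entry equals $\left(\tfrac12\right)^{a+b}\tfrac{a-b}{a+b}\binom{a+b}{b}$, independently of $N$. Since the component-moduli theorem shows the leading eigenvector localizes near the end (its moduli increase toward index $N$, and $v_{1,1}\to 0$), I would first argue that $\mathbf v_1(\mathbf A_N)$ converges, in the distance-from-end coordinate, to the leading eigenvector $u=(u_a)_{a\ge 0}$ of the semi-infinite Hermitian operator $\mathbf A_\infty$ with entries $i\left(\tfrac12\right)^{a+b}\tfrac{a-b}{a+b}\binom{a+b}{b}$, and that the normalization making $v_{1,N}$ real positive corresponds to $u_0>0$, i.e. $\text{Arg}(v_{1,N})=0$. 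The conjecture then becomes the assertion that $\text{Arg}(u_a)$ increases through $(0,\pi)$ for $a$ up to some large $N-K$.

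The engine for analyzing $\mathbf A_\infty$ is a generating-function identity. Using $\binom{a+b}{b}\tfrac{a-b}{a+b}=\binom{a+b-1}{b}-\binom{a+b-1}{b-1}$ and summing geometric series, one obtains the transport rule
\begin{align}
\sum_{b\ge 0}(\mathbf A_\infty)_{a,b}\,\xi^{b} &= i\,\frac{2(1-\xi)}{2-\xi}\left(\frac{1}{2-\xi}\right)^{a},
\end{align}
so $\mathbf A_\infty$ sends the geometric sequence $(\xi^b)_b$ to a multiple of $(\eta^a)_a$ with $\eta=\tfrac{1}{2-\xi}$. The Möbius map $\xi\mapsto\tfrac1{2-\xi}$ is parabolic with double fixed point $\xi=1$, and in the conjugating coordinate $\zeta=\tfrac{1}{\xi-1}$ it becomes the pure translation $\zeta\mapsto\zeta-1$, with the prefactor turning into a simple rational function of $\zeta$. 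Representing the eigenvector as a superposition $u_a=\int_\gamma \xi^{a}\,d\rho(\xi)$, the relation $\mathbf A_\infty u=\lambda u$ becomes a shift-covariance equation for $\rho$ in the $\zeta$-variable, whose single-valued, contour-convergent solutions are quantized and should reproduce exactly the spectrum $\lambda=\tfrac{2}{(2n-1)\pi}$, giving an independent check against the eigenvalue-limit conjecture. The leading mode ($n=1$, translation frequency $\pi$) is the discrete analogue of the eigenfunction $e^{i\pi x}$ of the antisymmetric kernel $\operatorname{sgn}(x-y)$ on $[0,1]$, whose eigenvalues are precisely $\tfrac{2}{(2n-1)\pi}$ and whose argument is the monotone linear sweep $\pi x$ from $0$ to $\pi$ predicted here.

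To make the phase statement rigorous I would reformulate monotonicity in real terms. Writing $\mathbf A_N=i\mathbf A$ and $\mathbf v_1=\mathbf p+i\mathbf q$ with $\mathbf p,\mathbf q$ real, the eigenrelation splits as $\mathbf A\mathbf p=\lambda_1\mathbf q$ and $\mathbf A\mathbf q=-\lambda_1\mathbf p$, and for consecutive indices the inequality $\text{Arg}(v_{1,n})>\text{Arg}(v_{1,n+1})$ is equivalent to the discrete Wronskian sign condition $p_{n+1}q_n-q_{n+1}p_n>0$. The conjecture is therefore equivalent to this cross product keeping a constant sign along the chain $K\le n<N$, which one can attack either through the contour-integral representation above or through a direct Sturm-type oscillation argument for the pair $(\mathbf p,\mathbf q)$. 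The hard part will be exactly this: controlling the argument of the resulting Gamma-type contour integral (equivalently, the persistent sign of the discrete Wronskian) uniformly in $a$, and making the localization and eigenvector convergence quantitative enough to transfer strict monotonicity from $\mathbf A_\infty$ back to $\mathbf A_N$ at finite large $N$ and to locate the threshold $K$ past which the monotone regime terminates.
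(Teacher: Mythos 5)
First, a point of order: the statement you are addressing is a \emph{conjecture} in the paper. The authors give no proof at all, only the numerical table at $N=50$ (where the monotone run is indices $50$ down through $45$) together with an explicit admission that they cannot describe the remaining phases. So there is no paper argument to match, and your proposal must stand on its own as a proof — which it does not yet. What you get right is real and goes beyond the paper: the reduction of $A_{j,k}$ to the $N$-independent corner variables $a=N-j$, $b=N-k$ is correct; your transport identity $\sum_{b\ge0}(\mathbf A_\infty)_{a,b}\,\xi^b = i\,\frac{2(1-\xi)}{2-\xi}\left(\frac{1}{2-\xi}\right)^a$ checks out for $a\ge1$ via $\frac{a-b}{a+b}\binom{a+b}{b}=\binom{a+b-1}{b}-\binom{a+b-1}{b-1}$ and the negative-binomial series; and the continuum analogy with the Hermitian kernel $i\,\operatorname{sgn}(x-y)$ on $[0,1]$, whose eigenvalues are $\frac{2}{(2n-1)\pi}$ with eigenfunctions $e^{i(2n-1)\pi x}$, simultaneously explains the paper's eigenvalue-limit conjecture and the expected linear phase sweep — an insight the paper itself never articulates.

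Nevertheless there are at least three genuine gaps, which you only partly acknowledge. (1) The transport rule fails at the boundary $a=0$: with the $(0,0)$ entry set to $0$, the row-$0$ generating function is $i\,\frac{-\xi}{2-\xi}$, not $i\,\frac{2(1-\xi)}{2-\xi}$; the two differ by the constant $i$. This defect is precisely what quantizes the spectrum, and your claim that the single-valued contour solutions "should reproduce exactly" $\lambda=\frac{2}{(2n-1)\pi}$ is asserted, not derived — without carrying out that boundary analysis you have neither the spectrum of $\mathbf A_\infty$ nor its leading eigenvector. (2) The passage from $\mathbf v_1(\mathbf A_N)$ to an eigenvector of $\mathbf A_\infty$ needs a quantitative localization and convergence argument: increasing moduli and $v_{1,1}\to 0$ do not by themselves give convergence in the distance-from-end coordinate (one needs, say, strong operator convergence plus a spectral gap above $\lambda_3$), and moreover those moduli results are themselves conditional on the paper's unproven conjectures on Gershgorin radii and principal-minor eigenvalues, so your foundation is itself conjectural. (3) Most importantly, the actual content of the conjecture — strict monotonicity of $\mathrm{Arg}(v_{1,n})$, equivalently the persistent sign of the discrete Wronskian $p_{n+1}q_n-q_{n+1}p_n$ — is exactly the step you defer. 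A Sturm-type oscillation theory is available for tridiagonal or sign-regular matrices, but $\mathbf A$ is a full matrix and you neither invoke nor develop such a theory for it. As it stands, your proposal is a credible and well-supported research program, with correct supporting algebra, but it is not a proof of the statement.
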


In \textbf{Table \ref{tbl: Cyclic OU Process First Nonzero Propagation Coefficient and Noise in Last Sensor Second Regime Lead Matrix Asymptotic Binomial Matrix Component Phases}}, we show numerical evidence of \textbf{Conjecture \ref{conj: Cyclic OU Process First Nonzero Propagation Coefficient and Noise in Last Sensor Second Regime Lead Matrix Asymptotic Binomial Matrix  Leading Eigenvector Component Phase Structure}} when $N=50.$  We observe the last $6$ component phases are in increasing order, supporting the conjecture. But at the current time of writing, we do not have a general conjecture pertaining to the behavior of the other phases.

We interpret the result of the conjecture. Suppose the signal in our propagation model were to be broadcast from the last sensor, namely the $N$-th sensor. Then, \textbf{Conjecture \ref{conj: Cyclic OU Process First Nonzero Propagation Coefficient and Noise in Last Sensor Second Regime Lead Matrix Asymptotic Binomial Matrix  Leading Eigenvector Component Phase Structure}} states the order of the last few eigenvector component phases reflects the expected order of the next few sensors that receive the signal. Therefore, Cyclicity Analysis enables us to partially recover the expected network structure.

\begin{table}[h!]
	\centering
	\begin{tabular}{||c | c||} 
		\hline 
		$n$ & $\text{Arg}(v_{1,n}(\mathbf A_{50}))$  \\ [0.75ex] 
		\hline\hline
		$1$ & $\approx 2.11988$  \\ 
		\hline
		$2$ & $\approx 2.04186$ \\
		\hline 
		$3$ & $\approx 1.96213$ \\
		\hline 
		$44$ & $\approx -3.09417$ \\
		\hline
		$45$ & $\approx 2.93144$ \\ 
		\hline 
		$46$ & $\approx 2.6331$ \\
		\hline
		$47$ & $\approx 2.27014$ \\
		\hline
		$48$ & $\approx 1.83497$ \\
		\hline
		$49$ & $\approx 1.38311$ \\
		\hline
		$50$ & $0$ \\
		\hline
	\end{tabular}
	
	\caption{We tabulate some component phases of $\mathbf v_1(\mathbf A_{50}).$}
	\label{tbl: Cyclic OU Process First Nonzero Propagation Coefficient and Noise in Last Sensor Second Regime Lead Matrix Asymptotic Binomial Matrix Component Phases}
\end{table}

\section{Only One Nonzero Propagation Coefficient and Independent and Identical Noise Injected into Few Sensors}

Next, let $\mathbf B(\epsilon)$ be the friction matrix as defined in the beginning of \textbf{Section \ref{sec: Only One Nonzero Propagation Coefficient and Injection of Noise into Only One Sensor}}. But this time, let $\boldsymbol \Sigma$ be a volatility matrix such that the diffusion matrix $\mathbf D$ is a diagonal diffusion matrix containing multiple nonzero diagonal entries that are equal. In particular, we let the last $L$ diagonal entries of $\mathbf D$ be positive and equal i.e. we assume $D_{N-L+1,N-L+1} = \ \dots \ =  D_{N,N}=d$ for some $d>0,$ while all other diagonal entries are $0.$  This corresponds to the situation we inject independent and identical noise into the last $L$ sensors. 

We explicitly derive the lead matrix with the assumptions imposed on $\mathbf B(\epsilon)$ and $\boldsymbol \Sigma.$

\begin{theorem}[Cyclic OU Process One Nonzero Propagation Coefficient and Independent and Identical Noise in Multiple Sensors Lead Matrix Explicit Formula]\label{thm: Cyclic OU Process One Nonzero Propagation Coefficient and Independent and Identical Noise in Multiple Sensors Lead Matrix Explicit Formula}
For each $\epsilon>0,$ the lead matrix $\mathbf Q(\epsilon)$ is explicitly  
	\begin{align}
			\mathbf Q(\epsilon) &= \frac{b_p d}N \sum_{m,n=1}^N \sum_{\ell=1}^L \frac{(\omega_m^{p-1} - \omega_n^{p-1}) \ \omega_{m+n}^{\ell}}{2 \epsilon + b_p (\omega_m^{p-1}+ \omega_n^{p-1}-2)} \ \mathbf w_m \mathbf w_n^\text T \label{eq: Cyclic OU Process One Nonzero Propagation Coefficient and Independent and Identical Noise in Multiple Sensors Lead Matrix}
	\end{align}
	
\end{theorem}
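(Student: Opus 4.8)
The plan is to specialize the general lead matrix formula \eqref{eq: Cyclic OU Process Lead Matrix} to the present hypotheses, following the proof of \textbf{Theorem \ref{thm: Cyclic OU Process One Nonzero Propagation Coefficient and Noise in One Sensor Lead Matrix Explicit Formula}}, and then account for the fact that noise is now injected into the last $L$ sensors rather than a single sensor. First I would substitute $b_1 = \epsilon - b_p$ together with $b_q = 0$ for every $1<q\le N$ with $q\ne p$ into \eqref{eq: Cyclic OU Process Lead Matrix}. Since $\sum_{q=2}^N b_q = b_p$, the denominator collapses to $2\epsilon + b_p(\omega_m^{q-1}+\omega_n^{q-1}-2)$ and the scalar coefficient of each dyad $\mathbf w_m \mathbf w_{N-m}^\text T \mathbf D \mathbf w_{N-n}\mathbf w_n^\text T$ becomes
\[
\frac{b_p(\omega_m^{p-1}-\omega_n^{p-1})}{2\epsilon + b_p(\omega_m^{p-1}+\omega_n^{p-1}-2)},
\]
which is exactly as in the one-sensor case. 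The only place the new assumption on $\boldsymbol\Sigma$ enters is through the inner product $\mathbf w_{N-m}^\text T \mathbf D \mathbf w_{N-n}$.

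Next I would evaluate this inner product using the diagonal structure $\mathbf D = d\sum_{s=N-L+1}^N \mathbf e_s \mathbf e_s^\text T$. Writing the $s$-th component of $\mathbf w_n$ as $\omega_n^{s-1}/\sqrt N$ and using $\omega_{N-m}=\omega_m^{-1}$ from \textbf{Theorem \ref{thm: Circulant Matrix Properties}}, each nonzero diagonal entry contributes $\tfrac{1}{N}\omega_{N-m}^{s-1}\omega_{N-n}^{s-1} = \tfrac{1}{N}\omega_{m+n}^{1-s}$, so that
\[
\mathbf w_{N-m}^\text T \mathbf D \mathbf w_{N-n} = \frac{d}{N}\sum_{s=N-L+1}^N \omega_{m+n}^{1-s}.
\]
The genuinely new step relative to the single-sensor proof, and the one I expect to be the main obstacle, is reindexing this sum over $s$ into the sum over $\ell$ appearing in the claim. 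Because $\omega_{m+n}$ is an $N$-th root of unity, exponents may be reduced modulo $N$; as $s$ ranges over $N-L+1,\dots,N$, the residues $1-s \bmod N$ realize exactly $1,2,\dots,L$. I would confirm this by checking the endpoints explicitly, namely $s=N$ giving $\omega_{m+n}^{1-N}=\omega_{m+n}^{1}$ and $s=N-L+1$ giving $\omega_{m+n}^{L-N}=\omega_{m+n}^{L}$, which establishes $\sum_{s=N-L+1}^N \omega_{m+n}^{1-s} = \sum_{\ell=1}^L \omega_{m+n}^{\ell}$.

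Finally, I would substitute this evaluation back into the specialized formula, absorb the constant $b_p\,d/N$ out front, and interchange the (finite) summations to obtain
\[
\mathbf Q(\epsilon) = \frac{b_p d}{N}\sum_{m,n=1}^N \sum_{\ell=1}^L \frac{(\omega_m^{p-1}-\omega_n^{p-1})\,\omega_{m+n}^{\ell}}{2\epsilon + b_p(\omega_m^{p-1}+\omega_n^{p-1}-2)}\,\mathbf w_m \mathbf w_n^\text T,
\]
which is the desired identity. The bulk of the work is the mechanical substitution and the bookkeeping in the inner product; the only point requiring care is the modular reindexing of the diagonal sum, so I would make sure to justify the reduction of $1-s$ modulo $N$ cleanly rather than treat it as obvious.
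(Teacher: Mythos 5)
Your proposal is correct and takes essentially the same route as the paper's proof: specialize the general formula \eqref{eq: Cyclic OU Process Lead Matrix} with $b_1=\epsilon-b_p$ and $b_q=0$ for $q\ne 1,p$, evaluate $\mathbf w_{N-m}^\text T \, \mathbf D \, \mathbf w_{N-n}$ using the diagonal structure of $\mathbf D$, and use the fact that $\omega_{m+n}$ is an $N$-th root of unity to turn the sum over the nonzero diagonal indices into $\sum_{\ell=1}^L \omega_{m+n}^{\ell}$. The only cosmetic difference is that the paper indexes the nonzero diagonal entries as $s_\ell = N-\ell+1$ from the outset (writing the exponent as $\omega_{N-m-n}^{-\ell}$), whereas you sum over $s$ and perform the modular reindexing at the end; the bookkeeping is identical.
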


\begin{proof}
Upon substituting $b_1=\epsilon-b_p$ and $b_q=0$ for all $q \ne 1,p$  into the right hand side of the Cyclic OU lead matrix formula in \eqref{eq:  Cyclic OU Process One Nonzero Propagation Coefficient and Noise in One Sensor Lead Matrix} and letting $\mathbf e_n$ be the $n$-th canonical basis vector in $\C^N,$ we have
	\begin{align}
		\mathbf Q(\epsilon) &=  \sum_{m,n=1}^N \frac{b_p (\omega_m^{p-1}- \omega_n^{p-1})}{2\epsilon  + b_p(\omega_m^{p-1}+ \omega_n^{p-1}-2)} \  \mathbf w_m \ \mathbf w_{N-m}^\text T \  \mathbf D \ \mathbf w_{N-n} \ \mathbf w_n^\text T \nonumber \\
		&=  \sum_{m,n=1}^N \frac{b_p (\omega_m^{p-1}- \omega_n^{p-1})}{2\epsilon  + b_p(\omega_m^{p-1}+ \omega_n^{p-1}-2)} \  \mathbf w_m \ \mathbf w_{N-m}^\text T \ \left (\frac{d \sum_{\ell=1}^L  \omega_{N-n}^{(N-\ell+1)-1} \   \mathbf e_{s_\ell}}{\sqrt N} \right) \ \mathbf w_n^\text T \nonumber \\
		&=\frac{b_p d}{N}  \sum_{m,n=1}^N \frac{b_p (\omega_m^{p-1}- \omega_n^{p-1})}{2\epsilon  + b_p(\omega_m^{p-1}+ \omega_n^{p-1}-2)} \  \mathbf w_m \  \ \left ( \sum_{\ell=1}^L \omega_{N-m-n}^{-\ell}  \right) \ \mathbf w_n^\text T \nonumber \\
		&= \frac{b_p d}N \sum_{m,n=1}^N \sum_{\ell=1}^L \frac{(\omega_m^{p-1} - \omega_n^{p-1}) \ \omega_{m+n}^{\ell}}{2 \epsilon + b_p (\omega_m^{p-1}+ \omega_n^{p-1}-2)} \ \mathbf w_m \mathbf w_n^\text T \nonumber.
	\end{align}
This proves the claim.
\end{proof}

We now investigate the structure of the leading eigenvector of $\mathbf Q(\epsilon)$ under the two regimes. 

\subsection{First Regime}
We begin with the first regime when $\epsilon>0$ is large.

\begin{theorem}[One Nonzero Propagation Coefficient and Independent and Identical Noise in Multiple Sensors First Regime Lead Matrix Asymptotic Expansion]\label{thm: Cyclic OU Process One Nonzero Propagation Coefficient and Independent and Identical Noise in Multiple Sensors First Regime Lead Matrix Asymptotic Formula}
Consider the lead matrix $\mathbf Q(\epsilon)$ defined in \eqref{eq: Cyclic OU Process One Nonzero Propagation Coefficient and Independent and Identical Noise in Multiple Sensors Lead Matrix}. Then, as $\epsilon \rightarrow \infty,$  we have the following asymptotic expansion: 
	\begin{align} \label{eq: Cyclic OU Process One Nonzero Propagation Coefficient and Independent and Identical Noise in Multiple Sensors First Regime Lead Matrix Asymptotic Formula}
		\mathbf Q(\epsilon) &= \mathbf A(\epsilon) + O \left(\frac{1}{\epsilon^2} \right),
	\end{align}
where $\mathbf A(\epsilon)$ is the $N \times N$ skew symmetric matrix whose $(j,k)$-th entry is of the form
	\begin{align} 
		A_{j,k}(\epsilon) &= \sum_{\ell=1}^L  \begin{cases} \frac{b_p d}{2 \epsilon} & j \equiv \ell + 1- p \mod N \ , \ k = \ell \\
		-\frac{b_p d}{2 \epsilon} &   j = \ell  \ , \ k \equiv \ell + 1- p \mod N  \\
		0& \text{else}
		 \end{cases}.  \label{eq: Cyclic OU Process One Nonzero Propagation Coefficient and Independent and Identical Noise in Multiple Sensors First Regime Lead Matrix Entry Asymptotic Formula}
	\end{align}
\end{theorem}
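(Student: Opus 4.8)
The plan is to mirror the argument used for the single-sensor first regime (Theorem \ref{thm: Cyclic OU Process One Nonzero Propagation Coefficient and Noise in One Sensor First Regime Lead Matrix Asymptotic Expansion}), since the present situation differs from it only in that the diffusion matrix now carries several equal diagonal entries rather than one. The cleanest conceptual observation is that the lead matrix depends linearly on $\mathbf D$: by \eqref{eq: OU Process Lead Matrix} and \eqref{eq: OU Process Stationary Covariance Matrix}, $\mathbf Q$ equals $\frac12(\mathbf B \mathbf S - \mathbf S \mathbf B^\text T)$ and $\mathbf S$ is a linear function of $\mathbf D$. Writing $\mathbf D = d \sum_{\ell=1}^L \mathbf e_{s_\ell} \mathbf e_{s_\ell}^\text T$ as a sum of single-sensor diffusion matrices therefore decomposes $\mathbf Q(\epsilon)$ into a sum of $L$ single-sensor lead matrices, so the desired expansion follows by applying the single-sensor theorem to each summand and adding the $O(1/\epsilon^2)$ remainders (a finite sum of such remainders is again $O(1/\epsilon^2)$). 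I would record this as the skeleton, and then carry out the explicit computation to pin down the entries and match the stated case expression.

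For the explicit computation I would start from the closed form \eqref{eq: Cyclic OU Process One Nonzero Propagation Coefficient and Independent and Identical Noise in Multiple Sensors Lead Matrix} and divide numerator and denominator of each summand by $2\epsilon$. For $\epsilon$ large enough that $\left| b_p(\omega_m^{p-1}+\omega_n^{p-1}-2)/(2\epsilon) \right| < 1$ for all $m,n$ (which holds for all sufficiently large $\epsilon$, there being finitely many pairs), the factor $1/(1+\cdots)$ expands as a geometric series in $r \ge 0$. Extracting the $r=0$ term isolates the leading contribution $\frac{b_p d}{2\epsilon N}\sum_{\ell=1}^L \sum_{m,n=1}^N (\omega_m^{p-1}-\omega_n^{p-1})\,\omega_{m+n}^{\ell}\,\mathbf w_m \mathbf w_n^\text T$, with the tail $\sum_{r \ge 1}$ absorbed into $O(1/\epsilon^2)$. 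Taking the $(j,k)$ entry via $(\mathbf w_m \mathbf w_n^\text T)_{j,k} = \frac1N \omega_m^{j-1}\omega_n^{k-1}$ and collecting powers of $\omega_m$ and $\omega_n$ reduces each of the two resulting terms to a product of geometric sums, which the orthogonality identity \eqref{eq: Finite Geometric Series Identity} evaluates to $1$ or $0$ according to whether the exponents vanish mod $N$.

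The step needing the most care, which I expect to be the main obstacle, is the bookkeeping of the resulting systems of congruences, now indexed additionally by $\ell$. For each $\ell$ the two terms produce congruence conditions of exactly the single-sensor shape; these collapse to an exact equality on one index (using $1 \le j,k \le N$) and a genuine mod-$N$ condition on the other, and the coprimality hypothesis $\gcd(p-1,N)=1$ is what prevents the $+$ and $-$ conditions from holding simultaneously when $p>2$, just as in the single-sensor case. The new wrinkle is that distinct values of $\ell$ may deposit contributions into the same $(j,k)$ entry, so I must verify that the final answer is correctly written as the (possibly overlapping) sum over $\ell$ of the case expressions in \eqref{eq: Cyclic OU Process One Nonzero Propagation Coefficient and Independent and Identical Noise in Multiple Sensors First Regime Lead Matrix Entry Asymptotic Formula} rather than as a disjoint union; because the $r=0$ term is already a sum over $\ell$, linearity guarantees these contributions simply accumulate. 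Finally I would confirm uniformity of the remainder: the tail over $r \ge 1$ is a triple sum of $O(N^2 L)$ bounded terms, each $O(1/\epsilon^2)$, so the matrix remainder is $O(1/\epsilon^2)$ entrywise, which completes the expansion.
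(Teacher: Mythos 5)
Your proposal is correct and is essentially the paper's own proof: the paper likewise observes that $\mathbf Q(\epsilon)$ is the sum of $L$ single-sensor lead matrices of the form \eqref{eq: Cyclic OU Process One Nonzero Propagation Coefficient and Noise in One Sensor Lead Matrix} with $s = N-\ell+1$ for $1 \le \ell \le L$, and obtains the expansion by summing the $L$ single-sensor asymptotic formulas, exactly as in your first paragraph (your explicit geometric-series and congruence recomputation is a redundant but harmless check). The one bookkeeping point to watch --- which both you and the paper's two-line proof gloss over --- is that substituting $s = N-\ell+1$ places the nonzero entries at $k = N-\ell+1$, equivalently $k \equiv 1-\ell \pmod{N}$, so the sum over $\ell$ in \eqref{eq: Cyclic OU Process One Nonzero Propagation Coefficient and Independent and Identical Noise in Multiple Sensors First Regime Lead Matrix Entry Asymptotic Formula} must be read as running over the noisy sensors $\{N-L+1, \dots, N\}$ rather than literally over $k = \ell \in \{1, \dots, L\}$.
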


\begin{proof}
Note that $\mathbf Q(\epsilon)$ is the sum of lead matrices of the form \eqref{eq: Cyclic OU Process One Nonzero Propagation Coefficient and Noise in One Sensor Lead Matrix} with $s=N-\ell+1$ for each $1 \le \ell \le L.$ Therefore, we substitute $s=N-\ell+1$ into the asymptotic formula \eqref{eq: Cyclic OU Process One Nonzero Propagation Coefficient and Noise in One Sensor First Regime Lead Matrix Asymptotic Formula}, and the result immediately follows upon summing the $L$ total asymptotic formulas corresponding to each $1 \le \ell \le L.$
\end{proof}

Under the first regime, we investigate the leading eigenvector $\mathbf v_1(\mathbf Q(\epsilon))$ as $\epsilon \rightarrow \infty.$

At the current time of writing, we are not aware of any explicit formulas for the eigenvalues and eigenvectors of
\eqref{eq: Cyclic OU Process One Nonzero Propagation Coefficient and Independent and Identical Noise in Multiple Sensors First Regime Lead Matrix Asymptotic Formula} for general $p.$ So we  concentrate on the case $p=2$ i.e. only the propagation coefficient $b_2$ is nonzero. For simplicity, we let $b_2=-1$ and $d=1.$

\begin{theorem}[First Nonzero Propagation Coefficient and Independent and Identical Noise in Multiple Sensors First Regime Leading Eigenvector Asymptotic Structure]\label{thm: First Nonzero Propagation Coefficient and Independent and Identical Noise in Multiple Sensors First Regime Leading Eigenvector Asymptotic Structure}
If $b_2=-1$ and $d=1,$ then the lead matrix $\mathbf Q(\epsilon)$ in \eqref{eq: Cyclic OU Process One Nonzero Propagation Coefficient and Independent and Identical Noise in Multiple Sensors Lead Matrix} satisfies 
	\begin{align}
		\lim_{\epsilon \rightarrow \infty} \mathbf v_1(\mathbf Q(\epsilon)) &= \frac{\left( \underbrace{0 \ , \ \dots \  ,  \ 0}_{\text{$N-L-1$ times}} \ , \  \left( -1 \right)^{\frac{1}{2}} \sin \left(\frac{(1)  \pi}{L+2}\right) \ , \ \dots \ , \  \left( -1 \right)^{\frac{L+1}{2}} \sin \left(\frac{(L+1) \pi}{L+2}\right)    \right)}{\sqrt{\sum_{\ell=1}^{L+1} \sin^2 \left( \frac{\ell \pi}{L+2} \right) }}
	\end{align}
\end{theorem}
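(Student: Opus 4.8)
The plan is to exploit the asymptotic reduction already recorded in \textbf{Theorem \ref{thm: Cyclic OU Process One Nonzero Propagation Coefficient and Independent and Identical Noise in Multiple Sensors First Regime Lead Matrix Asymptotic Formula}}, namely $\mathbf Q(\epsilon) = \mathbf A(\epsilon) + O(1/\epsilon^2)$, and then to diagonalize the dominant term $\mathbf A(\epsilon)$ exactly. First I would substitute the hypotheses $p=2$, $b_2 = -1$, $d=1$ into the entry formula for $\mathbf A(\epsilon)$. Every surviving entry then equals $\pm \frac{1}{2\epsilon}$, and (with the substitution $s = N-\ell+1$ used in the proof of that theorem, so that the noisy sensors are the last $L$) the nonzero entries occupy only rows and columns $N-L, \ldots, N$. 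Collecting them shows $\mathbf A(\epsilon)$ equals $\frac{1}{2\epsilon}$ times a matrix that vanishes outside an $(L+1)\times(L+1)$ block $\mathbf T$ on indices $N-L,\ldots,N$, where $\mathbf T$ is the real skew-symmetric tridiagonal matrix with $-1$ on the superdiagonal and $+1$ on the subdiagonal. Because a positive scalar multiple does not change eigenvectors, the normalized leading eigenvector of $\mathbf A(\epsilon)$ is independent of $\epsilon$, and the problem reduces to finding the leading eigenvector of $\mathbf T$.

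Next I would diagonalize $\mathbf T$ through the explicit similarity $\mathbf T = \mathbf E\,(-i\mathbf J)\,\mathbf E^{-1}$, where $\mathbf E = \text{diag}(1, i, i^2, \ldots, i^L)$ and $\mathbf J$ is the standard $(L+1)\times(L+1)$ symmetric tridiagonal matrix with all off-diagonal entries $1$; checking the conjugated entries confirms each $\pm 1$ of $\mathbf T$ becomes $-i$. The spectrum of $\mathbf J$ is classical: eigenvalues $2\cos(k\pi/(L+2))$ for $1\le k\le L+1$, with the $a$-th component of the $k$-th eigenvector equal to $\sin(ak\pi/(L+2))$. Hence $\mathbf T$ has purely imaginary eigenvalues $-2i\cos(k\pi/(L+2))$, and its $k$-th eigenvector has $a$-th block-component $i^{a-1}\sin(ak\pi/(L+2))$. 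The maximal modulus is attained at $k=1$ (with its complex-conjugate partner at $k=L+1$), so the leading eigenvector of $\mathbf T$ has block-components proportional to $i^{a-1}\sin(a\pi/(L+2))$. Re-embedding into coordinates $N-L,\ldots,N$, padding with zeros in the first $N-L-1$ slots, and normalizing yields precisely the stated limiting vector, up to the overall phase (here the factor $i$) and the complex conjugation that, as noted in \textbf{Section \ref{sec: Cyclicity Analysis}}, always leave the leading eigenvector ambiguous.

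Finally I would show the $O(1/\epsilon^2)$ remainder does not affect the limit. The top eigenvalue modulus of $\mathbf A(\epsilon)$ is $\frac{1}{\epsilon}\cos(\pi/(L+2))$ and the gap to the next distinct modulus is $\frac{1}{\epsilon}\bigl(\cos(\pi/(L+2)) - \cos(2\pi/(L+2))\bigr)$, both of order $1/\epsilon$, whereas the perturbation is of order $1/\epsilon^2$. Since the gap dominates the perturbation as $\epsilon\to\infty$, invariant-subspace perturbation theory (e.g. the Davis--Kahan $\sin\Theta$ bound) forces the dominant eigenspace of $\mathbf Q(\epsilon)$ to converge to that of $\mathbf A(\epsilon)$, so $\mathbf v_1(\mathbf Q(\epsilon))$ converges to the normalized leading eigenvector of $\mathbf T$. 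This last step is the main obstacle: because $\mathbf A(\epsilon)$ is skew-symmetric its dominant eigenvalue comes as a conjugate pair of equal modulus, so one cannot isolate a single simple eigenvalue and must instead control the full two-dimensional dominant invariant subspace; moreover, since every eigenvalue shrinks at rate $1/\epsilon$, it is the \emph{ratio} of the perturbation size to the spectral gap — not their absolute sizes — that must be shown to vanish, which is what forces the remainder to be genuinely $o(1/\epsilon)$ relative to the gap.
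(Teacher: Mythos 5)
Your proposal is correct and follows the same core route as the paper's proof: pass to the first-regime approximation $\mathbf A(\epsilon)$, observe that it vanishes outside an $(L+1)\times(L+1)$ skew-symmetric tridiagonal block, and read the limiting leading eigenvector off the spectrum of that block. Two differences are worth recording. First, where you diagonalize the block by the explicit similarity $\mathbf E(-i\mathbf J)\mathbf E^{-1}$ with $\mathbf J$ the symmetric tridiagonal matrix of ones, the paper instead quotes the known eigendecomposition of tridiagonal Toeplitz matrices \cite{NoschesePasquiniReichel2013} with $\sigma=-\tau$; these are the same classical fact (the cited formula is usually proved by exactly your similarity), and the resulting eigenvector components $i^{a-1}\sin\left(\frac{a\pi}{L+2}\right)$ versus the paper's $(-1)^{a/2}\sin\left(\frac{a\pi}{L+2}\right)$ differ only by the global phase and conjugation ambiguity you correctly flag (your sign convention for the block, $-1$ on the superdiagonal, is in fact the one consistent with the paper's own entry formula; the paper's proof silently uses the transposed block, which is immaterial for the same reason). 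Second---and here your write-up is genuinely stronger than the paper's---the paper stops after computing $\mathbf v_1(\mathbf A(\epsilon))$, treating $\|\mathbf Q(\epsilon)-\mathbf A(\epsilon)\|_F=O(1/\epsilon^2)$ as self-evidently sufficient for convergence of leading eigenvectors; it never confronts the fact that the spectrum of $\mathbf A(\epsilon)$ itself collapses at rate $1/\epsilon$, so an absolute-size comparison proves nothing. Your final step, bounding the perturbation against the $\Theta(1/\epsilon)$ gap of the two-dimensional dominant invariant subspace via a Davis--Kahan argument, is precisely the justification the paper omits, and your remark that only the ratio of perturbation to gap matters identifies the real content of that step.
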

 
\begin{proof}
	
Let $\mathbf A(\epsilon)$ be the $N \times N$ skew-symmetric matrix whose $(j,k)$-th entry is defined in \eqref{eq: Cyclic OU Process One Nonzero Propagation Coefficient and Independent and Identical Noise in Multiple Sensors First Regime Lead Matrix Entry Asymptotic Formula} for each $1 \le j, k \le N.$ Since 
	\begin{align}
		\| \mathbf A(\epsilon) - \mathbf Q(\epsilon) \|_F = O \left(\frac{1}{\epsilon^2} \right),
	\end{align}
we see that $\mathbf A(\epsilon)$ is a low rank approximation of $\mathbf Q(\epsilon)$ as $\epsilon \rightarrow \infty.$

We seek the leading eigenvector of $\mathbf A(\epsilon).$ Note that $\mathbf A(\epsilon)$ can be written in the block form
	\begin{align}
		\mathbf A(\epsilon) &= \begin{bmatrix}
			\mathbf 0 & \mathbf 0 \\
			\mathbf 0 & \mathbf T(\epsilon)
		\end{bmatrix},
	\end{align}
in which $\mathbf T(\epsilon)$ is the $(L+1) \times (L+1)$ matrix such that $T_{\ell, \ell+1}(\epsilon)  = -T_{\ell+1,\ell}=- \frac{b_2 d}{2 \epsilon}$ for each $1 \le \ell \le L$ and all other entries are $0.$ 

We note that $\mathbf T(\epsilon)$ is a tridiagonal Toeplitz matrix, whose eigenvalues and eigenvectors are known \cite{NoschesePasquiniReichel2013}. In particular, if $\tau=T_{\ell,\ell+1}, \sigma= T_{\ell+1,\ell}$ for each $1 \le \ell \le L$  and $\delta=T_{\ell,\ell}$ for all $1 \le \ell \le L+1.$ Then, 
\begin{align}
		\widetilde{\lambda}_\ell &= \delta +2 \sqrt{\sigma \tau } \cos \left(\frac{\ell \pi}{L+2} \right) 
\end{align}
is an eigenvalue of $\mathbf T(\epsilon)$ with associated eigenvector 
\begin{align}
		\widetilde{\mathbf v}_\ell &= \left(\left(\frac{\sigma}{\tau} \right)^{\frac{1}{2}}  \sin \left(\frac{(1)\ell \pi}{L+2} \right) \ , \ \dots \ , \ \left(\frac{\sigma}{\tau} \right)^{\frac{L}{2}}  \sin \left(\frac{(L) \ell \pi}{L+2} \right) \right).
\end{align} 

Therefore, upon substituting $\tau=\frac{1}{2 \epsilon}$ and $\sigma=-\tau$ and $\delta=0,$ the eigenvalues of the bottom right $(L+1) \times (L+1)$ submatrix $\mathbf T(\epsilon)$ are of the form
	\begin{align}
		\widetilde{\lambda}_\ell &= -\frac{1}{\epsilon}  \cos\left(\frac{\ell \pi}{L+2} \right) \ i
	\end{align} 
for each $1 \le \ell \le L+1$ with corresponding eigenvector
	\begin{align}
		\widetilde{\mathbf v}_\ell &= \left( \left( -1 \right)^{\frac{1}{2}} \sin \left(\frac{(1) \ell \pi}{L+2}\right) \ , \ \dots \ , \  \left( -1 \right)^{\frac{L+1}{2}} \sin \left(\frac{(L+1) \ell \pi}{L+2}\right)   \right).
	\end{align}
Therefore, the eigenvalues of $\mathbf A(\epsilon)$ are the eigenvalues of $\mathbf T(\epsilon)$ and $0.$ In particular, the largest eigenvalue of $\mathbf A(\epsilon)$ is $\widetilde{\lambda}_{1}$ with associated leading eigenvector being the vector formed by prepending $\widetilde{\mathbf v}_1$ with $N-L-1$ zeros. Hence, upon normalization, we have 
	\begin{align}
		\mathbf v_1(\mathbf A(\epsilon)) &= \frac{\left(\underbrace{0 \ , \ \dots \  ,  \ 0}_{\text{$N-L-1$ times}}  \ , \  \left( -1 \right)^{\frac{1}{2}} \sin \left(\frac{(1)  \pi}{L+2}\right) \ , \ \dots \ , \  \left( -1 \right)^{\frac{L+1}{2}} \sin \left(\frac{(L+1) \pi}{L+2}\right)  \right)}{\sqrt{\sum_{\ell=1}^{L+1} \sin^2 \left( \frac{\ell \pi}{L+2} \right) }},
	\end{align}
which proves the claim.
\end{proof}

We interpret the result given by \textbf{Theorem \ref{thm: First Nonzero Propagation Coefficient and Independent and Identical Noise in Multiple Sensors First Regime Leading Eigenvector Asymptotic Structure}}. The last $L+1$ leading eigenvector components are nonzero, suggesting the propagating signal in our model is received by the $L+1$ sensors. However, because the last $L+1$ components either lie on the real or imaginary axes the leading eigenvector component phases are either $0, \pi,$ or $\pm \frac{\pi}{2}.$ So we observe the cyclic order of the component phases does not match the expected cyclic order of sensors receiving the signal. Hence, Cyclicity Analysis does not enable us to fully recover the network structure.

However, we notice the last $L+1$ leading eigenvector component moduli form a sinusoidal pattern, which is interesting. This suggests if the signal were to be broadcast from the $N$-th sensor, then one of the later sensors is most receptive to the signal.

\subsection{Second Regime}

We investigate the second regime, in which the perturbation  $\epsilon>0$ is small.

\begin{theorem}[One Nonzero Propagation Coefficient and Independent and Identical Noise in Multiple Sensors Second Regime Lead Matrix Asymptotic Expansion]\label{thm: Cyclic One Nonzero Propagation Coefficient and Independent and Identical Noise in Multiple Sensors Second Regime Lead Matrix Asymptotic Expansion}
Consider the lead matrix $\mathbf Q(\epsilon)$ in \eqref{eq: Cyclic OU Process One Nonzero Propagation Coefficient and Independent and Identical Noise in Multiple Sensors Lead Matrix}.  Then, as $\epsilon \rightarrow 0,$ the matrix $\mathbf Q(\epsilon)$ has an asymptotic expansion of the form
	\begin{align}\label{eq: Cyclic OU Process One Nonzero Propagation Coefficient and Independent and Identical Noise in Multiple Sensors Second Regime Lead Matrix Asymptotic Formula} 
		\mathbf Q(\epsilon)  &= \frac{d}{N} \sum_{m,n=1}^N \sum_{\ell =1}^L \left(\frac{\left(\omega_m^{p-1}-\omega_n^{p-1} \right)  \  \omega_{m+n}^{1-\ell}} {\omega_m^{p-1}+\omega_n^{p-1}-2} \right) \ \mathbf w_m  \mathbf w_n^\text T + O(\epsilon),
	\end{align}
	in which the summand inside the double sum is defined to be $0$ if $m=n=N.$
\end{theorem}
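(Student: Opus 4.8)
The plan is to mirror the argument establishing \textbf{Theorem \ref{thm: Cyclic OU Process One Nonzero Propagation Coefficient and Noise in One Sensor Second Regime Lead Matrix Asymptotic Formula}}, exploiting the fact that the denominator $2\epsilon + b_p(\omega_m^{p-1}+\omega_n^{p-1}-2)$ appearing in \eqref{eq: Cyclic OU Process One Nonzero Propagation Coefficient and Independent and Identical Noise in Multiple Sensors Lead Matrix} does not depend on the index $\ell$. The most economical route is to notice that the present diffusion matrix is the sum $\mathbf D = \sum_{\ell=1}^L d\,\mathbf e_{N-\ell+1}\mathbf e_{N-\ell+1}^\text T$ of $L$ single-sensor diffusion matrices, so by the linearity of the lead-matrix formula \eqref{eq: Cyclic OU Process One Nonzero Propagation Coefficient and Noise in One Sensor Lead Matrix} in $\mathbf D$, the matrix $\mathbf Q(\epsilon)$ splits as a sum over $\ell$ of single-sensor lead matrices with $d_s = d$ and $s = N-\ell+1$. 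Applying \textbf{Theorem \ref{thm: Cyclic OU Process One Nonzero Propagation Coefficient and Noise in One Sensor Second Regime Lead Matrix Asymptotic Formula}} to each of these $L$ summands and adding the resulting expansions would then give the claim, the $L$ individual $O(\epsilon)$ remainders combining into a single $O(\epsilon)$ because $L$ is a fixed finite number.

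Alternatively, I would argue directly, as in the single-sensor proof. First I would divide the numerator and denominator of the summand in \eqref{eq: Cyclic OU Process One Nonzero Propagation Coefficient and Independent and Identical Noise in Multiple Sensors Lead Matrix} by $b_p(\omega_m^{p-1}+\omega_n^{p-1}-2)$, isolating the factor
\[
\frac{1}{1+\frac{2\epsilon}{b_p(\omega_m^{p-1}+\omega_n^{p-1}-2)}}.
\]
For all $\epsilon$ small enough that $\bigl|\tfrac{2\epsilon}{b_p(\omega_m^{p-1}+\omega_n^{p-1}-2)}\bigr|<1$ holds uniformly over the finitely many pairs $(m,n)$ with $\omega_m^{p-1}+\omega_n^{p-1}\ne 2$, I would expand this factor as a geometric series in $\epsilon$, interchange the finite sums over $m,n,\ell$ with the series, retain the $r=0$ term as the stated leading contribution, and bundle the tail $\sum_{r\ge 1}$ into $O(\epsilon)$, exactly as in the passage leading to \eqref{eq: Cyclic OU Process One Nonzero Propagation Coefficient and Noise in One Sensor Lead Matrix 3}. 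The numerator $(\omega_m^{p-1}-\omega_n^{p-1})\,\omega_{m+n}^{\ell}$ carried through this computation is precisely the one recorded in the claimed expansion \eqref{eq: Cyclic OU Process One Nonzero Propagation Coefficient and Independent and Identical Noise in Multiple Sensors Second Regime Lead Matrix Asymptotic Formula}.

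The main obstacle, and the only genuinely delicate point, is the degenerate index at which $\omega_m^{p-1}+\omega_n^{p-1}-2=0$, where neither the factoring nor the series expansion is valid. I would dispose of it with the triangle-inequality equality argument from the single-sensor proof: taking moduli in $\omega_m^{p-1}+\omega_n^{p-1}=2$ forces $|\omega_m^{p-1}+\omega_n^{p-1}|=|\omega_m^{p-1}|+|\omega_n^{p-1}|$, hence $\text{Arg}(\omega_m^{p-1})=\text{Arg}(\omega_n^{p-1})$ and so $m=n$; then $\omega_m^{p-1}=1$ together with $\gcd(p-1,N)=1$ pins down $m=n=N$. At this lone index the numerator factor $\omega_m^{p-1}-\omega_n^{p-1}$ also vanishes, producing an indeterminate $0/0$, and defining the corresponding summand to be $0$ is justified by the same L'Hospital computation used before. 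Since this exceptional contribution is isolated and $O(1)$, it leaves the $O(\epsilon)$ error term unaffected, which completes the proof.
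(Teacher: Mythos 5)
Your strategy is exactly the paper's: the paper omits this proof, stating that it involves the same reasoning as Theorem \ref{thm: Cyclic OU Process One Nonzero Propagation Coefficient and Noise in One Sensor Second Regime Lead Matrix Asymptotic Formula}, which is your direct route; and your linearity-in-$\mathbf D$ decomposition is precisely how the paper handled the first-regime analogue in Theorem \ref{thm: Cyclic OU Process One Nonzero Propagation Coefficient and Independent and Identical Noise in Multiple Sensors First Regime Lead Matrix Asymptotic Formula}. The geometric-series expansion, the uniformity of the smallness condition over the finitely many non-degenerate pairs $(m,n)$, the triangle-inequality/L'Hospital treatment of the lone degenerate index $m=n=N$, and the absorption of the $L$ individual remainders into a single $O(\epsilon)$ are all sound.

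There is, however, one assertion in your write-up that is not literally true, and it masks a real discrepancy. Both of your routes produce the exponent $\ell$, not $1-\ell$: directly, the numerator of \eqref{eq: Cyclic OU Process One Nonzero Propagation Coefficient and Independent and Identical Noise in Multiple Sensors Lead Matrix} carries $\omega_{m+n}^{\ell}$ through the expansion unchanged; and via the decomposition, substituting $s=N-\ell+1$ into the single-sensor expansion gives $\omega_{m+n}^{1-s}=\omega_{m+n}^{\ell-N}=\omega_{m+n}^{\ell}$, since $\omega_{m+n}^{N}=1$. The theorem as printed instead records $\omega_{m+n}^{1-\ell}$, and these do not agree: already for $L=2$ the exponent sets $\left \lbrace 0 \ , \ -1 \right \rbrace$ and $\left \lbrace 1 \ , \ 2 \right \rbrace$ differ mod $N$, so $\sum_{\ell=1}^{L} \omega_{m+n}^{1-\ell} \ne \sum_{\ell=1}^{L} \omega_{m+n}^{\ell}$ in general. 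Consequently your sentence claiming the computed numerator ``is precisely the one recorded in the claimed expansion'' is false as written; what your (correct) computation actually establishes is the expansion with $\omega_{m+n}^{\ell}$, consistent with \eqref{eq: Cyclic OU Process One Nonzero Propagation Coefficient and Independent and Identical Noise in Multiple Sensors Lead Matrix}, and it exposes an indexing slip in the printed statement: the exponent $1-\ell$ would correspond to injecting noise into sensors $1 \ , \ \dots \ , \ L$, whereas the setup places the noise in the last $L$ sensors. You should state explicitly that your proof yields the formula with $\omega_{m+n}^{\ell}$ and flag the mismatch, rather than assert agreement with the statement as printed.
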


We omit the proof, as it involves the exact same reasoning that was used to prove  \textbf{Theorem \ref{thm: Cyclic OU Process One Nonzero Propagation Coefficient and Noise in One Sensor Second Regime Lead Matrix Asymptotic Formula}}. At the current time of writing, we are not aware of any closed formulas for the eigenvalues and eigenvectors of the matrix on the right hand side of \eqref{eq: Cyclic OU Process One Nonzero Propagation Coefficient and Independent and Identical Noise in Multiple Sensors Second Regime Lead Matrix Asymptotic Formula}. Nevertheless, we pose a conjecture in the situation where $p=2$ and $d=1.$

\begin{conjecture}[First Nonzero Propagation Coefficient and Independent and Identical Noise in Multiple Sensors Second Regime Asymptotic Leading Eigenvector Structure]\label{conj: One Nonzero Propagation Coefficient and Independent and Identical Noise in Multiple Sensors Second Regime Leading Eigenvector Structure}
Let $\mathbf A$ be the $N \times N$ matrix on the right hand side of   \eqref{eq: Cyclic OU Process One Nonzero Propagation Coefficient and Independent and Identical Noise in Multiple Sensors Second Regime Lead Matrix Asymptotic Formula}. If $\mathbf v_1(\mathbf A)$ is the leading eigenvector of $\mathbf A,$ then we have 
	\begin{align}
		\text{Arg}(v_{1,N}(\mathbf A) ) < & \ \dots \ < \text{Arg}(v_{1,N-L+1}(\mathbf A) )=0.
	\end{align}
and $|v_{1,\ell}(\mathbf A)| < |v_{1,\ell+1}(\mathbf A)|$ for each $1 \le \ell \le N-L$ and $|v_{1,\ell}(\mathbf A)| > |v_{1,\ell+1}(\mathbf A)|$ for each $N-L \le \ell <N-1.$
\end{conjecture}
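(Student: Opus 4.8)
The plan is to follow the template established for the single-sensor case in the previous section, exploiting the fact that the matrix $\mathbf A$ here is a superposition of $L$ shifted copies of the binomial matrix studied there. First I would make the $p=2$, $d=1$ entries of $\mathbf A$ explicit by repeating, term by term in $\ell$, the geometric-series-plus-binomial-theorem computation used to evaluate the single-sensor lead matrix. Each summand $\frac{(\omega_m-\omega_n)\,\omega_{m+n}^{1-\ell}}{\omega_m+\omega_n-2}$ in \eqref{eq: Cyclic OU Process One Nonzero Propagation Coefficient and Independent and Identical Noise in Multiple Sensors Second Regime Lead Matrix Asymptotic Formula} produces a binomial-coefficient entry with indices shifted by $\ell-1$, so $A_{j,k}$ becomes a finite sum over $1\le\ell\le L$ of terms of the form $(1/2)^{2N-j-k+c_\ell}\,\tfrac{k-j+c_\ell}{\cdots}\binom{\cdots}{\cdots}$. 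As before I would then pass to the Hermitian matrix $i\mathbf A$, whose eigenvalues are real and whose eigenvectors are unchanged, so that Gershgorin's theorem, the Cauchy interlacing theorem, and the eigenvalue-eigenvector identity all apply.

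For the modulus claim I would invoke the eigenvalue-eigenvector identity \eqref{eq: Eigenvalue Eigenvector Identity} in the form
\[
|v_{1,j}(\mathbf A)|^2 = \frac{\prod_{n=1}^{N-1}\bigl(\lambda_1(\mathbf A)-\lambda_n(\widetilde{\mathbf A}_{N,j})\bigr)}{\prod_{n=2}^{N}\bigl(\lambda_1(\mathbf A)-\lambda_n(\mathbf A)\bigr)}.
\]
Because the denominator does not depend on $j$, the monotonicity of $|v_{1,j}(\mathbf A)|$ is controlled entirely by how the numerator varies as the deleted index $j$ moves. The genuinely new feature relative to the single-sensor case is that this product is no longer monotone in $j$: the assertion is that it increases for $j\le N-L$ and decreases on the noisy block $N-L\le j<N$. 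I would therefore need a \emph{unimodal} analogue of the principal-minor eigenvalue comparison conjectured for the single sensor, namely that $\lambda_1(\widetilde{\mathbf A}_{N,j})$ itself is unimodal in $j$ across the block; this I would attempt to establish by writing $\widetilde{\mathbf A}_{N,j}$ in block form and combining interlacing with the explicit sign pattern of the shifted binomial entries.

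The phase ordering is the genuinely hard part and is the reason the statement remains a conjecture: even for a single sensor the ordering of the component phases was only conjectured and supported numerically, so one cannot hope to do better by the same elementary route. The more promising strategy is operator-theoretic. The conjectured single-sensor eigenvalue limits $\tfrac{2}{(2n-1)\pi}$ are exactly the moduli of the purely imaginary eigenvalues of the skew-adjoint integral operator $(Kf)(x)=\int_0^1 \text{sgn}(y-x)\,f(y)\,dy$ on $L^2[0,1]$, whose eigenfunctions are the sinusoids $e^{i(2n-1)\pi x}$. I would try to show that $i\mathbf A_N$, suitably scaled, converges to $K$ in a norm controlling the leading eigenpair, so that $\mathbf v_1(\mathbf A_N)$ is asymptotically a sampled sinusoidal eigenfunction; the sinusoidal profile would then yield the monotone phase ordering and the growth of the moduli toward the right end in one stroke. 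The main obstacle is establishing this spectral convergence with sufficient eigenvector control: the matrix admits no closed-form diagonalization, one must show the spectral gap $\lambda_1-\lambda_2$ stays bounded below uniformly in $N$, and — most delicately — the reversal of the moduli on the final $L$ coordinates is a boundary-layer effect that the bulk limit $K$ cannot see, so it would require a separate edge (Wiener--Hopf-type) analysis of the restriction of the noise to the last $L$ sensors, handled uniformly in $N$.
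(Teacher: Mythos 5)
You should first be aware that the paper itself offers no proof of this statement: it is posed as a conjecture, and the only support given is numerical (the table of component moduli and phases for $N=10$, $L=4$, where the $7$-th modulus is largest and the last three phases increase). So there is no paper argument to match your proposal against; the relevant question is whether your plan would actually close the conjecture, and it would not. Your moduli argument transplants the eigenvalue--eigenvector identity strategy from the single-sensor case, but that strategy was already conditional there (it rests on the paper's unproven principal-minor ordering, \textbf{Conjecture \ref{conj: Cyclic OU Process First Nonzero Propagation Coefficient and Noise in Last Sensor Second Regime Lead Matrix Asymptotic Binomial Matrix Principal Minor Largest Eigenvalues}}), and here you need something strictly stronger: a \emph{unimodal} ordering of $\lambda_1(\widetilde{\mathbf A}_{N,j})$ in $j$, with the turning point located exactly at $j=N-L$, and in fact an ordering of \emph{all} eigenvalues $\lambda_n(\widetilde{\mathbf A}_{N,j})$, not just the largest, since the identity involves the full product $\prod_{n=1}^{N-1}\bigl(\lambda_1(\mathbf A)-\lambda_n(\widetilde{\mathbf A}_{N,j})\bigr)$. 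Saying you ``would attempt to establish'' this by interlacing plus sign patterns is naming the obstacle, not overcoming it; interlacing alone cannot even distinguish $j$ from $j+1$, since it constrains both minors by the same parent spectrum.

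That said, two things in your proposal have genuine value and go beyond what the paper does. First, the observation that $\mathbf A$ decomposes as a superposition of $L$ index-shifted copies of the single-sensor binomial matrix is correct and is exactly how the paper's own derivation would proceed (it is the same congruence bookkeeping with $s=N-\ell+1$). Second, the identification of the conjectured eigenvalue limits $\tfrac{2}{(2n-1)\pi}$ with the spectrum of the skew-adjoint operator $(Kf)(x)=\int_0^1 \operatorname{sgn}(y-x)f(y)\,dy$ on $L^2[0,1]$ is a real insight the paper never makes, and a spectral-convergence argument of that type is plausibly the right long-term route to the phase ordering. But as you yourself concede, this requires a uniform-in-$N$ spectral gap, eigenvector (not just eigenvalue) convergence, and a separate boundary-layer analysis for the modulus reversal on the last $L$ coordinates --- none of which you supply. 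The honest summary is that your proposal is a reasonable research program whose every hard step is itself an open problem; it neither proves the statement nor conflicts with the paper, which likewise leaves it open.
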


We interpret this conjecture. In the situation where we inject independent and identical noise into $L$ nodes, the order of the few $L$ leading eigenvector component phases matches the expected order of those last few $L$ sensors receiving the signal in our propagation model. In addition, even though we inject noise into the last $L$ sensors, the $(N-L)$-th sensor is most receptive to the propagating signal, which is interesting.

In \textbf{Table \ref{tbl: One Nonzero Propagation Coefficient and Independent and Identical Noise in Multiple Sensors Second Regime Leading Eigenvector Structure}}, we provide evidence supporting \textbf{Conjecture \ref{conj: One Nonzero Propagation Coefficient and Independent and Identical Noise in Multiple Sensors Second Regime Leading Eigenvector Structure}} in the situation where $N=10$ and $L=4.$ In particular, we observe the $7$-th component has the largest modulus. Moreover, we observe the last $3$ component phases are increasing.

\begin{table}[h!]
	\centering
	\begin{tabular}{||c | c | c ||} 
		\hline 
		$n$ & $|v_{1,n}(\mathbf A)|$ & $\text{Arg}(v_{1,n}(\mathbf A))$  \\ [0.75ex] 
		\hline\hline
		$1$ & $\approx 0.084589$ &$\approx 2.97575$  \\ 
			\hline
		$2$ & $\approx 0.115083$ &$\approx 2.61363$  \\ 
			\hline
		$3$ & $\approx 0.153119$ &$\approx 2.22315$  \\ 
			\hline
		$4$ & $\approx 0.205092$ &$\approx 1.80388$  \\ 
			\hline
		$5$ & $\approx 0.283689$ &$\approx 1.3701$  \\ 
			\hline
		$6$ & $\approx 0.409898$ &$\approx 0.94586$ \\
			\hline 
		$7$ & $\approx 0.52438$ &$0$ \\
			\hline
		$8$ & $\approx 0.466637$ &$\approx -0.859917$ \\
			\hline
		$9$ & $\approx 0.354551$ &$\approx -1.75523$ \\
			\hline
		$10$ & $\approx 0.217176$ &$\approx -2.73113$ \\
		\hline 
	\end{tabular}
	\caption{Leading eigenvector component moduli and phases for the matrix $\mathbf A$ defined in \textbf{Conjecture \ref{conj: One Nonzero Propagation Coefficient and Independent and Identical Noise in Multiple Sensors Second Regime Leading Eigenvector Structure}} in the situation where $N=10$ and $L=4.$ }
	\label{tbl: One Nonzero Propagation Coefficient and Independent and Identical Noise in Multiple Sensors Second Regime Leading Eigenvector Structure}
\end{table}

\section{Independent and Identical Noise Injected into All Sensors}

We now investigate the signal propagation model under the assumptions the propagation coefficient $b_p$ is nonzero for exactly one index $1<p \le N$ with $\gcd(p-1,N)=1$ and $\boldsymbol \Sigma$ is a volatility matrix such that the diffusion matrix $\mathbf D$ is proportional to the $N \times N$ identity matrix. This choice of $\boldsymbol \Sigma$ corresponds to the situation where we inject independent and identical noise into all $N$ sensors. Since the diagonal entries of $\mathbf D$ are all the same, we may assume without loss of generality that $\mathbf D$ is the $N \times N$ identity matrix itself.

We first derive the lead matrix with the imposed assumptions on these cyclic OU model parameters.

\begin{theorem}[Cyclic OU Process One Nonzero Propagation Coefficient and Independent and Identical Noise in All Sensors Lead Matrix Explicit Formula]\label{thm: Cyclic OU Process One Nonzero Propagation Coefficient and Independent and Identical Noise in All Sensors Lead Matrix Explicit Formula}
For each $\epsilon>0,$ with the assumptions imposed on $\mathbf B(\epsilon)$ and $\boldsymbol \Sigma,$ the lead matrix $\mathbf Q(\epsilon)$ is explicitly 
	\begin{align}\label{eq: Cyclic OU Process One Nonzero Propagation Coefficient and Independent and Identical Noise in All Sensors Lead Matrix}
		\mathbf Q(\epsilon) &= -i \  \sum_{n=1}^N \frac{b_p \sin \left(\frac{2 \pi n (p-1)}{N} \right)}{ \epsilon -2 b_p \sin^2 \left(\frac{\pi n (p-1)}{N} \right)} \ \mathbf w_{N-n} \mathbf w_n^\text T.
	\end{align}
Furthermore, $\mathbf Q(\epsilon)$ is a circulant matrix.
\end{theorem}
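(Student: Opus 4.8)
The plan is to specialize the general lead-matrix representation and then exploit the orthonormality of the Fourier basis $\{\mathbf w_n\}$ to collapse the double sum into a single sum. First I would substitute $b_1 = \epsilon - b_p$, $b_q = 0$ for $q \neq p$, and $\mathbf D = \mathbf I$ into the general formula \eqref{eq: Cyclic OU Process Lead Matrix} of \textbf{Theorem \ref{thm: Cyclic OU Process Lead Matrix Explicit Formula}}, which yields
\begin{align}
\mathbf Q(\epsilon) &= \sum_{m,n=1}^N \frac{b_p(\omega_m^{p-1} - \omega_n^{p-1})}{2\epsilon + b_p(\omega_m^{p-1} + \omega_n^{p-1} - 2)} \ \mathbf w_m \ \mathbf w_{N-m}^\text T \ \mathbf w_{N-n} \ \mathbf w_n^\text T. \nonumber
\end{align}
The key reduction is that the matrix sandwich $\mathbf w_{N-m}^\text T \mathbf w_{N-n}$ is now governed by orthonormality: using $\mathbf w_m^* = \mathbf w_{N-m}^\text T$ from \textbf{Theorem \ref{thm: Circulant Matrix Properties}}, it equals $\mathbf w_m^* \mathbf w_{N-n} = \delta_{m,N-n}$. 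Summing over $m$ therefore annihilates every term except $m = N-n$ (indices mod $N$), leaving a single sum over $n$ with surviving matrix factor $\mathbf w_{N-n} \mathbf w_n^\text T$.

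Next I would carry out the trigonometric simplification of the scalar coefficient. Writing $\theta_n = \frac{2\pi n(p-1)}{N}$ and using $\omega_{N-n}^{p-1} = \overline{\omega_n^{p-1}} = e^{-i\theta_n}$, the numerator becomes $\omega_{N-n}^{p-1} - \omega_n^{p-1} = -2i\sin\theta_n$, while the denominator becomes $2\epsilon + b_p(2\cos\theta_n - 2) = 2\left[\epsilon - 2b_p\sin^2(\theta_n/2)\right]$ via the half-angle identity $1 - \cos\theta_n = 2\sin^2(\theta_n/2)$. Since $\theta_n/2 = \frac{\pi n(p-1)}{N}$, the coefficient collapses to $-i\,b_p\sin(\theta_n)/\left[\epsilon - 2b_p\sin^2(\theta_n/2)\right]$, which is precisely the claimed summand, establishing \eqref{eq: Cyclic OU Process One Nonzero Propagation Coefficient and Independent and Identical Noise in All Sensors Lead Matrix}.

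For the circulant claim I would give the short structural argument. Each summand matrix is $\mathbf w_{N-n}\mathbf w_n^\text T = \mathbf w_{N-n}\mathbf w_{N-n}^*$ (using $\mathbf w_n^\text T = \mathbf w_{N-n}^*$, the same identity read with $m = N-n$), whose $(j,\ell)$ entry is $\frac{1}{N}\omega_{N-n}^{j-\ell}$, depending only on $j-\ell$ mod $N$; hence each is circulant. Because circulant matrices form a subspace closed under scalar multiplication and addition, $\mathbf Q(\epsilon)$ is circulant. Alternatively, and perhaps more transparently, $\mathbf B(\epsilon)$ is circulant by construction, so $\mathbf G(t) = \exp(-t\mathbf B(\epsilon))$ and hence $\mathbf S$ are circulant by \textbf{Theorem \ref{thm: Circulant Matrix Properties}} together with closure of the circulant class under products, transposes, and integration, whence $\mathbf Q(\epsilon) = \tfrac{1}{2}(\mathbf B\mathbf S - \mathbf S\mathbf B^\text T)$ is circulant.

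The main obstacle is bookkeeping rather than conceptual: correctly tracking the cyclic index $N-n$ through the collapse, in particular the boundary case $n = N$, where $N-n \equiv 0$ must be read as $N$ with $\mathbf w_0 = \mathbf w_N$, and confirming that the single surviving diagonal of the orthonormality relation produces the pairing $\mathbf w_{N-n}\mathbf w_n^\text T$ and not some other one. Everything downstream is a routine half-angle computation.
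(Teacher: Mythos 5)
Your proposal is correct and follows essentially the same route as the paper's proof: specialize the general formula of \textbf{Theorem \ref{thm: Cyclic OU Process Lead Matrix Explicit Formula}}, collapse the double sum via the orthonormality relation $\mathbf w_{N-m}^\text T \mathbf w_{N-n} = \delta_{m,N-n}$, and simplify the scalar coefficient (your half-angle identity is equivalent to the paper's Euler-identity computation, since $2\cos\theta_n - 2 = -4\sin^2(\theta_n/2)$). For the circulant claim, your ``alternative'' argument via the circulant structure of $\mathbf G(t)$ and $\mathbf S$ is exactly the paper's argument, and your primary entrywise argument (each $\mathbf w_{N-n}\mathbf w_{N-n}^*$ has $(j,\ell)$ entry $\tfrac{1}{N}\omega_{N-n}^{j-\ell}$, hence is circulant, and circulant matrices form a subspace) is a valid minor variant.
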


\begin{proof}
We substitute the $N \times N$ identity matrix for $\mathbf D$ and $b_1=\epsilon -b_p$ and $b_q=0$ for all $q \ne 1,p$ into the general Cyclic OU lead matrix formula \eqref{eq: Cyclic OU Process Lead Matrix}. Recalling $\left \lbrace \mathbf w_n \right \rbrace_{n=1}^N$ is an orthonormal basis of $\C^N,$ we can simplify the summand in \eqref{eq: Cyclic OU Process Lead Matrix} in the following way:
	\begin{align}
		\mathbf Q(\epsilon)  &=  \sum_{m,n=1}^N \frac{b_p(\omega_{m}^{p-1}  - \omega_n^{p-1} )}{2 \epsilon + b_p(\omega_{m}^{p-1}  + \omega_n^{p-1} -2)} \  \mathbf w_m \ \mathbf w_{N-m}^\text T \mathbf w_{N-n} \mathbf w_n^\text T \nonumber  \\
		&=   \sum_{m,n=1}^N \frac{b_p(\omega_{m}^{p-1}  - \omega_n^{p-1} )}{2 \epsilon + b_p(\omega_{m}^{p-1}  + \omega_n^{p-1} -2)} \  \mathbf w_m \ \delta_{m, N-n} \mathbf w_n^\text T  \nonumber   \\
		&=   \sum_{n=1}^N \frac{b_p(\omega_{N-n}^{p-1}  - \omega_n^{p-1} )}{2 \epsilon + b_p(\omega_{N-n}^{p-1}  + \omega_n^{p-1} -2)} \  \mathbf w_{N-n} \mathbf w_n^\text T \nonumber \\
		&=    \sum_{n=1}^N \frac{b_p(\omega_{-n}^{p-1}  - \omega_n^{p-1} )}{2 \epsilon + b_p(\omega_{-n}^{p-1}  + \omega_n^{p-1} -2)} \  \mathbf w_{N-n} \mathbf w_n^\text T \label{eq: Cyclic OU Process One Nonzero Propagation Coefficient and Independent and Identical Noise in All Sensors Lead Matrix 1}.
	\end{align}

Now, recall the Euler identities
\begin{align}
	\sin(\theta) &= \frac{e^{i \theta} - e^{- i \theta}}{2i} \nonumber \\
	\cos(\theta) &= \frac{e^{i \theta} + e^{- i \theta}}{2} \nonumber.
\end{align}
Using the Euler identities, we rewrite the numerator and denominator of the summand in \eqref{eq: Cyclic OU Process One Nonzero Propagation Coefficient and Independent and Identical Noise in All Sensors Lead Matrix 1} to obtain
	\begin{align}
		\mathbf Q(\epsilon) &=  \sum_{n=1}^N \frac{-2 i b_p \sin\left(\frac{2 \pi n (p-1)}{N} \right)}{2 \epsilon + b_p \left(2\cos \left(\frac{2 \pi n (p-1)}{N} \right)-2 \right)} \  \mathbf w_{N-n} \mathbf w_n^\text T  \nonumber   \\
		&=  \sum_{n=1}^N \frac{-2 i b_p \sin\left(\frac{2 \pi n (p-1)}{N} \right)}{2 \epsilon -4 b_p \sin^2 \left( \frac{\pi n (p-1)}{N}\right)} \  \mathbf w_{N-n} \mathbf w_n^\text T \nonumber \\
		&= -i \   \sum_{n=1}^N \frac{ b_p \sin\left(\frac{2 \pi n (p-1)}{N} \right)}{\epsilon -2 b_p \sin^2 \left( \frac{\pi n (p-1)}{N}\right)} \  \mathbf w_{N-n} \mathbf w_n^\text T \nonumber.
	\end{align}

Next, since $\mathbf B(\epsilon)$ is circulant, recall $\exp(-t \mathbf B(\epsilon))$ is circulant for each $t \ge 0$ by \textbf{Theorem \ref{thm: Circulant Matrix Properties}}. Examining the explicit formula for the stationary covariance matrix $\mathbf S$ in \eqref{eq: OU Process Stationary Covariance Matrix}, we see that the integrand is a product of the circulant matrices $\exp(-t \mathbf B(\epsilon))$ and its transpose, which implies $\mathbf S$ is circulant.  Therefore, $\mathbf B(\epsilon) 
 \ \mathbf S$ and $\mathbf S \ \mathbf B^\text T(\epsilon)$ are both circulant, and since $\mathbf Q(\epsilon)$ is proportional to the difference  between $\mathbf B(\epsilon) \  \mathbf S$ and  $\mathbf S \ \mathbf B^\text T(\epsilon),$ we conclude $\mathbf Q(\epsilon)$ is circulant. 
\end{proof}

\subsection{Second Regime}
We now consider the second regime in which the perturbation constant $\epsilon>0$ is small.
\begin{theorem}[Cyclic OU Process One Nonzero Propagation Coefficient and Independent and Identical Noise in All Sensors Second Regime Leading Eigenvector Explicit Formula]\label{thm: Cyclic OU Process One Nonzero Propagation Coefficient and Independent and Identical Noise in All Sensors Second Regime Leading Eigenvector Explicit Formula}
Consider the lead matrix $\mathbf Q(\epsilon)$ defined in \eqref{eq: Cyclic OU Process One Nonzero Propagation Coefficient and Independent and Identical Noise in All Sensors Lead Matrix}.  Then, as $\epsilon \rightarrow 0,$ the leading eigenvector of $\mathbf Q(\epsilon)$ is $\mathbf w_{q},$ where $1 \le q \le N$ satisfies $q(p-1) \equiv 1 \mod N.$
\end{theorem}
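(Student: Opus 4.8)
The plan is to use the fact that the explicit lead matrix \eqref{eq: Cyclic OU Process One Nonzero Propagation Coefficient and Independent and Identical Noise in All Sensors Lead Matrix} is already expressed in the orthonormal basis $\left \lbrace \mathbf w_n \right \rbrace_{n=1}^N$, so that diagonalization is immediate and the entire problem collapses to identifying which eigenvalue has the largest modulus as $\epsilon \to 0$. First I would verify that each $\mathbf w_k$ is an eigenvector. Using the relation $\mathbf w_n^* = \mathbf w_{N-n}^\text T$ from \textbf{Theorem \ref{thm: Circulant Matrix Properties}}, one gets $\mathbf w_n^\text T \mathbf w_k = \delta_{N-n,k}$, so applying $\mathbf Q(\epsilon)$ to $\mathbf w_k$ kills every term except $n = N-k$, yielding $\mathbf Q(\epsilon)\,\mathbf w_k = \lambda_k(\epsilon)\,\mathbf w_k$ with
\[
  \lambda_k(\epsilon) = \frac{i\,b_p \sin\!\left(\tfrac{2\pi k(p-1)}{N}\right)}{\epsilon - 2b_p \sin^2\!\left(\tfrac{\pi k(p-1)}{N}\right)},
\]
where the simplification of the surviving term relies on $\sin\!\left(\tfrac{2\pi(N-k)(p-1)}{N}\right) = -\sin\!\left(\tfrac{2\pi k(p-1)}{N}\right)$ and $\sin^2\!\left(\tfrac{\pi(N-k)(p-1)}{N}\right) = \sin^2\!\left(\tfrac{\pi k(p-1)}{N}\right)$. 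This is consistent with $\mathbf Q(\epsilon)$ being circulant and gives closed forms for every eigenvalue.

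Next I would study $|\lambda_k(\epsilon)|$ as $\epsilon \to 0$. Since $b_p<0$, the denominator equals $\epsilon + 2|b_p|\sin^2(\cdot)$ and is strictly positive, so no eigenvalue blows up. For $k=N$ the numerator vanishes, giving $\lambda_N = 0$. For $1 \le k < N$, the hypothesis $\gcd(p-1,N)=1$ forces $k(p-1)\not\equiv 0 \bmod N$, hence $\sin\!\left(\tfrac{\pi k(p-1)}{N}\right)\neq 0$, and the half-angle simplification produces $\lim_{\epsilon\to 0}|\lambda_k(\epsilon)| = \left|\cot\!\left(\tfrac{\pi k(p-1)}{N}\right)\right|$. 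The task thus reduces to maximizing $\left|\cot\!\left(\tfrac{\pi k(p-1)}{N}\right)\right|$ over $1 \le k < N$.

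The key arithmetic step is that coprimality makes $k \mapsto k(p-1) \bmod N$ a bijection of $\left \lbrace 1,\dots,N-1 \right \rbrace$ onto itself, so the limiting moduli range over $\left \lbrace |\cot(\pi r/N)| : r = 1,\dots,N-1 \right \rbrace$. This set attains its maximum precisely at the extreme residues $r \in \left \lbrace 1, N-1 \right \rbrace$, both giving $\cot(\pi/N)$, which strictly exceeds every other value. These two maximizers correspond to $k(p-1)\equiv 1$ and $k(p-1)\equiv -1 \bmod N$, that is, to $k = q$ and $k = N-q$; their limiting eigenvalues $-i\cot(\pi/N)$ and $+i\cot(\pi/N)$ form a conjugate pair, and the associated eigenvectors $\mathbf w_q$ and $\mathbf w_{N-q}$ are complex conjugates of one another.

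Finally, because there are finitely many indices and the limiting moduli are strictly ordered with a unique maximal value attained only on $\left \lbrace q, N-q \right \rbrace$, there exists $\epsilon_0 > 0$ such that for all $0 < \epsilon < \epsilon_0$ the largest-modulus eigenvalue of $\mathbf Q(\epsilon)$ occurs at $k \in \left \lbrace q, N-q \right \rbrace$. Appealing to the convention set out in the introduction, that the leading eigenvector of a real skew-symmetric matrix is determined only up to complex conjugation of its components, I would select $\mathbf v_1(\mathbf Q(\epsilon)) = \mathbf w_q$. The principal obstacle here is conceptual rather than computational: one must recognize and correctly dispose of the tie between $\mathbf w_q$ and its conjugate $\mathbf w_{N-q}$, and confirm that $|\cot(\pi r/N)|$ is genuinely maximized at the extreme residues rather than somewhere in the interior — both facts resting squarely on the coprimality hypothesis $\gcd(p-1,N)=1$.
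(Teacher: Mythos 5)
Your proposal is correct and follows essentially the same route as the paper: diagonalize $\mathbf Q(\epsilon)$ in the basis $\left\lbrace \mathbf w_n \right\rbrace_{n=1}^N$, obtain the same closed-form eigenvalues, pass to the limit $\left|\cot\left(\tfrac{\pi k(p-1)}{N}\right)\right|$, and maximize using the coprimality hypothesis. The only differences are presentational --- you verify the eigenpairs by applying $\mathbf Q(\epsilon)$ directly to $\mathbf w_k$ via the orthonormality relations where the paper invokes the circulant first-row eigenvalue formula, and you dispose of the $\mathbf w_q$ versus $\mathbf w_{N-q}$ conjugate-pair tie explicitly (via the conjugation-ambiguity convention), a point the paper leaves implicit.
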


\begin{proof}
First, we need to determine the eigenvalues of $\mathbf Q(\epsilon).$ In order to do this, we need to compute the first row of $\mathbf Q(\epsilon).$ Since $\mathbf Q(\epsilon)$ is skew-symmetric, its first row is the opposite of its first column. To obtain its first column, we compute the product of $\mathbf Q(\epsilon)$ and $\mathbf e_1,$ where $\mathbf e_1$ is the first standard basis vector in $\C^N.$ This means the first row of $\mathbf Q(\epsilon)$ is 
	\begin{align}
		- \mathbf Q(\epsilon) \ \mathbf e_1 &=  i \  \sum_{n=1}^N \frac{b_p \sin \left(\frac{2 \pi n (p-1)}{N} \right)}{ \epsilon -2 b_p \sin^2 \left(\frac{\pi n (p-1)}{N} \right)} \ \mathbf w_{N-n} \mathbf w_n^\text T \ \mathbf e_1 \nonumber \\
		 &= \frac{i}{\sqrt N}  \sum_{n=1}^N \frac{b_p \sin \left(\frac{2 \pi n (p-1)}{N} \right)}{ \epsilon -2 b_p \sin^2 \left(\frac{\pi n (p-1)}{N} \right)} \ \mathbf w_{N-n}.
	\end{align}
As a result, for each $1 \le q \le N,$ each eigenvalue of $\mathbf Q(\epsilon)$ is of the form
	\begin{align}
		\widetilde{\lambda_q}(\epsilon) &= \sqrt{N} \ \mathbf w^\text T_q  \left(  \frac{i}{\sqrt N}  \sum_{n=1}^N \frac{b_p \sin \left(\frac{2 \pi n (p-1)}{N} \right)}{ \epsilon -2 b_p \sin^2 \left(\frac{\pi n (p-1)}{N} \right)} \ \mathbf w_{N-n} \right) \nonumber \\
		&=   i \  \sum_{n=1}^N \frac{b_p \sin \left(\frac{2 \pi n (p-1)}{N} \right)}{ \epsilon -2 b_p \sin^2 \left(\frac{\pi n (p-1)}{N} \right)} \mathbf w_q^\text T  \ \mathbf w_{N-n} \nonumber \\
		&= i \   \sum_{n=1}^N \frac{b_p \sin \left(\frac{2 \pi n (p-1)}{N} \right)}{ \epsilon -2 b_p \sin^2 \left(\frac{\pi n (p-1)}{N} \right)} \delta_{N-q, N-n} \nonumber \\
		&=    \frac{b_p \sin \left(\frac{2 \pi q (p-1)}{N} \right)}{ \epsilon -2 b_p \sin^2 \left(\frac{\pi q (p-1)}{N} \right)} \ i \nonumber.
	\end{align}
	
Note that
	\begin{align}
		\lim_{\epsilon \rightarrow 0} 	\left|\widetilde{\lambda_q}(\epsilon) \right |&= \left| \frac{b_p \sin \left(\frac{2 \pi q (p-1)}{N} \right)}{  -2 b_p \sin^2 \left(\frac{\pi q (p-1)}{N} \right)} \right | \nonumber \\
		&=   \left| \frac{\sin \left(\frac{\pi q (p-1)}{N} \right) \cos \left(\frac{\pi q (p-1)}{N} \right)}{  \sin^2 \left(\frac{\pi q (p-1)}{N} \right)} \right | \nonumber  \\
		&=   \left|\cot \left(\frac{\pi q (p-1)}{N} \right) \right | \label{label}.
	\end{align}
We seek the index $q$ maximizing $ \left|\cot \left(\frac{\pi q (p-1)}{N} \right) \right |.$ Note that the map $\theta \mapsto |\cot(\theta)|$ is strictly decreasing on $(0,\frac{\pi}{2})$ and strictly increasing on $\left(\frac{\pi}{2}, \pi \right).$ Since $\cot(\theta)=\cot\left(\frac{\pi}{2} - \theta \right)$ for all $\theta \in (0,\frac{\pi}{2}),$ the map $\theta \mapsto |\cot(\theta)|$ restricted to the interval $(0,\pi)$ is symmetric about $\frac{\pi}{2}.$ Finally, note that $\theta \mapsto |\cot(\theta)|$ is $\pi$-periodic. The integers $u  \in \Z$ maximizing $\left | \cot(\frac{u \pi}{N}) \right|$ specifically in the domain $(0, \pi)$ are $u=1$ and $u=N-1.$ By periodicity, the integer $u \in \Z$ maximizing $ \left|\cot(\frac{u \pi}{N}) \right |$ on its entire domain satisfies $u \equiv \pm 1 \mod N.$

This means an index $1 \le q \le N$ that maximizes the eigenvalue limit \eqref{eq: Cyclic OU Process One Nonzero Propagation Coefficient and Independent and Identical Noise in All Sensors Lead Matrix} must satisfy either the congruence equation $q(p-1) \equiv 1 \mod N$ or the congruence equation $q(p-1) \equiv -1 \mod N.$ Both congruence equations have integer solutions because $\gcd(p-1,N),$ which is equal to $1$ by assumption, divides $\pm 1.$  

Let $1 \le q \le N$ be such that $q(p-1) \equiv 1 \mod N.$ Then, the associated eigenvector of the eigenvalue $\widetilde{\lambda}_q(\epsilon)$ is $\mathbf w_q,$ which is the leading eigenvector of $\mathbf Q(\epsilon).$ 
\end{proof}

Knowing the leading eigenvector of $\mathbf Q(\epsilon)$ under the second regime, we investigate whether the structure of the leading eigenvector recovers the structure of the cyclic sensor network.

\begin{theorem}[Cyclic OU Process One Nonzero Propagation Coefficient and Independent and Identical Noise in All Sensors Second Regime Leading Eigenvector Structure]\label{thm: Cyclic OU Process One Nonzero Propagation Coefficient and Independent and Identical Noise in All Sensors Second Regime Leading Eigenvector Structure}
Let $\mathbf w_q$ be the leading eigenvector of $\mathbf Q(\epsilon).$ Then, the permutation $\sigma \in S_N$ representing the cyclic order of the component phases $\mathbf w_q$ is defined by $\sigma(n) \equiv (N-(n-1)(p-1) )\mod N.$
\end{theorem}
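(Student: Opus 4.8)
The plan is to exploit the fact that the leading eigenvector $\mathbf w_q$ is an equispaced phasor vector: its $n$-th component is $w_{q,n}=\tfrac{1}{\sqrt N}\,\omega_q^{\,n-1}=\tfrac{1}{\sqrt N}e^{2\pi i q(n-1)/N}$, where $\omega_q=e^{2\pi i q/N}$. All $N$ components thus share the common modulus $\tfrac{1}{\sqrt N}$ and lie on a single circle, so the cyclic order of the components is governed \emph{entirely} by their phases $\text{Arg}(w_{q,n})=\tfrac{2\pi q(n-1)}{N}\bmod 2\pi$. The whole problem therefore reduces to sorting the residues $q(n-1)\bmod N$ around $\Z/N\Z$ and reading off which index lands in which angular position.

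First I would record that $\gcd(q,N)=1$, an immediate consequence of $q(p-1)\equiv 1\bmod N$, so that $n\mapsto q(n-1)\bmod N$ is a bijection of $\{1,\dots,N\}$ onto $\{0,\dots,N-1\}$ and the $N$ phases are exactly the $N$ equally spaced angles, each attained once. To locate $\sigma(n)$ I would invert this linear congruence, noting that the inverse of $q$ modulo $N$ is $p-1$, again directly from $q(p-1)\equiv 1$. Substituting the claimed value $\sigma(n)\equiv N-(n-1)(p-1)\bmod N$ into the phase index gives
\begin{align}
q(\sigma(n)-1) &\equiv q\bigl(-(n-1)(p-1)-1\bigr) \nonumber \\
&\equiv -(n-1)\,q(p-1)-q \equiv -(n-1)-q \pmod N, \nonumber
\end{align}
using $q(p-1)\equiv 1$. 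Hence as $n$ runs $1,\dots,N$ the phase index of $\sigma(n)$ decreases by exactly $1$ at each step (mod $N$), so the points $w_{q,\sigma(1)},\dots,w_{q,\sigma(N)}$ are swept out in uniform angular steps of $\tfrac{2\pi}{N}$ around the circle. This monotone sweep is precisely the meaning of $\sigma$ representing the cyclic order, which completes the verification.

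The delicate point, and the main obstacle, is orientation. The index step computed above is $-1$, i.e.\ the sweep runs \emph{clockwise}, whereas the introduction fixes the counterclockwise convention; moreover the naive counterclockwise sort of $\mathbf w_q$ yields $\sigma(n)\equiv 1+(n-1)(p-1)$, the opposite orientation and not the stated formula. I would resolve this exactly as the introduction prepares for: the leading eigenvector is determined only up to complex conjugation, and $\overline{\mathbf w_q}=\mathbf w_{N-q}$ reverses every phase and hence the orientation, so the counterclockwise order of $\overline{\mathbf w_q}$ is the arithmetic progression with common difference $-(p-1)$, coinciding with $N-(n-1)(p-1)$ after a cyclic shift. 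Invoking the uniqueness-up-to-cyclic-shift argument already established for COOM then renders the particular starting value $\sigma(1)=N$ immaterial, legitimizing the stated representative. Finally I would note the interpretive payoff: the common difference $-(p-1)$ is the reverse of the propagation step $n\mapsto n+(p-1)$ built into $\mathbf B$, which is how Cyclicity Analysis recovers the oriented network order in this regime.
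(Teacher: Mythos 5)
Your proposal is correct, and its careful handling of orientation is exactly what the paper's own argument lacks. The paper takes a different route: it defines $\sigma$ implicitly by the congruence $q(\sigma(n)-1)\equiv n-1 \pmod N$ (well-defined since $\gcd(q,N)=1$), derives the increment $\sigma(n+1)-\sigma(n)\equiv p-1 \pmod N$, and observes that then $w_{q,\sigma(n)}=\omega_1^{n-1}/\sqrt N$ has ascending principal arguments $2\pi(n-1)/N$. But solving that congruence explicitly gives $\sigma(n)\equiv 1+(n-1)(p-1)\pmod N$, an arithmetic progression with step $+(p-1)$, whereas the theorem asserts $\sigma(n)\equiv N-(n-1)(p-1)\pmod N$, a progression with step $-(p-1)$; since $\gcd(p-1,N)=1$, these two progressions agree up to cyclic shift only when $N\le 2$, so for $N>2$ the paper's proof actually establishes a permutation of the \emph{opposite} orientation from the one in its statement, and it never reconciles the two. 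Your route --- substituting the stated formula directly, computing the phase index $q(\sigma(n)-1)\equiv -(n-1)-q \pmod N$ so that the components are swept in uniform clockwise steps, and then legitimizing the stated formula via the conjugation ambiguity $\overline{\mathbf w_q}=\mathbf w_{N-q}$ of the leading eigenvector together with uniqueness of cyclic orders up to cyclic shift --- is the way to make the theorem true as written, and your shift bookkeeping checks out: the ascending-phase sort of $\mathbf w_{N-q}$ is $1-(n-1)(p-1)$, which coincides with $N-(n-1)(p-1)$ after a shift by $d\equiv q+1\pmod N$. Your reading is also the consistent one on two further grounds: the stated progression with step $-(p-1)$ matches the expected order in which sensors receive the signal under the friction matrix $\mathbf B(\epsilon)$, and $\mathbf w_{N-q}$ (not $\mathbf w_q$, whose eigenvalue here is a \emph{negative} multiple of $i$ because $b_p<0$) is the member of the conjugate pair whose eigenvalue has positive imaginary part, i.e., the representative conforming to the convention used in the paper's one-harmonic COOM analysis. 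In short, the paper's congruence construction is cleaner arithmetic but proves the reverse-orientation formula; your argument proves the statement itself and diagnoses why the discrepancy is harmless.
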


\begin{proof}
First, we note that $\gcd(q,N)=1.$ To see this, recall the previous theorem states that $q(p-1) \equiv 1 \mod N.$ By definition, this means $$q(p-1)-Nk=1$$ for some integer $k.$ Hence, we can write $1$ as an integer linear combination of the numbers $q$ and $N,$ which implies $\gcd(q,N)=1.$

Let $\sigma \in S_N$ be the permutation such that 
 \begin{align}\label{eq: Congruence Equation}
	q(\sigma(n)-1) &= n-1 \mod N.
\end{align}
for each $1 \le n \le N.$ Firstly, we note $\sigma$ is well-defined. For each $1 \le n \le N,$ because $\gcd(q,N)=1$ divides $n-1,$ there is a unique solution to the congruence equation \eqref{eq: Congruence Equation} for $\sigma(n).$ 

For each $1 \le n <N,$ we have 
	\begin{align}
		q(\sigma(n+1)-\sigma(n)) &\equiv  (n+1-n) \mod N \nonumber \\
		& \equiv 1 \mod N \nonumber.
	\end{align}
Therefore, we deduce 
	\begin{align}
		q(\sigma(n+1)-\sigma(n)) \equiv q(p-1) \mod N \nonumber.
	\end{align}
Because $\gcd(q,N)=1,$ we may divide both sides of the congruence equation by $q$ to deduce that 
	\begin{align}
		\sigma(n+1) - \sigma(n) & \equiv p-1 \mod N.
	\end{align}

Finally, we claim $\sigma$ is a cyclic order permutation corresponding to the component principal arguments of the eigenvector $\mathbf w_q.$ Recall the $n$-th component of $\mathbf w_q$ is explicitly 
	\begin{align}
		w_{q,n} &= \frac{\omega_q^{n-1}}{\sqrt N} \nonumber  \\
		&= \frac{\omega_1^{q(n-1)}}{\sqrt N} \nonumber .
	\end{align}
By \eqref{eq: Congruence Equation}, we have 
	\begin{align}
		w_{q,\sigma(n)} &= \frac{\omega_1^{q(\sigma(n)-1)}}{\sqrt N} \nonumber  \\
		&= \frac{\omega_1^{n-1}}{\sqrt N} \nonumber ,
	\end{align}
which has principal argument $\frac{2 \pi (n-1)}N \in [0, 2 \pi).$ Therefore, our permutation $\sigma$ ensures the components of $\mathbf w_q$ are sorted in increasing order by their corresponding principal arguments. Thus, $\sigma$ is a valid cyclic order permutation.
\end{proof}

We interpret the result of the previous theorem. If we inject independent and identical noise into all sensors, then under the second regime, the structure of the leading eigenvector of $\mathbf Q(\epsilon)$ enables us to recover the expected network structure induced by the friction matrix $\mathbf B(\epsilon).$  In particular, the cyclic order of the phases of the leading eigenvector matches the expected cyclic order of the sensors receiving the signal in our propagation model.

\chapter{Experimental Results}\label{chap: experimentalresults}

In this chapter, we discuss the experimental results pertaining to the main topic of our thesis. Throughout, we consider an $N$-dimensional OU Process $\left \lbrace \mathbf x(t) \right \rbrace_{t \ge 0}$ with fixed model parameters $\mathbf B \in \Mat_{N,N}(\R)$ and $\boldsymbol \Sigma \in \Mat_{M,N}(\R).$ We let $\mathbf D$ be the diffusion matrix and $\mathbf Q$ be the lead matrix of the OU process. For the purposes of this chapter, we specifically refer to $\mathbf Q$ as the \textit{theoretical lead matrix} of the OU process.

\section{OU Process Realization Time Series Simulation}
In this section, we describe how to generate a time series approximating a realization of the OU process over a time interval of the form $[0,T],$ where $T>0$ is a fixed, large constant. We employ the \textit{Euler-Maruyama} method \cite{KloedenPlaten1992}, which we describe in detail. 

First, we fix a large integer $K,$ representing the number of iterations, and a small constant $\Delta>0,$ representing the time step size between consecutive times.  In the original governing SDE \eqref{eq: OU Process SDE} of the OU process, we replace the time differential $dt$ with $\Delta,$ the stochastic differential $d \mathbf x(t)$ with the difference $\mathbf x(t+\Delta)-\mathbf x(t),$ and the Wiener differential $d \mathbf w(t)$ with the difference $\mathbf w(t+\Delta)-\mathbf w(t).$ 

Recall the increments of the standard Wiener process are independent and Gaussian; in particular, $\mathbf w(t+\Delta)-\mathbf w(t)$ is an $M$-dimensional Gaussian random vector with mean $\mathbf 0$ and covariance matrix $\Delta \mathbf I,$ where $\mathbf I$ is the $M \times M$ identity matrix. So we generate a sequence of vectors $\left \lbrace \boldsymbol \xi_k \right \rbrace_{k=0}^{K-1}$ representing the Wiener increments,  in which all vectors are randomly and independently chosen according to the $M$-dimensional Gaussian distribution with mean $\mathbf 0$ and covariance matrix $\Delta \mathbf I.$  We let $\mathbf x_0 \in \R^N$ be an initial starting vector randomly chosen according to the Gaussian distribution with mean $\mathbf 0$ and covariance matrix $\mathbf S,$ where $\mathbf S$ solves the Lyapunov equation \eqref{eq: OU Process Stationary Covariance Matrix Lyapunov Equation}.

We now construct the time series $\left \lbrace \mathbf x_k \right \rbrace_{k =0}^{K-1},$ where $\mathbf x_k=\mathbf x(k \Delta)$ for each $0 \le k< K.$ Explicitly, we have
	\begin{align}\label{eq: OU Process Realization Time Series}
		\mathbf x_{k+1} &= \mathbf x_k - \Delta \mathbf B \ \mathbf x_k + \boldsymbol \Sigma \ \boldsymbol \xi_k
	\end{align}
for each $0 \le k < K-1.$ This constructed time series $\left \lbrace \mathbf x_k \right \rbrace_{k =0}^{K-1}$ is what we use to approximate a realization of the OU process over the specific time interval $\left[0 \ , \ (K-1) \  \Delta \right].$

In \textbf{Figure \ref{fig: SampleOURealization}}, we plot a sample time series approximating a realization of a $2$-dimensional (cyclic) OU process over the finite time interval $[0,100]$ with model parameters $\mathbf B= \begin{bmatrix}
	1.2 & -0.2 \\ -0.2 & 1.2
\end{bmatrix}$ and $\boldsymbol \Sigma$ equal to the $2 \times 2$ identity matrix. To generate the time series, we used the scheme \eqref{eq: OU Process Realization Time Series}  with $K=10001$ iterations and time step size $\Delta =0.01.$

\begin{figure}[h!]
	\centering
	\includegraphics[width=\textwidth]{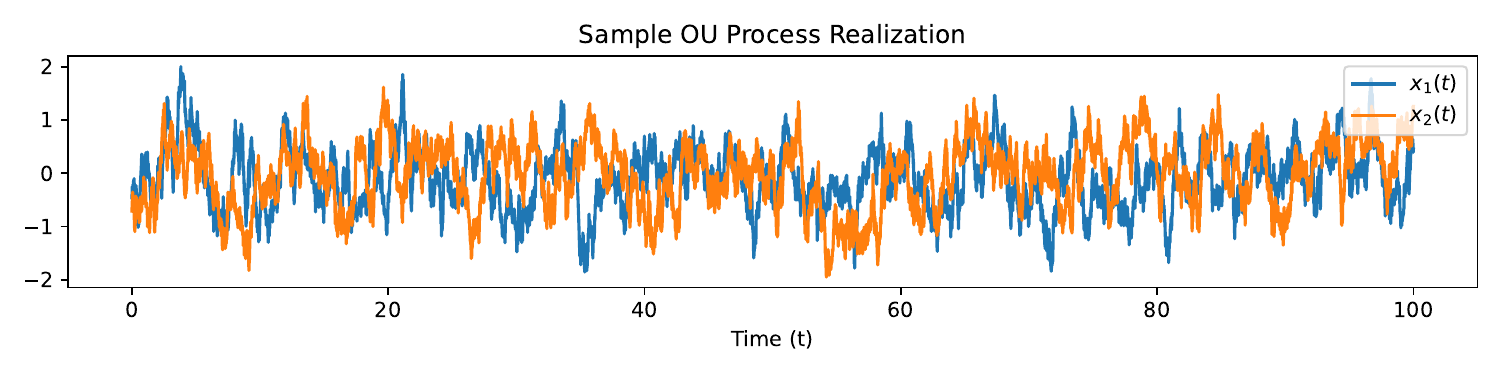}
	\caption{A sample two-dimensional time series approximating a realization of a two-dimensional OU process with model parameters $\mathbf B= \begin{bmatrix} 1.2 & -0.2 \\ -0.2 & 1.2 \end{bmatrix}$ and $\boldsymbol \Sigma$ equal to the $2 \times 2$ identity matrix. We plot the first component time series $x_1$ in blue and the second time series $x_2$ in orange.}\label{fig: SampleOURealization}
\end{figure}

\section{OU Process Empirical Lead Matrix Generation}

Throughout, we fix $K \in \N$ and $\Delta>0.$ Consider a time series  $\left \lbrace \mathbf x_k \right \rbrace_{k =0}^{K-1}$ produced according to our scheme in \eqref{eq: OU Process Realization Time Series}. We compute its \textit{empirical lead matrix}, denoted $\mathbf A(K,\Delta),$ via the shoelace formula mentioned in \eqref{eq: Discrete Lead Matrix}.

As a consequence of the Strong Law of Large numbers identity stated in \textbf{Theorem \ref{thm: Lead Process Law of Large Numbers Identity}}, for each fixed $\Delta>0,$ we have
	\begin{align} \label{eq: Discrete Ergodic Identity}
		\lim_{K \rightarrow \infty} \frac{\mathbf A(K,\Delta)}{(K-1) \Delta} &= \mathbf Q,
	\end{align}
for almost all realizations of the OU process, where $\mathbf Q$ is the theoretical lead matrix.

We show numerical evidence of the discrete law of large numbers identity \eqref{eq: Discrete Ergodic Identity} via an example. Consider the $5$-dimensional OU process with model parameters $\mathbf B=\text{Circ}(2.1, -0.2,-0.4,-0.6,-0.8)$  and $\boldsymbol \Sigma$ equal to the $5 \times 5$ identity matrix. In \textbf{Figure \ref{fig: SampleOURealizationLeadMatrix}}, for each $K \in \left \lbrace 10^2+1  \ , \ \dots \ , \ 10^6+1 \right \rbrace$ and each $\Delta \in \left \lbrace 10^{-1} \ , \ 10^{-2} \right \rbrace,$ we display a scatterplot, in which the first coordinate is the $(m,n)$-th entry of the time-averaged empirical lead matrix $\frac{\mathbf A(K,\Delta)}{(K-1) \Delta}$ and the second coordinate is the $(m,n)$-th entry of the theoretical lead matrix $\mathbf Q.$ We observe as $K$ increases, the scatterplot approaches the straight line $y=x,$ which means the entries of the time-averaged empirical lead matrix $\frac{\mathbf A(K,\Delta)}{(K-1) \Delta}$ approach the corresponding entries of the theoretical matrix $\mathbf Q$. Furthermore, our example shows that even when decreasing the time step size from $10^{-1}$ to $10^{-2}$, we still would need a large number of iterations $K$ if we want an accurate estimate of $\mathbf Q$ from a time-averaged empirical lead matrix.

\begin{figure}[h!]
	\centering
	\includegraphics[width=\textwidth]{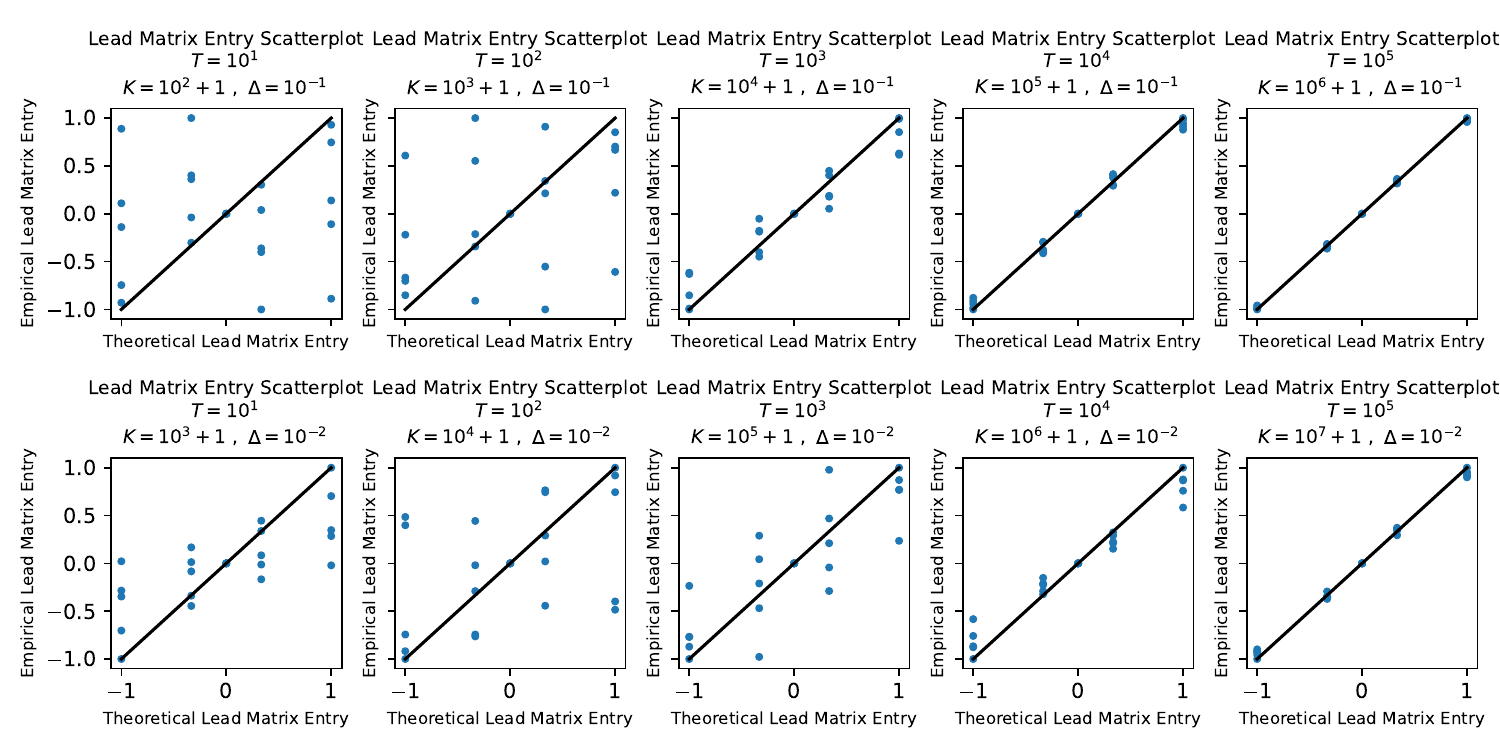}
	\caption{Various lead matrix entry scatterplots for a $5$-dimensional cyclic OU process with model parameters $\mathbf B=\text{Circ}(2.1, -0.2,-0.4,-0.6,-0.8)$  and $\boldsymbol \Sigma$ being the $5 \times 5$ identity matrix. In each scatterplot, we fix values of $K$ and $\Delta$ and plot the coordinates $\left(\frac{A_{m,n}(K,\Delta)}{(K-1) \Delta}, Q_{m,n} \right),$ where $\frac{A_{m,n}(K,\Delta)}{(K-1) \Delta}$ is the $(m,n)$-th entry of the time-averaged empirical matrix $\frac{\mathbf A(K,\Delta)}{(K-1) \Delta}$ and $Q_{m,n}$ is the $(m,n)$-th entry of the theoretical lead matrix $\mathbf Q.$  }\label{fig: SampleOURealizationLeadMatrix}
\end{figure}

\section{Cyclicity Analysis of the Cyclic OU Process}
We revisit the main problem statement of our thesis, in which we consider signal propagation model governed by the cyclic OU process with model parameters $\mathbf B(\epsilon)$ and $\boldsymbol \Sigma$ for each  $\epsilon>0,$ where
the circulant friction matrix $\mathbf B(\epsilon)$ is of the form
	\begin{align*}
		\mathbf B(\epsilon) &= \text{Circ}\left(\epsilon - \sum_{p=2}^N b_p \ , \ b_2 \ , \ \dots \ , \ b_N \right).
	\end{align*}
for some fixed propagation coefficients $b_2 \  , \ \dots \ , \ b_N \le 0.$ 

Throughout this section, we fix the dimension $N=100$ assume the first propagation coefficient $b_2$ is equal to $-1$ while all other propagation coefficients are $0.$  This means the expected network structure is such that the $n$-th sensor is receptive only to activity within the $(n+1)$-th sensor, its immediate neighbor to the right, in which $n+1$ is indexed mod $N$. We also assume $\boldsymbol \Sigma$ is a volatility matrix such that the diffusion matrix $\mathbf D$ is a diagonal matrix whose positive entries are equal. We let $d_s \in \lbrace 0, 1 \rbrace$ be the $s$-th diagonal entry of $\mathbf D.$ Recall $d_s=1$ corresponds to the situation where noise is injected into the $s$-th sensor. For different choices of $\boldsymbol \Sigma$ and perturbation  $\epsilon>0,$ in which $\epsilon$ is some power of $10,$ we  produce a time-averaged empirical lead matrix $\frac{\mathbf A(K,\Delta)}{(K-1) \Delta}$ corresponding to a realization of the $100$-dimensional cyclic OU process with model parameters $\mathbf B(\epsilon)$ and $\boldsymbol \Sigma.$ Here, we fix $K=10^6+1$ iterations and $\Delta=0.01$ as the time step size. We present numerical results as to whether the leading eigenvector of the empirical lead matrix enables us to recover the structure of the cyclic network structure induced by $\mathbf B(\epsilon).$

\subsection{Injection of Noise into Only One Sensor}

Suppose we inject noise into only one sensor.  Here, we consider the situation where we inject noise into the last sensor i.e. $d_{100}=1$ and $d_s=0$ for $1 \le s<100.$  We let $\epsilon \in  \left \lbrace 10^{-11}, 10^{-10}, 10^{-1},10^0,10^3,10^4 \right \rbrace.$ We chose these particular values of $\epsilon$ for a specific reason. For $\epsilon<10^{-11},$  the friction matrix $\mathbf B(\epsilon)$ is considered unstable due to precision issues. For $\epsilon>10^4,$ there was no change to the structure of the leading eigenvector. For the other listed values of $\epsilon,$ we noticed intermediary changes within the structure of the leading eigenvector.

In \textbf{Figure \ref{fig: SampleCyclicOUProcessOnlyOneNonzeroPropagationCoefficientNoiseInjectedinLastNodeLeadingEigenvectors}}, for each such value of $\epsilon,$ we plot the heatmap of a time-averaged empirical lead matrix $\frac{\mathbf A(K,\Delta)}{(K-1)\Delta}(\epsilon)$  and the logarithms of its eigenvalue moduli in descending order. We also plot the component phases and moduli of the leading eigenvector. In \textbf{Table \ref{tbl: SampleCyclicOUProcessOnlyOneNonzeroPropagationCoefficientNoiseInjectedinLastNodeLeadingEigenvectors}}, we record the ratio of the largest to third largest eigenvalue for each such empirical lead matrix.

\begin{figure}[h!]
	\centering
	\includegraphics[width=\textwidth]{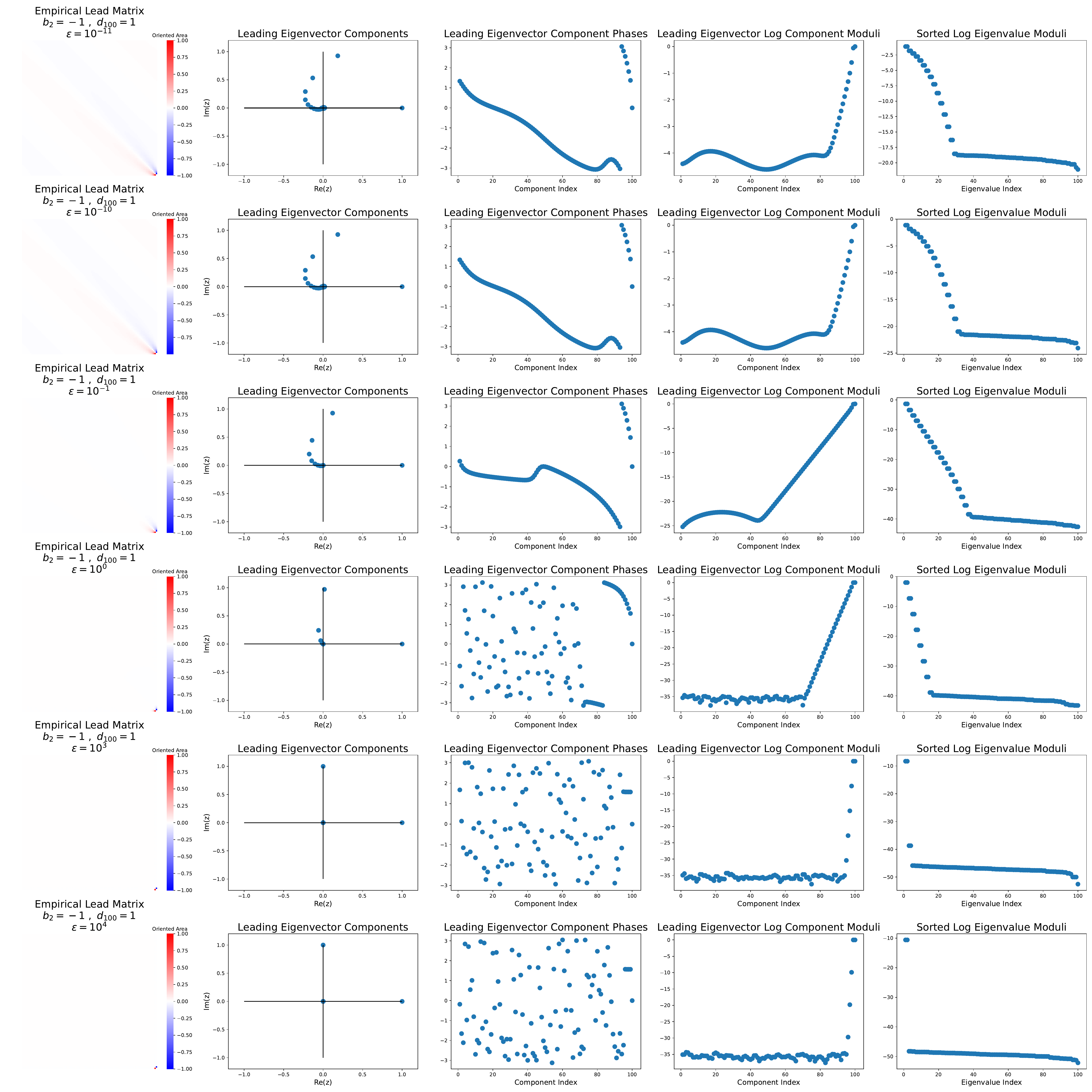}
	\caption{Time-averaged empirical lead matrices $\frac{\mathbf A(K,\Delta)}{(K-1) \Delta} (\epsilon)$ with their leading eigenvectors corresponding to a $100$-dimensional cyclic OU process with  $b_2=-1$ and $d_{100}=1,$ and $\epsilon \in \left \lbrace 10^{-11}, 10^{-10}, 10^{-1},1,10^3,10^4 \right \rbrace.$ Here, we chose $K=10^6+1$ iterations and step size $\Delta =10^{-2}$ in producing the time series representing the OU process realizations.} \label{fig: SampleCyclicOUProcessOnlyOneNonzeroPropagationCoefficientNoiseInjectedinLastNodeLeadingEigenvectors}
\end{figure}

\begin{table}[h!]
	\centering
	\begin{tabular}{||c | c||} 
		\hline
		$\epsilon$ & $\left|\lambda_1/\lambda_3 \right|$  \\ [0.5ex] 
		\hline\hline
		$10^{-11}$ & $\approx 2.00022$  \\ 
		\hline
		$10^{-10}$ & $\approx 2.00022$ \\
		\hline
		$10^{-1}$ & $\approx 8.00132$ \\
		\hline
		$1$ & $\approx 201.8365$ \\ 
		\hline 
		$10^3$ & $\approx 16064344115142$ \\
		\hline
		$10^4$ & $\approx 2.24629 \times 10^{16}$ \\
		\hline
	\end{tabular}
	
	\caption{The ratio of the first to third largest eigenvalue of each time-averaged empirical lead matrix  in \textbf{Figure \ref{fig: SampleCyclicOUProcessOnlyOneNonzeroPropagationCoefficientNoiseInjectedinLastNodeLeadingEigenvectors}}. }
	\label{tbl: SampleCyclicOUProcessOnlyOneNonzeroPropagationCoefficientNoiseInjectedinLastNodeLeadingEigenvectors}
\end{table}

\subsection{Observations}

For small $\epsilon,$ we observe the components of the leading eigenvector spiral around the origin. We observe that the last few component phases are decreasing. 
This means if the signal in our propagation model is broadcast from the last sensor, then the leading eigenvector correctly detects the next few sensors that receive the signal as it propagates.  However, we do not see overall decreasing behavior within the component phases corresponding to other indices. This means the global order of the component phases does not reflect the expected order of sensors receiving the propagating signal. Therefore, the phases alone suggest we cannot fully recover the overall network structure. Moreover, we observe all but the last few leading eigenvector component moduli are close to $0,$ which suggests as the signal is propagating, all but the last few sensors are receptive to the signal. Finally, we observe there is no clear separation between the largest eigenvalue and third largest eigenvalue of the empirical lead matrix, which suggests the largest eigenvalue does not actually dominate the spectrum of the lead matrix.
            
As $\epsilon$ gets larger, the spiraling pattern among the leading eigenvector components begins to disappear. The last leading eigenvector component stays put on the real axis. But the second to last leading eigenvector component gets closer to the imaginary axis, while all other leading eigenvector components get closer to the origin. We see chaotic behavior in all but the last two component phases. This is because while the last two leading eigenvector components approach the origin, they do so from different directions. This suggests that only the last two sensors are receptive to the propagating signal, which means we cannot fully recover the expected network structure. Moreover, there is a noticeable gap between the largest eigenvalue and the third largest eigenvalue of the empirical lead matrix, which means the empirical lead matrix can be well-approximated via a low rank matrix.

\subsection{Independent, Identically Distributed Noise Injected into Few Sensors}

Now, suppose we inject independent and identical noise into only a few sensors.  Consider the situation where we inject noise into the last $11$ sensors i.e. $d_{s}=1$ for all $90 \le s \le 100$ and $d_s=0$ for all other $s.$ We let $\epsilon \in  \left \lbrace 10^{-10}, 10^{-9}, 10^{-3},10^{-2},1,10^2,10^3 \right \rbrace.$ In \textbf{Figure \ref{fig: SampleCyclicOUProcessOnlyOneNonzeroPropagationCoefficientNoiseInjectedinLastElevenNodesLeadingEigenvectors}}, we plot the heatmap of a time-averaged empirical lead matrix $\frac{\mathbf A(K,\Delta)}{(K-1)\Delta}(\epsilon)$   and the logarithms of its eigenvalue moduli in descending order. We also plot the component phases and moduli of the leading eigenvector. In \textbf{Table \ref{tbl: SampleCyclicOUProcessOnlyOneNonzeroPropagationCoefficientNoiseInjectedinLastElevenNodesLeadingEigenvectors}}, we record the ratio of the largest to third largest eigenvalue of each empirical lead matrix.

\begin{figure}[h!]
	\centering
	\includegraphics[width=\textwidth]{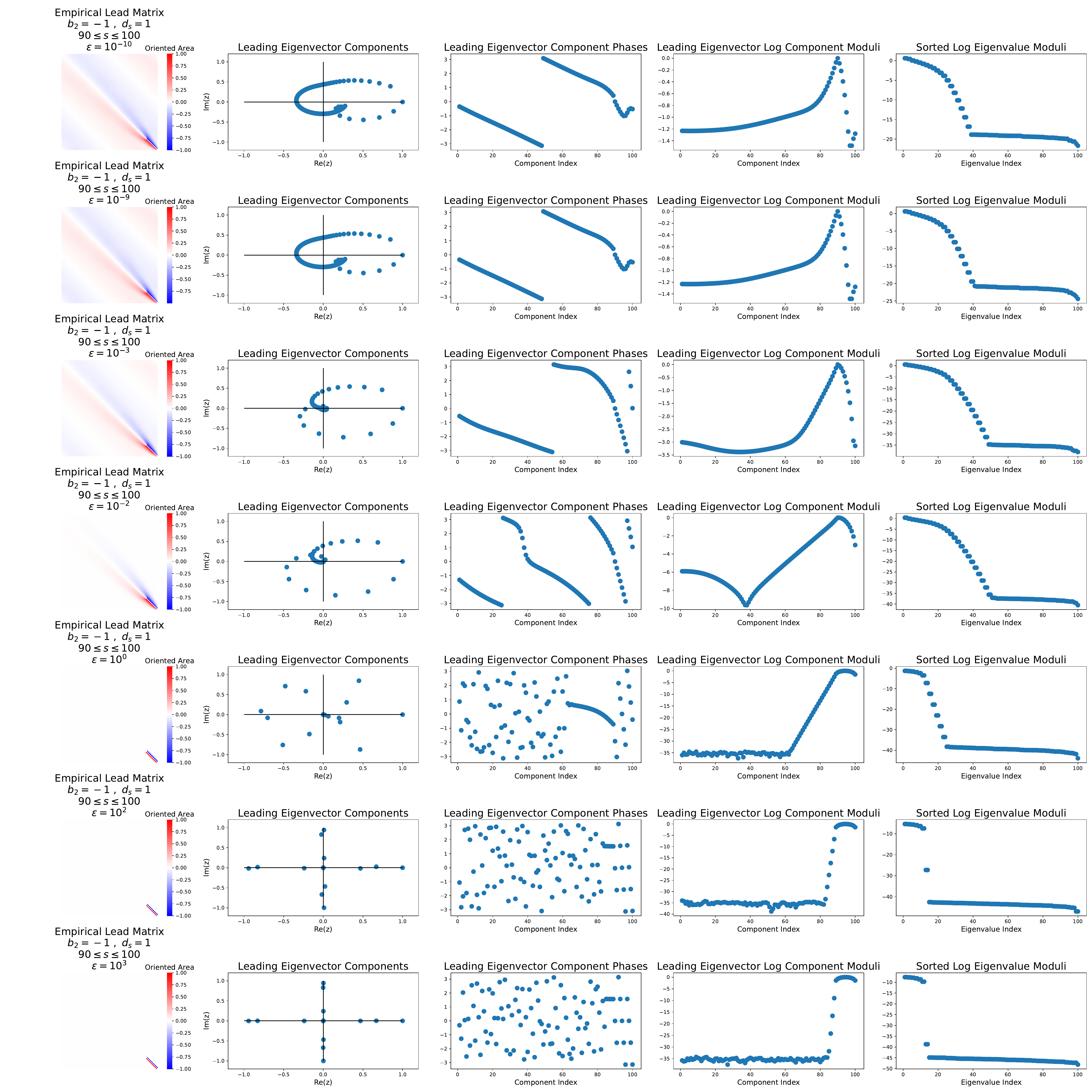}
	\caption{Time-averaged empirical lead matrices $\frac{\mathbf A(K,\Delta)}{(K-1) \Delta} (\epsilon)$ with their leading eigenvectors corresponding to a $100$-dimensional cyclic OU process with  $b_2=-1$ and $d_{s}=1$ with $90 \le s \le 100,$ and $\epsilon \in \left \lbrace 10^{-10}, 10^{-9}, 10^{-3},10^{-2},1,10,10^2,10^3 \right \rbrace.$ Here, we chose $K=10^6+1$ iterations and step size $\Delta =10^{-2}$ in producing the time series representing the OU process realizations. } \label{fig: SampleCyclicOUProcessOnlyOneNonzeroPropagationCoefficientNoiseInjectedinLastElevenNodesLeadingEigenvectors}
\end{figure}

\begin{table}[h!]
	\centering
	\begin{tabular}{||c | c||} 
		\hline
		$\epsilon$ & $\left|\lambda_1/\lambda_3 \right|$  \\ [0.5ex] 
		\hline\hline
		$10^{-10}$ & $\approx 1.2063$  \\ 
		\hline
		$10^{-9}$ & $\approx 1.2063$ \\
		\hline
		$10^{-3}$ & $\approx 1.3219$ \\
		\hline
		$10^{-2}$ & $\approx 1.5622$ \\ 
		\hline 
		$1$ & $\approx 1.1248$ \\
		\hline
		$10^2$ & $\approx 1.0965$ \\
		\hline
		$10^3$ & $\approx 0.9119$ \\
		\hline
	\end{tabular}
	
	\caption{We tabulate the ratios between the first and third largest eigenvalues corresponding to each time-averaged empirical lead matrix  in \textbf{Figure \ref{fig: SampleCyclicOUProcessOnlyOneNonzeroPropagationCoefficientNoiseInjectedinLastElevenNodesLeadingEigenvectors}}. }
	\label{tbl: SampleCyclicOUProcessOnlyOneNonzeroPropagationCoefficientNoiseInjectedinLastElevenNodesLeadingEigenvectors}
\end{table}

\subsection{Observations}

For small $\epsilon,$ the components of the leading eigenvector exhibit a visible spiral pattern. We notice, however, the last few leading eigenvector component phases are not decreasing. This suggests if the signal in our propagation model is broadcast from the $100$-th sensor, then we are not able to recover the correct order of the next few sensors receiving the signal. Moreover, we observe that the $90$-th leading eigenvector component modulus is the largest, which suggests the $90$-th sensor is most receptive to the propagating signal. Recall that the index $90$ is the smallest index corresponding to the sensor we inject noise. Despite the fact we inject noise into the $s$-th sensor for $90 \le s \le 100,$ the corresponding leading eigenvector component moduli suggest the other sensors receiving noise are not as receptive to the propagating signal. Finally, we observe there is no clear separation between the largest eigenvalue of the empirical lead matrix and third largest eigenvalue. 

As $\epsilon$ gets larger, we see the spiraling pattern within the leading eigenvector components disappears. The leading eigenvector components eventually lie on either the real axis, imaginary axis, or the origin.

\subsection{Independent, Identically Distributed Noise Injected into a Large Number of Sensors}

Suppose we inject noise into a larger number of sensors.  In particular, suppose we inject noise into the last $51$ sensors i.e. $d_{s}=1$ for all $50 \le s \le 100$ and $d_s=0$ for all other $s.$ We let $\epsilon \in  \left \lbrace 10^{-9}, 10^{-8}, 10^{-3},10^{-2},1,10^2,10^3,10^4 \right \rbrace.$ In \textbf{Figure \ref{fig: SampleCyclicOUProcessOnlyOneNonzeroPropagationCoefficientNoiseInjectedinLastFiftyNodesLeadingEigenvectors}}, we plot the heatmap of a time-averaged empirical lead matrix $\frac{\mathbf A(K,\Delta)}{(K-1)\Delta}(\epsilon)$   and the logarithms of its eigenvalue moduli in descending order. We also plot the component phases and moduli of the leading eigenvector. In \textbf{Table \ref{tbl: SampleCyclicOUProcessOnlyOneNonzeroPropagationCoefficientNoiseInjectedinLastFiftyNodesLeadingEigenvectors}}, we record the ratio of the largest to third largest eigenvalue for each such empirical lead matrix.

\begin{figure}[h!]
	\centering
	\includegraphics[width=\textwidth]{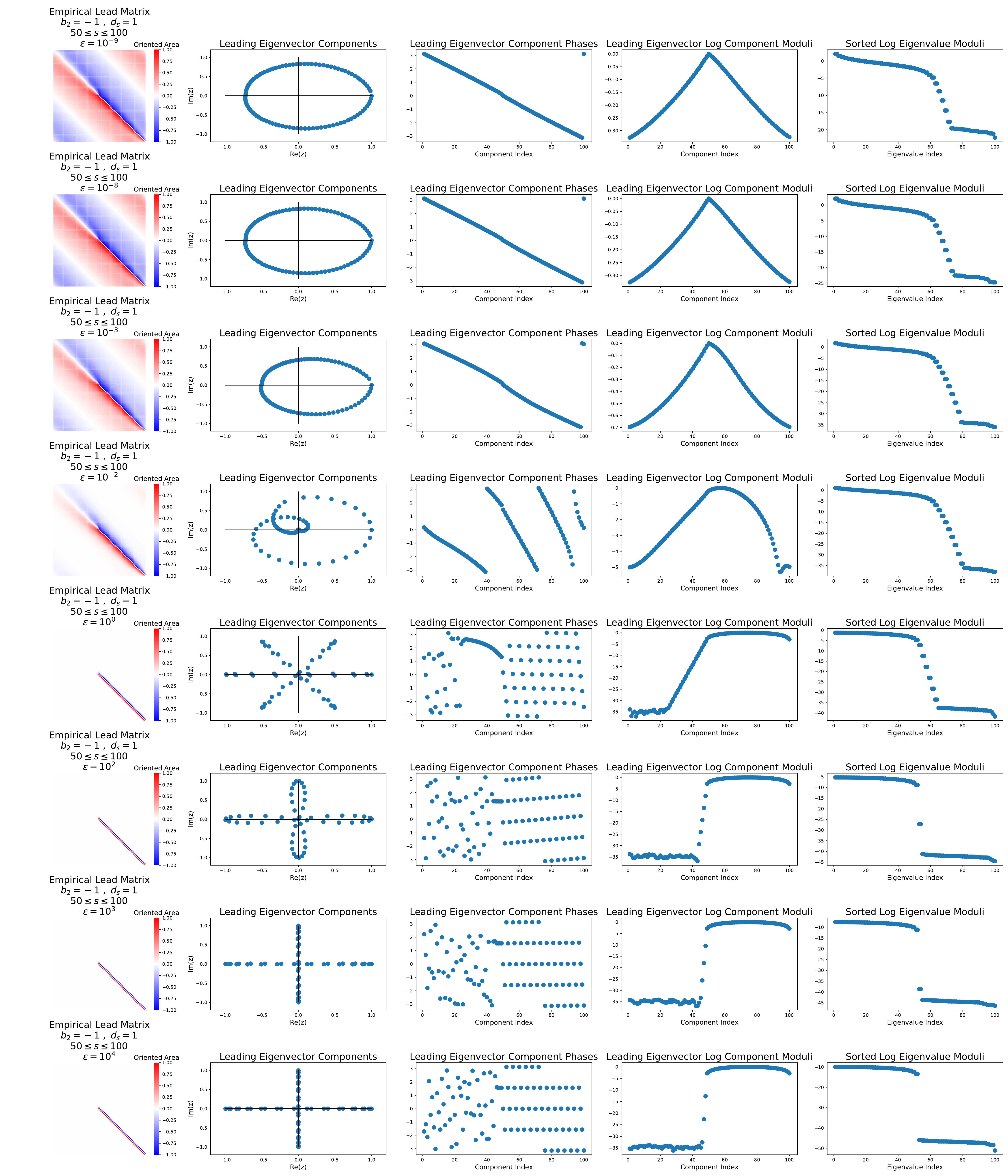}
	\caption{Time-averaged empirical lead matrices $\frac{\mathbf A(K,\Delta)}{(K-1) \Delta} (\epsilon)$ with their leading eigenvectors corresponding to a $100$-dimensional cyclic OU process with $b_2=-1$ and $d_{s}=1$ for $50 \le s \le 100$ and $\epsilon \in \left \lbrace 10^{-9}, 10^{-8}, 10^{-3},10^{-2},1,10^2,10^3,10^4 \right \rbrace.$ Here, we chose $K=10^6+1$ iterations and step size $\Delta =10^{-2}$ in producing the time series representing the OU process realizations. } \label{fig: SampleCyclicOUProcessOnlyOneNonzeroPropagationCoefficientNoiseInjectedinLastFiftyNodesLeadingEigenvectors}
\end{figure}

\begin{table}[h!]
	\centering
	\begin{tabular}{||c | c||} 
		\hline
		$\epsilon$ & $\left|\lambda_1/\lambda_3 \right|$  \\ [0.5ex] 
		\hline\hline
		$10^{-9}$ & $\approx 1.8548$  \\ 
		\hline
		$10^{-8}$ & $\approx 1.8548$ \\
		\hline
		$10^{-3}$ & $\approx 1.387$ \\
		\hline
		$10^{-2}$ & $\approx 1.194$ \\ 
		\hline 
		$1$ & $\approx 1.007$ \\
		\hline
		$10^2$ & $\approx 1.005$ \\
		\hline
		$10^3$ & $\approx 1.005$ \\
		\hline 
		$10^4$ & $\approx 1.005$ \\
		\hline
	\end{tabular}
	
	\caption{We tabulate the ratios between the first and third largest eigenvalues corresponding to each time-averaged empirical lead matrix  in \textbf{Figure \ref{fig: SampleCyclicOUProcessOnlyOneNonzeroPropagationCoefficientNoiseInjectedinLastFiftyNodesLeadingEigenvectors}}. }
	\label{tbl: SampleCyclicOUProcessOnlyOneNonzeroPropagationCoefficientNoiseInjectedinLastFiftyNodesLeadingEigenvectors}
\end{table}

\subsection{Observations}
For small $\epsilon,$ we observe the components of the leading eigenvector appear to form an elliptical pattern. The component principal arguments exhibit linear monotonically decreasing behavior.  This suggests the leading eigenvector phases do approximately reflect the expected structure of the network. Moreover, the component moduli are nonzero and appear to exhibit a symmetric pattern. Finally, we observe there is no clear separation between the first and third largest eigenvalues of the lead matrix.

When $\epsilon$ gets larger,  the elliptical pattern disappears, and the leading eigenvector components approach either the real axis, imaginary axis, or the origin. The last $50$ leading eigenvector component moduli are large, while the others are close to $0.$ Therefore, Cyclicity Analysis suggests we cannot fully recover the structure of the network.

\subsection{Independent, Identically Distributed Noise Injected into All Sensors}

Now, suppose we inject independent and identical noise into all sensors. This corresponds to the situation where $\boldsymbol \Sigma$ is a volatility matrix such that the diffusion matrix $\mathbf D$ is the identity matrix.

In \textbf{Figure \ref{fig: SampleCyclicOUProcessOnlyOneNonzeroPropagationCoefficientNoiseInjectedinAllNodesLeadingEigenvectors}}, we plot the heatmap of a time-averaged empirical lead matrix $\frac{\mathbf A(K,\Delta)}{(K-1)\Delta}(\epsilon)$  for each $\epsilon \in \left \lbrace 10^{-9}, 10^{-8}, 10^{-2}, 1, 10,10^3,10^4 \right \rbrace $  and the logarithms of its eigenvalue moduli in descending order. We also plot the component phases and moduli of the leading eigenvector. In \textbf{Table \ref{tbl: SampleCyclicOUProcessOnlyOneNonzeroPropagationCoefficientNoiseInjectedinLastFiftyNodesLeadingEigenvectors}}, we record the ratio of the largest to third largest eigenvalue for each such empirical lead matrix.

\begin{figure}[h!]
	\centering
	\includegraphics[width=\textwidth]{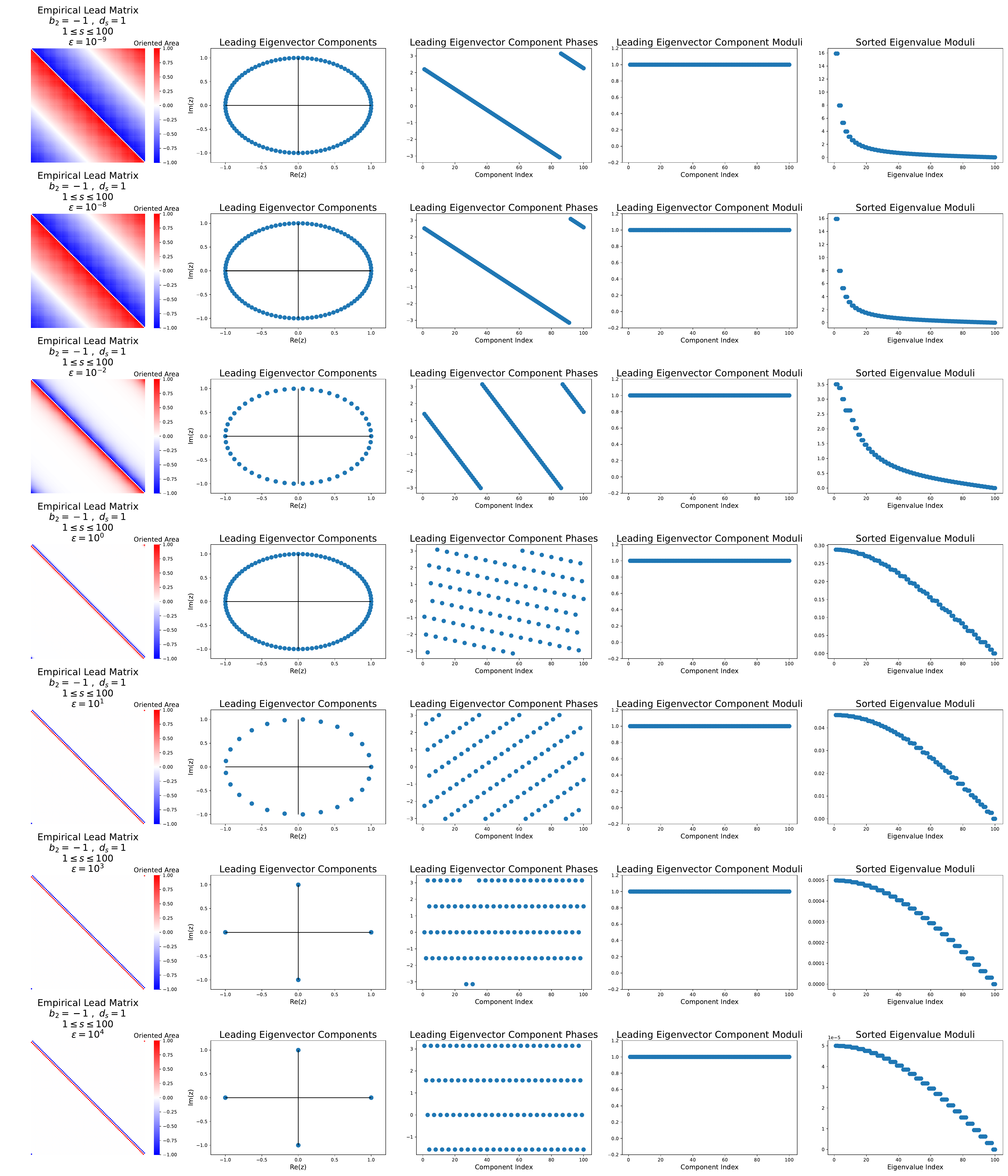}
	\caption{Time-averaged empirical lead matrices $\frac{\mathbf A(K,\Delta)}{(K-1) \Delta} (\epsilon)$ with their leading eigenvectors corresponding to a $100$-dimensional cyclic OU process with $b_2=-1$ and $d_{s}=1$ for $1 \le s \le 100$ and $\epsilon \in\left \lbrace 10^{-9}, 10^{-8}, 10^{-2}, 1, 10,10^3,10^4 \right \rbrace.$ Here, we chose $K=10^6+1$ iterations and step size $\Delta =10^{-2}$ in producing the time series representing the OU process realizations. } \label{fig: SampleCyclicOUProcessOnlyOneNonzeroPropagationCoefficientNoiseInjectedinAllNodesLeadingEigenvectors}
\end{figure}

\begin{table}[h!]
	\centering
	\begin{tabular}{||c | c||} 
		\hline
		$\epsilon$ & $\left|\lambda_1/\lambda_3 \right|$  \\ [0.5ex] 
		\hline\hline
		$10^{-9}$ & $\approx 2.001$  \\ 
		\hline
		$10^{-8}$ & $\approx 2.001$ \\
		\hline
		$10^{-2}$ & $\approx 1.036$ \\
		\hline
		$1$ & $\approx 1.0009$ \\ 
		\hline 
		$10$ & $\approx 1.0002$ \\
		\hline
		$10^3$ & $\approx 1.001$ \\
		\hline
		$10^4$ & $\approx 1.001$ \\
		\hline
	\end{tabular}
	
	\caption{We tabulate the ratios between the first and third largest eigenvalues corresponding to each time-averaged empirical lead matrix  in \textbf{Figure \ref{fig: SampleCyclicOUProcessOnlyOneNonzeroPropagationCoefficientNoiseInjectedinAllNodesLeadingEigenvectors}}. }
	\label{tbl: SampleCyclicOUProcessOnlyOneNonzeroPropagationCoefficientNoiseInjectedinAllNodesLeadingEigenvectors}
\end{table}

\subsection{Observations}

For small $\epsilon,$ the components of the leading eigenvector lie on a circle, as indicated by their moduli being equal. Observe the cyclic order of the leading eigenvector component phases correctly matches the order of sensors receiving the signal in our model. Therefore, Cyclicity Analysis enables us to fully recover the network structure. 

For large $\epsilon,$ the components of the leading eigenvector still lie on a circle, but they lie on either the positive real, positive imaginary, negative real, or the negative imaginary axis. Therefore, the leading eigenvector component phases do not enable us to recover the network structure. 

\chapter{Conclusion}

In this thesis, we posed very general questions pertaining to the lead lag dynamics amongst the components of an $N$-dimensional signal. Firstly, if two signals evolve similarly throughout time, which signal leads and which signal follows ? Secondly, if all signals evolve similarly throughout time, what is the order in which such signals evolve ? 

We discussed Cyclicity Analysis, an established way of answering the aforementioned two questions for cyclic signals, which are repeating, yet aperiodic, signals. Cyclicity Analysis is different from many standard tools in that it is time reparameterization invariant; the results of the analysis do not depend on how one observes the signals throughout time. It involves two parts. The first part is to construct an $N \times N$ lead matrix, in which the sign of the $(m,n)$-th entry heuristically captures the pairwise leader follower relationship between the $m$-th and $n$-th component signals. It is a specific construction based on iterated path integrals, which are the functionals of the signal $\mathbf x$ that are time reparameterization invariant. The second part of Cyclicity Analysis is to assume a model in which the component signals of $\mathbf x$ trace some periodic signal up to time reparameterization, scaling constants, and time shifts. Under this model, one determines the order in which the component signals evolve throughout time, which is the cyclic order of those time shifts. Under certain baseline situations, the leading eigenvector of the lead matrix can approximately recover the order in which the component signals of $\mathbf x$ evolve throughout time. More specifically, the cyclic order of the eigenvector component phases reflects the cyclic order of the offsets. 

In this thesis, we investigated Cyclicity Analysis in a novel setting. We considered an $N$-dimensional Ornstein-Uhlenbeck (OU) stochastic process with model parameters $\mathbf B$ and $\boldsymbol \Sigma,$ in which $\mathbf B$ is a friction matrix and $\boldsymbol \Sigma$ is a volatility matrix. We considered a signal propagation model governed by the cyclic OU process, in which the model parameter $\mathbf B$ is a circulant friction matrix. In this model, there is a signal propagating throughout a network of $N$ sensors. The matrix $\mathbf B$ represents the cyclic network structure of the sensors, while the diffusion matrix $\mathbf D = \boldsymbol \Sigma  \boldsymbol \Sigma^\text T/2$ is the covariance matrix whose entries are pairwise covariances of noises injected into any two sensors. 

To make Cyclicity Analysis suitable for analyzing the OU process, we defined an auxiliary $N \times N$ lead process, a matrix-valued stochastic process in which each random matrix in the collection represents a lead matrix corresponding to a realization of the OU process. The first main result of the thesis is that the lead process obeys a strong law of large numbers identity: its time average converges almost surely to a skew-symmetric matrix $\mathbf Q$.  

Then, with different choices of cyclic OU model parameters $\mathbf B$ and $\boldsymbol \Sigma,$ we investigated whether Cyclicity Analysis could enable us to recover the network structure induced by $\mathbf B$ under our signal propagation model. More specifically, we analyzed whether the structure of the leading eigenvector of $\mathbf Q$ could enable us to recover the network structure induced by $\mathbf B.$ 

For our analysis, we assumed the ground truth network structure is such that every sensor receives the propagating signal from exactly one sensor to its right.  In addition, we assumed independent and identical noise is injected into multiple sensors, which corresponds to the situation where $\mathbf D$ is a diagonal diffusion matrix whose nonzero diagonal entries are equal. 

With the imposed assumptions on $\mathbf B$ and $\boldsymbol \Sigma,$ our experimental results showed that Cyclicity Analysis was in fact able to partially recover the structure of the network. For example, in the situation where we inject noise only into the last node, the cyclic order of the component phases of the leading eigenvector of $\mathbf Q$ partially reflected the expected cyclic order of the sensors receiving the signal in our model. As we increase the number of sensors receiving noise, we observed the cyclic order of the component phases approximately match the expected cyclic order of the sensors. In the ultimate case identical noise is injected into all sensors, the cyclic orders perfectly match.

There is future work that can be done. For example, what if we remove the assumption that $\mathbf D$ has equal positive diagonal entries ? This would correspond to a situation where independent, but not identical, noise is injected into multiple sensors. What if $\mathbf B$ represents a network structure such that each sensor does not receive the signal from only one other sensor, but multiple sensors ? Does the leading eigenvector of the lead matrix $\mathbf Q$ capture those phenomena ?

%

\backmatter

%
\bibliographystyle{unsrt}
\bibliography{thesisrefs}


\end{document}